\DeclareMathOperator{\supp}{supp}
\DeclareMathOperator{\gain}{gain}
\DeclareMathOperator{\bonus}{bonus}
\newenvironment{myfigure}[1][1]{\bigskip}{\bigskip}
\begin{document}

\setcounter{page}{1} \setcounter{section}{0}
\newtheorem{theorem}{Theorem}[section]
\newtheorem{lemma}[theorem]{Lemma}
\newtheorem{corollary}[theorem]{Corollary}
\newtheorem{proposition}[theorem]{Proposition}
\newtheorem{observation}[theorem]{Observation}
\newtheorem{definition}[theorem]{Definition}
\newtheorem{claim}{Claim}
\newtheorem{conjecture}[theorem]{Conjecture}
\newtheorem{problem}[theorem]{Problem}

\title{3-Flows with Large Support}

\author{
	Matt DeVos\thanks{Department of Mathematics, Simon Fraser University, Burnaby, B.C., Canada V5A 1S6, mdevos@sfu.ca.}
\and
	Jessica McDonald\thanks{Department of Mathematics and Statistics, Auburn University, Auburn, AL, USA 36849, mcdonald@auburn.edu.}
\and
	Irene Pivotto\thanks{School of Mathematics and Statistics, University of Western Australia, Perth, WA, Australia 6009, irene.pivotto@uwa.edu.au.}
\and
	Edita Rollov\'a\thanks{European Centre of Excellence, NTIS- New Technologies for Information Society, Faculty of Applied Sciences, University of West Bohemia, Pilsen, edita.rollova@gmail.com.}
\and
	Robert \v{S}\'amal \thanks{Computer Science Institute of Charles University, Prague, samal@iuuk.mff.cuni.cz.}
}

\date{}

\maketitle

\begin{abstract}
We prove that every 3-edge-connected graph $G$ has a 3-flow $\phi$ with the property that $| \supp( \phi) | \ge \frac{5}{6} |E(G)|$.  The graph $K_4$
demonstrates that this $\frac{5}{6}$ ratio is best possible; there is an infinite family where $\frac 56$ is tight.
\end{abstract}

\newpage
\tableofcontents
\newpage

\section{Introduction}

Throughout the paper we permit graphs to have both multiple edges and loops.  If $G$ is an oriented graph and $\Gamma$ is an additive abelian group, then we define a function $\phi : E(G) \rightarrow \Gamma$ to be a \emph{flow} if it satisfies the following rule at every vertex $v \in V(G)$:
\[ \sum_{e \in \delta^+(v)} \phi(e) - \sum_{e \in \delta^-(v)} \phi(e)  = 0,\]
where $\delta^+(v)$  ($\delta^-(v)$) denotes the set of edges directed away from (toward) the vertex $v$.
The flow $\phi$ is \emph{nowhere-zero} if $0 \not\in \phi(E(G))$ and it is called a $k$-\emph{flow} for a positive integer $k$ if $\Gamma = \mathbb{Z}$ and $| \phi(e) | \le k-1$ for every $e \in E(G)$. The \emph{support} of $\phi$ is the set of all edges of $G$ with $\phi(e)\neq 0$, and is denoted by $\supp (\phi)$. In~\cite{Tutte54} Tutte initiated the study of nowhere-zero flows by proving the following duality theorem.

\begin{theorem}[Tutte]\label{th:Tutte-col}
\label{duality}
If $G$ and $G^*$ are dual planar graphs, then $G^*$ has a proper $k$-colouring if and only if $G$ has a nowhere-zero $k$-flow.
\end{theorem}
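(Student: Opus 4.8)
The plan is to prove the two implications separately, in each case transporting structure across the standard correspondence between a connected plane graph $G$ and its plane dual $G^*$: the edges come in pairs $e\leftrightarrow e^*$, the faces of $G$ are the vertices of $G^*$, and the facial cycles of $G^*$ are dual to the vertex-stars $\delta(v)$ of $G$ (this is the only genuinely planar ingredient). First I would fix an orientation of the plane, orient $G$ arbitrarily, orient each $e^*$ by rotating $e$ a quarter turn clockwise, and for each edge $e$ write $h(e),t(e)$ for the faces of $G$ immediately to the left and right of $e$; these are the head and tail of $e^*$. One may assume $G$ is connected (otherwise treat components separately), so $G^*$ is connected as well.

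For the direction ``$G^*$ properly $k$-colourable $\Rightarrow$ $G$ has a nowhere-zero $k$-flow'', given a proper colouring $c:V(G^*)\to\{0,1,\dots,k-1\}$ I would simply set $\phi(e)=c(h(e))-c(t(e))\in\mathbb{Z}$. Since $c$ is proper and takes distinct values in $\{0,\dots,k-1\}$, automatically $\phi(e)\neq 0$ and $|\phi(e)|\le k-1$, so $\phi$ has the right range. The content is the conservation law at a vertex $v$: I would list the faces $g_1,\dots,g_d$ incident with $v$ in cyclic order, note that consecutive edges of $\delta(v)$ separate consecutive $g_i$, and check that with the orientation conventions the quantity $\sum_{e\in\delta^+(v)}\phi(e)-\sum_{e\in\delta^-(v)}\phi(e)$ collapses to a telescoping sum of the form $\sum_i\bigl(c(g_i)-c(g_{i-1})\bigr)$ around the cyclic list of faces, hence equals $0$. (Conceptually this is just the statement that, under $e\leftrightarrow e^*$, the tension on $G^*$ induced by the potential $c$ is a flow on $G$.)

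For the converse, starting from a nowhere-zero $k$-flow $\phi$ of $G$, I would construct a colouring of $G^*$ by integration: fix a root face $f_0$, set $c(f_0)=0$, and for an arbitrary face $f$ pick a walk in $G^*$ from $f_0$ to $f$ and let $c(f)$ be the sum of the values $\pm\phi(e)$ over the traversed edges, reduced modulo $k$, the sign recording whether $e^*$ is traversed forward or backward. Well-definedness reduces to showing that the signed sum around every closed walk in $G^*$ is $\equiv 0\pmod k$; since the cycle space of $G^*$ is generated by the facial cycles dual to the vertex-stars $\delta(v)$, and the signed sum around the cycle dual to $\delta(v)$ is exactly the net $\phi$-flow at $v$, which is $0$, every such sum is in fact $0$ in $\mathbb{Z}$. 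Properness is then immediate: if faces $f,f'$ are joined by $e^*$ then $c(f)-c(f')\equiv\pm\phi(e)\pmod k$, which is nonzero mod $k$ because $\phi(e)\in\{-(k-1),\dots,-1,1,\dots,k-1\}$. Thus $c$ is a proper $k$-colouring of $G^*$ (with colour set $\mathbb{Z}_k$).

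The genuinely delicate part is bookkeeping: choosing the orientation and left/right conventions consistently enough that the telescoping sum around a vertex of $G$, and the corresponding sum along a facial cycle of $G^*$, really do cancel with the signs claimed. Once that is pinned down, the only structural fact used is that facial cycles of $G^*$ are the vertex-stars of $G$, which is where the embedding enters. Degenerate edges are consistent with the statement and need no special treatment: a bridge of $G$ is a cut of size one, forcing any flow to vanish on it, and dually it is a loop of $G^*$, which equally obstructs a proper colouring.
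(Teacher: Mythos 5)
The paper does not prove this statement at all: it is quoted as Tutte's classical duality theorem and attributed to \cite{Tutte54}, so there is no in-paper argument to compare against. Your proposal is, however, a correct and complete rendering of the standard proof: the colouring-to-flow direction via the tension $\phi(e)=c(h(e))-c(t(e))$ with the telescoping cancellation around each vertex star, and the flow-to-colouring direction by integrating $\phi$ along walks in $G^*$, with well-definedness resting on the fact that the (integral) circulation space of $G^*$ is generated by its facial boundaries, which are exactly the duals of the vertex stars $\delta(v)$ of $G$ -- the one genuinely planar input, as you say. Two small points of care if you were to write it out in full: for the integration step you need the facial boundaries to generate signed (integer) circulations, not merely the cycle space over $\mathbb{Z}_2$, which is true but worth stating; and ``treat components separately'' is slightly glib, since the plane dual of a disconnected $G$ is connected (its blocks are essentially the duals of the components glued at cut vertices), though proper colourability is decided blockwise, so the reduction does go through. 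Your handling of bridges and loops is exactly right.
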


Based in part on this duality, Tutte (\cite{Tutte54, Tutte66}) made three lovely conjectures concerning the existence of nowhere-zero flows.  These conjectures, known as the 5-Flow, 4-Flow, and 3-Flow conjectures have motivated a great deal of research on this subject, but despite this all three remain unsolved.

Our approach here will be to relax the notion of nowhere-zero and instead look for flows which have large support. Our main theorem is the following bound for 3-flows in 3-edge-connected graphs.

\begin{theorem}
\label{main1}
Every 3-edge-connected graph $G$ has a 3-flow $\phi$ satisfying
\[ |\supp (\phi) | \ge \tfrac{5}{6} |E(G)|.\]
\end{theorem}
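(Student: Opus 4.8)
The plan is to work over $\mathbb{Z}_3$ rather than $\mathbb{Z}$. Every integer $3$-flow reduces mod $3$ to a $\mathbb{Z}_3$-flow with the \emph{same} support (the values $\pm1,\pm2$ stay nonzero mod $3$); conversely, the restriction of a $\mathbb{Z}_3$-flow to its support is a nowhere-zero $\mathbb{Z}_3$-flow of that subgraph, hence by Tutte's equivalence a nowhere-zero integer $3$-flow there, which we extend by $0$. So the achievable supports are the same, and it suffices to find a $\mathbb{Z}_3$-flow $\phi$ with $|\supp(\phi)|\ge\tfrac56|E(G)|$. I would argue by contradiction: take a counterexample $G$ minimizing, say, $|V(G)|+|E(G)|$, fix a $\mathbb{Z}_3$-flow $\phi$ of $G$ with $|\supp(\phi)|$ maximum, and set $Z=E(G)\setminus\supp(\phi)$, so $|Z|>\tfrac16|E(G)|$; the task is to extract enough structure from $Z$ to reach a contradiction.

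First I would record the cheap consequences of maximality of $\phi$. If $Z$ contained a loop, or two parallel edges, we could add the obvious $\mathbb{Z}_3$-flow supported on those edges and enlarge $\supp(\phi)$, so $(V,Z)$ is a simple graph. Likewise $(V,\supp(\phi))$ has no bridge (a bridge is a $1$-edge-cut, forcing flow value $0$), and flow conservation shows that no vertex is incident with exactly one edge of $\supp(\phi)$; equivalently, at every vertex either \emph{all} incident edges lie in $Z$ or at most $\deg(v)-2$ of them do, so $Z$ never "almost covers" a vertex. These are the local obstructions to be fed into the final count.

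Then come the genuine reductions, where I expect most of the work to live. Since $G$ is $3$-edge-connected it has no $1$- or $2$-edge-cut; I would show a smallest counterexample also has no nontrivial $3$-edge-cut and no vertex of degree $3$. For a nontrivial $3$-edge-cut $\delta(X)$, contract each side to get two smaller $3$-edge-connected graphs $G_1,G_2$ with $|E(G_1)|+|E(G_2)|=|E(G)|+3$; by minimality each has a $\mathbb{Z}_3$-flow of support $\ge\tfrac56|E(G_i)|$, and the point is to choose these flows so that their values agree on the three cut edges, after which they glue to a flow of $G$ — the agreement is arranged by composing one of them with a flow supported on a short path joining two cut edges inside a piece. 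A degree-$3$ vertex is removed by the flow-preserving $Y$--$\Delta$ move (electrically a star and a triangle are interchangeable), which strictly decreases $|V|$. The delicate point throughout is quantitative: a naive gluing or reduction loses an additive constant and only yields $\tfrac56|E(G)|-O(1)$, so one must track a sharper invariant (for instance $\tfrac56|E(G)|-|Z|$, or a weighting of vertices and edges) and be careful which maximum-support flow is selected on each piece. Getting every reduction to respect the exact ratio $\tfrac56$ is, I expect, the main obstacle.

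After these reductions $G$ is simple, essentially $4$-edge-connected, and of minimum degree at least $4$, and I would finish with a discharging argument: give each edge an initial charge, let edges of $\supp(\phi)$ send charge to nearby edges of $Z$, and use the structural facts (simplicity of $Z$, the non-almost-covering property, minimum degree $\ge4$, no small cuts) to show that charge can be moved so that every edge of $Z$ is ultimately paid at least $\tfrac56$ per edge, contradicting $|Z|>\tfrac16|E(G)|$. The local analysis is really an analysis of how a component of $Z$, together with the support edges forced around it, sits inside $G$; the tight local configuration is exactly the $K_4$-gadget (six edges, one forced zero), which both explains why $\tfrac56$ is the right constant and, glued together repeatedly, yields the claimed infinite family of extremal graphs.
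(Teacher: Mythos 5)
Your opening reduction (pass to $\mathbb{Z}_3$-flows, recover an integer $3$-flow on the support via Tutte's equivalence) is exactly how the paper begins, and your closing intuition that the $K_4$/tripod gadget is the tight local configuration is also correct. But the two reductions that carry your induction do not work as stated, and the quantitative mechanism you yourself flag as "the main obstacle" is precisely what is missing. The $Y$--$\Delta$ step is not support-preserving in the direction you need: if $v$ has degree $3$ with neighbours $x,y,z$ and you replace the star by a triangle, a $\mathbb{Z}_3$-flow of the new graph assigning equal nonzero values $a=b=c$ to the three triangle edges pulls back to a flow of $G$ in which all three star edges get value $0$ (the star values are the differences $c-b$, $a-c$, $b-a$). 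So a flow of the reduced graph with support $\ge \frac56|E|$ can lift to one losing three support edges -- this is exactly the $K_4$ phenomenon, and it is why degree-$3$ vertices cannot be eliminated for free; indeed all the extremal examples are built around them. Similarly, for a nontrivial $3$-edge-cut, gluing two flows of the contracted pieces requires matching the value patterns on the three cut edges; forcing agreement by adding a flow along a path inside one piece toggles supports along that path uncontrollably, and even the best-case count gives only $\frac56|E(G)|-O(1)$, as you note. You propose to fix this by "tracking a sharper invariant," but no such invariant is exhibited, and without it there is no induction.

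For comparison, the paper's route is to strengthen the statement until the induction closes: first to Theorem~\ref{main2} (find $\phi$ with an arbitrary prescribed zero-sum boundary $\mu:V\to\mathbb{Z}_3$ and $|\supp(\phi)|\ge\frac56|E|$), and then further to Lemma~\ref{workhorse}, which works with subdivisions of $3$-edge-connected graphs and replaces the flat ratio by an ear-by-ear accounting ($\gain$ versus $\bonus$), so that each ear of length $\not\equiv 0 \pmod 3$ contributes a quantifiable surplus. That bookkeeping is what lets them delete or contract ears, repair connectivity after removals (the $G^{\bullet}$, endpoint-pattern and $G^{\Delta}$ analysis), and never lose the additive constants that defeat your scheme; note also the paper's remark that the boundary version genuinely needs $3$-edge-connectivity, so the strengthening has to be chosen with care. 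Your final discharging phase is, at this point, only a declaration of intent: no charges, no discharging rules, and no local analysis are given, and since your (invalid) reductions were supposed to deliver minimum degree $\ge 4$ and essential $4$-edge-connectivity, nothing in the sketch addresses the configurations that actually make the bound tight. As it stands the proposal identifies the right difficulty but does not overcome it.
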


Since the graph $K_4$ does not have a nowhere-zero 3-flow, but $K_4 - e$ does (for any edge $e$ of $K_4$), the ratio $\frac{5}{6}$ in this
theorem is best possible. The family of tight examples is much larger, though. Let us call \emph{tripod} a graph obtained from $K_3$ by adding a pendant
edge to every vertex. So tripod has three leaves (i.e., vertices of degree~1); identifying the leaves of a tripod produces a~$K_4$. Suppose $G$ is a  graph obtained from a set of tripods by identifying their leaves (in any desired way).
Consider any 3-flow of $G$; restricting such a flow to a single tripod and then contracting all edges outside the tripod produces a 3-flow of $K_4$. Therefore $G$ has no flow with support larger than $\tfrac{5}{6} |E(G)|$.
An easy way to get
large 3-edge-connected graphs as a union of tripods is to start with a 3-edge-connected bipartite graph $H=(U,V,E)$ such that all vertices in~$U$ have degree~3. Then we truncate each
vertex~$v$ in~$U$, which turns it into a triangle. This triangle together with the original neighbors of~$v$ form a tripod.

When we restrict our attention to planar graphs, our theorem has the following corollary (proved in the following section).

\begin{corollary}
\label{planarcor}
If $G$ is a simple planar graph, then there exists a function $f : V(G) \rightarrow \{1,2,3\}$ so that the number of edges $uv$ with $f(u) = f(v)$ is at most $\frac{1}{6} |E(G)|$.
\end{corollary}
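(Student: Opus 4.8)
The plan is to pass to the planar dual and apply Theorem~\ref{main1} there, exploiting the correspondence between flows of a plane graph and tensions (equivalently, vertex colourings) of its dual that underlies Theorem~\ref{duality}.

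First, some harmless reductions. It is enough to prove the statement for connected $G$, since for a disconnected graph we may add up the bounds obtained on the components. If $G$ has no edges there is nothing to prove, and if $G$ is a forest it is even properly $2$-colourable, giving no monochromatic edges at all; so we may assume $G$ has a cycle. Fix a plane embedding of $G$ and let $H=G^{*}$ be the dual graph, so $|E(H)|=|E(G)|$, and since a cycle of $G$ bounds at least two faces we have $|V(H)|\ge 2$.

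Second, I claim $H$ is $3$-edge-connected. The bonds (inclusion-minimal edge cuts) of $H$ are exactly the edge sets of cycles of $G$; as $G$ is simple it has no cycle of length $1$ or $2$, so $H$ has no bond of size at most $2$. Every edge cut contains a bond, so $H$ has no edge cut of size at most $2$ whatsoever; together with $|V(H)|\ge 2$ and the connectedness of $H$ (inherited from $G$), this says $H$ is $3$-edge-connected. Here $H$ may have loops and parallel edges, but that is permitted in Theorem~\ref{main1}.

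Now apply Theorem~\ref{main1} to $H$: there is a $3$-flow $\phi$ of $H$ with $|\supp(\phi)|\ge\tfrac56|E(H)|=\tfrac56|E(G)|$. Reducing $\phi$ modulo $3$ gives a $\mathbb{Z}_3$-flow $\bar\phi$ of $H$, and because the nonzero values of $\phi$ lie in $\{-2,-1,1,2\}$, none of which vanishes mod~$3$, we have $\supp(\bar\phi)=\supp(\phi)$. By the flow/tension dictionary behind Theorem~\ref{duality}, the $\mathbb{Z}_3$-flow $\bar\phi$ of $H=G^{*}$ corresponds to a $\mathbb{Z}_3$-tension of $G$, that is (since $G$ is connected) to a function $f:V(G)\to\mathbb{Z}_3$, under which an edge $e$ of $H$ lies in $\supp(\bar\phi)$ exactly when its partner edge $uv$ of $G$ has $f(u)\ne f(v)$. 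Thus the number of edges $uv$ of $G$ with $f(u)=f(v)$ is $|E(G)|-|\supp(\bar\phi)|\le\tfrac16|E(G)|$, and relabelling $\mathbb{Z}_3$ as $\{1,2,3\}$ finishes the proof. The only genuinely delicate step is the second one: reading off $3$-edge-connectivity of $H$ from the simplicity of $G$ via the bond--cycle correspondence of plane duality (and dealing with the degenerate case where $H$ has a single vertex, handled above by the forest reduction). The remaining steps are routine bookkeeping with the flow/colouring dictionary.
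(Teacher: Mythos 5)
Your proposal is correct and follows essentially the same route as the paper: pass to the planar dual, observe that simplicity of $G$ makes the dual 3-edge-connected, apply Theorem~\ref{main1} there, and translate the large-support flow back through planar duality into a 3-labelling of $V(G)$ with few monochromatic edges. The only difference is cosmetic: you unpack the flow/tension (potential function) correspondence directly and handle the degenerate cases explicitly, whereas the paper contracts the zero-valued edges and quotes Theorem~\ref{th:Tutte-col} to get a proper 3-colouring of the corresponding subgraph of $G$.
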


Note that the graph $K_4$ also demonstrates that the above corollary gives a best possible bound.  In fact, this corollary is not a new result -- it is also a corollary to the Four Colour Theorem.   To see this, let $f : V(G) \rightarrow \{1,2,3,4\}$ be a proper 4-colouring of $G$, and assume (without loss) that the number of edges with one end of colour 3 and one of colour 4 is at most $\frac{1}{6}|E(G)|$.  Now the function $g : V(G) \rightarrow \{1,2,3\}$ given by $g(v) = \min \{ f(v), 3 \}$ is a 3-colouring with the desired properties.  To our knowledge, our argument gives the first proof of this result not relying on the Four Colour Theorem.

Although the $\frac{5}{6}$ ratio in Theorem~\ref{main1} is best possible, it seems quite possible that the same bound holds more generally for graphs which are 2-edge-connected.  Unlike most results in the realm of nowhere-zero flows, our theorem on 3-edge-connected graphs does not obviously give a similar result for 2-edge-connected graphs.  The best bound we have for 3-flows in 2-edge-connected graphs is the following result due to Tarsi \cite{tarsi} (proved in Section~\ref{sec:bounds}). An earlier version of this paper instead included a result by Kr\'al'~\cite{Kral} with $\tfrac{3}{4}$ in place of $\tfrac{4}{5}$.

\begin{theorem}[Tarsi]
\label{2ec3flow}
Every 2-edge-connected graph $G$ has a 3-flow $\phi$ with
\[ | \supp(\phi) | \ge \tfrac{4}{5} |E(G)|. \]
\end{theorem}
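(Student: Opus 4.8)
The plan is to argue in terms of $\mathbb{Z}_3$-flows rather than integer $3$-flows — these are interchangeable as far as the support is concerned, since by the classical theorem of Tutte a graph has a nowhere-zero $\mathbb{Z}_3$-flow if and only if it has a nowhere-zero $3$-flow — and to build the desired flow by combining a parity flow with a $\mathbb{Z}_3$-flow extracted from Seymour's $6$-flow theorem.

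First, I would apply Seymour's $6$-flow theorem to the $2$-edge-connected graph $G$: it has a nowhere-zero $\mathbb{Z}_6$-flow, and via $\mathbb{Z}_6\cong\mathbb{Z}_2\times\mathbb{Z}_3$ this is the same thing as a pair $(\phi_2,\phi_3)$ in which $\phi_2$ is a $\mathbb{Z}_2$-flow, $\phi_3$ is a $\mathbb{Z}_3$-flow, and no edge is sent to $(0,0)$; equivalently $\supp(\phi_2)\cup\supp(\phi_3)=E(G)$. The support of $\phi_2$ is an even subgraph, hence an edge-disjoint union of cycles, so $G$ also carries a $\mathbb{Z}_3$-flow $\psi_3$ with $\supp(\psi_3)=\supp(\phi_2)$ (sum up the $\{\pm1\}$-valued flows running around these cycles). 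The upshot is that for \emph{every} edge $e$, at least one of $\phi_3(e)$ and $\psi_3(e)$ is nonzero.

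Next I would average over the four $\mathbb{Z}_3$-flows $\phi_3$, $\psi_3$, $\phi_3+\psi_3$ and $\phi_3-\psi_3$ (linear combinations of $\mathbb{Z}_3$-flows are again $\mathbb{Z}_3$-flows). A short case check — splitting on whether $\psi_3(e)=0$, $\phi_3(e)=0$, or both are nonzero, and using that in $\mathbb{Z}_3$ exactly one of $a+b$ and $a-b$ vanishes when $a,b\in\{1,2\}$ — shows that every edge lies in the support of exactly three of these four flows. Hence the four supports have total size $3|E(G)|$, so one of them, say $\supp(\psi)$, has size at least $\tfrac34|E(G)|$. To finish, restrict $\psi$ to the spanning subgraph with edge set $\supp(\psi)$; there it is nowhere-zero, so (Tutte again) that subgraph has a nowhere-zero $3$-flow, and extending this flow by $0$ on the remaining edges of $G$ produces a $3$-flow $\phi$ with $\supp(\phi)=\supp(\psi)$ and $|\supp(\phi)|\ge\tfrac34|E(G)|$.

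The one genuinely nontrivial input is Seymour's $6$-flow theorem; once it is available, the only idea really needed is to combine the parity part of the $6$-flow with a $\mathbb{Z}_3$-flow before averaging, after which the verification is a finite check over $\mathbb{Z}_3$. The two technical points worth stating carefully are that the support of a $\mathbb{Z}_2$-flow carries a $\mathbb{Z}_3$-flow with the same support, and that a $\mathbb{Z}_3$-flow can be realized by an integer $3$-flow with the same support; both follow from the decomposition of the cycle space into cycles together with Tutte's equivalence between group-valued and integer flows. Finally, the degenerate cases ($E(G)=\emptyset$, or $\supp(\phi_2)$ empty or all of $E(G)$) should be checked separately, but in each of them the claimed flow is immediate.
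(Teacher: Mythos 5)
Your proposal is correct and is essentially the paper's own argument: the paper also starts from Seymour's $6$-flow theorem to obtain a nowhere-zero $\mathbb{Z}_3\times\mathbb{Z}_3$-flow $(\phi_1,\phi_2)$ (the $\mathbb{Z}_2$-part being replaced by a $\mathbb{Z}_3$-flow with the same even support, exactly as you do), and its partition of the nonzero elements of $\mathbb{Z}_3\times\mathbb{Z}_3$ into four inverse pairs, followed by an automorphism making the least-used pair $\{(1,-1),(-1,1)\}$ and taking $\phi_1+\phi_2$, is the same pigeonhole you perform by averaging over $\phi_3,\psi_3,\phi_3+\psi_3,\phi_3-\psi_3$. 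Your final passage from a $\mathbb{Z}_3$-flow to an integer $3$-flow with the same support via Tutte's equivalence matches the reduction the paper uses as well.
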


More generally, we are interested in finding bounds on the maximum support of a $k$-flow in a $t$-edge-connected graph.  For a graph $G$ and a positive integer $k$, let $h(G,k)$ be the maximum of $\frac{ | \supp(\phi) |}{ |E(G)| }$ over all possible $k$-flows $\phi$.  Then, for a positive integer $t$, we define $h(t,k)$ to be the infimum of $h(G,k)$ over all $t$-edge-connected graphs.  So $h(t,k)$ is the best lower bound for the maximum ratio of edges covered by a $k$-flow over all $t$-edge-connected graphs.  It is immediate that $h(1,k) = 0$ and $h(t,1) = 0$ for all $k,t$.  The following table indicates our present state of knowledge of $h(t,k)$ for $2 \le t,k \le 6$.

\begin{figure}[htp]
\centerline{\includegraphics[width=12cm]{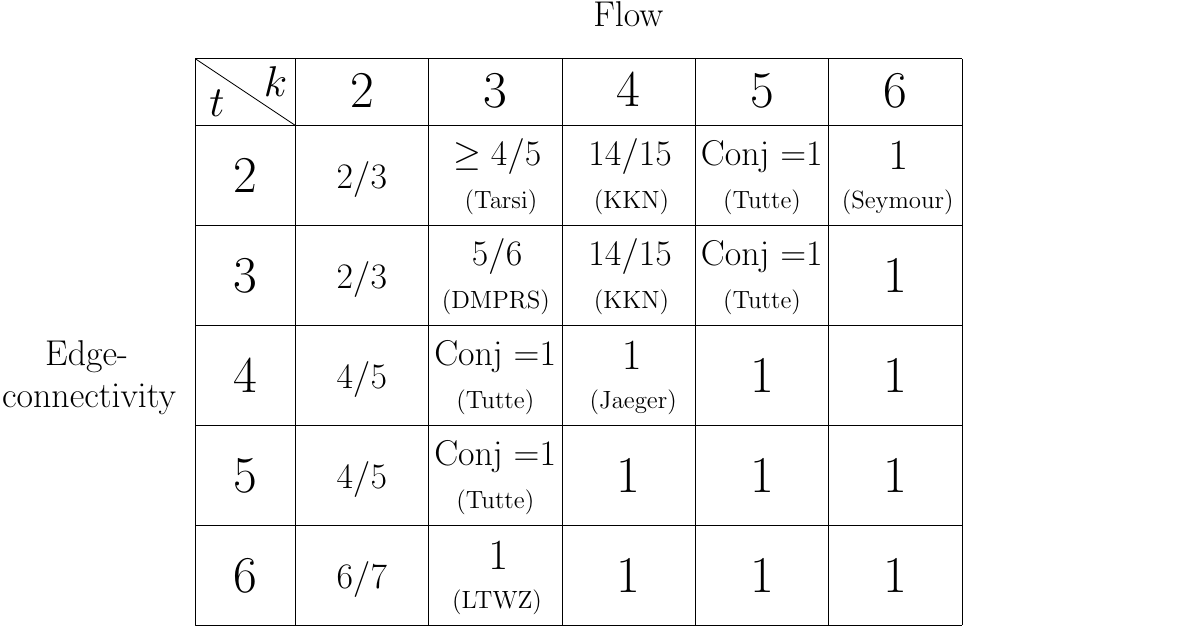}}
\caption{The function $h(t,k)$}
\label{table}
\end {figure}

Tutte's famous 5-Flow and 3-Flow Conjectures are equivalent to the assertions that $h(2,5) = 1$ and $h(4,3) = 1$ as shown in the table.  Several famous
  theorems on flows appear here as well.  For instance, Seymour's 6-Flow Theorem~\cite{Seymour81} is equivalent to $h(2,6) = 1$, Jaeger's 4-Flow
  Theorem~\cite{Jaeger79} is equivalent to $h(4,4) = 1$, and the recent result of Lov\'asz, Thomassen, Wu, and Zhang on 3-Flows~\cite{LTWZ2013} is
  equivalent to $h(6,3) = 1$.  The indicated $h(2,4) = \frac{14}{15}$ result is a straightforward consequence of a variant of a theorem of Kaiser,
  Kr\'al' and Norine~\cite{KKN2006} as we show in the next section.  This table suggests that $h(2j, 2) = h(2j+1, 2) = \frac{2j}{2j+1}$.  This is indeed
  true, and will also be established in the following section by way of some standard techniques.  Since $h(t,k)$ is nondecreasing in both $t$ and $k$, all values
  of this function are known apart from the pairs $(t,k) = (2,3)$, $(2,5)$, $(3,5)$, $(4,3)$, and $(5,3)$.

In this paper we will prove that $h(3,3)=5/6$. In fact, our main theorem has a somewhat stronger ``choosability'' form related to group-connectivity as introduced by
  Jaeger, Linial, Payan, and Tarsi~\cite{JLPT1992}.  Instead of insisting that the function $\phi$ is a flow, we may instead ask for the sum involved at the vertex~$v$,
  (i.e., $\sum_{e \in \delta^+(v)} \phi(e) - \sum_{e \in \delta^-(v)} \phi(e)$) to take on certain prescribed values at each vertex $v$.  Let us return to a general setting to put these definitions in place.  Assume that $G$ is an oriented graph, let $\Gamma$ be an abelian group (written additively) and let $\phi : E(G) \rightarrow \Gamma$.  The \emph{boundary} of~$\phi$ is the function $\partial \phi : V(G) \rightarrow \Gamma$ given by the following rule for every $v \in V(G)$:
\[
   \partial \phi( v) = \sum_{ e \in \delta^+(v) } \phi(e) - \sum_{ e \in \delta^-(v) } \phi(e).
\]
We recall that $\delta^+(v)$ and $\delta^-(v)$ denote the set of edges directed away from and toward~$v$ respectively.
If we think of $\phi$ as indicating a circulation of fluid, then $\partial \phi (v)$ tells us how much is leaving the network at~$v$.
Note that by definition, the function $\phi$ is a flow if $\partial \phi$ is identically zero.  If we sum the boundary function $\partial \phi$
over all vertices, then whatever value $x$ is assigned to an edge $e$ will get added once and subtracted once, so it has no effect.  This gives the
following useful identity
\[
   \sum_{v \in V(G)} \partial \phi (v)  = 0
\]
which holds for every function $\phi : E(G) \rightarrow \Gamma$.  In general, we say that a function $\mu : V(G) \rightarrow \Gamma$ is \emph{zero-sum} if $\sum_{v \in V(G)} \mu(v) = 0$. The general form of our main theorem may now be stated as follows.

\begin{theorem}
\label{main2}
If $G$ is an oriented 3-edge-connected graph and $\mu : V(G) \rightarrow \mathbb{Z}_3$ is zero-sum, then there exists $\phi : E(G) \rightarrow \mathbb{Z}_3$ so that
\begin{enumerate}
\item $\partial \phi = \mu$, and
\item $| \supp (\phi) | \ge \frac{5}{6} |E(G)|.$
\end{enumerate}
\end{theorem}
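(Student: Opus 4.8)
\section*{Proof proposal}

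The plan is to prove Theorem~\ref{main2} by induction on $|E(G)|$ (breaking ties, if needed, by $|V(G)|$), i.e.\ by examining a minimal counterexample $G$. The prescribed-boundary (group-connectivity) formulation is exactly what makes the induction run: when we pass to a minor of $G$ the boundary function $\mu$ only changes in a controlled way and remains zero-sum, whereas the bare property "is a flow'' would not be inherited. A few harmless normalizations come first. We may assume $G$ is connected and has at least two vertices; we may assume $G$ has no loops, since a loop can always be assigned the value $1$ and hence never costs anything and may be deleted; and we may orient the edges of $G$ however is convenient, because reorienting an edge $e$ merely negates $\phi(e)$ and leaves $\supp(\phi)$ unchanged. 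Throughout, write $\mathrm{def}(\phi)=|E(G)|-|\supp(\phi)|$ for the \emph{deficiency} of $\phi$; the target is $\mathrm{def}(\phi)\le\tfrac16|E(G)|$, and the delicate point of every reduction is to check that the solution it produces on $G$ stays within this "deficiency budget''.

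Next come the structural reductions. (i) \emph{Parallel edges.} A bundle of $t\ge 2$ edges joining $u$ and $v$ can be merged into a single edge, the reduced instance solved, and the value split back across the $t$ copies; since in $\mathbb{Z}_3$ every target sum is hit by some sequence of $t\ge 2$ nonzero values, all copies stay in the support and the budget only improves --- provided the merge preserves $3$-edge-connectivity, the few exceptional bundles (for instance a degree-$3$ vertex joined by $\ge 2$ edges to a single neighbour) being dispatched by hand. (ii) \emph{Vertices of degree $3$.} First shift the value $\mu(v)$ off of $v$ along one incident edge, recording a constant to be added back to that edge at the end, so that we may assume $\mu(v)=0$; then apply a $Y$--$\Delta$ exchange replacing the claw at $v$ by a triangle on its neighbours, checking that this keeps $G$ $3$-edge-connected (using that no small cut threads the claw) and that converting a flow back does not create new zero edges. (iii) \emph{Non-trivial $3$-edge-cuts.} If $\delta(A)=\{e_1,e_2,e_3\}$ with both sides containing more than one vertex, contract each side to an apex, obtaining smaller $3$-edge-connected graphs $G/A$ and $G/B$, choose the apex boundaries so the two solutions amalgamate into a flow of $G$, and track the deficiency. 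After all of these have been exhausted, a minimal counterexample is internally $4$-edge-connected, carries no reducible low-degree configuration, and is thereby forced to be small enough to verify directly --- in particular $K_4$ appears as a genuine base case, and it is exactly there that the constant $\tfrac56$, rather than $1$, becomes unavoidable.

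The step I expect to fight hardest with is the deficiency accounting in reduction~(iii), and the companion question of which $3$-edge-cuts to reduce at all. Splitting naively overspends: a cut that isolates a triangle $T$ (three vertices, three internal edges) yields $G/A$ with only $|E(G)|-3$ edges, against which the induction allows only $\tfrac12$ of a missing edge, yet once the values on $e_1,e_2,e_3$ are fixed, the flow forced on $T$ ranges over a one-parameter family whose average number of zero edges is exactly $1$ --- so in the worst case $T$ alone costs a full unit and we are over budget by $\tfrac12$. Hence the two sides cannot be solved independently: one wants to solve the smaller side first (e.g.\ the $K_4$ obtained across a triangle-cut) so as to pin favourable values $c_1,c_2,c_3$ on the cut edges, and then solve the larger side with those three values pinned --- which amounts to a prescribed-boundary problem on a further contraction --- the catch being that pinning three edges can destroy $3$-edge-connectivity, or connectivity, of what remains. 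Making this bookkeeping close (most likely via a coordinated choice that re-selects the larger side's flow by adding a $\mathbb{Z}_3$-circulation so as to avoid the bad triangle offsets, or via a slightly stronger inductive invariant that also constrains the location of the zero edges), together with confirming that each reduction strictly decreases the induction parameter and that the shift in~(ii) never actually loses support, is where the real work lies; the parallel-edge reduction and the small base cases are routine by comparison.
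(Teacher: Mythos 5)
There is a genuine gap, and it is the one you half-acknowledge yourself: you run the induction on the bare statement of Theorem~\ref{main2}, and the $\tfrac16|E(G)|$ deficiency budget does not close under your reduction (iii). This is not a bookkeeping nuisance to be patched at the end; it is the heart of the problem. The tight examples (tripods glued along their leaves, described after Theorem~\ref{main1}) show that the small side of a $3$-edge-cut can genuinely cost a full zero edge against only three edges of its own, so no "coordinated choice'' of the two sides' flows can rescue the plain invariant: once the values on the three cut edges are fixed, the flow inside the small side ranges over a one-parameter family whose best member may still have a zero edge, and re-selecting the large side's flow by adding a $\mathbb{Z}_3$-circulation only permutes which member of that family you are stuck with. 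The paper resolves this exactly by \emph{not} inducting on Theorem~\ref{main2}: it proves the strictly stronger Lemma~\ref{workhorse}, stated for subdivisions of $3$-edge-connected graphs, with a gain/bonus system over ear decompositions ($\gain(\psi)=24|\supp(\psi)|-16|E|$, bonus $4$, $3$ or $0$ per ear according to its length mod $3$ and equitability) that earns extra credit on ears of favourable length to pay for the losses your triangle-cut computation exposes. Making that strengthened invariant close is what consumes the body of the paper: contractible and reducible subgraphs, the auxiliary graphs $G^{\bullet}$ and $G^{\Delta}$, endpoint patterns, a long list of forbidden configurations, and terminal case analysis through $K_{3,3}$, the Cube, $V_8$, Petersen and Heawood. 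Your "slightly stronger inductive invariant'' is precisely the missing content, not a detail.

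A second, independent problem is your reduction (ii). In a $Y$--$\Delta$ exchange at a degree-$3$ vertex $v$ with neighbours $u_1,u_2,u_3$, the values forced on the three claw edges are determined by the boundary condition at the $u_i$, i.e.\ by signed differences of the two incident triangle values, and these are invariant under adding a circulation around the new triangle. In particular, a flow of $G_\Delta$ whose triangle edges all carry the same nonzero value (full support there) pulls back to a claw on which all three edges are zero, and no adjustment on the triangle can change this, since the claw values depend only on the flow outside the triangle. So the statement "converting a flow back does not create new zero edges'' is false, and the inductive bound on $G_\Delta$ gives no control over the deficiency created at $v$; the preliminary shift of $\mu(v)$ onto an incident edge has a similar uncontrolled effect on that edge's support. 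Finally, even granting (i)--(iii), the residual class is not "small enough to verify directly'': a $3$-edge-connected graph with no degree-$3$ vertices and no nontrivial $3$-edge-cut is $4$-edge-connected, an infinite class on which the $\tfrac56$ bound (let alone the $\mathbb{Z}_3$-boundary version) is still a substantive open-ended problem, not a finite base-case check alongside $K_4$.
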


To see that this theorem implies Theorem~\ref{main1}, simply apply it with $\mu = 0$ to choose a flow $\phi: E(G) \rightarrow \mathbb{Z}_3$ with $\supp (\phi)\ge \frac{5}{6}|E(G)|$.  Now using Tutte's equivalence between nowhere-zero modular and integer flows~\cite{Tutte56} (applied on the support of $\phi$) we deduce that $G$ has a 3-flow with support of size at least $\frac{5}{6}|E(G)|$ as desired.

Unlike our earlier theorem, in the case of Theorem~\ref{main2} the assumption of 3-edge-connectivity is necessary.  To see this, take an arbitrary oriented 3-edge-connected graph $G$ with $|E(G)| \equiv 0 \pmod 3$ and modify it by subdividing every edge twice (thus forming a directed path of three edges).  Now define $\mu : V(G) \rightarrow \mathbb{Z}_3$ by the following rule:
\[ \mu(v) = \left\{ \begin{array}{cl}
	1	&	\mbox{if $\deg(v) = 2$}	\\
	0	&	\mbox{otherwise.}
	\end{array} \right. \]
For every 3-edge path $P$ with both interior vertices of degree 2, a straightforward check reveals that every function $\phi : E(G) \rightarrow
  \mathbb{Z}_3$ which satisfies $\partial \phi = \mu$ will have the property that $\phi$ assigns all three edges of $P$ distinct values.  Therefore,
  every such function $\phi$ will satisfy $|\supp(\phi)| = \frac{2}{3} |E(G)|$.
  Consequently, Theorem~\ref{2ec3flow} does not extend from $\mathbb{Z}_3$-flows to $\mathbb{Z}_3$-connectivity.

\section{Flow and Colouring Bounds}\label{sec:bounds}

In this section we will prove Corollary~\ref{planarcor} and some of the results stated in the table of Figure~\ref{table}.
As in the proof of Theorem~\ref{main2}, we may use Tutte's theorem to work with flows in any abelian group of order~$k$, instead of 
with integer $k$-flows. We will do this implicitly throughout this section. 

\begin{proof}[Proof of Corollary \ref{planarcor}:] Let $G$ be a simple planar graph and let $G^*$ be the dual of $G$.
%
%
Since $G$ is simple, $G^*$ is 3-edge-connected.
By Theorem~\ref{main1} we may choose a flow $\phi : E(G^*) \rightarrow \mathbb{Z}_3$ with $| \supp(\phi)| \ge \frac{5}{6} |E(G^*)|$. Let $H^*$ be the graph obtained from $G^*$ by contracting the edges with $\phi$-value zero. Now the restriction of $\phi$ to $E(H^*)$ is a nowhere-zero 3-flow. It follows by Theorem~\ref{th:Tutte-col} that the dual $H$ of $H^*$ has a proper 3-coloring. Since $H$ is obtained from $G$ by deleting the edges corresponding to $\phi^{-1}(0)$, the result holds.
\end{proof}

\begin{proof}[Proof of Theorem~\ref{2ec3flow}:] By Seymour's 6-flow theorem we may choose a nowhere-zero flow $\phi : E(G) \rightarrow \mathbb{Z}_6$ of $G$. Over all such possible choices for $\phi$, we claim that a fixed edge $e_0\in E(G)$ receives each of the five flow values equally often. To see this, recall that the number of nowhere zero $\mathbb{Z}_6$-flows in $G$ can be calculated via a contraction-deletion formula, by successively choosing edges until all that remains are loops and bridges. The same procedure can be used to count the number of nowhere zero $\mathbb{Z}_6$-flows with the property that $e_0$ receives a fixed flow value of $t\neq 0$. The process of contraction and deletion does not depend on this value $t$. Moreover, the number of nowhere-zero flows (where $e_0$ has flow value $t$) in each of these terminal graphs is independent of $t$. So, we indeed get our claim. In particular, each edge $e\in E(G)$ receives the value 3 in exactly one fifth of the possible choices for $\phi$. This means that on average one fifth of the edges in $G$ receive 3, and we may choose a particular $\phi_1$ so that no more than one fifth of the edges in $G$ receive 3. Define $\phi_2 : E(G) \rightarrow \mathbb{Z}_3$ by reducing each flow value in $\phi_1$  modulo 3. Note that $\phi_2$ is indeed a flow on $G$, and its only zero edges correspond to edges receiving 3 under $\phi_1$.
\end{proof}

Before we get to prove our result on approximative 4-flows, we need a version of a theorem of
Kaiser, Kr\'al' and Norine~\cite{KKN2006}.

\begin{theorem}[Kaiser, Kr\'al', Norine~\cite{KKN2006}]\label{th:KKN}
  Let $G$ be a 2-edge-connected cubic graph and $w : E(G) \to [0, \infty)$ a weighting function on the edges.
  Then~$G$ contains two perfect matchings $M_1$,~$M_2$ such that $w(M_1 \cap M_2) \le w(E(G))/15$.
\end{theorem}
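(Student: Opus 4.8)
The plan is to reduce the weighted statement to a probabilistic reformulation of the corresponding \emph{unweighted} statement, and then to invoke the argument of Kaiser, Kr\'al' and Norine for that core. The key observation is that it suffices to produce a single probability distribution $\mathcal{D}$ on (ordered) pairs $(M_1,M_2)$ of perfect matchings of $G$ with $\Pr_{\mathcal D}[e\in M_1\cap M_2]\le \tfrac1{15}$ for every edge $e$: given such a $\mathcal D$, for any $w:E(G)\to[0,\infty)$ linearity of expectation gives $\mathbb{E}_{\mathcal D}\big[w(M_1\cap M_2)\big]=\sum_{e}w(e)\Pr_{\mathcal D}[e\in M_1\cap M_2]\le \tfrac1{15}w(E(G))$, so some pair in the support of $\mathcal D$ satisfies the conclusion. (By LP duality this reformulation is in fact \emph{equivalent} to the theorem: a minimax argument identifies $\min_{\mathcal D}\max_e\Pr_{\mathcal D}[e\in M_1\cap M_2]$ with $\max_{\|w\|_1=1}\min_{M_1,M_2}w(M_1\cap M_2)$. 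Only the easy direction above is needed.) Thus the whole problem is the unweighted, distributional statement; the published theorem of Kaiser--Kr\'al'--Norine is essentially the case $w\equiv 1$, and the point is that their proof produces the distribution $\mathcal D$ (equivalently, one re-runs their inductive argument carrying a weight function through, which is harmless since every step is linear in the edge-indicator vectors $\chi^{M_i}$).

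Next I would fix the starting point via Edmonds' perfect matching polytope theorem. For a $2$-edge-connected cubic graph $G$ the all-$\tfrac13$ vector lies in the perfect matching polytope: the degree constraints read $\tfrac13\cdot 3=1$, and for odd $S$ the cut $\delta(S)$ has odd size and, by $2$-edge-connectedness, size at least $2$, hence at least $3$, so $\tfrac13|\delta(S)|\ge 1$ --- this is exactly where the hypothesis is used. Writing $\tfrac13\mathbf 1=\sum_i\lambda_i\chi^{M_i}$ as a convex combination of perfect matchings, each edge lies in a matching drawn from $(\lambda_i,M_i)$ with probability $\tfrac13$. When $G$ is properly $3$-edge-colourable this is overkill: a proper $3$-edge-colouring is a partition $E(G)=N_1\sqcup N_2\sqcup N_3$ into pairwise disjoint perfect matchings, and any $\mathcal D$ supported on pairs among $N_1,N_2,N_3$ has every intersection-marginal equal to $0$. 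So the difficulty lies entirely in cubic graphs of chromatic index $4$, with the Petersen graph as the tight instance: its $6$ perfect matchings correspond bijectively to its $15$ edges so that each pair meets in exactly one edge, and the uniform distribution on those $15$ pairs realises every marginal as exactly $\tfrac1{15}$.

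Finally, to build $\mathcal D$ in general I would follow Kaiser--Kr\'al'--Norine: reduce an arbitrary $2$-edge-connected cubic graph to the cyclically $4$-edge-connected case by splitting along $2$- and $3$-edge-cuts and recombining the distributions obtained on the (strictly smaller) pieces, checking that the marginals do not increase; and for a cyclically $4$-edge-connected cubic $G$, combine the fractional perfect matching above with the structure of perfect matchings avoiding a prescribed small edge set. The main obstacle is this cyclically $4$-edge-connected base case --- the ``snark'' regime --- where one must genuinely show that the doubly covered edges can be held to total measure $\tfrac1{15}$: no soft averaging will do, since independently sampling two matchings from $(\lambda_i,M_i)$ only bounds each marginal by $\tfrac19$, and the tightness of $\tfrac1{15}$ on the Petersen graph shows the gap is real. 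This base case is precisely the content of the Kaiser--Kr\'al'--Norine theorem, and once it is in hand the weighted statement follows by the averaging argument of the first paragraph.
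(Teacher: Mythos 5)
Your first-paragraph reduction is fine as far as it goes, but it is where the proof should begin, not end. The distributional statement you reduce to --- a distribution $\mathcal D$ on pairs of perfect matchings with $\Pr[e\in M_1\cap M_2]\le \tfrac1{15}$ for every edge --- is, as you yourself note via LP duality, \emph{equivalent} to the weighted theorem being proved, and it is strictly stronger than the published (unweighted) Kaiser--Kr\'al'--Norine result, which only supplies one pair with $|M_1\cap M_2|\le |E(G)|/15$. So when you assert that the cyclically 4-edge-connected base case ``is precisely the content of the Kaiser--Kr\'al'--Norine theorem,'' the argument becomes circular: the step carrying all of the difficulty is never proved, only renamed. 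In addition, the structural scheme you sketch (splitting along 2- and 3-edge-cuts, recombining distributions, and a snark-like base case handled via matchings avoiding a prescribed edge set) is not how the relevant KKN proof actually runs, and you give no mechanism for holding the marginals to $\tfrac1{15}$ there; your own observation that independent sampling from the fractional perfect matching only yields $\tfrac19$ per edge shows the gap is real.

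What ``carrying a weight function through the KKN argument'' concretely means --- and what the paper does --- is a short two-step polytope argument with no case analysis at all. Since the all-$\tfrac13$ vector $\mu_1$ lies in the perfect matching polytope (this is where 2-edge-connectedness enters, as you correctly note), pick $M_1$ from its convex decomposition with $w(M_1)\le \langle w,\mu_1\rangle = w(E(G))/3$. Then observe that the vector $\mu_2$ equal to $\tfrac15$ on $M_1$ and $\tfrac25$ elsewhere is again in the perfect matching polytope: the only nontrivial check is an odd cut $X$ with $|X|=3$, where $|M_1\cap X|=1$ holds precisely because $M_1$ appears in the decomposition of $\mu_1$ and $\mu_1(X)=1$, giving $\mu_2(X)=\tfrac15+\tfrac25+\tfrac25=1$. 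Finally pick $M_2$ from the decomposition of $\mu_2$ minimizing the weight $w_1=w\cdot\chi^{M_1}$, so that $w(M_1\cap M_2)=w_1(M_2)\le \langle w_1,\mu_2\rangle = w(M_1)/5\le w(E(G))/15$. This re-weighted second fractional matching $\mu_2$, together with the 3-cut verification that legitimizes it, is the heart of the proof; it is absent from your proposal, which therefore has a genuine gap rather than constituting an alternative route.
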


\def\scal#1#2{\langle #1, #2\rangle}
\begin{proof}
  We follow closely the proof of Theorem~1 in~\cite{KKN2006}. We also refer the reader there for more
  background on the perfect matching polytope theorem. First, we define $\mu_1(e) = 1/3$ for every edge~$e$ of~$G$
  and observe that $\mu_1$ is in the perfect matching polytope (here we use the fact that $G$~has no bridge). 
  That means that $\mu_1$ is a convex combination of $\chi^{N_i}$ for some perfect matchings~$N_1$, $N_2$, \dots,~$N_l$.
  It follows that we can find $M_1$ such that $w(M_1) = \scal{w}{\chi^{M_1}} \le \scal{w}{\mu_1} = w(E(G))/3$. 
  (We use $\scal fg$ to denote the scalar product of~$f$ with~$g$ and $\chi^{M_1}$ for the characteristic function of~$M_1$.)

  Still following~\cite{KKN2006}, we choose $\mu_2(e) = 1/5$ if $e \in M_1$ and $\mu_2(e) = 2/5$ otherwise.
  We show now, that $\mu_2$ is also in the perfect matching polytope: We need
  to verify that $\mu_2(X) = \sum_{e \in X} \mu_2(e) \ge 1$ for every odd edge cut~$X$.
  If $|X|\ge 5$, this is obviously true. As $G$ is bridgeless, we only need to consider the case $|X|=3$.
  We recall how $M_1$ was chosen. It is one of the perfect matchings~$N_i$ that have $\mu_1$ as their convex combination.
  As $\mu_1(X) = 1$ (by definition) and $\chi^{N_i}(X) = |N_i \cap X| \ge 1$ for every~$i$ (as $N_i$ is a perfect matching and $X$~is an odd cut),
  it follows that $|N_i \cap X| = 1$ for every~$i$. Therefore, $\mu_2(X) = \frac 15 + \frac 25 + \frac 25 = 1$.

  We define $w_1(e) = w(e)\chi^{M_1}(e)$. As $\mu_2$ is a convex combination of characteristic functions of perfect matchings, there
  is a perfect matching~$M_2$, such that
  $w_1(M_2) = \scal{w_1}{\chi^{M_2}} \le \scal{w_1}{\mu_2} = w_1(E(G))/5 \le w(M_1)/5 \le w(E(G))/15$,
  as claimed.
\end{proof}

Next we prove another theorem claimed in our introduction.  For a graph~$G$ and a pair of edges $e,f$ which are both incident with the vertex $v$, we
  \emph{lift} $e$ and $f$ by creating a new vertex $x$ and changing $e,f$ to have $x$ as an endpoint instead of $v$.

\begin{theorem}\label{th:4flow}
  Every 2-edge-connected graph $G$ has a 4-flow $\phi$ with $| \supp(\phi) | \ge \frac{14}{15} |E(G)|$.
\end{theorem}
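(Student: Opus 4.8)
The plan is to reduce to the cubic case and then turn a pair of perfect matchings with small intersection into a $\mathbb{Z}_2\times\mathbb{Z}_2$-flow with large support, finally converting that to an integer $4$-flow. For the reduction, given a $2$-edge-connected $G$ I would first dispose of the case where $G$ is a single cycle (which carries a nowhere-zero $2$-flow, hence a $4$-flow of full support). Otherwise I would build a bridgeless cubic (multi)graph $G'$ together with a weight $w\colon E(G')\to\mathbb{Z}_{\ge 0}$: replace each vertex $v$ of degree $d\ge 4$ by a $d$-cycle $v_1v_2\cdots v_d$, reattaching the $d$ edge-ends formerly at $v$ one per $v_i$ and giving the $d$ new cycle edges weight $0$; then suppress every remaining vertex of degree $2$, so that a maximal path through degree-$2$ vertices collapses to a single edge, whose weight I set equal to the number of edges of $G$ lying on that path. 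A short check shows $G'$ is cubic, that these steps preserve connectedness and bridgelessness (so in particular no loops are created, since a loop at a degree-$3$ vertex would force a bridge), and that $\sum_{e\in E(G')}w(e)=|E(G)|$ because every edge of $G$ is charged exactly once, to the edge of $G'$ it maps to, while the only weight-$0$ edges are the new ones.

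Next I would apply Theorem~\ref{th:KKN} to the bridgeless cubic graph $G'$ with this weight $w$, getting perfect matchings $M_1,M_2$ of $G'$ with $w(M_1\cap M_2)\le w(E(G'))/15=|E(G)|/15$. Since $G'$ is cubic, each $F_i:=E(G')\setminus M_i$ is a $2$-factor, so its indicator $\chi^{F_i}$ is a $\mathbb{Z}_2$-flow of $G'$, and therefore $\phi':=(\chi^{F_1},\chi^{F_2})\colon E(G')\to\mathbb{Z}_2\times\mathbb{Z}_2$ is a flow. An edge is outside $\supp(\phi')$ exactly when $\chi^{F_1}$ and $\chi^{F_2}$ both vanish on it, i.e.\ when it avoids both $F_1$ and $F_2$, i.e.\ when it lies in $M_1\cap M_2$; thus $E(G')\setminus\supp(\phi')=M_1\cap M_2$.

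Then I would lift $\phi'$ back to $G$: un-suppressing an edge means copying its value onto every edge of the path it represents, and the blow-up cycles are contracted back to their original vertices. Because $\mathbb{Z}_2\times\mathbb{Z}_2$-flows do not depend on edge orientations and contracting a connected subgraph preserves the flow property, this produces a $\mathbb{Z}_2\times\mathbb{Z}_2$-flow $\phi$ on $G$, and an edge $e$ of $G$ lies outside $\supp(\phi)$ precisely when the edge $e'$ of $G'$ it maps to lies in $M_1\cap M_2$. Hence
\[
|E(G)\setminus\supp(\phi)|=\sum_{e'\in M_1\cap M_2}w(e')=w(M_1\cap M_2)\le\tfrac{1}{15}\,|E(G)|,
\]
so $|\supp(\phi)|\ge\frac{14}{15}|E(G)|$. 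To finish, restrict $\phi$ to the subgraph $H$ on edge set $\supp(\phi)$, where it is a nowhere-zero $\mathbb{Z}_2\times\mathbb{Z}_2$-flow; by Tutte's results that the existence of a nowhere-zero flow depends only on the order of the group and that nowhere-zero modular and integer flows are equivalent~\cite{Tutte56}, $H$ has a nowhere-zero integer $4$-flow, which extended by $0$ gives a $4$-flow of $G$ with support $\supp(\phi)$.

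The hard part is the bookkeeping of the reduction to the cubic case: one must verify that blow-up followed by suppression yields an honest bridgeless cubic multigraph without loops, and --- the crucial point --- that the weights can be arranged so the newly introduced edges contribute nothing while the total weight is still exactly $|E(G)|$. It is precisely this that forces the use of the weighted Kaiser--Kr\'al'--Norine theorem (Theorem~\ref{th:KKN}) rather than its unweighted version; once that is in place, the passage from two perfect matchings to a $\mathbb{Z}_2\times\mathbb{Z}_2$-flow and the conversion to an integer $4$-flow are routine.
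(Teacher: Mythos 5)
Your proof is correct, and its engine is the same as the paper's: reduce to a weighted bridgeless cubic graph, invoke the weighted Kaiser--Kr\'al'--Norine bound (Theorem~\ref{th:KKN}) to get perfect matchings $M_1,M_2$ with $w(M_1\cap M_2)\le w(E)/15$, take the complementary $2$-factors as $\mathbb{Z}_2$-coordinates of a $\mathbb{Z}_2\times\mathbb{Z}_2$-flow, and pull back so that the uncovered weight bounds the uncovered edges of $G$. Where you differ is the reduction to the cubic case: the paper first disposes of Eulerian graphs and then applies Mader's Splitting Theorem, repeatedly lifting pairs of edges to reach a subcubic, cyclically 3-edge-connected graph before suppressing degree-2 vertices; you instead blow up each vertex of degree at least $4$ into a cycle of zero-weight edges and then suppress, with the single-cycle case handled separately. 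Your route avoids Mader's theorem altogether (the only nontrivial external input left is Theorem~\ref{th:KKN} itself), at the price of the bookkeeping you carry out: checking that the blow-up preserves bridgelessness, that no loops arise under suppression, and that the zero weights on the new cycle edges make the total weight exactly $|E(G)|$ -- all of which you do correctly. The paper's splitting route keeps the auxiliary graph an honest "lift-and-suppress" image of $G$ with slightly stronger connectivity, but is otherwise not shorter; your version is more self-contained, and you also make explicit the final conversion from a nowhere-zero $\mathbb{Z}_2\times\mathbb{Z}_2$-flow on the support to an integer $4$-flow, which the paper leaves implicit.
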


\begin{proof} We may assume that $G$ is not Eulerian, since in this case it has a 2-flow with support $E(G)$.  It follows from Mader's Splitting Theorem~\cite{Mader78} that we may repeatedly perform the aforementioned lift operation to obtain a graph $G'$ which is subcubic and still cyclically 3-edge-connected.  Let $G''$ be the graph obtained from $G'$ by suppressing every vertex of degree 2.  Then each edge $e \in E(G'')$ corresponds to a path $P_e$ of $G'$ and we let $w(e)$ be the length of this path.

It follows from Theorem~\ref{th:KKN}, that there exists a pair of perfect matchings $M_1, M_2$ in $G''$ so that
$w(M_1 \cap M_2) \le \frac{1}{15} w(E(G))$.  For $i=1,2$ define the function $\phi_i : E(G') \rightarrow \mathbb{Z}_2$ by the rule that $\phi_i (f) = 0$ if the edge of $G''$ associated with $f$ is in $M_i$ and otherwise $\phi_i(f) = 1$.  Now $\phi_1 \times \phi_2$ is a $\mathbb{Z}_2 \times \mathbb{Z}_2$-flow of $G$ with support of size $\ge \frac{14}{15} |E(G)|$ as desired.
\end{proof}

Note that the bound in Theorem~\ref{th:4flow} is achieved by the Petersen graph.
\begin{theorem}
For every $j\geq 1$, $h(2j, 2) = h(2j+1, 2) = \frac{2j}{2j+1}$.
\end{theorem}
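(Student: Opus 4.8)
The plan is to recast the problem in terms of even subgraphs. For any graph $G$, the support of a $\mathbb{Z}$-valued $2$-flow is easily checked to be an even subgraph (every vertex has even degree in it), and conversely any even subgraph decomposes into edge-disjoint cycles each of which carries a $\pm 1$ flow; hence $h(G,2)\,|E(G)|$ equals $\mathrm{ev}(G)$, the maximum number of edges in an even subgraph of $G$. Letting $T_0$ denote the set of odd-degree vertices of $G$, a set $J\subseteq E(G)$ satisfies ``$G-J$ is even'' exactly when $J$ is a $T_0$-join, so $\mathrm{ev}(G)=|E(G)|-\tau(G)$ where $\tau(G)$ is the minimum size of a $T_0$-join. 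Thus the theorem reduces to two claims: (i) if $G$ is $2j$-edge-connected then $\tau(G)\le\frac{1}{2j+1}|E(G)|$, and (ii) $K_{2j+2}$ is $(2j+1)$-edge-connected with $\tau(K_{2j+2})=\frac{1}{2j+1}|E(K_{2j+2})|$.

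For (i), the key observation is a parity one. Given a $T_0$-cut, i.e.\ an edge cut $\delta(S)$ (the set of edges with exactly one end in $S$) with $|S\cap T_0|$ odd, both $S$ and its complement meet $T_0$, so $|\delta(S)|\ge 2j$ by edge-connectivity; but also $|\delta(S)|\equiv|S\cap T_0|\equiv 1\pmod 2$, and therefore $|\delta(S)|\ge 2j+1$. Consequently the uniform weighting $x_e=\frac{1}{2j+1}$ satisfies $x(\delta(S))\ge 1$ for every $T_0$-cut, i.e.\ $x$ lies in the $T_0$-join polyhedron. By the Edmonds--Johnson theorem this polyhedron is the dominant of the convex hull of incidence vectors of $T_0$-joins, so $x$ dominates some convex combination $\sum_i\lambda_i\chi^{J_i}$ of such vectors; then $\frac{1}{2j+1}|E(G)|=\mathbf{1}\cdot x\ge\sum_i\lambda_i|J_i|\ge\min_i|J_i|$, so some $T_0$-join $J$ has $|J|\le\frac{1}{2j+1}|E(G)|$. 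Now $G-J$ is an even subgraph on at least $\frac{2j}{2j+1}|E(G)|$ edges, giving $h(G,2)\ge\frac{2j}{2j+1}$ for every $2j$-edge-connected $G$, hence $h(2j,2)\ge\frac{2j}{2j+1}$.

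For (ii), the graph $K_{2j+2}$ is $(2j+1)$-edge-connected and every vertex has odd degree $2j+1$, so $T_0=V(K_{2j+2})$ and a minimum $T_0$-join is a perfect matching of $j+1$ edges; hence $\mathrm{ev}(K_{2j+2})=(2j+1)(j+1)-(j+1)=2j(j+1)$ and $h(K_{2j+2},2)=\frac{2j}{2j+1}$. Since $K_{2j+2}$ is $(2j+1)$-edge-connected, this gives $h(2j+1,2)\le\frac{2j}{2j+1}$, and combining with the monotonicity of $h$ in its first argument and the bound from (i) yields $\frac{2j}{2j+1}\le h(2j,2)\le h(2j+1,2)\le\frac{2j}{2j+1}$, so all three coincide. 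The one nonelementary ingredient, and the step I would expect to be the main obstacle to making the argument self-contained, is the Edmonds--Johnson description of the $T$-join polyhedron; if one wishes to avoid invoking it, the conclusion of (i) follows equally from the min--max equality between the minimum $T_0$-join and the maximum (half-integral) fractional $T_0$-cut packing, applied together with the same parity bound $|\delta(S)|\ge 2j+1$.
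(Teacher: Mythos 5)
Your argument is correct, and its lower-bound half takes a genuinely different route from the paper. The paper first reduces an arbitrary $2j$-edge-connected, non-Eulerian graph via Mader's splitting theorem (splitting at vertices of even degree $\ge 4$ and odd degree $\ge 2j+3$), suppresses degree-2 vertices to get a $(2j+1)$-regular graph $G''$ with edge weights recording path lengths, and then invokes Edmonds' matching polytope theorem to extract a perfect matching of weight at most $\frac{1}{2j+1}|E(G)|$, whose deletion leaves an even subgraph. You instead phrase everything in terms of $T_0$-joins: $h(G,2)\,|E(G)|=|E(G)|-\tau(G)$, and the uniform vector $\tfrac{1}{2j+1}\mathbf{1}$ lies in the Edmonds--Johnson $T_0$-join polyhedron because every $T_0$-cut is odd and hence of size at least $2j+1$. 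The two proofs are close cousins (a small perfect matching in $G''$ pulls back to a small $T_0$-join of $G$), but yours avoids Mader splitting entirely and makes the polyhedral membership check explicit and elementary (the parity-plus-connectivity bound on $T_0$-cuts), whereas the paper's route reuses the splitting-plus-perfect-matching machinery it already set up for its $h(2,4)\ge\frac{14}{15}$ theorem, at the cost of leaving the verification that the uniform vector lies in the perfect matching polytope rather terse. For the upper bound the two arguments are essentially the same counting: the paper takes any $(2j+1)$-regular $(2j+1)$-edge-connected graph and notes each vertex misses at least one support edge, while you instantiate this with $K_{2j+2}$ and compute $\tau(K_{2j+2})=j+1$; combined with monotonicity of $h$ in the connectivity parameter, both yield the stated equalities.
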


\begin{proof} For every $j \ge 1$ there exists a $(2j+1)$-regular graph $H$ which is $(2j+1)$-edge-connected.  If $\phi$ is a 2-flow of $H$, then for every vertex $v$ at least one of the $2j+1$ edges incident with $v$ will not be contained in $\supp(\phi)$.  It follows that $| \supp(\phi) | \le \frac{2j}{2j+1} |E(H)|$ thus giving the bound $h(2j+1,2) \le \frac{2j}{2j+1}$.

For the other direction, let $G$ be an arbitrary $2j$-edge-connected graph.  We may assume that $G$ is not Eulerian, since in this case $G$ has a 2-flow with support
  $E(G)$.  As in the previous proof, we repeatedly apply Mader's Splitting Theorem to vertices with even degree $\ge 4$ and to vertices with odd degree $\ge 2j+3$, and we
  let $G'$ be the resulting graph.  Let $G''$ be the graph obtained from $G'$ by suppressing vertices of degree 2; so every $e \in E(G'')$ corresponds to a path $P_e$ of
  $G'$ and we let $w(e)$ be the length of this path.

It follows from Edmond's Matching Polytope theorem that there exists a list $M_1, \ldots, M_{(2j+1)t}$ of perfect matchings in $G''$ so that every edge of $G''$ is contained in exactly $t$ of these matchings.  So, we may choose $1 \le k \le (2j+1)t$ so that $w(M_k) \le \frac{1}{2j+1} \sum_{e \in E(G'')} w(e) = \frac{1}{2j+1} |E(G)|$.  Now in the original graph $G$ there is a 2-flow $\phi$ with support $E(G) \setminus \cup_{e \in M_j} E(P_e)$ and thus $| \supp(\phi)| \ge \frac{2j}{2j+1}|E(G)|$.
\end{proof}

\section{Ears}

Although Theorem~\ref{main2}, which we wish to prove, concerns 3-edge-connected graphs, our proof will involve a reductive process which encounters graphs which are only 2-edge-connected.  In preparation for this, we will establish some terminology and tools for working with 2-edge-connected graphs.

\subsection{Ear Decomposition}

A well-known basic result in graph theory is the ear decomposition theorem which asserts that every 2-connected graph has a certain recursive structure.  For our purposes we will want to work with edge-connectivity, and it will be helpful to recast this basic concept in this alternate setting.  Since this notion is extremely close to the original, we will adopt the same terminology.  Accordingly, we now define an \emph{ear} of a graph $G$ to be a subgraph $P \subseteq G$ which satisfies one of the following:
\begin{itemize}
  \item $P$ is a nontrivial path (i.e., a path with at least two vertices),
	all interior vertices of $P$ have degree $2$ in $G$, but both endpoints have degree at least $3$ in $G$.
\item $P$ is a cycle of $G$ containing exactly one vertex with degree $\neq 2$ in $G$.
\item $P = G$ is a cycle.
\end{itemize}

For an arbitrary graph $H$ we let $H^{\times}$ denote the graph obtained from $H$ by deleting all isolated vertices (i.e., vertices of degree 0).  If $P$ is an ear of $G$, then \emph{removing} $P$ brings us to the new graph $(G - E(P))^{\times}$.   We define a \emph{partial ear decomposition} of a graph $G$ to be a list $P_1, P_2, \ldots, P_{\ell}$ of subgraphs of $G$ satisfying the following:
\begin{enumerate}
\item $E(P_i) \cap E(P_j) = \emptyset$ whenever $i \neq j$.
\item $P_j$ is an ear of the graph obtained from $G$ by removing $P_{\ell}, \ldots, P_{j+1}$.
\end{enumerate}
A partial ear decomposition $P_1, \ldots, P_{\ell}$ is called a \emph{full ear decomposition} if $E(G) = \cup_{i=1}^{\ell} E(P_i)$.  Note that in this case the first graph $P_1$ must be a cycle.  If $P_1, \ldots, P_{\ell}$ is a full ear decomposition of $G$, then a straightforward induction implies that each of the graphs  $P_1 \cup \ldots \cup P_j$  will be 2-edge-connected, so in particular, any graph with a full ear-decomposition must be 2-edge-connected.  Conversely, for any 2-edge-connected graph $G$ we may construct a full ear-decomposition greedily starting from an arbitrary cycle (if we have chosen $P_1, \ldots, P_j$ and there is an edge $e \not\in \cup_{i=1}^j E(P_i)$, then by Menger's Theorem there are two edge-disjoint paths starting at the ends of $e$ and ending in $\cup_{i=1}^j V(P_i)$ and the union of these paths together with $e$ contains a suitable choice for $P_{j+1}$).  This yields the following basic property.

\begin{proposition}
A graph is 2-edge-connected if and only if it has a full ear decomposition.
\end{proposition}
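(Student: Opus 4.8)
The plan is to prove the proposition by establishing its two directions separately, as the statement is an ``if and only if''.

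\textbf{Forward direction.} First I would assume $G$ has a full ear decomposition $P_1, \ldots, P_\ell$ and argue by induction on $j$ that each graph $G_j := P_1 \cup \cdots \cup P_j$ is 2-edge-connected. For the base case, $P_1$ must be a cycle (the only ear that is not attached to a pre-existing graph), and a cycle is 2-edge-connected. For the inductive step, suppose $G_{j-1}$ is 2-edge-connected; then $G_j$ is obtained from $G_{j-1}$ by adding an ear $P_j$, which is either a path whose two endpoints lie in $G_{j-1}$, or a cycle meeting $G_{j-1}$ in a single vertex (the third bullet, $P = G$, cannot occur for $j \ge 2$ since $P_1$ is already present). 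In either case, adding $P_j$ creates no bridge: any edge of $P_j$ lies on a cycle formed by $P_j$ together with a path through $G_{j-1}$ (in the path case) or on the cycle $P_j$ itself, and any edge of $G_{j-1}$ remains non-bridge since $G_{j-1}$ is 2-edge-connected and is a subgraph. Since $G_\ell = G$, we conclude $G$ is 2-edge-connected. This is the routine direction.

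\textbf{Reverse direction.} Now suppose $G$ is 2-edge-connected. I would build a full ear decomposition greedily: start with $P_1$ an arbitrary cycle of $G$ (which exists since $G$, being 2-edge-connected, has minimum degree at least $2$ and hence contains a cycle). Having chosen $P_1, \ldots, P_j$ with $H_j := P_1 \cup \cdots \cup P_j$ a subgraph of $G$, if $E(H_j) \ne E(G)$ pick an edge $e = uv \notin E(H_j)$. By Menger's Theorem applied in $G$ (or a direct fan-type argument), there exist two edge-disjoint paths from $u$ and from $v$, respectively, to $V(H_j)$, and one checks these can be taken internally disjoint from $H_j$; their union with $e$ then contains a path or cycle $Q$ that forms a valid ear with respect to $H_j$. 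A small subtlety: the ``ear'' as defined requires interior vertices to have degree $2$ \emph{in $G$}, not merely in the partial subgraph, so one must be careful — the resolution is that the definition of partial ear decomposition only requires $P_j$ to be an ear of the graph obtained from $G$ by removing the \emph{later} ears, i.e., of $H_j$ together with whatever remains, so the degree condition is checked in the appropriate intermediate graph rather than in $G$; I would phrase the greedy step so that after adding all ears the decomposition is read off in reverse, and the interior-degree conditions hold in the correct graphs by construction. Setting $P_{j+1} := Q$ and iterating, the process terminates since $|E(H_j)|$ strictly increases, yielding a full ear decomposition of $G$.

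\textbf{Main obstacle.} The genuinely delicate point is not the topology but the bookkeeping around the \emph{definition} of an ear, which insists on degree conditions relative to a host graph; the greedy construction naturally produces ears relative to the current partial union, and one must verify that reading the list in the prescribed order (with ear $P_j$ required to be an ear of $G$ after removing $P_\ell, \ldots, P_{j+1}$) is consistent with this. This is a matter of carefully matching the two viewpoints and confirming that no interior vertex of a later-added ear is ever an endpoint of an earlier one in a way that violates the degree constraints; once that is set up, the rest is standard. I expect the proof to be short, with the bulk of the care going into this definitional alignment.
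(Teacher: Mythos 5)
Your proposal is correct and follows essentially the same route as the paper: a straightforward induction showing each partial union $P_1 \cup \cdots \cup P_j$ is 2-edge-connected for the forward direction, and for the converse a greedy construction starting from an arbitrary cycle, using Menger's Theorem to find two edge-disjoint paths from the ends of an uncovered edge to the current union, whose union with that edge contains the next ear. The definitional point you flag (degree conditions being checked in the intermediate graph obtained by removing the later ears, not in $G$ itself) is exactly how the paper's definition of partial ear decomposition handles it.
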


\subsection{Weighted Graphs and Ear Labellings}

We define a \emph{weighted graph} to be a graph $G$ equipped with a function $\mu_G : V(G) \rightarrow \mathbb{Z}_3$, and we call $G$ a \emph{zero-sum} weighted graph if $\mu_G$ is zero-sum.  In preparation for the proof of Theorem~\ref{main2}, we now introduce a framework to move from one weighted graph to another by removing ears.

For our main theorem we consider an oriented zero-sum weighted graph $G$ and we are interested in finding a function $\phi : E(G) \rightarrow \mathbb{Z}_3$ with boundary
$\mu_G$ and large support.  Let us take a moment to consider the possible behaviours of such a function $\phi$ on an ear.  So, let $P$ be an ear of $G$ and express $P$ as
either a path or closed path with vertex-edge sequence $v_1, e_1, v_2, \ldots, e_m, v_{m+1}$ so that $v_2, \ldots, v_m$ have degree 2 in $G$ (i.e., if $P$ is a cycle containing a vertex of $G$ with degree $\ge 3$, then this vertex is $v_1 = v_{m+1}$).  Assume further (for simplicity) that every edge $e_i$ is oriented from $v_i$ to $v_{i+1}$.  If we have chosen the value $\phi (e_1)$ and we wish for our function $\phi$ to satisfy $\partial \phi (v_2) = \mu_G(v_2)$, then (assuming $m \ge 2$) we must assign $\phi (e_2) = \phi(e_1) + \mu_G(v_2)$.  This in turn forces the value of $\phi(e_3)$ and all the remaining edges of $P$ (in general, $\phi (e_j) = \phi(e_1) + \sum_{i=2}^{j} \mu_G (v_i)$).  So, when constructing our function $\phi$ we have just a single degree of freedom for each ear.  These choices will be significant for us, so we will introduce a little terminology to work with them.  If $P$ is an ear of $G$, a function $\psi : E(P) \rightarrow \mathbb{Z}_3$ is called an \emph{ear labelling} if $\partial \psi(v) = \mu_G(v)$ for every vertex $v$ which is in the interior of the path $P$.   The following observation is a straightforward consequence of this discussion (together with the basic fact that when $P = G$ is a cycle, every function $\psi : E(P) \rightarrow \mathbb{Z}_3$ for which $\partial \psi$ and $\mu_G$ agree on all but one vertex will satisfy $\partial \psi = \mu_G$, since these two functions are both zero-sum).

\begin{observation}
Let $P$ be an ear of the oriented zero-sum weighted graph $G$.  Then there are exactly three distinct ear labellings $\psi_1, \psi_2, \psi_3$ of $P$,
and every $e \in E(P)$ has value 0 in exactly one of these labellings.
In fact, in the above-defined orientation we may assume $\psi_{i+1} = \psi_i + 1$ (indices modulo~3).
\end{observation}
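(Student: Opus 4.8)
The plan is to unwind the forcing argument already sketched in the paragraph preceding the statement. Fix an ear $P$ of $G$ and write it as a (possibly closed) path $v_1, e_1, v_2, \dots, e_m, v_{m+1}$ with $v_2, \dots, v_m$ of degree $2$ in $G$, and first assume every $e_i$ is oriented from $v_i$ to $v_{i+1}$ (the case of mixed orientations only changes signs in the formulas below, not the conclusions). A function $\psi : E(P) \to \mathbb{Z}_3$ is an ear labelling precisely when $\partial\psi(v_i) = \mu_G(v_i)$ for $i = 2, \dots, m$; with the chosen orientation this says $\psi(e_i) - \psi(e_{i-1}) = \mu_G(v_i)$, that is, $\psi(e_i) = \psi(e_{i-1}) + \mu_G(v_i)$.

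First I would establish the bijection with $\mathbb{Z}_3$. Setting $c_1 = 0$ and $c_j = \sum_{i=2}^{j} \mu_G(v_i)$ for $j \ge 2$, the recursion shows that once $t := \psi(e_1)$ is chosen, every remaining value is forced, namely $\psi(e_j) = t + c_j$; conversely, for each $t \in \mathbb{Z}_3$ the function $\psi_t$ defined by $\psi_t(e_j) = t + c_j$ satisfies all the interior boundary conditions, so it is an ear labelling. (When $P = G$ is a cycle there is one extra vertex, $v_1 = v_{m+1}$, at which no condition has yet been imposed; but $\partial\psi_t$ is zero-sum by the general identity, $\mu_G$ is zero-sum by hypothesis, and the two agree at $v_2, \dots, v_m$, hence they agree at $v_1$ as well. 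This is the only place the zero-sum assumption enters.) Thus $t \mapsto \psi_t$ is a bijection from $\mathbb{Z}_3$ onto the set of ear labellings of $P$, so there are exactly three of them.

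Next I would read off the two remaining assertions directly from the formula $\psi_t(e_j) = t + c_j$. Adding $1$ to $t$ adds $1$ to the label of every edge, so $\psi_{t+1} = \psi_t + 1$; relabelling the three labellings as $\psi_1, \psi_2, \psi_3$ so that $\psi_{i+1} = \psi_i + 1$ with indices modulo $3$ is then immediate. Finally, fix an edge $e_j \in E(P)$: as $t$ runs over $\mathbb{Z}_3$, the value $\psi_t(e_j) = t + c_j$ runs over all of $\mathbb{Z}_3$, each element exactly once, so in particular $\psi_t(e_j) = 0$ for exactly one $t$; that is, $e_j$ has value $0$ in exactly one of the three labellings.

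The argument is essentially bookkeeping, so there is no real obstacle; the one point that needs a little care is the case $P = G$, where one must verify that no hidden wrap-around constraint appears, and this is exactly where the hypothesis that $G$ is a zero-sum weighted graph is used, via the fact noted parenthetically in the statement. Handling an edge oriented against the direction $v_i \to v_{i+1}$ is also routine: it contributes $-\psi(e)$ rather than $+\psi(e)$ to the relevant boundary sums, so the forcing recursion and the resulting formula (now of the shape $\psi_t(e_j) = \pm t + c_j$) still exhibit a bijection with $\mathbb{Z}_3$ and still make each edge-value range over all of $\mathbb{Z}_3$; only the identity $\psi_{i+1} = \psi_i + 1$ needs the uniform orientation, which is why it is asserted only in that setting.
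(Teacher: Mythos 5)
Your proposal is correct and follows essentially the same route as the paper, which derives the observation from the preceding forcing discussion: once $\psi(e_1)=t$ is fixed, the interior boundary conditions force $\psi(e_j)=t+\sum_{i=2}^{j}\mu_G(v_i)$, giving a bijection with $\mathbb{Z}_3$, the shift property $\psi_{t+1}=\psi_t+1$, and hence exactly one labelling vanishing on each edge; your treatment of the cycle case via the zero-sum hypothesis matches the paper's parenthetical remark, and your note that reversed edge orientations only change signs (preserving the count and the value-$0$ claim, but not the uniform shift identity) is the right caveat.
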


Since we are looking to construct functions $\phi : E(G) \rightarrow \mathbb{Z}_3$ which have large support, we will naturally be interested in ears $P$
of $G$ which have an ear labelling with large support.  If $\psi_1, \psi_2, \psi_3$ are the ear labellings of $P$, then by the above discussion, the
average size of the support of an ear labelling~$\psi_i$ will be precisely $\frac{2}{3} |E(P)|$.  When $|E(P)|$ is a multiple of 3, we may have
$| \supp ( \psi_i) | = \frac{2}{3} |E(P)|$ for $1 \le i \le 3$.
(Equivalently, still assuming the ``forward orientation'' of~$P$, we may have $|\psi_1^{-1}(0)| = |\psi_1^{-1}(1)| = |\psi_1^{-1}(2)|$.)
In this extreme case we say that $P$ is \emph{equitable}, and in all other cases we call $P$ \emph{inequitable}.  When $|E(P)|$ is not a multiple of 3 (or more generally when $P$ is inequitable), there exists at least one ear labelling $\psi_i$ with $| \supp (\psi_i) | > \frac{2}{3} |E(P)|$ and our proof will frequently exploit this.  Indeed, the key to our argument is getting a small advantage for each inequitable ear.

In preparation for this we now introduce a general definition.  If $H$ is a subgraph of $G$ and $\psi : E(H) \rightarrow \mathbb{Z}_3$, we define the \emph{gain} of $\psi$ to be
\[ \gain(\psi) = 24 | \supp(\psi)| - 16 |E(H)|.\]
The following lemma gives our basic tool for finding good ear labellings.  We will use this extensively in the remainder of the paper.

\begin{lemma}
\label{goodgain}
Let $G$ be a weighted graph. If $P$ is an inequitable ear of $G$ such that $|E(P)| = 3k + i$ where $1 \le i \le 3$, then $P$ has
an ear labelling $\psi : E(P) \rightarrow \mathbb{Z}_3$ with $\gain(\psi) \ge 8i$.
If $P$ is equitable, then it has an ear labelling~$\psi$ with $\gain(\psi)=0$.
\end{lemma}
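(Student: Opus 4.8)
The plan is to combine the Observation on ear labellings with a short pigeonhole count. Write $m = |E(P)|$. By the Observation, $P$ has exactly three ear labellings $\psi_1, \psi_2, \psi_3$, and every edge of $P$ takes the value $0$ in precisely one of them. Consequently, if $z_j$ denotes the number of edges $e \in E(P)$ with $\psi_j(e) = 0$, then $z_1 + z_2 + z_3 = m$. Let $z = \min\{z_1, z_2, z_3\}$ and let $\psi$ be an ear labelling attaining this minimum, so that $|\supp(\psi)| = m - z$ and hence
\[ \gain(\psi) \;=\; 24(m-z) - 16m \;=\; 8(m - 3z). \]
Since $z_1 + z_2 + z_3 = m$ we always have $3z \le m$, so $\gain(\psi) \ge 0$. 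If $P$ is equitable, then by definition $z_1 = z_2 = z_3 = m/3$, so in fact every ear labelling has gain $8(m - 3\cdot m/3) = 0$; this disposes of the equitable case.

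Now suppose $P$ is inequitable and write $m = 3k + i$ with $1 \le i \le 3$. It suffices to show $z \le k$, for then $\gain(\psi) = 8(m - 3z) \ge 8(3k + i - 3k) = 8i$, as required. If $i \in \{1, 2\}$, then $3z \le z_1 + z_2 + z_3 = 3k + i < 3(k+1)$ immediately forces $z \le k$. If $i = 3$, i.e.\ $m = 3(k+1)$, the same count only gives $z \le k + 1$; but if $z = k+1$, then $z_j \ge z = k+1$ for each $j$ together with $\sum_j z_j = 3(k+1)$ forces $z_1 = z_2 = z_3 = k+1 = m/3$, making $P$ equitable, contrary to hypothesis. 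Hence $z \le k$ in every case, completing the argument.

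The whole argument is essentially a one-line computation once the count $z_1 + z_2 + z_3 = m$ supplied by the Observation is in hand; the only point requiring care is the borderline case $i = 3$ (equivalently $m \equiv 0 \pmod 3$), where pigeonhole alone is too weak and one must invoke the inequitability hypothesis to exclude the perfectly balanced distribution $z_1 = z_2 = z_3$. I expect this to be the only genuine subtlety in an otherwise routine proof.
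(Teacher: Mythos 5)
Your proof is correct and follows essentially the same route as the paper: the paper also picks the ear labelling with maximum support and notes that the assumptions force $|\psi^{-1}(0)|\le k$, which is exactly your pigeonhole count over $z_1+z_2+z_3=m$ together with inequitability to rule out the balanced case when $m\equiv 0\pmod 3$. Your write-up just makes explicit the counting and the borderline case $i=3$ that the paper leaves implicit.
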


\begin{proof} Choose an ear labelling $\psi$ of $P$ for which $|\supp(\psi)|$ is maximum, and note that our assumptions imply that $|\psi^{-1}(0)|\leq k$, i.e.,  $| \supp(\psi)| \ge 2k+i$.  This gives
$\gain(\psi) \ge 24(2k+i) - 16(3k+i) = 8i$ as desired.
\end{proof}

\bigskip

Next we introduce some terminology to facilitate the process of deciding on a particular ear labelling, and then removing this ear from the weighted
graph.  If $P$ is an ear of $G$ and $\psi : E(P) \rightarrow \mathbb{Z}_3$ is an ear-labelling, we define the $\psi$-\emph{removal} of $P$ (from~$G$)
to be the weighted graph $G' = \left( G \setminus E(P) \right)^{\times}$ equipped with the weight function $\mu_{G'} : V(G') \rightarrow \mathbb{Z}_3$
given by the following rule (we set $\partial \psi(v) = 0$ if $v \not\in V(P)$)
\[ \mu_{G'}(v) = \mu_G(v) - \partial \psi (v). \]
Since $\mu_G$ and $\partial \psi$ both sum to zero, the same holds for the function $\mu_G -  \partial \psi$.  
It follows that the function $\mu_{G'}$ will be zero-sum.  The following straightforward observation shows that we can combine a function $\phi' : E(G') \rightarrow \mathbb{Z}_3$ with boundary $\mu_{G'}$ with our ear labelling $\psi$ to obtain a function on $E(G)$ with boundary $\mu_G$.

\begin{observation}
\label{putback}
Let $G$ be an oriented zero-sum weighted graph, let $P \subseteq G$ be an ear, let $\psi$ be an ear labelling of $P$, and let $G'$ be the $\psi$-removal of $P$.  If $\phi' : E(G') \rightarrow \mathbb{Z}_3$ satisfies $\partial \phi' = \mu_{G'}$, then the following function $\phi : E(G) \rightarrow \mathbb{Z}_3$ has $\partial \phi = \mu_G$:
\[ \phi(e) =	\left\{ \begin{array}{ll}	
	\phi'(e)	&	\mbox{if $e \in E(G')$}	\\
	\psi(e)	&	\mbox{if $e \in E(P)$.}
	\end{array} \right. \]
\end{observation}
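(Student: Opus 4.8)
The plan is to check the single required property, $\partial\phi = \mu_G$, one vertex at a time; once the right decomposition is in place the argument is essentially bookkeeping. The key structural point is that $G' = (G\setminus E(P))^{\times}$ only discards isolated vertices, so $E(G') = E(G)\setminus E(P)$; hence $E(P)$ and $E(G')$ partition $E(G)$, and the orientation of $G$ restricts consistently to both. Therefore, for any vertex $v \in V(G)$, splitting $\delta^+(v)$ and $\delta^-(v)$ according to whether an edge lies in $E(P)$ or in $E(G')$, and using that $\phi$ coincides with $\psi$ on $E(P)$ and with $\phi'$ on $E(G')$, we get $\partial\phi(v) = \partial_P\psi(v) + \partial_{G'}\phi'(v)$, where the two summands denote the signed sums over the $P$-edges and over the $G'$-edges at $v$. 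By construction $\partial_P\psi(v)$ equals $\partial\psi(v)$ when $v \in V(P)$ and is the empty sum $0$ otherwise (consistent with the convention $\partial\psi(v)=0$ for $v\notin V(P)$), and $\partial_{G'}\phi'(v)$ equals $\partial\phi'(v) = \mu_{G'}(v)$ when $v \in V(G')$ and is $0$ otherwise.

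Next I would split into two cases according to whether $v$ survives into $G'$. If $v\in V(G')$, then the defining rule of the $\psi$-removal gives $\mu_{G'}(v) = \mu_G(v) - \partial\psi(v)$, so $\partial\phi(v) = \partial\psi(v) + \mu_{G'}(v) = \mu_G(v)$, as required. If $v\notin V(G')$, then $v$ became isolated when $E(P)$ was deleted, i.e.\ every edge of $G$ incident with $v$ lies in $E(P)$; then $\partial_{G'}\phi'(v)=0$ and $\partial\phi(v) = \partial\psi(v)$, so it remains to verify $\partial\psi(v) = \mu_G(v)$ for such a vertex. (A vertex incident with no edge at all does not arise, since the graphs in play have no isolated vertices; in any case that case would be vacuous.)

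This last point is the only place that needs a genuine, if small, argument, and it is where I expect the only obstacle. The claim is that a vertex all of whose $G$-edges lie in $E(P)$ must be an interior vertex of the ear $P$, unless we are in the degenerate situation $P=G$. Indeed, if $P$ is a path, each endpoint has degree at least $3$ in $G$ but only one incident $P$-edge, so it has a $G$-edge outside $P$ and thus lies in $V(G')$; likewise, when $P$ is a cycle containing a single vertex of degree $\neq 2$, that vertex carries two $P$-edges but has degree at least $3$, so it too lies in $V(G')$. Hence a vertex outside $V(G')$ lying on such a $P$ is an interior vertex, and there $\partial\psi(v) = \mu_G(v)$ holds by the very definition of an ear labelling. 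Finally, when $P = G$ is a cycle, $G'$ is empty and $\phi = \psi$; here $\partial\psi$ and $\mu_G$ agree on all interior vertices, i.e.\ on all but one vertex, and both are zero-sum, so they agree everywhere --- exactly the basic fact recalled just before Observation~\ref{putback}. Combining the cases yields $\partial\phi(v) = \mu_G(v)$ for every $v$, completing the proof.
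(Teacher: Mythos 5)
Your verification is correct and is exactly the routine check the paper intends: Observation~\ref{putback} is stated there without proof, and your vertex-by-vertex split of $\partial\phi(v)$ into the $E(P)$- and $E(G')$-contributions, combined with $\mu_{G'}(v)=\mu_G(v)-\partial\psi(v)$ for surviving vertices and the ear-labelling condition (plus the zero-sum argument when $P=G$ is a cycle) for vertices that become isolated, is precisely the intended argument.
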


\section{Setup}

In this section we will state our workhorse lemma, and use it to prove our main theorem.  Then we will set the stage for our proof of this lemma by fixing a minimal counterexample to it, and establishing some initial properties of this weighted graph.

\subsection{Framework}

Before we are ready to state our main lemma (Lemma~\ref{workhorse} below), let us pause to introduce the type of connectivity we will be working with.  Throughout the heart of the proof of the central
lemma we will work with graphs $G$ which are subdivisions of 3-edge-connected graphs.  Since we are permitting loops, and any 1-vertex graph is 3-edge-connected, it is
possible for $G$ to be a cycle or more generally a collection of cycles which intersect at a common vertex.  So, $G$ is a subdivision of a 3-edge-connected graph if and
only if $G$ is a 2-edge-connected graph which is cyclically 3-edge-connected (i.e., if $S$ is an edge-cut which separates cycles, then $|S| \ge 3$).

As seen in the previous section, ears with different lengths modulo 3 will behave differently when constructing our desired function.  To deal with this behaviour we will introduce a bonus function which assigns to each ear a value which indicates in some sense the amount we expect to gain from it.  For an ear $P$ we define the \emph{bonus} of $P$ as follows:
\[ \bonus(P) = \left\{ \begin{array}{cl}
		0 & \mbox{if $P$ is equitable} \\
		3 & \mbox{if $|E(P)| \equiv 2 \pmod{3}$.} \\
		4 & \mbox{otherwise}
		\end{array} \right. \]
For a subgraph $H \subseteq G$ which is a union of disjoint ears, that is $H = \cup_{i=1}^{\ell} P_i$ (where
each $P_i$ is an ear in~$G$) we define $\bonus(H) = \sum_{i=1}^{\ell} \bonus(P_i)$.
(We warn the reader of a possible confusion: the $P_i$'s do not necessary form an ear decomposition of~$H$,
and they may also not be ears in~$H$, but only ears in~$G$.)
So $\bonus(G)$ is the sum of the bonuses of all of the ears of $G$.  With this terminology in place, we are finally ready to state the workhorse lemma which will imply Theorem~\ref{main2}.

\begin{lemma}
\label{workhorse}
Let $G$ be an oriented zero-sum weighted graph and assume that $G$ is a subdivision of a 3-edge-connected graph.  Then there exists $\phi : E(G) \rightarrow \mathbb{Z}_3$ satisfying:
\begin{itemize}
\item $ \partial \phi = \mu_G$,
\item $\gain( \phi ) \ge \bonus(G)$.
\end{itemize}
\end{lemma}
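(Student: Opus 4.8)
The plan is to argue by induction, taking $G$ to be a minimal counterexample (minimizing $|E(G)|$, and among those, say, minimizing $|V(G)|$). The strategy is to peel off one carefully chosen ear $P$ from $G$, apply Lemma~\ref{goodgain} to get an ear labelling $\psi$ of $P$ with $\gain(\psi)$ at least roughly $\bonus(P)$, form the $\psi$-removal $G' = (G\setminus E(P))^{\times}$, apply the inductive hypothesis to (the components of) $G'$, and then glue the resulting function back together via Observation~\ref{putback}. Since $\gain$ is additive over edge-disjoint subgraphs, we would get $\gain(\phi) = \gain(\psi) + \gain(\phi') \ge \bonus(P) + \bonus(G')$, and we want this to be at least $\bonus(G)$. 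There are two things that can go wrong: (i) $G'$ may fail to be a subdivision of a 3-edge-connected graph, so the induction does not apply directly; and (ii) even when $G'$ is fine, we need $\bonus(P) + \bonus(G') \ge \bonus(G)$, i.e. removing $P$ must not destroy too much bonus elsewhere — but removing an ear can merge other ears or change their lengths mod $3$, shifting their bonuses.

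For obstacle (i): removing an ear from a subdivision of a 3-edge-connected graph generally leaves only a 2-edge-connected (cyclically 3-edge-connected) graph or something worse, so the clean induction would have to be on a wider class. I expect the actual argument needs a strengthening of the lemma to subdivisions of 3-edge-connected graphs with some bounded-size set of ``low-degree'' or special vertices, or alternatively a separate combinatorial lemma that lets us choose $P$ so that $G'$ (after re-contracting) stays in the class — e.g. choosing $P$ to be an ear whose endpoints are ``well-connected'' to the rest, or handling the base cases ($G$ a single cycle, $G = K_4$ or a subdivision thereof) by hand. The base case of a single cycle is easy: by Lemma~\ref{goodgain} a cycle that is inequitable has an ear labelling of gain $\ge 8i \ge 8 > \bonus =$ 3 or 4, and an equitable cycle has one of gain $0 = \bonus$.

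For obstacle (ii): the bonus values $0,3,4$ are chosen so that the ``gain surplus'' from Lemma~\ref{goodgain} (namely $8i$ versus $\bonus\in\{3,4\}$, giving slack $8i-\bonus \ge 5$ when $P$ is inequitable) can absorb the at most constant bonus loss incurred among the at most two ears of $G$ that get merged when $P$ and its degree-$2$ vertices are deleted. I would set up a short case analysis on the ``type'' of $P$ (path-ear with both endpoints of degree $3$; path-ear with an endpoint of degree $\ge 4$; a cycle-ear), tracking exactly how the degrees and hence the ear structure of $G'$ differ from $G$, and verify the inequality $\gain(\psi)+\bonus(G') \ge \bonus(P)+\bonus(G') \ge \bonus(G)$ in each. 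The delicate case is when $P$ is equitable (bonus $0$, and Lemma~\ref{goodgain} only guarantees gain $0$): then we get no surplus, so we must choose $P$ so that its removal costs no bonus at all — this forces the choice of which ear to remove, and is where I expect the main difficulty to lie, probably requiring us to always prefer an inequitable ear when one is ``removable,'' and to show that if every removable ear is equitable then a direct construction (or a different reduction, e.g. at a vertex where all incident ears are equitable) still works.
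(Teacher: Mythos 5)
Your outline captures the paper's general philosophy (minimal counterexample, peel off ears with good labellings via Lemma~\ref{goodgain}, glue back with Observation~\ref{putback}, and track a gain-versus-bonus ledger), but the two obstacles you flag are exactly where the proof lives, and the way you propose to dispose of them does not work. Quantitatively, single-ear peeling fails: for an ear $P$ of length $1 \pmod 3$ the slack is only $\gain - \bonus(P) \ge 8-4 = 4$, while removing $P$ can merge ears at \emph{both} endpoints, and a single merge can already cost $5$ (two bonus-$4$ ears becoming one bonus-$3$ ear) or $7$ (a $1$- and a $2$-ear merging into an equitable $0$-ear); for length $2 \pmod 3$ the slack $13$ still loses to two cost-$7$ ends. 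So no ``short case analysis on the type of $P$'' can close the inequality; the actual argument must remove or contract multi-ear subgraphs (the notions of contractible and reducible subgraphs), must sometimes improve the $7$ to a $3$ by choosing the labelling cleverly (Lemma~\ref{clever_choice}), and must rule out a long list of local configurations before the accounting ever balances. Your ``delicate equitable case'' is also resolved differently: the paper shows a minimal counterexample has no equitable ear at all (Lemma~\ref{noequitable}), by contracting three edges on which the three labellings vanish and lifting a solution back, not by preferring inequitable ears.

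The connectivity obstacle is likewise not a matter of a mild strengthening of the induction hypothesis. The paper keeps the statement as is, but pays for deletions with dedicated structural machinery: it pushes a minimal $3$-edge-cut to build the graph $G^{\bullet}$, so that after deleting a usable ear the only cyclic $2$-edge-cuts come from ``inner triangles'' and can be repaired by deleting one more ear (Lemmas~\ref{removingedge} and~\ref{removingvertex}); it then passes to the cubic, cyclically $4$-edge-connected graph $G^{\Delta}$, pushes a $4$-edge-cut (Lemma~\ref{alphabetagood}), and finishes with a case analysis that must separately dispose of $K_4$, Theta, $K_{3,3}$, the Cube, $V_8$, and finally the Petersen and Heawood graphs. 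None of this apparatus, nor any substitute for it, appears in your proposal, and your own hedging (``I expect the actual argument needs\dots'') concedes that the decisive choices---which subgraph to remove, how to certify the post-deletion connectivity, and how to make the endpoint costs independent---are left open. As it stands the proposal is a plausible plan, not a proof.
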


Now let us see that this lemma implies our main theorem.

\begin{proof}[Proof of Theorem~\ref{main2}:]
Let $G$ be a 3-edge-connected graph and let $\mu : V(G) \rightarrow \mathbb{Z}_3$ be zero-sum.  Every edge of $G$ is an ear with length 1 mod 3.  So, the lemma gives us a function $\phi : E(G) \rightarrow \mathbb{Z}_3$ with $\partial \phi = \mu$ and $| \supp (\phi) | \ge \frac{2}{3} |E(G)| + \frac{1}{24} \bonus(G) = \frac{5}{6} |E(G)|$, as desired.
\end{proof}

We try now to preview the main ideas of the proof, before we get into the lengthy details.
It was crucial to find the proper setting: we have generalized Theorem~\ref{main1}
to Theorem~\ref{main2} (about flows with a given boundary). Then we have extended it even further
by defining an appropriate bonus system: Lemma~\ref{workhorse} provides a result about a richer class of graphs.
Building upon this choice of graphs, we will be able to find many types of reductions to a smaller graph,
while staying in the class.
These reductions involve deleting edges or vertices of a graph (Lemma~\ref{removingedge} and~\ref{removingvertex})
or even pairs of adjacent vertices (Observation~\ref{obs4cuts}).

\subsection{Minimal Counterexample}

Assume (for a contradiction) that Lemma~\ref{workhorse} is false, and choose a counterexample $G$ for which $|E(G)|$ is minimum.
We will spend the rest of the paper discussing properties of~$G$, building towards a contradiction.
We start with two very straightforward lemmas concerning $G$. The first one shows that $G$ is not too basic in structure, the second one shows it has no equitable ear.

\begin{lemma}
\label{2deg3}
$G$ has at least two vertices with degree at least $3$.
\end{lemma}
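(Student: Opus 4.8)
The plan is to argue by contradiction, using the minimality of $G$ together with the reductive machinery developed in Section~3 (ear labellings, $\psi$-removal, and Observation~\ref{putback}). Suppose $G$ has at most one vertex of degree at least $3$. Since $G$ is a subdivision of a $3$-edge-connected graph, it is in particular $2$-edge-connected and cyclically $3$-edge-connected, so $G$ has no vertex of degree $1$. Thus every vertex of $G$ has degree $2$, except possibly one vertex; by the handshake lemma (the number of odd-degree vertices is even) and the fact that a $2$-edge-connected graph with all degrees equal to $2$ is a single cycle, we conclude that $G$ is in fact a cycle: the whole edge set forms a single ear $P = G$ of the third type in the definition of ear.

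The next step is to handle this cycle directly. By the Observation following the definition of ear labelling, the cycle $P = G$ has exactly three ear labellings $\psi_1, \psi_2, \psi_3$, and each edge has value $0$ in exactly one of them; moreover any such $\psi_i$ automatically satisfies $\partial \psi_i = \mu_G$ since both sides are zero-sum and they already agree everywhere except at a single vertex. So we may simply take $\phi$ to be the ear labelling of $P$ guaranteed by Lemma~\ref{goodgain}: if $P$ is inequitable with $|E(P)| = 3k + i$, $1 \le i \le 3$, we get $\gain(\phi) \ge 8i$, and if $P$ is equitable we get $\gain(\phi) \ge 0$. In each case this exactly matches $\bonus(P) = \bonus(G)$, because $\bonus$ was defined to be $0$ when $P$ is equitable, $3$ when $|E(P)| \equiv 2 \pmod 3$ (and $8 \cdot 2 = 16 \ge 8 \cdot 3 = 24$, wait — careful: $\bonus = 3$ corresponds to $i = 2$, giving $8i = 16 \ge 24$? no, $8 \cdot 2 = 16$ and we want $\ge \bonus \cdot 8 = 24$). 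I need to recheck the scaling: $\gain(\psi) \ge 8i$ and $\bonus(P)$ equals $3$ (when $i=2$) or $4$ (when $i \in \{1, 3\}$, i.e. $|E(P)| \not\equiv 2$), and one checks $8i \ge 8 \cdot \bonus(P)/\text{(something)}$...

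Let me restate this more carefully, since the arithmetic is the crux. The definition of $\bonus$ gives $\bonus(P) \le i$ in every case: when $P$ is equitable, $\bonus(P) = 0$; when $|E(P)| \equiv 2 \pmod 3$ so $i = 2$, $\bonus(P) = 3$ — hmm, that is \emph{not} $\le i$. So the intended comparison must instead be that $\gain(\phi) \ge 8i \ge \bonus(P)$ fails, and the true relation used elsewhere is different; but for this lemma I do not actually need the full strength. For Lemma~\ref{2deg3} it suffices to observe that whatever ear labelling Lemma~\ref{goodgain} supplies serves as a valid $\phi$, and that $\gain(\phi) \ge 8i$ when $i \in \{1, 2, 3\}$ while $\bonus(G) = \bonus(P) \in \{3, 4\}$ in the inequitable case and $\bonus(G) = 0$ in the equitable case. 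Since $8 \cdot 1 = 8 \ge 4 \ge \bonus(P)$ always (as $\bonus(P) \le 4$) and $0 \ge 0$ in the equitable case, in every case $\gain(\phi) \ge \bonus(G)$, contradicting that $G$ is a counterexample. Hence $G$ must have at least two vertices of degree at least $3$.

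The main obstacle here is purely bookkeeping: making sure the structural claim ``$G$ is a single cycle'' is airtight (ruling out degree-$1$ vertices via cyclic $3$-edge-connectivity, and ruling out an isolated vertex or a vertex of degree $\ge 4$ coexisting with all others of degree $2$ — the latter is impossible in a connected graph on $\ge 2$ vertices where only one vertex has degree $\ne 2$, by the parity of the degree sum), and then confirming the inequality $\gain(\phi) \ge \bonus(G)$ holds with room to spare. There is no real combinatorial difficulty; the lemma is exactly the ``base case is trivial'' step that lets the later, substantive reductions have something non-degenerate to work with.
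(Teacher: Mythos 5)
There is a genuine gap, and it lies exactly in the structural claim you flag as ``bookkeeping.'' You assert that a vertex of degree $\ge 4$ coexisting with all other vertices of degree $2$ is impossible ``by the parity of the degree sum,'' but parity only rules out a single vertex of \emph{odd} degree. A single vertex of even degree $\ge 4$ with all others of degree $2$ is perfectly possible, and in this paper it genuinely occurs: loops are permitted, a one-vertex graph counts as 3-edge-connected, and the paper explicitly notes that $G$ may be ``a collection of cycles which intersect at a common vertex'' (e.g.\ a figure-eight, which is 2-edge-connected, cyclically 3-edge-connected, and a subdivision of the one-vertex graph with two loops). Such a graph has at most one vertex of degree $\ge 3$ but is not a cycle, so your reduction to the case $P = G$ a single cycle does not cover it, and your argument never produces the required $\phi$ there.

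The repair is essentially the paper's proof, which does not attempt to show $G$ is a cycle. Instead, for \emph{every} ear $P$ of $G$ (in the exceptional situation each ear is either the whole cycle or a cycle through the unique high-degree vertex, i.e.\ the second type of ear), one applies Lemma~\ref{goodgain} to get an ear labelling with $\gain \ge \bonus(P)$, takes $\phi$ to be the union of these labellings, and observes that $\partial\phi = \mu_G$ at every degree-2 vertex by the definition of ear labelling; since $\partial\phi$ and $\mu_G$ are both zero-sum and disagree at most at the single vertex of degree $\ge 3$, they agree everywhere, contradicting that $G$ is a counterexample to Lemma~\ref{workhorse}. Your handling of the single-cycle case (including the zero-sum observation that an ear labelling of a full cycle automatically has boundary $\mu_G$, and the comparison $\gain \ge 8 \ge \bonus$) is fine as far as it goes, and your initial confusion about the scaling of $\gain$ versus $\bonus$ resolves correctly, but without the union-over-all-ears step the proof is incomplete.
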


\begin{proof}
Suppose (for a contradiction) that $G$ has at most one vertex of degree at least $3$.
Apply Lemma~\ref{goodgain} to choose an ear labelling $\psi : E(P) \rightarrow \mathbb{Z}_3$ with $\gain(\psi) \ge \bonus(P)$ for every ear $P$ of $G$.  Let $\phi: E(G) \rightarrow \mathbb{Z}_3$ be the union of these functions and note that $\gain(\phi) \ge \bonus(G)$.  It follows from our construction that $\partial \phi (v) = \mu_G(v)$ holds at every vertex $v$ with $\deg(v) = 2$.  Since $\partial \phi$ and $\mu_G$ are both zero-sum, we conclude that $\partial \phi = \mu_G$, which is a contradiction. Hence $G$ has at least two vertices with degree at least $3$.
\end{proof}

If $H$ is a weighted graph and $uv \in E(H)$, then \emph{contracting} $uv$ gives a new weighted graph $H'$ where the underlying graph $H'$ is obtained from $H$ by contracting the edge $uv$ to form a new vertex $y$, and the weight function $\mu_{H'}$ is given by the following rule:
\[ \mu_{H'} (x) = \left\{ \begin{array}{cl}
		\mu_{H}(x)	&	\mbox{if $x \neq y$}	\\
		\mu_H(u) + \mu_H(v)	&	\mbox{if $x=y$}
		\end{array} \right. \]
Note that if $\mu_H$ is zero-sum, then $\mu_{H'}$ will also be zero-sum.

\begin{lemma}
\label{noequitable}
If $P$ is an ear of $G$ with ear labellings $\psi_1, \psi_2, \psi_3$, then there do not exist $e_1, e_2, e_3 \in E(P)$ so that
$\psi_i(e_i) = 0$ for $1 \le i \le 3$.  In particular, $G$ has no equitable ear.
\end{lemma}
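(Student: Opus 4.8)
The ``in particular'' clause is immediate from the main assertion: an equitable ear $P$ has $|E(P)|$ equal to a positive multiple of $3$, so the three zero-sets $\psi_1^{-1}(0),\psi_2^{-1}(0),\psi_3^{-1}(0)$ (which, recall, partition $E(P)$ since each edge is zero in exactly one ear labelling) each have size $|E(P)|/3\ge 1$, and choosing $e_i\in\psi_i^{-1}(0)$ produces exactly the configuration the first part forbids. So it suffices to prove the first statement, and for a contradiction we suppose $e_1,e_2,e_3\in E(P)$ satisfy $\psi_i(e_i)=0$; since the sets $\psi_i^{-1}(0)$ partition $E(P)$ and $e_i\in\psi_i^{-1}(0)$, the three edges are distinct, and $|E(P)|\ge 3$. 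The idea is to contract $e_1,e_2,e_3$, invoke the minimality of $G$ on the smaller graph, and then lift the resulting function back to $G$.

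Let $G''$ be obtained from $G$ by contracting $e_1,e_2,e_3$ one at a time (with the weight-contraction rule defined before Lemma~\ref{workhorse}, which keeps $\mu$ zero-sum). By Lemma~\ref{2deg3}, $G$ is not a single ear, so $P\subsetneq G$ and $|E(G'')|=|E(G)|-3<|E(G)|$; moreover contracting an edge of an ear only shortens (or, if all of $P$ is consumed, deletes) the corresponding subdivided edge or loop of the underlying $3$-edge-connected graph, and contraction cannot decrease edge-connectivity nor create a small cyclic edge-cut, so $G''$ is again a subdivision of a $3$-edge-connected graph. The crucial bookkeeping point is that $\bonus(G'')=\bonus(G)$: all ears except $P$ are untouched, $P$ becomes $P''$ with $|E(P'')|=|E(P)|-3\equiv|E(P)|\pmod 3$, and removing one edge from each of the three classes $\psi_i^{-1}(0)$ leaves these three sizes either all still equal or all still not-equal, so $P''$ is equitable iff $P$ is; hence $\bonus(P'')=\bonus(P)$ (and if $|E(P)|=3$ then $P$ is equitable with bonus $0$ and $P''$ vanishes, so the identity still holds). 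By minimality, Lemma~\ref{workhorse} holds for $G''$, giving $\phi'':E(G'')\to\mathbb{Z}_3$ with $\partial\phi''=\mu_{G''}$ and $\gain(\phi'')\ge\bonus(G'')=\bonus(G)$.

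Now un-contract $e_3$, then $e_2$, then $e_1$: at each step the value on the reintroduced edge is forced by the degree-$2$ vertices it creates, and the standard uncontraction argument (in the spirit of Observation~\ref{putback}) yields a function $\phi:E(G)\to\mathbb{Z}_3$ with $\partial\phi=\mu_G$. Since $\partial\phi$ agrees with $\mu_G$ at every interior vertex of $P$, the restriction $\phi|_{E(P)}$ is one of the ear labellings, say $\psi_j$; because the zero-sets partition $E(P)$ and $e_i\in\psi_i^{-1}(0)$, we have $\phi(e_i)=\psi_j(e_i)=0$ exactly when $i=j$. Thus precisely two of $\phi(e_1),\phi(e_2),\phi(e_3)$ are nonzero, so $|\supp(\phi)|=|\supp(\phi'')|+2$ and $|E(G)|=|E(G'')|+3$, whence
\[ \gain(\phi)=24\bigl(|\supp(\phi'')|+2\bigr)-16\bigl(|E(G'')|+3\bigr)=\gain(\phi'')\ge\bonus(G). \]
Together with $\partial\phi=\mu_G$ this contradicts the choice of $G$ as a minimal counterexample to Lemma~\ref{workhorse}.

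\textbf{Main obstacle.} The only delicate points are the two bookkeeping claims in the second paragraph: that $G''$ remains a subdivision of a $3$-edge-connected graph, and, more importantly, that $\bonus$ is preserved exactly under deletion of one edge from each zero-class. The reason the argument closes so cleanly is the numerology behind $\gain$: removing three edges while keeping exactly two of them in the support is precisely the break-even ratio $2:3$ for $24\,|\supp|-16\,|E|$, and the forced labelling $\psi_j$ on $P$ guarantees this ratio automatically. Everything else (distinctness of the $e_i$, that contraction preserves $2$-edge-connectivity and cyclic $3$-edge-connectivity, and that uncontraction preserves the prescribed boundary) is routine.
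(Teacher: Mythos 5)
Your proposal is correct and follows essentially the same route as the paper's proof: contract the three edges $e_1,e_2,e_3$, observe that the bonus is unchanged, invoke minimality on the contracted graph, and uncontract (values forced by the boundary conditions) so that the restriction to $P$ is an ear labelling with exactly one of the $e_i$ mapped to zero, which preserves the gain and yields the contradiction. The only difference is that you spell out the break-even gain arithmetic and the equitability bookkeeping explicitly, which the paper leaves as "follows from our definitions."
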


\begin{proof}
  Suppose to the contrary that there exist such edges $e_1, e_2, e_3$. Let $G_1$ be the weighted graph obtained from~$G$ by contracting~$e_1$, let $G_2$ be obtained
from~$G_1$ by contracting~$e_2$, and let $G_3$ be obtained from~$G_2$ by contracting~$e_3$. Now $E(P)\setminus \{e_1,e_2,e_3\}$ is either empty (in which case $P$ was
equitable and had bonus~0) or it induces an ear of $G_3$ with the same length modulo 3 as~$P$.
It follows from our definitions that $\bonus(G_3) = \bonus(G)$, so by the minimality of
our counterexample, we may choose a function $\phi : E(G_3) \rightarrow \mathbb{Z}_3$ with $\partial \phi = \mu_{G_3}$ and $\gain(\phi) \ge \bonus(G)$.  Now we will
step back from~$G_3$ to~$G_2$ by reversing the contraction of $e_3$.  Since any two zero-sum functions which agree on all but one vertex also agree on the last, we may
extend $\phi$ by choosing a value for $\phi(e_3)$ in such a way that $\partial \phi = \mu_{G_2}$.  Repeating this argument to reverse the contraction of $e_2$ and then
$e_1$ results in a function $\phi : E(G) \rightarrow \mathbb{Z}_3$ with $\partial \phi = \mu_G$.  The restriction of $\phi$ to $E(P)$ is an ear labelling, so by our
assumption $\phi(e_i) = 0$ for exactly one $1 \le i \le 3$.  This function $\phi$ satisfies the conclusion of Lemma~\ref{workhorse}, thus giving us a contradiction.
Therefore no such ear may exist.  In particular this implies that $G$ has no equitable ear.
\end{proof}

\subsection{Contraction}

In order to handle more general situations, we will now establish some tools for finding subgraphs of $G$ which we can contract.  In many of our arguments, we will be
interested in removing a sequence of ears which have nonzero length modulo 3, and we will call on Lemma~\ref{goodgain} to find suitable ear labellings.  In light of this,
it is natural (and helpful) to introduce a concept of gain for a partial ear decomposition.  Let $H$ be a graph, let $P_1, P_2, \ldots, P_{\ell}$ be a partial ear
decomposition of $H$, and assume that $|E(P_i)| \equiv r_i \pmod{3}$ where $0 \le r_i \le 2$ for every $1 \le i \le \ell$.
Then we define (motivated by Lemma~\ref{goodgain})  the \emph{gain} of $P_1, P_2,
\ldots, P_{\ell}$ to be $\sum_{i = 1}^{\ell} 8 r_i$.  The following easy lemma gives a first use of this concept, explaining the connection to the gain of an ear
labelling.

\begin{lemma}
\label{fulldecompose}
Let $H$ be an oriented zero-sum weighted graph with a full ear decomposition $P_1, \ldots, P_{\ell}$.  Then there exists $\phi : E(H) \rightarrow \mathbb{Z}_3$ with $\partial \phi = \mu_H$ so that $\gain(\phi)$ is at least the gain of $P_1, \ldots, P_{\ell}$.
\end{lemma}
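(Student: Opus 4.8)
The plan is to induct on $\ell$, peeling off the last ear $P_\ell$ and applying Observation~\ref{putback} to glue an ear labelling of $P_\ell$ onto a function obtained for the smaller graph. First I would handle the base case $\ell = 1$: here $H = P_1$ is a single ear (necessarily a cycle, since a full ear decomposition must start with a cycle), so by Lemma~\ref{goodgain} there is an ear labelling $\psi$ of $P_1$ with $\gain(\psi) \ge 8 r_1$ (whether $P_1$ is equitable, in which case $r_1$ might be $0$ and $\gain \ge 0$, or inequitable). Since $H = P_1$ is a cycle and $\mu_H$ is zero-sum, an ear labelling of the whole cycle automatically has boundary equal to $\mu_H$ — this is exactly the parenthetical remark preceding Observation 3.3 — so $\phi = \psi$ works.

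For the inductive step with $\ell \ge 2$, let $H'$ be the graph obtained from $H$ by removing $P_\ell$, i.e.\ $H' = (H - E(P_\ell))^{\times}$. Then $P_1, \dots, P_{\ell-1}$ is a full ear decomposition of $H'$ (this is immediate from the definition of partial/full ear decomposition), and $H'$ is $2$-edge-connected. Choose an ear labelling $\psi$ of $P_\ell$ with $\gain(\psi) \ge 8 r_\ell$ using Lemma~\ref{goodgain}, and let $H''$ be the $\psi$-removal of $P_\ell$: this is $H'$ equipped with the zero-sum weight function $\mu_{H''}(v) = \mu_H(v) - \partial\psi(v)$, which is again an oriented zero-sum weighted graph. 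By the induction hypothesis applied to $H''$ with its full ear decomposition $P_1, \dots, P_{\ell-1}$, there is $\phi'' : E(H'') \to \mathbb{Z}_3$ with $\partial \phi'' = \mu_{H''}$ and $\gain(\phi'') \ge \sum_{i=1}^{\ell-1} 8 r_i$. Now Observation~\ref{putback} combines $\phi''$ and $\psi$ into a single function $\phi : E(H) \to \mathbb{Z}_3$ with $\partial \phi = \mu_H$.

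It remains to check the gain bound, and this is the only place a small computation is needed — and it is the step most likely to hide a subtlety. Since $E(H)$ is the disjoint union of $E(H'')$ and $E(P_\ell)$, and $\supp(\phi)$ is correspondingly the disjoint union of $\supp(\phi'')$ and $\supp(\psi)$, the quantity $\gain(\phi) = 24|\supp(\phi)| - 16|E(H)|$ is additive: $\gain(\phi) = \gain(\phi'') + \gain(\psi)$. Hence $\gain(\phi) \ge \sum_{i=1}^{\ell-1} 8 r_i + 8 r_\ell = \sum_{i=1}^{\ell} 8 r_i$, which is precisely the gain of $P_1, \dots, P_\ell$, completing the induction. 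The main thing to be careful about is that removing an ear may delete vertices (those that become isolated), so one should confirm that $\gain$ really only depends on edge counts — which it does, by definition — so the passage between $H$ and $H''$ causes no bookkeeping problem with vertices. I expect no genuine obstacle here; the lemma is essentially an assembly of Lemma~\ref{goodgain} and Observation~\ref{putback} via induction.
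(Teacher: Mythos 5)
Your proposal is correct and follows essentially the same route as the paper: the paper's proof also peels the ears off from $P_{\ell}$ down to $P_1$, choosing each ear labelling via Lemma~\ref{goodgain} and reassembling with repeated applications of Observation~\ref{putback}, with the gain bound coming from the same additivity of $\gain$ over the disjoint edge sets. Your version merely phrases this as an explicit induction on $\ell$, which is a presentational difference only.
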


\begin{proof} Starting with $i = \ell$ and working down to $i=1$ we apply Lemma~\ref{goodgain} to choose an ear labelling $\psi_i$ of $P_i$ and then replace $H$ by the $\psi_i$-removal of $P_i$.  If $\phi$ is the union of these $\psi_i$ functions, then by repeated applications of Lemma~\ref{putback}, we have $\partial \phi = \mu_H$ and by construction we have that $\gain(\phi)$ is at least the gain of $P_1, \ldots, P_{\ell}$.
\end{proof}

\bigskip

Next we define a notion of contractible based on this concept of  gain.

\begin{definition}
Let $H \subseteq G$ be a union of ears.  We say that $H$ is \emph{contractible} if there exists a full ear decomposition $P_1, P_2, \ldots, P_{\ell}$ of $H$ with gain
at least~$\bonus(H)$.
\end{definition}

For clarity, we note that $\bonus(H) = \sum_{j=1}^k \bonus(Q_j)$, where $Q_1$, \dots, $Q_k$ are the ears of~$G$ that
comprise~$H$. However, the $P_i$'s and the $Q_j$'s are, in general, different (each of the $P_i$'s is a union of some of the $Q_j$'s).

Also note that, in particular, a contractible subgraph is 2-edge-connected.
The next lemma shows that this definition captures the desired notion.

\begin{lemma}
$G$ has no contractible subgraph.
\end{lemma}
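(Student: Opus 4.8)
The plan is to argue by contradiction: suppose $H \subseteq G$ is a contractible subgraph, so $H$ is $2$-edge-connected and has a full ear decomposition $P_1, \ldots, P_\ell$ whose gain is at least $\bonus(H)$. The goal is to show that $G$ can be reduced to a strictly smaller weighted graph still satisfying the hypotheses of Lemma~\ref{workhorse}, from which the minimal counterexample property produces a function $\phi$ that, when combined with a good labelling of the ears of $H$, contradicts the choice of $G$. The natural reduction is to \emph{contract} all of $H$ to a single vertex: form the weighted graph $G/H$ by contracting each edge of $H$ (using the edge-contraction operation on weighted graphs already defined, iterated), so the new vertex $y$ receives weight $\sum_{v \in V(H)} \mu_G(v)$, which keeps $\mu_{G/H}$ zero-sum.

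First I would verify that $G/H$ is again a subdivision of a $3$-edge-connected graph. This should follow from the fact that $G$ is cyclically $3$-edge-connected and $H$ is $2$-edge-connected (hence connected): contracting a connected subgraph cannot create an edge-cut of size $\le 2$ separating cycles that did not already exist, because any such cut in $G/H$ pulls back to a cut of the same size in $G$ separating cycles (one side containing the contracted vertex $y$ corresponds to a side containing all of $V(H)$). I would also need to handle the degenerate possibility $E(G/H) = \emptyset$, i.e.\ $H = G$; in that case $G$ itself would be contractible, and Lemma~\ref{fulldecompose} applied to the full ear decomposition $P_1,\ldots,P_\ell$ directly gives $\phi : E(G) \to \mathbb{Z}_3$ with $\partial\phi = \mu_G$ and $\gain(\phi)$ at least the gain of the decomposition, which is $\ge \bonus(H) = \bonus(G)$ — immediately contradicting that $G$ is a counterexample. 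Otherwise $|E(G/H)| < |E(G)|$, so minimality gives $\phi' : E(G/H) \to \mathbb{Z}_3$ with $\partial \phi' = \mu_{G/H}$ and $\gain(\phi') \ge \bonus(G/H)$.

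Next I would reconstruct a function on all of $G$. Think of $G/H$ as obtained from $G$ by removing the ears of $H$ one at a time (in the order given by the ear decomposition $P_1,\ldots,P_\ell$, processed from $P_\ell$ down): by Lemma~\ref{fulldecompose} applied to $H$ with weight function $\mu_G - \partial\phi'$ appropriately interpreted, or more directly by iterating Observation~\ref{putback} together with Lemma~\ref{goodgain} on each $P_i$, I obtain an extension of $\phi'$ to a function $\phi : E(G) \to \mathbb{Z}_3$ with $\partial \phi = \mu_G$ and
\[ \gain(\phi) \;\ge\; \gain(\phi') + \bigl(\text{gain of } P_1,\ldots,P_\ell\bigr) \;\ge\; \bonus(G/H) + \bonus(H). \]
Finally I would check the bonus bookkeeping: $\bonus(G) = \bonus(G/H) + \bonus(H)$, since the ears of $G$ split into those lying inside $H$ (contributing $\bonus(H)$ by the note following the definition of contractible) and those lying outside $H$, and an ear outside $H$ survives contraction of the connected subgraph $H$ with its length modulo $3$ unchanged (it either misses $H$ entirely, or meets it only at endpoints, which become the single vertex $y$ — the interior and the edge count are untouched), hence the same bonus; the ears of $G/H$ are exactly these surviving ears. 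Combining, $\gain(\phi) \ge \bonus(G)$, contradicting that $G$ is a counterexample.

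The main obstacle is the bonus-accounting step — specifically, confirming that contracting $H$ does not merge, split, lengthen, or shorten any ear of $G$ lying outside $H$, and that no ear of $G/H$ arises that is not accounted for. One must be careful about ears of $G$ that have an endpoint (or, in the cycle case, their unique high-degree vertex) inside $V(H)$: after contraction this endpoint becomes $y$, but the ear's interior vertices and its edge set are unchanged, so its length mod $3$ and hence its bonus is preserved; an ear of $G$ that is a cycle meeting $H$ only in one vertex stays such a cycle in $G/H$. I would also double-check the edge case where two distinct endpoints of a single ear $P$ of $G$ both lie in $V(H)$ — then in $G/H$ the ear $P$ becomes a cycle through $y$ rather than a path, but still an ear of the same length, so its bonus is unchanged. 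Verifying $G/H$ stays cyclically $3$-edge-connected in this setting is the other delicate point, but the contrapositive argument (pull back a small bad cut through the contraction) handles it cleanly.
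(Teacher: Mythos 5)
Your primary route---contract $H$, invoke minimality of $G$ on the contracted graph to get $\phi'$, extend it by zero on $E(H)$, and then apply Lemma~\ref{fulldecompose} to $H$ with the residual zero-sum weight $\mu_G-\partial\phi'$ to obtain $\psi$ with gain at least $\bonus(H)$---is exactly the paper's proof, and your additional care about the bonus bookkeeping and the connectivity of the contraction merely fills in details the paper leaves implicit. One caution: the parenthetical shortcut of ``iterating Observation~\ref{putback} with labellings chosen via Lemma~\ref{goodgain}'' does not work on its own, since contraction is not ear removal and those labellings need not realize the required residual boundary $\mu_G-\partial\phi'$ at each vertex of $V(H)$; the Lemma~\ref{fulldecompose} formulation you state first is the one to keep.
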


\begin{proof}
Suppose (for a contradiction) that $H$ is a contractible subgraph of $G$ and that $P_1, \ldots, P_{\ell}$ is the indicated ear decomposition of $H$.  Note that $H$ must be
  connected since it has a full ear decomposition.  Let $G'$ be the weighted graph obtained from $G$ by contracting all of the edges in $H$.  By the minimality of our
  counterexample we may choose a function $\phi' : E(G') \rightarrow \mathbb{Z}_3$ with $\partial \phi' = \mu_{G'}$ and
  $\gain(\phi') \ge \bonus(G') = \bonus(G) - \bonus(H)$.  Extend $\phi'$ to have domain $E(G)$ by setting $\phi'(e) = 0$ for every $e \in E(H)$.  Now $\partial \phi'$ and $\mu_G$ agree on every vertex in $V(G)
  \setminus V(H)$. We define $\nu : V(H) \rightarrow \mathbb{Z}_3$ by the rule $\nu(v) = \mu_G(v) - \partial \phi'(v)$.  Since $\mu_G$ and $\partial \phi'$ are both
  zero-sum, the function $\nu$ will also be zero-sum.  Using Lemma~\ref{fulldecompose} we may choose a function $\psi : E(H) \rightarrow \mathbb{Z}_3$ with $\partial \psi
  = \nu$ and $\gain(\psi) \ge \bonus(H)$.  Now the union of $\phi'$ and $\psi$ has boundary $\mu_G$ and gain at least $\bonus(G)$, which is a contradiction.
\end{proof}

\bigskip

Next we call on the concept of contractible to eliminate some additional subgraphs of $G$.

\begin{lemma}
\label{easycont}
There does not exist a cycle $C \subseteq G$ satisfying one of
\begin{enumerate}
\item $C$ is the union of at most two ears and $|E(C)| \not\equiv 0 \pmod{3}$,
\item $C$ is the union of three or four ears and $|E(C)| \equiv 2 \pmod{3}$.
\end{enumerate}
\end{lemma}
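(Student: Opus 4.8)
The plan is to show that any such cycle $C$ would be a contractible subgraph of $G$, contradicting the lemma that $G$ has no contractible subgraph. The crucial point is that since $C$ is itself a cycle, the one-term list $P_1 = C$ is a full ear decomposition of the graph $C$ (using the third type of ear, where the ear is all of the ambient graph). So it suffices to check that this particular full ear decomposition has gain at least $\bonus(C)$.

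First I would compute the gain of the decomposition $P_1 = C$. Writing $|E(C)| \equiv r \pmod 3$ with $0 \le r \le 2$, this gain is by definition exactly $8r$. In case~(1) the hypothesis $|E(C)| \not\equiv 0 \pmod 3$ forces $r \in \{1,2\}$, so the gain is at least $8$; in case~(2) we have $r = 2$, so the gain is $16$.

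Next I would bound $\bonus(C)$ from above. Write $C = Q_1 \cup \dots \cup Q_k$ where $Q_1, \dots, Q_k$ are the ears of $G$ comprising $C$ (edge-disjoint, since distinct ears of $G$ partition $E(G)$), so that $\bonus(C) = \sum_{j=1}^k \bonus(Q_j)$. By Lemma~\ref{noequitable}, $G$ has no equitable ear, so every $\bonus(Q_j) \in \{3,4\}$, and in particular $\bonus(Q_j) \le 4$. In case~(1) we have $k \le 2$, hence $\bonus(C) \le 8 \le 8r$; in case~(2) we have $k \le 4$, hence $\bonus(C) \le 16$, which equals the gain. Either way the full ear decomposition $P_1 = C$ witnesses that the (2-edge-connected) subgraph $C$ is contractible, contradicting the earlier lemma; hence no such cycle exists.

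There is essentially no obstacle here beyond making the right choice of ear decomposition: once one uses the trivial decomposition $P_1 = C$ rather than decomposing $C$ along the constituent ears $Q_j$, and recalls from Lemma~\ref{noequitable} that every ear of $G$ has bonus $3$ or $4$, the numerical inequalities are immediate. The only mild subtlety is that in case~(2) one really does need all four ears to contribute at most $4$ each, and the resulting bound $16 \le 16$ is still enough, since contractibility only requires gain $\ge \bonus$.
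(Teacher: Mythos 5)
Your proposal is correct and matches the paper's own argument: both use the trivial full ear decomposition $P_1 = C$ with gain $8r$ and bound $\bonus(C)$ by $4$ per constituent ear ($\le 8$ resp.\ $\le 16$), concluding that $C$ would be contractible. The appeal to Lemma~\ref{noequitable} is not even needed, since every ear has bonus at most $4$ by definition.
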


\begin{proof}
In the former case, $\bonus(C) \le 8$ and $C$ has a full ear decomposition (consisting of the entire cycle $C$) of gain $\ge 8$, so $C$ is contractible. In the latter case, $\bonus(C) \le 16$ and $C$ has a full ear decomposition of gain $\ge 16$, so it is contractible.
\end{proof}

\subsection{Deletion}

Next we will introduce some tools to facilitate deletion arguments.  Unlike contraction which automatically preserves our desired edge-connectivity, we will need to be careful when deleting.

\begin{definition}
A subgraph $H \subseteq G$ is \emph{reducible} if there exists a partial ear decomposition $P_1, \ldots, P_{\ell}$ of $G$ together with functions $\psi_i : E(P_i) \rightarrow \mathbb{Z}_3$ for $1 \le i \le \ell$ satisfying the following properties:
\begin{itemize}
\item $H = \cup_{i=1}^{\ell} P_i$,
\item $(G - E(H))^{\times}$ is empty or is a subdivision of a 3-edge-connected graph,
\item The function $\psi_j$ is an ear labelling of $P_j$ in the $\psi_{j+1}$-removal of $P_{j+1}$ from $\ldots$ from the
  $\psi_{\ell}$-removal of $P_{\ell}$ from~$G$.
\item The weighted graph $G'$ that is
  the $\psi_{1}$-removal of $P_{1}$ from $\ldots$ from the $\psi_{\ell}$-removal of $P_\ell$ from~$G$
	satisfies
  $\sum_{i=1}^{\ell} \gain(\psi_i) \ge \bonus(G) - \bonus(G')$.
\end{itemize}
\end{definition}

The following observation shows that our minimal counterexample $G$ cannot contain a subgraph of this type.

\begin{observation}
\label{noreducible}
$G$ does not have a reducible subgraph $H$.
\end{observation}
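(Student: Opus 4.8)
The plan is to show that a reducible subgraph $H$ would contradict the minimality of $G$ by the same template used in the proof that $G$ has no contractible subgraph, but now with the roles of contraction and deletion reversed. Given a reducible $H$ with the partial ear decomposition $P_1,\dots,P_\ell$ and labellings $\psi_1,\dots,\psi_\ell$, I would first form the weighted graph $G'=(G-E(H))^\times$ together with the weight function obtained by successively performing the $\psi_\ell$-removal of $P_\ell$, then the $\psi_{\ell-1}$-removal of $P_{\ell-1}$, and so on down to the $\psi_1$-removal of $P_1$ (the third bullet in the definition guarantees each $\psi_j$ is a legitimate ear labelling at the moment we remove it, so this sequence of removals is well-defined). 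The second bullet tells us $G'$ is either empty or a subdivision of a 3-edge-connected graph, so $G'$ is a strictly smaller instance to which the minimality of $G$ applies (if $G'$ is empty the conclusion is trivial; if nonempty, $|E(G')|<|E(G)|$ since $H$ is nonempty).

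Next I would apply minimality to obtain $\phi':E(G')\to\mathbb{Z}_3$ with $\partial\phi'=\mu_{G'}$ and $\gain(\phi')\ge\bonus(G')$. Then I would reassemble a function on $E(G)$ by repeatedly invoking Observation~\ref{putback}: starting from $\phi'$ on $G'$, put back $P_1$ using $\psi_1$, then $P_2$ using $\psi_2$, and so on up to $P_\ell$, each time extending the current function by the corresponding ear labelling. Observation~\ref{putback} guarantees that at each stage the boundary condition is restored, so the final function $\phi:E(G)\to\mathbb{Z}_3$ satisfies $\partial\phi=\mu_G$. For the gain, $\gain$ is additive over edge-disjoint subgraphs, so $\gain(\phi)=\gain(\phi')+\sum_{i=1}^\ell\gain(\psi_i)\ge\bonus(G')+(\bonus(G)-\bonus(G'))=\bonus(G)$, where the last inequality is exactly the fourth bullet in the definition of reducible. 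Hence $\phi$ satisfies the conclusion of Lemma~\ref{workhorse}, contradicting the assumption that $G$ is a counterexample.

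The only genuinely delicate point is bookkeeping: one must be careful that the "removal" order ($P_\ell$ first, down to $P_1$) and the "putback" order ($P_1$ first, up to $P_\ell$) are consistent, i.e., that the weighted graph produced by the chain of removals is precisely the one whose edge set is $E(G')$ and whose weights are the ones used in the minimality step, and conversely that the putback chain rebuilds $\mu_G$ exactly. This is a direct consequence of how $\psi$-removal and Observation~\ref{putback} are set up — $\psi$-removal subtracts $\partial\psi$ from the weights and putback adds back a function with that same boundary — so no new idea is needed, only care. I do not expect any substantive obstacle; the lemma is essentially a repackaging of Observation~\ref{putback} and the additivity of $\gain$ and $\bonus$, mirroring the earlier contractible-subgraph argument with deletion in place of contraction.
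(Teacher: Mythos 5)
Your proposal is correct and follows essentially the same route as the paper: apply minimality to the weighted graph $G'$ produced by the chain of $\psi_i$-removals, then rebuild a function on $E(G)$ by repeated use of Observation~\ref{putback}, with the gain bound coming from additivity of $\gain$ and the fourth bullet of the definition. Your extra remarks about the removal/putback ordering and the empty-$G'$ case are fine but add nothing beyond the paper's argument.
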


\begin{proof}
  Suppose (for a contradiction) that such a subgraph exists and let $P_1, \ldots, P_{\ell}$ and $\psi_1, \ldots, \psi_{\ell}$ and $G'$ be as in the above
definition.  By the minimality of the counterexample $G$ (and by the second property in the definition of reducible) we may choose a function $\phi' : E(G') \rightarrow \mathbb{Z}_3$ with $\partial \phi' = \mu_{G'}$ and $\gain(\phi') \ge \bonus(G')$.  Now repeatedly applying Observation~\ref{putback} to $P_1, \ldots, P_{\ell}$
gives us a function $\phi : E(G) \rightarrow \mathbb{Z}_3$ with $\partial \phi = \mu_G$ and $\gain(\phi) \ge \bonus(G)$ which is a contradiction.
\end{proof}

\bigskip

We will apply the above observation repeatedly, but we will begin with an easy instance.

\begin{lemma}
\label{easydelconseq}
$G$ does not have either of the following:
\begin{itemize}
\item a cycle which is an ear,
\item a cycle consisting of two ears containing a vertex of degree 3 in $G$.
\end{itemize}
\end{lemma}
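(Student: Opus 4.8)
The plan is to show that each of the two forbidden configurations, if it existed in $G$, would be a reducible subgraph, contradicting Observation~\ref{noreducible}. In both cases the subgraph $H$ in question is a cycle $C$, and removing a cycle from a 2-edge-connected graph either empties it or leaves a graph that is still (after deleting isolated vertices) 2-edge-connected and cyclically 3-edge-connected — this is the content that makes the second bullet in the definition of reducible hold, and I would spell it out carefully since it is the only substantive point.

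First consider a cycle $C$ which is an ear of $G$. By the definition of ear, $C$ contains at most one vertex of degree $\neq 2$ in $G$; write that vertex (if it exists) as $v$. If there is no such vertex then $C = G$, and since $G$ has at least two vertices of degree $\ge 3$ by Lemma~\ref{2deg3}, this case does not arise. So $C$ meets the rest of $G$ only at $v$, and $(G - E(C))^{\times}$ is obtained by deleting $v$ (which has degree $2$ in $G - E(C)$ only if... — more precisely, deleting the two $C$-edges at $v$ drops $\deg(v)$ by $2$) — in any case $(G-E(C))^{\times}$ is a disjoint thing attached to $G$ at a single vertex, hence remains 2-edge-connected and cyclically 3-edge-connected, i.e.\ a subdivision of a 3-edge-connected graph (or empty). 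Now take the partial ear decomposition consisting of the single ear $P_1 = C$. Since $G$ has no equitable ear (Lemma~\ref{noequitable}), $C$ is inequitable, so $|E(C)| \equiv i \pmod 3$ with $i \in \{1,2\}$ forces — wait, $i$ could be such that $|E(C)| \equiv 0$: but then by the same lemma $C$ cannot be equitable, yet a cycle-ear with length divisible by $3$... Actually Lemma~\ref{noequitable} rules out $C$ being equitable, and a length-$\equiv 0$ ear that is inequitable still has, by Lemma~\ref{goodgain} applied with $i = 3$, an ear labelling of gain $\ge 24 > \bonus(C)$. In general, by Lemma~\ref{goodgain}, $C$ has an ear labelling $\psi_1$ with $\gain(\psi_1) \ge 8i \ge \bonus(C) = \bonus(G) - \bonus(G')$ (the last equality because $G'$ contains exactly the ears of $G$ other than $C$ — one must check removing $C$ doesn't merge remaining ears, which holds since $C$'s only contact with $G$ is the single vertex $v$). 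Hence $C$ is reducible, a contradiction.

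Now consider a cycle $C$ consisting of two ears $P_1, P_2$ and containing a vertex $w$ of degree $3$ in $G$. The two ears share their endpoints, which are exactly the two vertices of $C$ of degree $\ge 3$ in $G$; one of these is $w$. Deleting $E(C)$ lowers the degree of each of these two vertices by $2$; at $w$, whose degree was $3$, this leaves degree $1$, so $w$ is not isolated — hmm, degree $1$ means $(G - E(C))^{\times}$ has a vertex of degree $1$, which cannot be a subdivision of a 3-edge-connected graph. So this configuration cannot be handled by simply deleting $C$; instead I would argue that such a $C$ does not leave a valid remainder, so reducibility must be set up differently — or, more likely, the intended argument is that $w$ having degree $3$ and lying on a $2$-ear cycle means the third edge $f$ at $w$ together with $C$ already forces structure that Lemma~\ref{easycont} or a direct contractibility argument kills. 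Here is the clean route: the third edge $f$ at $w$ is an ear (since removing the two $C$-edges at $w$, this edge has an endpoint $w$ of degree... ), and $C + f$ — no. The right statement: take $H = C$ with the partial ear decomposition $P_1, P_2$; then $(G - E(H))^\times$ need not be $3$-edge-connected, so instead observe that after deleting $E(C)$ the vertex $w$ becomes a leaf, so its pendant edge $f$ can be absorbed: $(G - E(C))^{\times}$ has $w$ of degree $1$, but then in fact we should have included $f$ in $H$. So enlarge $H$ to $C \cup \{f\}$, which is a union of three ears; now $(G - E(H))^{\times}$ deletes $w$ entirely and lowers the other $C$-endpoint's degree by $2$ and $f$'s other endpoint's degree by $1$. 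This is getting delicate.

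The main obstacle, and the part I would write most carefully, is exactly this second item: verifying that the remainder $(G - E(H))^{\times}$ is a subdivision of a 3-edge-connected graph. For the cycle-ear this is easy (single vertex of contact). For the two-ear cycle through a degree-$3$ vertex $w$, the honest approach is: the third edge $f$ at $w$ must have its far endpoint elsewhere; set $H$ to be the union of $P_1$, $P_2$, and the ear $P_3$ through $f$; then $H$ is a union of three ears, and one checks that removing $H$ deletes $w$ and reduces two other degrees appropriately so that cyclic $3$-edge-connectivity survives — using that $G$ was cyclically $3$-edge-connected and $w$ had degree exactly $3$ (so $\{$two $C$-edges, $f\}$ was not a cycle-separating cut unless $P_1,P_2,P_3$ already enclose the whole graph, a case excluded via Lemma~\ref{2deg3}). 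Then by Lemma~\ref{goodgain}, using that none of these ears is equitable (Lemma~\ref{noequitable}), one picks ear labellings with total gain at least $8(r_1 + r_2 + r_3)$ and compares to $\bonus(H) = \bonus(P_1) + \bonus(P_2) + \bonus(P_3) \le 4 \cdot 3 = 12$, noting $\bonus(P_i) \le 4$ while the gain from an inequitable ear of residue $r$ is $8r \ge 8$ when $r \ne 0$; a short case check (the only worry being ears of residue $0$, i.e.\ length $\equiv 0 \bmod 3$, which are inequitable by Lemma~\ref{noequitable} and still yield gain $\ge 24$ via $i=3$ in Lemma~\ref{goodgain}) gives $\sum \gain(\psi_i) \ge \bonus(H) = \bonus(G) - \bonus(G')$, so $H$ is reducible — contradiction.
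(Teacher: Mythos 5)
Your first bullet follows the paper's route (treat the cycle--ear $C$ as a one--ear reducible subgraph), but the justification of the key accounting step is not valid: ``removing $C$ does not merge remaining ears because $C$ meets the rest of $G$ in a single vertex'' is not a reason --- merging at the attachment vertex $v$ is prevented only if $v$ keeps degree at least $3$ after the removal, i.e.\ only if $\deg_G(v)\ge 5$. That is in fact true, but it needs the argument the paper gives: the non-$C$ edges at $v$ form an edge cut separating the cycle $C$ from a part of $G$ that contains a cycle (using Lemma~\ref{2deg3}), so cyclic $3$-edge-connectivity forces at least three such edges. If $\deg_G(v)$ could be $4$, two ears would merge at $v$ and a gain of $8$ need not cover the bonus drop, so this point cannot be waved away.

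The second bullet is where the proposal genuinely breaks down. You end up removing $H=P_1\cup P_2\cup P_3$, the whole two--ear cycle together with the third ear at the degree-$3$ vertex $x$, and you assert two things that are false in general. First, $(G-E(H))^{\times}$ need not be a subdivision of a $3$-edge-connected graph: deleting all three ears incident with a degree-$3$ vertex can create a cyclic $2$-edge-cut, namely when a far endpoint of one of these ears lies on a triangle of ears whose boundary then drops to two edges; this is exactly the phenomenon the paper must fight later (Lemma~\ref{removingvertex} and the endpoint-pattern machinery), and none of that is available, or supplied by you, at this stage. Second, the inequality you need is $\sum_i\gain(\psi_i)\ge \bonus(G)-\bonus(G')$, which is not the same as $\sum_i\gain(\psi_i)\ge\bonus(H)$: removing $H$ merges ears at the other endpoint $y$ of the cycle and at the far end of $P_3$, and each such merge can push $\bonus(G)-\bonus(G')$ strictly above $\bonus(H)$ (two bonus-$4$ ears fusing into an equitable ear costs $8$ on its own), so comparing against $\bonus(H)\le 12$ does not close the argument. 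The paper's proof avoids both problems by deleting only one ear: by Lemma~\ref{easycont}, $|E(C)|\equiv 0\pmod 3$, so one of the two ears, say $P$, has length $\not\equiv 1\pmod 3$ and, being inequitable by Lemma~\ref{noequitable}, admits a labelling of gain at least $16$; because $x$ has degree $3$, the $\psi$-removal of $P$ is still a subdivision of a $3$-edge-connected graph, and only the few ears meeting $x$ (and $y$) are destroyed, so the bonus drop is at most $12$ and $P$ alone is reducible. You should either adopt that one-ear reduction or supply the missing connectivity and merging analysis for your three-ear removal; as written, the second case is not proved.
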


\begin{proof}
If $G$ has a cycle $C$ which is an ear, then Lemma~\ref{2deg3} (and the assumption that $G$ is a subdivision of a 3-edge-connected graph) implies that
  $C$ must contain a vertex of degree at least $5$ in $G$. Thus, removing $C$ does not merge two ears of~$G$ into one,
  implying $\bonus(G) = \bonus( (G-E(C))^\times ) + \bonus(C)$.
  Lemma~\ref{goodgain} allows us to choose an ear labelling $\psi$ of $C$ with $\gain(\psi)
  \ge \bonus(C)$ and it then follows that $C$ is a reducible subgraph (contradicting Observation~\ref{noreducible}).

Next suppose that $G$ has a cycle $C$ consisting of two ears $P,P'$ so that an endpoint $x$ of $P$ and $P'$ has $\deg_G(x) = 3$.  Let $y$ be the other endpoint of these two ears, and let $P''$ be the third ear incident to $x$. By Lemma~\ref{easycont} we have $|E(C)| \equiv 0 \pmod{3}$, so we may assume (without loss) that $|E(P)| \not\equiv 1 \pmod{3}$.  Therefore, by Lemma~\ref{goodgain} (and Lemma~\ref{noequitable}) we may choose an ear labelling~$\psi$ of $P$ with gain at least 16. If $G'$ is the $\psi$-removal of $P$, then we have that $G'$ is a subdivision of a 3-edge-connected graph, as the ears $P'$ and $P''$ merge in $G'$. If $y$ has degree at least 4 in $G$, then there are just three ears of $G$ which are not ears of $G'$. If $y$ has degree 3 in $G$, then since $G$ is a subdivision of a 3-edge-connected graph, $P''$ must be incident to $y$, and $G$ must consist of only these three ears. Hence, in either case, $\bonus(G') \ge \bonus(G) - 12$. Thus $P$ is reducible, giving us a contradiction.
\end{proof}

\subsection{Pushing a 3-Edge Cut}

We have introduced the concept of a reducible subgraph and this will be key in our proof.  However, in order for a subgraph $H$ to be reducible, it must be the case that $(G - E(H))^{\times}$ is a subdivision of a 3-edge-connected graph (or it is empty).  So, in our search for reducible subgraphs, we will need to have some tools for finding subgraphs which we can delete so as to maintain our connectivity.  This section gives the first of these tools.  It is based on a simple minimality property for 3-edge-cuts.  Once we have this  in place, we will apply it to construct a new graph $G^{\bullet}$ which will be convenient for our operations.

For any subset of vertices $X$ of a graph, we let $d(X)$ denote the number of edges with exactly one endpoint in $X$.  The proof of our main tool from this section is
based on uncrossing arguments that call on the following inequalities which hold for any two subsets $X,Y$ of vertices.
\begin{align}
d(X \cap Y) + d(X \cup Y) &\le d(X) + d(Y) 	 \label{submod} \\
d(X \setminus Y) + d(Y \setminus X) &\le d(X) + d(Y) \label{other}
\end{align}

\begin{lemma}
\label{3edgecut}
Let $H = (V,E) $ be a 3-edge-connected graph and let $X \subseteq V$ be minimal subject to the following conditions
\begin{enumerate}
\item $d(X) = 3$,
\item $X$ induces a graph containing at least two cycles.
\end{enumerate}
If $e \in E$ has both ends in $X$, then every 3-edge-cut of $H$ containing $e$ has the form $\delta(Y)$ where $Y \subseteq X$ induces a graph containing at most one cycle.
\end{lemma}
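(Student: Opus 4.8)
The plan is to take an arbitrary 3-edge-cut of $H$ containing $e = uv$, write it as $\delta(Z)$ for some nonempty proper $Z \subseteq V$, and show that after possibly replacing $Z$ by $V \setminus Z$ we have $Z \subseteq X$ with $Z$ inducing at most one cycle. First I would record the routine fact that in a 3-edge-connected graph every set of three edges whose removal disconnects the graph equals $\delta(Z)$ for some nonempty proper $Z$: if $Z$ is a union of some but not all components of $H$ minus the cut, then $\delta(Z)$ is a nonempty subset of the cut, hence has size at least $3$ by 3-edge-connectivity, hence is the whole cut. Since $u, v \in X$ and $e$ crosses $Z$, exactly one of $u, v$ lies in $Z$; consequently both $A := X \cap Z$ and $B := X \setminus Z$ are nonempty, with $\{u,v\}$ split between them.

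Next I would run the standard uncrossing on $X$ and $Z$, writing also $C := Z \setminus X$ and $D := V \setminus (X \cup Z)$. If $C = \emptyset$ then $Z \subseteq X$ and we keep $Y = Z$; if $D = \emptyset$ then $V \setminus Z \subseteq X$ and we take $Y = V \setminus Z$ (noting $\delta(Y) = \delta(Z)$). So everything reduces to ruling out the ``fully crossing'' case $C \ne \emptyset$ and $D \ne \emptyset$. In that case $A, B, C, D$ are each nonempty proper subsets of $V$, so each has $d(\cdot) \ge 3$. Inequality \eqref{submod} applied to $X$ and $Z$ gives $d(X \cap Z) + d(X \cup Z) \le d(X) + d(Z) = 6$; since $X \cap Z = A$ is nonempty and $V \setminus (X \cup Z) = D$ is nonempty, both terms are at least $3$, forcing $d(X \cap Z) = 3$. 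Inequality \eqref{other} applied to $X$ and $Z$ gives $d(X \setminus Z) + d(Z \setminus X) \le 6$; since $X \setminus Z = B$ and $Z \setminus X = C$ are both nonempty, this forces $d(X \setminus Z) = 3$.

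Now comes the crux, which is a parity observation. The sets $A = X \cap Z$ and $B = X \setminus Z$ are disjoint with union $X$, so $\delta(A)$ and $\delta(B)$ have symmetric difference $\delta(X)$ and intersection exactly the set of $A$--$B$ edges; hence the number of $A$--$B$ edges equals $\tfrac12\bigl(d(A) + d(B) - d(X)\bigr) = \tfrac12(3 + 3 - 3) = \tfrac32$, which is not an integer. This contradiction eliminates the fully crossing case, and it is precisely here that the oddness of $d(X) = 3$ is used; this step is the only real obstacle in the argument, everything else being bookkeeping.

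Finally, with $Y \in \{Z, V \setminus Z\}$ satisfying $Y \subseteq X$ and $\delta(Y) = \delta(Z)$, I would observe that $Y$ is a \emph{proper} nonempty subset of $X$: it is nonempty and unequal to $X$ because $e$ has exactly one endpoint in $Y$ while both endpoints lie in $X$. Since $d(Y) = 3$, if the subgraph induced by $Y$ contained at least two cycles then $Y$ would satisfy both defining conditions of $X$ while being strictly smaller, contradicting the minimality of $X$. Hence $Y$ induces at most one cycle, so the given 3-edge-cut $\delta(Z) = \delta(Y)$ has exactly the claimed form.
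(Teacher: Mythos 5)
Your proof is correct and follows essentially the same route as the paper: uncross $X$ with the cut via inequalities \eqref{submod} and \eqref{other}, kill the fully crossing case by a parity contradiction (your half-integer count of $A$--$B$ edges is the same observation as the paper's $d(X\setminus Y)+d(X\cap Y)\equiv d(X)\pmod 2$), and finish with the minimality of $X$. The only differences are organizational (you treat the cases $Z\setminus X=\emptyset$ and $V\setminus(X\cup Z)=\emptyset$ symmetrically rather than complementing at the outset, and you spell out why the edge cut is of the form $\delta(Z)$), so there is nothing to fix.
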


\begin{proof} Suppose that $\delta(Y)$ is a 3-edge-cut of $H$ with $e \in \delta(Y)$, and consider the Venn diagram on $V$ given by the sets $X$ and $Y$ as depicted in the figure below.  By possibly swapping $Y$ with its complement, we may assume that $V\setminus (X\cup Y)$ is nonempty. First suppose that $Y \setminus X = \emptyset$.  Then inequality~(\ref{submod}) gives us $d(X \cap Y) + d(X \cup Y) \le 6$, but then the 3-edge-connectivity of $H$ implies $d(X \cap Y) = 3 = d(X \cup Y)$.  It now follows from the minimality of $X$ that $Y = X \cap Y$ is a subset of $X$ which induces a graph containing at most one cycle, as desired.

\begin{figure}[htp]
\centerline{\includegraphics[width=4cm]{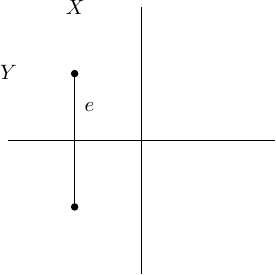}}
\end {figure}

Thus, we may assume that all four sets in our Venn Diagram are nonempty.
Now the inequalities~(\ref{submod}) and~(\ref{other}) imply that $d(X \cap Y) = d(X \cup Y) = d(X \setminus Y) = d(Y \setminus X) = 3$, but this is impossible.  (For instance $d(X \setminus Y) + d(X \cap Y) \equiv d(X) \pmod{2}$, so our present conditions violate parity.)
\end{proof}

\bigskip

In order to productively use this lemma, we will need to show that our graph $G$ is not one of a couple of small graphs.  One of these graphs is a two vertex graph with exactly three edges none of which is a loop, which we call \emph{Theta}.

\begin{lemma}
$G$ is not isomorphic to a subdivision of either $K_4$ or Theta.
\end{lemma}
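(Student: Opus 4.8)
The plan is to show that if $G$ were a subdivision of $K_4$ or of Theta, then $G$ would contain a reducible or contractible subgraph, contradicting the earlier lemmas. The two cases are naturally handled separately, each by directly exhibiting the required ear decomposition and ear labellings.

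First I would dispose of the Theta case. Here $G$ is a subdivision of the two-vertex three-edge graph, so $G$ consists of three internally disjoint paths $P_1, P_2, P_3$ joining the two branch vertices $u, v$; by Lemma~\ref{2deg3} both $u, v$ have degree $3$, so each $P_i$ is an ear of $G$. I would first observe that $G$ itself is not a counterexample: if each $P_i$ has length $\equiv 0 \pmod 3$ then each ear is equitable, contradicting Lemma~\ref{noequitable} (alternatively Lemma~\ref{2deg3} already gives a contradiction since a subdivision of Theta has exactly two vertices of degree $\ge 3$, so I should instead argue directly). The cleaner route: $P_1 \cup P_2$ is a cycle which is the union of two ears; by Lemma~\ref{easycont}(1) it has length $\equiv 0 \pmod 3$, and likewise $P_1 \cup P_3$ and $P_2 \cup P_3$; subtracting, all three $|E(P_i)|$ are congruent mod $3$, and since their pairwise sums are $\equiv 0$, each $|E(P_i)| \equiv 0 \pmod 3$. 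But then $P_1$ is an equitable ear unless it is inequitable with length a multiple of $3$ — and Lemma~\ref{noequitable} forbids equitable ears. The remaining possibility is that some $P_i$ is inequitable of length $\equiv 0$; but then that ear has bonus $0$, and moreover by Lemma~\ref{goodgain} every equitable-length ear has a labelling of gain $0$, and reading off the whole-graph labelling as in Lemma~\ref{2deg3} gives $\phi$ with $\partial\phi=\mu_G$ and $\gain(\phi)\ge \bonus(G)$ — contradiction. So Theta is eliminated.

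Next the $K_4$ case, which is the main obstacle. Now $G$ is a subdivision of $K_4$: four branch vertices of degree $3$ and six ears $P_1,\dots,P_6$, one for each edge of $K_4$. Applying Lemma~\ref{easycont}(1) to each of the four triangles of $K_4$ (each a union of three ears) — wait, that lemma only covers cycles that are unions of at most two ears, so I must use part (2): each triangle is a union of three ears, so it has length $\not\equiv 2 \pmod 3$, i.e.\ length $\equiv 0$ or $1 \pmod 3$. Also each $4$-cycle of $K_4$ (union of four ears) has length $\not\equiv 2 \pmod 3$. I would set $a_i = |E(P_i)| \bmod 3$ and translate these constraints into a small system of congruences over $\mathbb{Z}_3$. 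Together with Lemma~\ref{noequitable} (no equitable ear) and Lemma~\ref{easydelconseq}, the aim is to force enough structure that some subgraph — most plausibly a union of two ears forming a path between two branch vertices together with a cleverly chosen ear labelling, or a three-ear subgraph — becomes reducible: after its $\psi$-removal the remaining graph is still a subdivision of a $3$-edge-connected graph (a subdivision of $\Theta$ or $K_4$ minus an edge, hence of $\Theta$), and the gains accumulated cover the bonus lost. The delicate point is the connectivity bookkeeping: removing the ear corresponding to an edge of $K_4$ leaves a subdivision of $K_4$ minus that edge, which is a subdivision of $\Theta$ (still $2$-edge-connected and cyclically $3$-edge-connected, provided no low cut is created) — so $(G-E(H))^\times$ lies in the allowed class, and I can invoke Observation~\ref{noreducible}. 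I would then check that for the congruence classes permitted by the triangle/$4$-cycle constraints, at least one ear has positive bonus and its gain (via Lemma~\ref{goodgain}, gain $\ge 8$ for length $\equiv 1$, and with two such ears combined $\ge 16$ for length $\equiv 2$ pieces) suffices against the at-most-$4$ bonus it carries and the bonus of at most one merged ear.

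I expect the congruence case analysis in the $K_4$ case to be the real work: one must rule out every assignment of lengths mod $3$ to the six ears consistent with all triangle and $4$-cycle constraints, in each case producing either a contractible cycle (via Lemma~\ref{easycont}, contradicting the no-contractible-subgraph lemma) or a reducible two- or three-ear subgraph whose removal preserves membership in the class (contradicting Observation~\ref{noreducible}). A convenient simplification is that by Lemma~\ref{easydelconseq} $G$ has no cycle that is a single ear and no two-ear cycle through a degree-$3$ vertex — but in a subdivision of $K_4$ every branch vertex has degree exactly $3$, so in fact \emph{no} two ears of $G$ form a cycle at all; this means the triangles and $4$-cycles of $K_4$ are genuinely three- and four-ear cycles, and Lemma~\ref{easycont}(1) simply does not apply, so all the weight of the argument rests on part (2) and on the deletion lemma. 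With that observation the analysis should close: the four triangle-length constraints ($\not\equiv 2$) together with $\bonus$-accounting leave no room for a counterexample.
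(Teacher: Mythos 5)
Your toolkit is the right one, but both halves of the argument have real problems. In the Theta case your closing step fails twice: an inequitable ear of length $0 \bmod 3$ has bonus $4$, not $0$ (bonus $0$ is reserved for equitable ears, which Lemma~\ref{noequitable} rules out), and the ``read off the whole-graph labelling as in Lemma~\ref{2deg3}'' move is invalid here, because that gluing argument needs at most one vertex of degree $\ge 3$: with the two branch vertices of Theta, a union of independently chosen ear labellings satisfies the boundary condition at every degree-2 vertex but can fail at both branch vertices simultaneously (zero-sum only forces the two discrepancies to be negatives of each other, not zero). The paper disposes of Theta in one line: any two of its three ears form a cycle through a degree-3 vertex, which is exactly the configuration forbidden by Lemma~\ref{easydelconseq}. (A repair along your lines is possible -- once every ear has length $0 \bmod 3$ and is inequitable, a single ear has a labelling of gain $\ge 24 \ge \bonus(G)$ by Lemma~\ref{goodgain}, and its removal leaves a cycle, so it is reducible and Observation~\ref{noreducible} applies -- but that is not what you wrote.)

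In the $K_4$ case you only sketch a plan (``translate into congruences \dots I expect the analysis to close'') and never produce the decompositions that do the work, so the essential content is missing. The paper's argument is short precisely because of an accounting fact you never use: $\bonus(G) \le 24$, since there are six ears each of bonus at most $4$. If some ear has length $0 \bmod 3$, it is inequitable, so by Lemma~\ref{goodgain} it has a labelling of gain $\ge 24$, and its removal leaves a subdivision of Theta, so it is reducible -- done. If some ear has length $2 \bmod 3$, the paper builds a \emph{full} ear decomposition of all of $G$ (using that ear and the opposite ear) of gain $\ge 24$ and applies Lemma~\ref{fulldecompose} directly to $G$; no removal and hence none of the connectivity bookkeeping you flagged as the delicate point is needed. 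If every ear has length $1 \bmod 3$, the full ear decomposition consisting of a three-ear cycle, a two-ear path, and a single ear has gain $0+16+8 = 24 = \bonus(G)$, again contradicting minimality via Lemma~\ref{fulldecompose}. Your proposal neither exhibits these decompositions nor identifies the $\bonus(G)\le 24$ bound that makes them sufficient, so as it stands the $K_4$ case is not proved.
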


\begin{proof}
By Lemma~\ref{easydelconseq}, $G$ is not a subdivision of Theta.
Now suppose (for a contradiction) that $G$ is isomorphic to a subdivision of $K_4$.  If there is an ear $P$ of $G$ with length 0 mod 3, then it has an ear labelling with gain at least 24 (by Lemma~\ref{goodgain} and~\ref{noequitable}), and thus $P$ is a reducible subgraph since $\bonus(G) \le 24$.
If $G$ has an ear $P$ with length 2 mod 3, then setting $P'$ to be the unique ear of~$G$ which does not have an end in common with~$P$, we may construct a full ear
decomposition using $P$ and $P'$ which has gain at least $24$ (again, Lemma~\ref{fulldecompose} shows that $G$~is not a counterexample).
So, we may assume that every ear of $G$ has length 1 mod 3.  Again in this case we find a full ear decomposition of $G$ with gain at least 24, thus giving us a contradiction.
\end{proof}

\bigskip

We need just one more simple property of $G$ before we can take advantage of Lemma~\ref{3edgecut}.

\begin{lemma}
$G$ has a 3-edge-cut $\delta(X)$ so that $X$ induces a subgraph containing at least two cycles.
\end{lemma}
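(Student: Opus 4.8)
The plan is to pass to the $3$-edge-connected graph $H$ of which $G$ is a subdivision, produce the desired cut in $H$, and lift it. Since $G$ has at least two vertices of degree $\ge 3$ (Lemma~\ref{2deg3}), no cycle which is an ear (Lemma~\ref{easydelconseq}), and is not a subdivision of $K_4$ or Theta, the graph $H$ satisfies $|V(H)|\ge 2$, is loop-free, and is neither $K_4$ nor Theta. Each edge $f$ of $H$ corresponds to an ear-path $P_f$ of $G$; given a $3$-edge-cut $\delta_H(Y)$ of $H$, letting $X$ be $Y$ together with the interior vertices of all $P_f$ with both ends of $f$ in $Y$ produces a $3$-edge-cut $\delta_G(X)$ of $G$ with $G[X]$ a subdivision of $H[Y]$, hence of the same cycle rank. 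So it suffices to find a $3$-edge-cut $\delta_H(Y)$ with $H[Y]$ of cycle rank at least $2$.

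First I would record a counting observation. In a $3$-edge-cut $\delta_H(Y)$ of a $3$-edge-connected graph both sides are connected (if one side split into two parts with no edge between them, each part would have degree $\ge 3$, forcing the cut to have at least $6$ edges); and if $|Y| = s$ then $2|E(H[Y])| + 3 = \sum_{v\in Y}\deg_H(v) \ge 3s$, so the cycle rank of $H[Y]$ is $|E(H[Y])| - s + 1 \ge \lceil (s-1)/2\rceil$, which is at least $2$ once $s\ge 4$. Hence any $3$-edge-cut of $H$ with a side of size at least $4$ already gives what we want. We may therefore assume that either $H$ has no $3$-edge-cut at all, or every $3$-edge-cut of $H$ has both sides of size at most $3$; in the latter case $|V(H)|\le 6$.

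In the first case $H$ is $4$-edge-connected, so for every edge $e$ the graph $H-e$ is again $3$-edge-connected with minimum degree $\ge 3$; hence deleting the ear $P$ of $G$ corresponding to $e$ leaves a subdivision of a $3$-edge-connected graph (no vertex needs to be suppressed) with $\bonus$ dropping by exactly $\bonus(P)$, and Lemma~\ref{goodgain} together with Lemma~\ref{noequitable} supplies an ear labelling of $P$ of gain at least $8\ge\bonus(P)$, making $P$ reducible --- contradicting Observation~\ref{noreducible}. In the second case, a degree-$3$ vertex $v$ yields the $3$-edge-cut $\delta_H(\{v\})$, whose other side then has size $\le 3$, so $|V(H)|\le 4$. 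A short inspection of the loop-free $3$-edge-connected graphs on at most six vertices then shows that, apart from $K_4$ and the $4$-edge-connected ones (both already excluded), every such graph either has a $3$-edge-cut with a side of cycle rank $\ge 2$ (and we are done) or is the graph $H_0$ on three vertices $a,b,c$ with one edge $ab$ and doubled edges $bc$ and $ca$.

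It remains to rule out $H_0$. Here $G$ is built from an $ab$-ear, two $bc$-ears, and two $ca$-ears. Deleting one of the two $bc$-ears and suppressing the resulting degree-$2$ vertex $b$ (or, symmetrically, deleting a $ca$-ear and suppressing $a$) leaves a subdivision of Theta, which is $3$-edge-connected; moreover exactly one suppression occurs, merging at most two ears, so $\bonus$ drops by at most $12$. If any of these four ears has length $\not\equiv 1\pmod 3$, then Lemma~\ref{goodgain} gives it an ear labelling of gain at least $16\ge 12$, so that ear is a reducible subgraph --- contradiction. Otherwise all four of these ears have length $\equiv 1 \pmod 3$, and then the two $bc$-ears form a cycle which is a union of two ears of total length $\equiv 2\not\equiv 0\pmod 3$; by the first part of Lemma~\ref{easycont} this cycle is contractible, contradicting that $G$ has no contractible subgraph. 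Either way we reach a contradiction, which proves the claim. The main work --- and the thing to be careful about --- is the finite case analysis in the third paragraph (checking that $H_0$ really is the only graph not disposed of by the counting argument or by $4$-edge-connectivity) together with the $\bonus$ bookkeeping in the last paragraph; the rest is the degree count and routine reductions.
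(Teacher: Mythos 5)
Your proof is correct, but it takes a genuinely different route from the paper's. You pass to the underlying 3-edge-connected graph $H$ of which $G$ is a subdivision, observe via a degree count that any 3-edge-cut of $H$ whose side has at least $4$ vertices automatically has cycle rank at least $2$ on that side (and that both sides are connected), handle the case of no 3-edge-cut by making every ear reducible, and then reduce to small multigraphs, where the only surviving exception beyond Theta and $K_4$ is the triangle with two doubled edges $H_0$, which you eliminate by a deletion/contraction argument. The paper instead argues entirely inside $G$: if $G$ has no 3-edge-cut every ear is reducible; otherwise it takes a 3-edge-cut with both sides unicyclic, uses Lemma~\ref{easydelconseq} to force each side's unique cycle to be a union of exactly three ears, concludes $G$ is a subdivision of Theta, $K_4$ or the Prism, and dispatches the Prism by taking $X$ to be the complement of a degree-3 vertex. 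Your counting bound absorbs the Prism (complement of a vertex has five branch vertices), but it surfaces the multigraph $H_0$, which the paper's framing never meets; what your approach buys is a cleaner quantitative criterion (any 3-cut with a big side works), at the price of a small-graph classification.

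Two remarks on the write-up. First, the ``short inspection of the loop-free 3-edge-connected graphs on at most six vertices'' is asserted rather than carried out, and as literally phrased it ranges over an infinite class (edge multiplicities are unbounded) and is false for Theta, which you excluded earlier but omit from that sentence. The claim is true, and it does become a genuinely finite, short check once combined with your counting observation: a degree-3 vertex whose complementary cut-side has size at most $3$ and rank at most $1$ forces, by the same degree count, either two vertices (Theta), three vertices with the structure of $H_0$, or four vertices with $H$ cubic and hence $H\cong K_4$; and if $H$ has minimum degree at least $4$, any 3-cut side of size $2$ or $3$ already has rank at least $2$. Spelling this out would close the one step you yourself flag as ``the thing to be careful about.'' Second, your disposal of $H_0$ can be done in one line: the two parallel $bc$-ears form a cycle consisting of two ears through the degree-3 vertex $b$, which is exactly the configuration forbidden by the second bullet of Lemma~\ref{easydelconseq}; your longer argument via Lemma~\ref{goodgain} and Lemma~\ref{easycont} is nonetheless valid.
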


\begin{proof}
First suppose that $G$ does not have an edge-cut of size 3.  In this case, Lemma~\ref{goodgain} implies that every ear is a reducible subgraph, so $G$ is not a minimal
counterexample.  Therefore, we may assume that $G$ has some edge cut $\delta(X)$ with $d(X) = 3$.  If neither $X$ nor $V(G) \setminus X$ satisfy the lemma, then each of
these sets induces a graph with at most one cycle.  If $X$ induces a graph with no cycles, then since $G$ is a subdivision of a 3-edge-connected graph and $d(X)=3$, $G[X]$ is a subdivision of $K_{1, 3}$.  If $X$ induces a graph with  a unique cycle $C$, then Lemma~\ref{easydelconseq} says that $C$ is not an ear nor it is composed of two ears with a vertex of degree three. So $C$ must be the union of
exactly three ears whose endpoints all have degree 3 in $G$ (otherwise more than three edges are incident to $C$, so $G[X]$ would have a second cycle or a small edge-cut).  A similar argument for $V(G) \setminus X$ shows that $G$ must be a subdivision of either Theta, $K_4$ or the graph Prism depicted below.  The
former two cases are impossible by the previous lemma; in the last case choosing $X$ to be the complement of a vertex of degree 3 yields a suitable edge-cut.
\end{proof}

\begin{figure}[htp]
\centerline{\includegraphics[width=2cm]{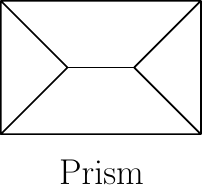}}
\end {figure}

\subsection[Creating $G^{\bullet}$]{Creating $\mathbf{G^{\bullet}}$}

With the above lemma in place we shall now choose a set $X \subseteq V(G)$ so that $d(X) = 3$ and the graph induced by $X$ contains at least two cycles, and subject to
  this we choose $X$ minimal. Let $P_1, P_2, P_3$ be the ears of $G$ containing an edge of $\delta(X)$ and define $W$ to be the set of all vertices which are not in $X$
  and are not interior vertices in $P_1$, $P_2$, or $P_3$.  We define $G^{\bullet}$ to be the weighted graph obtained from $G$ by identifying $W$ to a single new vertex
  $w$, and deleting any loop at $w$.  (As a typographical reminder, when depicting the vertex $w$ in our figures we will use a $\bullet$).  The following observation is
  an immediate consequence of this definition and Lemma~\ref{3edgecut}.

\begin{observation}
\label{bullet3ec}
If $\delta(Z)$ is a 3-edge-cut of $G^{\bullet}$ with $w \not\in Z$, then one of the following holds:
\begin{enumerate}
\item $X \subseteq Z$,
\item The graph induced by $Z$ contains at most one cycle.
\end{enumerate}
\end{observation}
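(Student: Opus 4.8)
\textbf{Proof proposal for Observation~\ref{bullet3ec}.}

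The plan is to lift a 3-edge-cut $\delta_{G^\bullet}(Z)$ back to a 3-edge-cut of the original graph $G$ and then invoke Lemma~\ref{3edgecut}. First I would fix the set $X \subseteq V(G)$ chosen above (with $d(X)=3$, inducing at least two cycles, and minimal subject to this), the three ears $P_1,P_2,P_3$ carrying the edges of $\delta(X)$, and the set $W$ of vertices outside $X$ that are not interior to any $P_i$; recall $G^\bullet$ is $G$ with $W$ contracted to the single vertex $w$ (loops at $w$ deleted). Given a 3-edge-cut $\delta_{G^\bullet}(Z)$ with $w \notin Z$, I would set $Y := Z \subseteq V(G) \setminus W \subseteq V(G)$, viewing $Z$ as a subset of the original vertex set. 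Since contracting $W$ and deleting loops does not change the edges with exactly one endpoint in a set disjoint from $W$, we get $d_G(Y) = d_{G^\bullet}(Z) = 3$, so $\delta_G(Y)$ is a 3-edge-cut of $G$ (here $G$ is a subdivision of a 3-edge-connected graph, so its only small cuts are the ``trivial'' ones around ears, but all we need is that $\delta_G(Y)$ is genuinely a 3-edge-cut).

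Next I would produce an edge $e$ with both ends in $X$ that lies in $\delta_G(Y)$, so that Lemma~\ref{3edgecut} applies. The subtlety is that a priori $\delta_G(Y)$ might consist entirely of edges incident to interior vertices of the $P_i$'s, i.e.\ it might be one of the ``trivial'' cuts coming from subdivided edges. To rule this out I would argue as follows: each ear $P_i$ meets $X$ in a subpath (its portion inside $X$) and meets $W \cup (\text{the other part})$ outside; because $X$ induces at least two cycles while (by minimality and Lemma~\ref{3edgecut}) the pieces we cut off induce few cycles, at least one edge of $X$ with both ends in $X$ must be separated by $Y$ unless $Y$ is already ``aligned'' with $X$. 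More carefully: consider $Y \cap X$. If $Y \cap X$ is neither empty nor all of $X$, then since the graph induced on $X$ is connected (it contains two cycles and, being a piece of a 2-edge-connected graph cut off by only $3$ edges, is connected) there is an edge $e$ of $G$ with both ends in $X$ joining $Y \cap X$ to $X \setminus Y$, hence $e \in \delta_G(Y)$, and Lemma~\ref{3edgecut} forces $\delta_G(Y) = \delta_G(Y')$ for some $Y' \subseteq X$ inducing at most one cycle; comparing sides and using $w \notin Z$ one deduces $Z$ itself induces at most one cycle, giving conclusion~(2).

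It then remains to handle the two degenerate alignments. If $Y \cap X = \emptyset$, then $Z = Y$ is disjoint from $X$, and I would show directly that $Z$ induces at most one cycle: $Z$ lies in $V(G) \setminus X$ minus $W$, i.e.\ $Z$ consists only of interior vertices of $P_1,P_2,P_3$, and any such vertex set together with a $3$-edge-cut can contain at most one cycle (in fact the induced subgraph is a union of subpaths of the three ears), again yielding~(2). If $Y \cap X = X$, i.e.\ $X \subseteq Y = Z$, that is exactly conclusion~(1). The main obstacle is the bookkeeping in the ``neither empty nor all of $X$'' case — correctly translating the $Y' \subseteq X$ produced by Lemma~\ref{3edgecut} back into a statement about $Z$ in $G^\bullet$, making sure the cycle count is preserved under the contraction of $W$ (which only decreases, and in fact does not increase, the number of independent cycles in a set avoiding $W$), and checking the ``trivial cut'' cases don't sneak in an extra cycle. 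Everything else is a routine uncrossing/connectivity argument once the reduction to Lemma~\ref{3edgecut} is set up.
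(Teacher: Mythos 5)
Your overall route is the intended one: the paper treats Observation~\ref{bullet3ec} as an immediate consequence of the construction of $G^{\bullet}$ together with Lemma~\ref{3edgecut}, and your case split on $Z \cap X$ (empty, proper, all of $X$), the connectivity of $G[X]$, and the two degenerate cases are exactly the right bookkeeping. The genuine gap is at the central step: Lemma~\ref{3edgecut} is stated for a \emph{3-edge-connected} graph, whereas $G$ is only a subdivision of one, and your parenthetical claim that ``all we need is that $\delta_G(Y)$ is genuinely a 3-edge-cut'' is not enough --- the conclusion you invoke can actually fail in $G$. Concretely, let $G$ be the prism with triangles $a_1a_2a_3$ and $b_1b_2b_3$, rungs $a_1b_1$, $a_2b_2$, and the rung $a_3b_3$ subdivided by a vertex $s$. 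Then $X=\{a_1,a_2,a_3,b_1,b_2\}$ is a legitimate minimal choice ($d(X)=3$ and $G[X]$ contains the triangle $a_1a_2a_3$ and the square $a_1b_1b_2a_2$; no proper subset induces two cycles), the crossing ears are $a_3sb_3$, $b_1b_3$, $b_2b_3$, so $W=\{b_3\}$ and $w=b_3$. Now $Z=\{a_3,s\}$ is a 3-edge-cut of $G^{\bullet}$ with $w\notin Z$ whose boundary contains the edge $a_1a_3$ with both ends in $X$, yet neither $Z$ (it contains $s\notin X$) nor its complement (it contains $b_3\notin X$) lies inside $X$. So the sentence ``Lemma~\ref{3edgecut} forces $\delta_G(Y)=\delta_G(Y')$ for some $Y'\subseteq X$ inducing at most one cycle'' is false in the generality in which you use it; note the Observation itself still holds here, since $Z$ induces a single edge.

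What rescues the argument is precisely the degree-2 structure you set aside. Either apply Lemma~\ref{3edgecut} to the 3-edge-connected graph obtained by suppressing degree-2 vertices (as the paper does explicitly when deriving Lemma~\ref{removingedge}) and translate back, observing that the side of the cut lying in the suppressed image of $X$ only picks up paths of subdivision vertices when lifted to $G$, and such paths create no cycles, so conclusion (2) still follows for $Z$; or rerun the uncrossing proof of Lemma~\ref{3edgecut} directly in $G$, using that every side of a 2-edge-cut of $G$ is a cycle-free path of degree-2 vertices, so the minimality of $X$ and the cycle counts still force the dichotomy. Two smaller points: to exclude the alternative $Z=V(G)\setminus Y'$ in your main case you need $W\neq\emptyset$ (so that the complement of any $Y'\subseteq X$ meets $W$, which $Z$ cannot); this is true but should be said. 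The degenerate cases $Z\cap X=\emptyset$ and $X\subseteq Z$ are handled correctly.
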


Note that every ear of $G^{\bullet}$ is also an ear of $G$.  The three ears of $G^{\bullet}$ which are incident with the vertex $w$ will be called \emph{unusable} since we have little control over the 3-edge-cuts of $G$ which meet these three ears.  Every other ear of $G^{\bullet}$ will be called \emph{usable}.  We say that a cycle $C \subseteq G^{\bullet} \setminus \{w\}$ with $X\not\subseteq V(C)$ is an \emph{inner triangle} if $d( V(C) ) = 3$.  Note that in this case, Observation~\ref{bullet3ec} implies that $C$ has no chords. By Lemma~\ref{easydelconseq}, $C$ is a union of three distinct ears. Hence it contains exactly three vertices of degree 3, while all other vertices of $C$ have degree 2 (hence the name triangle).  Since $G$ is a subdivision of a 3-edge-connected graph, any two distinct inner triangles must be vertex disjoint.  We say that an ear $P$ of $G^{\bullet}$ is \emph{inner} if it is contained in an inner triangle and it is \emph{outer} otherwise. The following lemma follows from Lemma~\ref{3edgecut} applied to the graph obtained from $G^{\bullet}$ by suppressing the degree 2 vertices.

\begin{lemma}
  \label{removingedge}
Let $P$ be a usable ear of $G^{\bullet}$ and let $G' = \left( G - E(P) \right)^{\times}$.
\begin{enumerate}
\item If $P$ is inner, $G'$ is a subdivision of a 3-edge-connected graph.
\item If $P$ is outer, then the only 2-edge-cuts of $G'$ separating cycles have the form~$\delta(Y)$ where $Y \subseteq V(G^{\bullet} - w)$ contains a unique cycle $C$ which is an inner triangle in $G^{\bullet}$ (and contains an end of $P$).
\end{enumerate}
\end{lemma}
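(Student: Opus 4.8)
The plan is to derive the lemma from Lemma~\ref{3edgecut}, applied to the graph $H$ obtained from $G^{\bullet}$ by suppressing its degree-$2$ vertices. First I would check that $H$ is $3$-edge-connected, so that Lemma~\ref{3edgecut} applies with the minimal set $X$ from the construction of $G^{\bullet}$: $G^{\bullet}$ is $2$-edge-connected, and any $2$-edge-cut of $G^{\bullet}$ separating cycles may be chosen to avoid $w$, hence lifts to a $2$-edge-cut $\delta_G(Z)$ of $G$ having a vertex of degree at least $3$ on each side, which then projects to a cut of size at most $2$ separating two nonempty parts in the $3$-edge-connected core of $G$ --- impossible. (The minimality of $X$ is inherited by $H$, since any smaller witness would lie inside $X$, hence avoid $w$, hence already exist in $G$.) The second ingredient is the observation that \emph{every usable ear $P$ of $G^{\bullet}$ has all of its vertices in $X$}: $P$ avoids $w$ (it is not $P_1$, $P_2$ or $P_3$), so $V(P)\subseteq V(G)\setminus W$; and no interior vertex of $P$ (degree $2$, interior to the single ear $P$) and no endpoint of $P$ (degree at least $3$) can be interior to $P_1$, $P_2$ or $P_3$; so $V(P)\subseteq X$, and in particular every edge of $P$ has both ends in $X$. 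Finally, by Lemma~\ref{easydelconseq}, $P$ is a path with distinct ends $a$, $b$.

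Next I would relate cuts of $G' := (G - E(P))^{\times}$ to cuts of $G$. Since $P$ is an ear of the subdivision $G$ of its $3$-edge-connected core $K$, the graph $G'$ is a subdivision of $K - e_P$ (with $e_P$ the edge of $K$ corresponding to $P$), hence $2$-edge-connected; and, by the paper's definition, ``$G'$ is a subdivision of a $3$-edge-connected graph'' means precisely that $G'$ has no $2$-edge-cut separating cycles. So let $\delta_{G'}(Y)$ be a $2$-edge-cut of $G'$ separating cycles. I would first argue that $a$ and $b$ lie on opposite sides of $Y$: otherwise, re-inserting the whole path $P$ on the side containing $a$ and $b$ yields a set $\widehat{Y}\subseteq V(G)$ with $\delta_G(\widehat{Y})$ of size $2$ and still separating cycles, contradicting the cyclic $3$-edge-connectivity of $G$. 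Re-inserting $P$ with its interior split into a prefix at $a$ and a suffix at $b$ then produces $\widehat{Y}\supseteq Y$ with $\delta_G(\widehat{Y}) = \delta_{G'}(Y)\cup\{e^{*}\}$ a $3$-edge-cut of $G$ that separates cycles, where $e^{*}$ is a single edge of $P$; and $e^{*}$ has both ends in $X$.

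Now I would apply Lemma~\ref{3edgecut}. In $H$ the edge $e^{*}$ is carried by the single edge coming from $P$, whose ends $a$, $b$ lie in $X$; so the $3$-edge-cut $\delta(\widehat{Y})$ of $H$ has the form $\delta(Y')$ with $Y'\subseteq X$ inducing at most one cycle. Pulling back to $G$ (and noting that the subdivision vertices absorbed into $\widehat{Y}$ lie in $X$, hence not in $W$), we may take $\widehat{Y}\subseteq X$, so $Y\subseteq\widehat{Y}$ avoids $W$, giving $Y\subseteq V(G^{\bullet}-w)$; moreover $G[\widehat{Y}]$ contains at most one cycle, and since it contains $G'[Y]$ with its cycle it contains a \emph{unique} cycle $C$, which is edge-disjoint from $P$. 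The graph $G[\widehat{Y}]$ is connected and unicyclic, i.e.\ it is $C$ with a forest hanging off it; a short count of the three edges of $\delta_G(\widehat{Y})$ --- each leaf of a hanging tree consumes a boundary edge of $\widehat{Y}$ --- forces $d_G(V(C))\le 3$, and together with the $3$-edge-connectivity of the core of $G$ we get $d_G(V(C))=3$. If $G[V(C)]$ had a chord it would be a proper subset of $X$ with $d=3$ inducing two cycles, contradicting the minimality of $X$; so $G[V(C)]=C$ and $C$ is an inner triangle. A similar count, using that $a$ has degree at least $3$ in $G$, shows $a\in V(C)$ (otherwise the tree of $G[\widehat{Y}]$ containing $a$ and the edge $e^{*}$ would absorb at least two boundary edges, forcing $d_G(V(C))\le 2$) --- so $C$ contains an end of $P$. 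This proves part~(2).

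For part~(1), suppose $P$ is inner, lying in an inner triangle $C_0$; since deleting $E(P)$ leaves $G'[V(C_0)]$ acyclic (the two ears of $C_0$ other than $P$ form a path), $C_0$ is not a cycle of $G'$. If $G'$ had a $2$-edge-cut separating cycles, then by the analysis above its side $Y$ would induce a unique cycle $C$ that is an inner triangle containing an end of $P$; as the ends of $P$ lie on $C_0$ and distinct inner triangles are vertex-disjoint, $C=C_0$ --- but $C_0$ is not a cycle of $G'$, a contradiction. Hence $G'$ has no $2$-edge-cut separating cycles, and being $2$-edge-connected it is a subdivision of a $3$-edge-connected graph. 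The parts I expect to need the most care are the bookkeeping of vertex sets as we pass between $G$, $G^{\bullet}$ and $H$ (especially keeping $\widehat{Y}$ clear of $W$ and making the ``pull back to $G$'' step precise), and verifying that the two boundary-edge counts are genuinely exhaustive, so that every surviving $2$-edge-cut of $G'$ really is accounted for by a usable outer ear meeting a corner of an inner triangle.
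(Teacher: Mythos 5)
Your strategy is the one the paper indicates (its proof of this lemma is a one-sentence appeal to Lemma~\ref{3edgecut} applied to the suppression of $G^{\bullet}$), and much of your bookkeeping is sound: usable ears lie entirely in $X$, a cyclic $2$-edge-cut of $G'$ does extend to a $3$-edge-cut of $G$ through a single edge $e^{*}$ of $P$, the unicyclic degree count does force $d_G(V(C))=3$ with an end of $P$ on $C$, and part~(1) does follow from part~(2) via vertex-disjointness of inner triangles. The genuine gap is exactly the step you flag as delicate: ``so the $3$-edge-cut $\delta(\widehat Y)$ of $H$ has the form $\delta(Y')$ with $Y'\subseteq X$''. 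At that point $\delta_G(\widehat Y)$ is only a cut of $G$, not of $G^{\bullet}$ or $H$: the graph $G'$ still contains every vertex of $W$, and $Y$ is an arbitrary side of a $2$-edge-cut of $G'$, so $\widehat Y$ may contain some but not all of $W$ (one or both edges of $\delta_{G'}(Y)$ could a priori lie inside $G[W]$ or on the unusable ears). In that case the cut does not descend to $G^{\bullet}$, hence not to $H$, and Lemma~\ref{3edgecut} cannot be invoked. Since ``the relevant side misses $W$'' is essentially the conclusion $Y\subseteq V(G^{\bullet}-w)$ you are trying to prove, assuming the descent here makes the argument circular at its crux.

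The gap is closable, but it needs an argument you have not supplied. One route: both sides of $\delta_G(\widehat Y)$ contain cycles, hence (as $G$ is not a cycle) vertices of degree at least $3$; so the induced partition of the branch vertices of $G$ is proper, and $3$-edge-connectivity of the core of $G$ forces the induced cut of the core to have size exactly $3$, with the three cut edges on three distinct ears, each crossed once, one of them being the edge corresponding to $P$. You may then apply Lemma~\ref{3edgecut} in the core of $G$ (no identification of $W$ at all), provided you also check that the image of $X$ is still minimal there; for that you need the small extra fact that no unusable ear has both ends in $X$, which follows from $2$-edge-connectivity of $G$ (otherwise $d_G(W)=1$, and in fact $d_G(W)=3$ and $G[W]$ is connected). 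Alternatively, keep your $H$-route but first prove that $W$ cannot be split by $\widehat Y$. Note also that the invariant you can actually establish is ``the relevant side avoids $W$'', not ``$\widehat Y\subseteq X$'': if one of the two non-$P$ cut edges lies on an unusable ear, that side may pick up interior vertices of this ear lying outside $X$; fortunately only ``avoids $W$'' is needed for the statement.
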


So, in short, removing an inner ear always results in a graph which is a subdivision of a 3-edge-connected graph, while removing an outer ear may result
in a graph with one or two cyclic 2-edge-cuts. If one of these cuts is of the form $\delta(V(C))$ for some cycle $C$ with $d(C)=3$ and $w \in V(C)$,
then $V(G^\bullet)\setminus V(C)$ induces a graph with exactly one cycle $C'$. Thus deleting any ear in $\delta(V(C))$ and then deleting one of the
two ears in $C'$ in the resulting graph produces a subdivision of Theta, which is a subdivision of a 3-edge-connected graph. All other cyclic edge
cuts produced by deleting an outer ear appear as $\delta(Y)$ where $Y$ contains a unique cycle $C$ which is an inner triangle (there are at most two
such cycles~$C$, one at each end of the removed outer ear~$P$).
We may further modify $G'$ by removing an ear of $G'$ contained in $C$ so as to return to a graph which is a subdivision of a 3-edge-connected graph.
This basic reduction technique will be exploited at great length throughout our proof.

A similar technique is based on the following lemma: we can remove three ears adjacent to the same vertex/inner triangle, and then
we remove up to three more ears, one from each inner triangle that now forms a 2-edge-cut. Again, this procedure yields a subdivision of a
3-edge-connected graph.

\begin{lemma}
  \label{removingvertex}
Let $P_1$, $P_2$, $P_3$ be usable ears of $G^{\bullet}$ such that
\begin{enumerate}
  \item either all of $P_1$, $P_2$, and $P_3$ are adjacent to the same vertex~$v \ne w$ of degree 3,
  \item or all of $P_1$, $P_2$, and $P_3$ are adjacent to the same inner triangle~$T$.
\end{enumerate}
Consider the graph~$G' = \left( G - \bigcup_{i=1}^3 E(P_i) \right)^{\times}$ (in the first variant)
or  $G' = \left( G - \bigcup_{i=1}^3 E(P_i) - E(T) \right)^{\times}$ (in the second variant).
Then the only 2-edge-cuts of $G'$ separating cycles have the form~$\delta(Y)$ where $Y \subseteq V(G^{\bullet} - w)$ contains a unique cycle $C_Y$,
which is an inner triangle in $G^{\bullet}$ (distinct from~$T$) and contains an end of some $P_i$ ($i=1$, $2$, or~$3$).
\end{lemma}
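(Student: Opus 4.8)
The plan is to adapt the proof of Lemma~\ref{removingedge}, working throughout in the graph $H^{\bullet}$ obtained from $G^{\bullet}$ by suppressing all degree-$2$ vertices; by the lemmas already established, $H^{\bullet}$ is $3$-edge-connected, each usable ear of $G^{\bullet}$ becomes a single edge of $H^{\bullet}$, the vertex $w$ survives with degree $3$, and the vertices of an inner triangle appear in $H^{\bullet}$ as a triangle all of whose vertices have degree $3$ (two edges inside the triangle and one edge into the unique ear attached there). In the first variant, deleting $E(P_1)\cup E(P_2)\cup E(P_3)$ from $G^{\bullet}$ corresponds to deleting the three edges at $v$ from $H^{\bullet}$, and, when $\deg_{H^{\bullet}}(v)=3$, to deleting the vertex $v$ itself; in the second variant, deleting $E(P_1)\cup E(P_2)\cup E(P_3)\cup E(T)$ corresponds to deleting the three-element vertex set $V(T)$. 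Writing $R$ for the set of deleted edges of $H^{\bullet}$, the graphs $G'$ and $H^{\bullet}-R$ have essentially the same cycle and edge-cut structure (they differ only by the identification of $W$ to $w$ and by suppression of degree-$2$ vertices); so it suffices to show that every $2$-edge-cut of $H^{\bullet}-R$ separating cycles, oriented so that the side $Y$ carrying a cycle avoids $w$, has $Y$ inducing a unique cycle, which is an inner triangle of $G^{\bullet}$ distinct from $T$ and meeting an end of some $P_i$.

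So fix such a cut $\delta(Y)=\{e,f\}$ and let $\delta_{H^{\bullet}}(Y)=\{e,f\}\cup R'$, where $R'$ consists of the deleted edges with exactly one end in $Y$. Since $\delta(Y)$ separates cycles in $H^{\bullet}-R$ and $H^{\bullet}$ has the extra cycle $T$ (second variant) or vertex $v$ (first variant) available, $\delta_{H^{\bullet}}(Y)$ separates cycles, hence $|R'|\ge1$; the crucial step is to reach the case $|R'|=1$. If $|R'|\ge2$ in the second variant, enlarging $Y$ by $V(T)$ yields a cut of size $2+|R'|-(\text{number of pendant triangle edges crossing }Y)\le3$ that still separates cycles (the triangle $T$ being absorbed into the enlarged side), and size $2$ is ruled out by $3$-edge-connectivity of $H^{\bullet}$, leaving a $3$-edge-cut; in the first variant, when $v\in Y$ one checks that $v$ then has too few edges inside $Y$ to lie on a cycle there, so deleting $v$ from $Y$ again produces a $2$- or $3$-edge-cut of $H^{\bullet}$, the former contradicting $3$-edge-connectivity. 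Thus we arrive at a $3$-edge-cut $\delta_{H^{\bullet}}(Z)$ separating cycles, in which exactly one deleted edge occurs, namely the edge at an end of some $P_i$.

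By construction $Z$ does not contain $w$: the three edges at $w$ run into $X$, which induces at least two cycles, so $w$ stays on the side containing $X$, whereas $Z$, being the cycle side of a cut that we will see is small, cannot contain $X$. Observation~\ref{bullet3ec} now gives $X\subseteq Z$ or $Z$ induces at most one cycle; the first is impossible, since $X\subseteq Z$ and $w\notin Z$ would put the complement of $Z$ inside $\{w\}\cup\{\text{degree-}2\text{ vertices}\}$ (the only vertices of $G^{\bullet}$ outside $X$), which contains no cycle (and, in the second variant, also contradicts $V(T)\subseteq X$), against $\delta_{H^{\bullet}}(Z)$ separating cycles. Hence $Z$ induces exactly one cycle $C_Z$; as $d_{H^{\bullet}}(V(C_Z))=3$, Lemma~\ref{easydelconseq} forces $C_Z$ to be a union of three ears, that is, an inner triangle; it differs from $T$ because $E(T)$ was deleted, and it contains the end of the single $P_i$ whose edge lies in $\delta_{H^{\bullet}}(Z)$. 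Pulling this back through the suppression and the $W$--$w$ identification yields the claimed description of $\delta(Y)$. \emph{The main obstacle} is the uncrossing of the middle paragraph: bounding how many of the three removed ears can cross a bad $2$-edge-cut and showing the surplus is always absorbable --- by $V(T)$, or by the common vertex $v$ --- so as to land on a $3$-edge-cut of $H^{\bullet}$ (or to contradict its $3$-edge-connectivity); most of the casework comes from degenerate configurations, such as when an end $u_i$ of some $P_i$ becomes a pendant or degree-$2$ vertex after the deletion, and from keeping careful track of the side on which $w$ lies.
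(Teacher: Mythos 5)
Your proposal takes essentially the same route the paper intends: the paper gives no detailed proof of this lemma, saying only that the precise arguments are similar to those for Lemma~\ref{removingedge}, which itself is obtained by applying Lemma~\ref{3edgecut} (through Observation~\ref{bullet3ec}) to the graph obtained from $G^{\bullet}$ by suppressing degree-2 vertices --- exactly your $H^{\bullet}$ --- and your plan of absorbing surplus deleted edges into $V(T)$ (or into $v$) to reach a 3-edge-cut avoiding $w$, then using Observation~\ref{bullet3ec} together with the acyclicity of $V(G^{\bullet})\setminus X$, is that argument. The one step you assert rather than prove is that $d(V(C_Y))=3$ for the unique cycle on the cut side; this follows from a short count (all vertices of that side have degree at least $3$ in $H^{\bullet}$ while only three edges leave it, so the side has at most three branch vertices and its unique cycle must be a triangle on all of them), after which Lemma~\ref{easydelconseq} identifies it as an inner triangle, matching the level of detail the paper itself leaves implicit.
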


\begin{proof}
By arguments similar to those for Lemma~\ref{removingedge}, it is easy to see that deleting exactly one of $P_1,P_2$ and $P_3$, say $P_1$, and possible modification of its incident triangles yields a subdivision of a 3-edge-connected graph. Now $P_2\cup P_3$ in the vertex case or $P_2\cup P_3 \cup T'$ in the triangle case (where $T'\subseteq T$ is the leftover of $T$ after modification of $T$) forms a new ear. Delete the ear and possibly modify its incident triangles. Again, by arguments similar to those for Lemma~\ref{removingedge}, the resulting graph remains a subdivision of a 3-edge-connected graph.
\end{proof}

Going forward, we will ignore the original graph and instead focus our attention on $G^{\bullet}$ when searching for contractible or reducible subgraphs.  However, any contractible subgraph of $G^{\bullet}$ which contains only usable ears will also be a contractible subgraph of $G$.  Similarly, if $H$ is a reducible subgraph of $G^{\bullet}$ which contains only usable ears, then $H$ will also be a reducible subgraph of $G$ since the graph $(G - E(H))^{\times}$ will also be a subdivision of a 3-edge-connected graph.  So in short, as long as we avoid the unusable ears, we are free to operate in $G^{\bullet}$.

\section{Forbidden Configurations}

In this section we will establish a number of results which give us information about our minimal counterexample $G$.
In order to facilitate  calculations, we will introduce some graphical notation for working with possible subgraphs of $G$.  When depicting a subgraph
of~$G$ we will use an open circle $\circ$ to indicate a vertex with degree $\ge 3$ which is not $w$, and we will use a $\bullet$ to denote the vertex
$w$ (a vertex which might or might not be $w$ receives no symbol).  If there is an ear $P$ with ends $u,v$, then this will appear as a line segment
between $u$ and $v$ and we will sometimes mark this segment with a 0, 1, or 2 to indicate the length of $P$ mod 3 (we will not indicate vertices of
degree 2).  Finally, we will draw a dotted circle around an inner triangle and will draw a small dotted circle around a vertex to indicate that it is
not in an inner triangle (such vertex will be called a \emph{triad}).
We call these drawings \emph{configurations}, and we will say that a configuration is \emph{forbidden} if the corresponding
subgraph cannot appear in $G^{\bullet}$.

\begin{lemma}
\label{triangle_types}
Every inner triangle in $G^{\bullet}$ appears in a configuration of one of the following three types:

\begin{myfigure}[htp]
\centerline{\includegraphics[width=8cm]{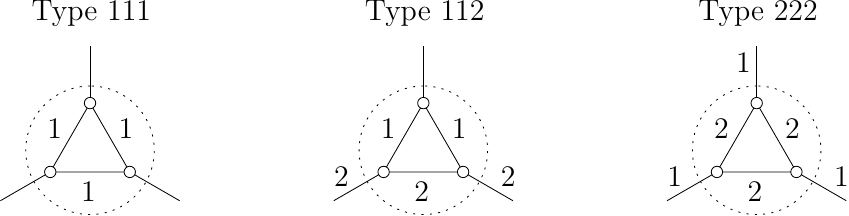}}
\end{myfigure}

Furthermore, if $P$ is an ear with length 2 mod 3 which is contained in an inner triangle, then for every ear labelling $\psi$ of $P$ with gain at least 16, both newly formed ears of the $\psi$-removal of $P$ are equitable.
\end{lemma}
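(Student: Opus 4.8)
The plan is to determine the length-profile (modulo~$3$) of the three ears of an inner triangle, and then to extract the ``Furthermore'' statement from the same deletion argument. I claim the three types in the figure are precisely the inner triangles whose three ear-lengths, taken modulo~$3$, form the multiset $\{1,1,1\}$, $\{1,1,2\}$, or $\{2,2,2\}$. So the first task is to rule out everything else, and the crux is the sub-claim that \emph{no ear of an inner triangle has length divisible by~$3$}; granting this, Lemma~\ref{easycont}(2) (no cycle that is a union of three or four ears has total length $\equiv 2\pmod 3$) finishes Part~1 by a one-line check over $\{1,2\}^3$: among the multisets with entries in $\{1,2\}$, only $\{1,2,2\}$ has total length $\equiv 2\pmod 3$, so it alone is eliminated.

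To prove the sub-claim, fix an inner triangle $T$ with degree-$3$ vertices $a,b,c$ and ears $P_1,P_2,P_3$ of $G$, labelled so that $P_1$ joins $b$ to $c$, $P_2$ joins $c$ to $a$, and $P_3$ joins $a$ to $b$; let $R_b$ and $R_c$ be the third ear of $G$ at $b$ and at $c$, respectively. First I would record the local picture: $R_b\neq R_c$ (otherwise $P_1\cup R_b$ would be a cycle formed by two ears through the degree-$3$ vertex $b$, forbidden by Lemma~\ref{easydelconseq}), neither $R_b$ nor $R_c$ is a closed ear, and their far ends avoid $\{a,b,c\}$ (by these same obstructions of Lemma~\ref{easydelconseq}); hence $P_1,P_2,P_3,R_b,R_c$ are five pairwise-distinct ears of $G$. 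Deleting $E(P_1)$ from $G$ therefore drops exactly $b$ and $c$ to degree~$2$ and has the precise effect of replacing $P_1,P_2,P_3,R_b,R_c$ by the two merged ears $Q_1=P_3\cup R_b$ and $Q_2=P_2\cup R_c$, so if $G'=(G-E(P_1))^{\times}$ then
\[
\bonus(G)-\bonus(G')=\bonus(P_1)+\bonus(P_2)+\bonus(P_3)+\bonus(R_b)+\bonus(R_c)-\bonus(Q_1)-\bonus(Q_2).
\]
Because $P_1$ is a usable inner ear of $G^{\bullet}$, Lemma~\ref{removingedge}(1) tells us $G'$ is again a subdivision of a $3$-edge-connected graph, and it is nonempty since it still contains the degree-$3$ vertex $a$. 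Now suppose $|E(P_1)|\equiv 0\pmod 3$. Then $P_1$ is inequitable (Lemma~\ref{noequitable}), so Lemma~\ref{goodgain} yields an ear labelling $\psi$ of $P_1$ with $\gain(\psi)\ge 24$, while the identity above gives $\bonus(G)-\bonus(G')\le 5\cdot 4=20$ (here $\bonus(P_1)=4$, each remaining ear-bonus is at most $4$ as $G$ has no equitable ear, and $\bonus(Q_1),\bonus(Q_2)\ge 0$). Since $24\ge 20$, the pair $(P_1,\psi)$ witnesses that $P_1$ is a reducible subgraph of $G$, contradicting Observation~\ref{noreducible}. This establishes the sub-claim, hence Part~1.

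For the ``Furthermore'' part I rerun the identical deletion, now with $P:=P_1$ of length $\equiv 2\pmod 3$ and $\psi$ an \emph{arbitrary} ear labelling of $P$ with $\gain(\psi)\ge 16$ (such $\psi$ exist by Lemma~\ref{goodgain}). Now $\bonus(P)=3$; since $P$ is not a reducible subgraph (Observation~\ref{noreducible}), while $(G-E(P))^{\times}$ still lies in our class (Lemma~\ref{removingedge}(1)) and $\psi$ is an ear labelling of $P$, the defining inequality of reducibility must fail for the pair $(P,\psi)$, i.e.\ $\gain(\psi)<\bonus(G)-\bonus(G')$; together with $\gain(\psi)\ge 16$ this forces $\bonus(G)-\bonus(G')\ge 17$, and the bonus identity now gives
\[
\bigl[\bonus(P_3)+\bonus(R_b)-\bonus(Q_1)\bigr]+\bigl[\bonus(P_2)+\bonus(R_c)-\bonus(Q_2)\bigr]\ge 14 .
\]
Each bracket is at most $8$ (two ear-bonuses of value at most $4$, minus a nonnegative quantity), hence each is at least $6$, forcing $\bonus(Q_1)\le\bonus(P_3)+\bonus(R_b)-6\le 2$ and likewise $\bonus(Q_2)\le 2$; as ear-bonuses take values in $\{0,3,4\}$ this yields $\bonus(Q_1)=\bonus(Q_2)=0$, i.e.\ both $Q_1$ and $Q_2$ --- the two newly formed ears of the $\psi$-removal of $P$ --- are equitable, as claimed. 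Note that equitability of $Q_1,Q_2$ is measured against the modified weight $\mu_{G'}$, which varies with $\psi$, but the argument is uniform: the bonus identity does not mention $\psi$ and the reducibility obstruction of Observation~\ref{noreducible} applies to each admissible $\psi$ separately.

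I expect the genuine work to be the structural bookkeeping rather than the (tight, but elementary) arithmetic: one must verify that deleting $P_1$ produces exactly the two merges described and no others, that the five ears $P_1,P_2,P_3,R_b,R_c$ are distinct and non-degenerate, and that $(G-E(P_1))^{\times}$ remains a subdivision of a $3$-edge-connected graph --- all resting on the construction of $G^{\bullet}$ (distinct inner triangles are vertex-disjoint and avoid $w$), on Lemma~\ref{easydelconseq}, and on Lemma~\ref{removingedge}(1).
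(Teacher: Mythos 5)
Your proof is correct and takes essentially the same route as the paper: the same single-ear reduction (gain $24$ versus a bonus drop of at most $20$ over the five affected ears) rules out inner ears of length $0 \bmod 3$, the same appeal to Lemma~\ref{easycont} eliminates the $\{1,2,2\}$ triangle, and the same reducibility-failure count (gain $\ge 16$ against the five affected ears) forces both merged ears to be equitable, your bracketed bookkeeping being just a repackaging of the paper's ``if one merged ear is inequitable then the bonus drop is at most $16$'' step, with the ``for every $\psi$'' universality handled explicitly (the paper glosses this). The only difference is presentational: the paper's three configurations additionally record the lengths of the outer ears at the ends of each $2$-side, but this is the immediate consequence of the ``Furthermore'' statement you prove, since an equitable merged ear forces the outer ear's length to be $\equiv -(\text{adjacent side}) \pmod 3$.
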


\begin{proof}  Consider the configurations appearing in the figure below.

\begin{myfigure}[htp]
\centerline{\includegraphics[width=8cm]{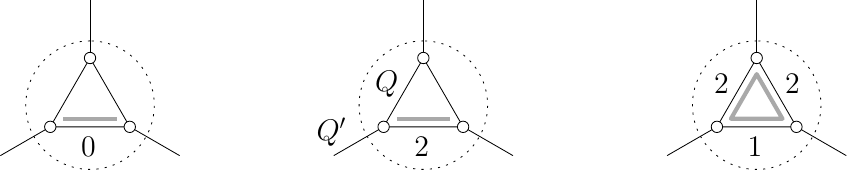}}
\end {myfigure}

In the rightmost configuration, the thick gray lines indicate a contractible subgraph by Lemma~\ref{easycont}, so this configuration is forbidden.  For the two configurations on the left in the figure, the thick gray lines indicate a partial ear decomposition consisting of just a single ear which we call $P$.

If $P$ has length 0 mod 3 (as on the left), then since $P$ is not equitable (Lemma~\ref{noequitable}), we may choose an ear labelling $\psi : E(P) \rightarrow \mathbb{Z}_3$ with gain at least 24 by Lemma~\ref{goodgain}.  The $\psi$-removal of $P$ is a weighted graph $G'$ which is still a subdivision of a 3-edge-connected graph.  Since there are just 5 ears of $G$ which are not ears of $G'$, we have $\bonus(G') \ge \bonus(G) - 20$ and we conclude that $P$ is reducible.  Thus, every inner ear of $G$ has nonzero length mod 3.

Next suppose that $P$ is an inner ear with length 2 mod 3 as in the middle and let $Q,Q'$ be the ears as shown in the figure.  By Lemma~\ref{goodgain} we may choose an ear labelling~$\psi$ of~$P$ with $\gain(\psi) \ge 16$ and we let $G'$~denote the $\psi$-removal of $P$.  Now $Q \cup Q'$ is an ear of $G'$, and if this ear is not equitable, then $\bonus(G') \ge \bonus(G) - 16$ and we have a reducible subgraph.  Therefore, $Q \cup Q'$ must be equitable in $G'$.  Since $Q$ cannot have length 0 mod 3 it must be that one of $Q,Q'$ has length 1 mod 3 and the other has length 2 mod 3.

Combining the above arguments we deduce that every inner triangle must have one of these three types, as desired.
\end{proof}

\begin{lemma}
\label{nozero}
No usable ear has length 0 mod 3.
\end{lemma}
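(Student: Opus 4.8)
The plan is to suppose, for a contradiction, that there is a usable ear $P$ of $G^{\bullet}$ with $|E(P)| \equiv 0 \pmod 3$, and to exhibit a reducible subgraph (contradicting Observation~\ref{noreducible}). Since $G$ has no equitable ear (Lemma~\ref{noequitable}), Lemma~\ref{goodgain} gives an ear labelling $\psi$ of $P$ with $\gain(\psi) \ge 24$. The idea is that $24$ is a large gain: removing $P$ destroys the bonus of only a few ears, and $24$ should pay for all of them. Concretely, if $P$ is inner, then by Lemma~\ref{removingedge}(1) the graph $G' = (G - E(P))^{\times}$ is already a subdivision of a 3-edge-connected graph; the only ears of $G$ that fail to be ears of $G'$ are $P$ itself and the at most two ears that get merged with neighbouring ears at the two ends of $P$ (in fact, since $P$ lies in an inner triangle, by Lemma~\ref{triangle_types} the relevant merges are into equitable ears or produce ears of known residue). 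A bonus of at most $3$ per lost ear gives $\bonus(G') \ge \bonus(G) - \text{(small)}$, and since $\gain(\psi) \ge 24$ exceeds this loss, $P$ is reducible.

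The main work, and the main obstacle, is the \emph{outer} case, where by Lemma~\ref{removingedge}(2) deleting $P$ can create one or two cyclic 2-edge-cuts, each of the form $\delta(Y)$ with $Y$ inducing a unique cycle $C$ that is an inner triangle of $G^{\bullet}$ containing an end of $P$. For each such bad cut we must delete one further ear — an ear of the inner triangle $C$ — to return to a subdivision of a 3-edge-connected graph, exactly as described in the paragraph following Lemma~\ref{removingedge}. So the reducible subgraph will be $H = P$ together with (at most) one ear from each of the (at most two) offending inner triangles; we then build a partial ear decomposition of $G$ out of these $P_i$'s and choose ear labellings $\psi_i$ via Lemma~\ref{goodgain}. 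We must check two things: first, that the chosen extra ears from the inner triangles can be taken to have nonzero length mod 3 (so Lemma~\ref{goodgain} yields $\gain \ge 8$ for each, not just the trivial bound for an equitable ear) — here we use Lemma~\ref{triangle_types}, which pins down the residues of the ears in an inner triangle, together with Lemma~\ref{noequitable} forbidding equitable ears; second, a bonus accounting: $\bonus(G) - \bonus(G')$ counts the bonuses of all ears of $G^{\bullet}$ that are not ears of the final graph, and we must verify $\gain(\psi) + \sum_i \gain(\psi_i) = 24 + \sum_i \gain(\psi_i) \ge \bonus(G) - \bonus(G')$.

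For the accounting I would argue as follows: each ear of $G$ lost in the whole process contributes at most $4$ to $\bonus(G) - \bonus(G')$, and there are only a bounded number of them — $P$, the at most two deleted inner-triangle ears, and the handful of ears that get merged at the various degree-3 vertices that are suppressed during the reductions. In the worst case (two offending inner triangles) we delete $P$ plus two inner ears and merge the remaining third ear of each triangle into an adjacent outer ear, plus the two outer ears at the ends of $P$ may merge: a crude count gives at most, say, $3 + 2\cdot 3 = 9$ lost ears, hence $\bonus(G)-\bonus(G') \le 36$; but this is already too weak, so the real content is that the inner-triangle ears we \emph{delete} each give us an extra $+8$ from $\gain(\psi_i)$, and Lemma~\ref{triangle_types} guarantees the \emph{merged} ears are equitable (bonus $0$) or have controlled residue, cutting the true loss well below $24 + 8 + 8 = 40$. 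The hard part is being careful enough with the three configuration types of Lemma~\ref{triangle_types} to see that in every case the surplus from $\gain(\psi)=24$ and the inner-ear labellings dominates the bonus actually lost; once that case analysis is done, $H$ is reducible and we contradict Observation~\ref{noreducible}, so no usable ear has length $0$ mod $3$.
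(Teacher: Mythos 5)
Your overall strategy matches the paper's: take an ear labelling $\psi$ of $P$ with $\gain(\psi)\ge 24$ (via Lemmas~\ref{noequitable} and~\ref{goodgain}), note the easy case where no end of $P$ lies in an inner triangle (there $\bonus(G)-\bonus(G')\le 20$), and repair each inner triangle at an end of $P$ by removing one more ear. But the argument has a genuine gap exactly where you acknowledge it: the bonus accounting in the triangle case. Your plan is to delete an extra ear of nonzero residue from each offending triangle and collect only $+8$ per such ear, and your own estimate ($\le 36$ loss against $24+8+8=40$, with a crude loss bound of $36$ that is in fact too small once one counts all five ears incident with a triangle that cease to be ears) shows this does not close; you then defer ``the hard part'' to an unspecified case analysis over the triangle types. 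That case analysis is the actual content of the lemma, and its key point is missing from your writeup: since $P$ has length $0$ mod $3$, Lemma~\ref{triangle_types} forces \emph{both} ears of the inner triangle meeting the end $x$ of $P$ to have length $1$ mod $3$ (an outer ear attached at a $1$--$2$ vertex of a $112$ triangle or at any vertex of a $222$ triangle must have residue $2$ or $1$, respectively, by the equitability clause of that lemma). Hence in the $\psi$-removal those two ears merge into a single ear of length $2$ mod $3$, and it is \emph{this merged ear} of $G'$ (not an original triangle ear) that one removes, with gain at least $16$. With $+16$ per triangle end, the loss of at most $4\cdot 5=20$ in bonus at that end is covered up to a surplus deficit of $4$, which the $24$ from $P$ (whose own bonus is $4$) absorbs; this is what makes every case work.

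A second, smaller confusion: you suggest that Lemma~\ref{triangle_types} helps because ``the merged ears are equitable (bonus $0$)''. Equitable newly formed ears do not reduce $\bonus(G)-\bonus(G')$; they contribute the minimum possible ($0$) to $\bonus(G')$, so they are the worst case for the accounting, not a savings. The savings come from the high gain ($\ge 16$) of the length-$2$ merged ear described above, together with the fact that the remaining ears at the far side of the triangle merge into a single ear whose bonus is simply dropped to $0$ in the estimate. So the structure of your proof is right, but as written it does not prove the lemma: the decisive use of Lemma~\ref{triangle_types} to pin the residues at a triangle end of a length-$0$ outer ear, and the resulting $\ge 16$ gain from the merged ear, must be supplied.
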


\begin{proof}
Consider the configurations appearing in the figure below.

\begin{myfigure}[htp]
\centerline{\includegraphics[width=8cm]{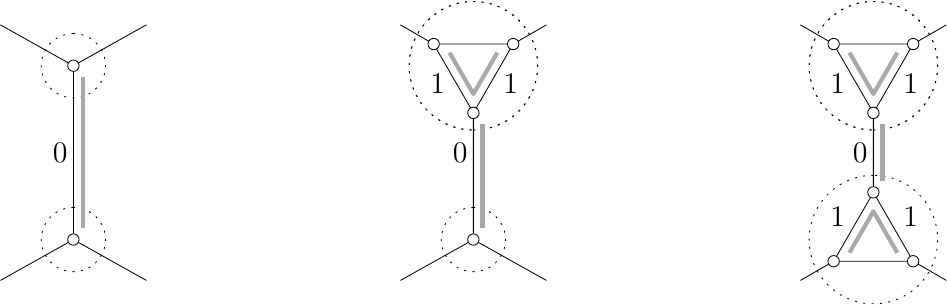}}
\end {myfigure}

For each of these configurations the figure indicates a partial ear decomposition.  Let $P$ be the outer ear with length 0 mod 3 as indicated and note
  that since $P$ is not equitable (by Lemma~\ref{noequitable}), we may choose an ear labelling~$\psi$ of~$P$ with gain at least 24
  (Lemma~\ref{goodgain}).   As usual, we will consider the $\psi$-removal graph $G'$.  If neither end of $P$ is in an inner triangle, then $G'$ is
  cyclically 3-edge-connected, so $P$ is reducible (clearly, $\bonus(G) - \bonus(G') \le 20$).
  Next suppose that some endpoint of $P$, call it $x$, is contained in an inner triangle.  Then by
  Lemma~\ref{triangle_types}, both of the ears of the inner triangle ending at $x$, call them $Q$ and $Q'$, must have length 1 modulo 3.  In the graph
  $G'$ it follows that $Q \cup Q'$ will be an ear with length $2$ modulo 3, so we may choose an ear labelling $\psi'$ with gain at least 16.  Now after
  forming the $\psi'$-removal we have repaired the connectivity problem caused by this inner triangle when we removed $P$.  Doing the same if necessary
  at the other end of $P$ (if it is also contained in an inner triangle) gives us a reducible subgraph.
\end{proof}

Our past two lemmas are rather easy reductions, but already give considerable structure to the graph $G^{\bullet}$.  These two lemmas already exhibit most of the basic ideas in our process, but we will need to consider rather more complicated configurations in going forward.

\subsection{Endpoint Patterns}

In the proof of our last lemma, after removing a certain outer ear, we needed to remove a little more to fix cyclic 2-edge-cuts caused by inner triangles.  In this section we will introduce some terminology to assist in doing the accounting in such circumstances.  So our setup for this section is as follows:  $P$ is an outer ear that we have decided to remove as shown in the figure below.

\begin{myfigure}[htp]
\centerline{\includegraphics[width=3.2cm]{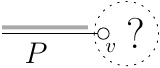}}
\end {myfigure}

We will focus our attention on just one end of $P$ which we call $v$.  If $v$ has degree at least four, then removing $P$ will not cause any ears at this end to merge.  If $v$ is a triad or in a cycle $C$ with $w$, then there are two other ears, say $Q,Q'$ incident with $v$ and removing $P$ will cause $Q$ and $Q'$ to merge into a single ear.  In this case we will say that $P$ has \emph{endpoint pattern} $Y_{0*}$, $Y_{11}$, $Y_{12}$, or $Y_{22}$ at $v$ depending on which of the configurations in the following figure it corresponds to.  Each of these patterns is associated with a cost which indicates the difference between the bonuses of $Q$ and $Q'$ in the original graph and the bonus of the ear $Q \cup Q'$ in the new graph obtained by removing $P$ (so this does not take into account the cost or gain from removing $P$).  Note that if $P$ has one of these endpoint patterns, then we don't need to worry about connectivity at this end after the removal of $P$.

\begin{myfigure}[htp]
\centerline{\includegraphics[width=13cm]{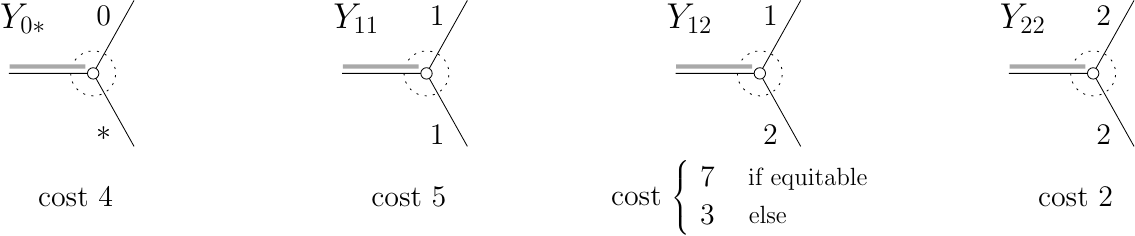}}
\end {myfigure}

So, for instance $Y_{11}$ has a cost of 5 since we have two ears of bonus 4 which merge to form an ear of bonus 3 in the new graph.  There is one special pattern here, $Y_{12}$ which has a variable cost depending on whether the newly formed ear (which will have length 0 mod 3) is equitable or not.

Next we consider the situation where we wish to remove an ear $P$ and we are interested in what happens at some end $v$ of $P$, but this vertex is contained in an inner triangle $C$.   In this case, when we remove $P$ to form the graph $G'$, this new graph will have a 2-edge-cut separating $C$ from the rest of the graph.  So, in order to maintain our desired connectivity, we will need to remove one of the two ears which comprise $C$ in the graph $G'$.  It will be convenient to absorb all of this in our notion of cost.  So, below we introduce four endpoint patterns $\Delta_{11}$, $\Delta_{11}'$, $\Delta_{12}$, and $\Delta_{22}$ with associated costs.
(The subscripts $a$ and $b$ in $\Delta_{ab}$ indicate that the two ears of the inner triangle which have $v$ as an end have lengths $a$ and $b$ modulo 3.)  In each case the cost gives an upper bound on the drop in bonus at this end minus the gain of the additional ear which is removed (when computing this drop in bonus, we count the five ears other than $P$ which have an end in the inner triangle).

\begin{myfigure}[htp]
\centerline{\includegraphics[width=13cm]{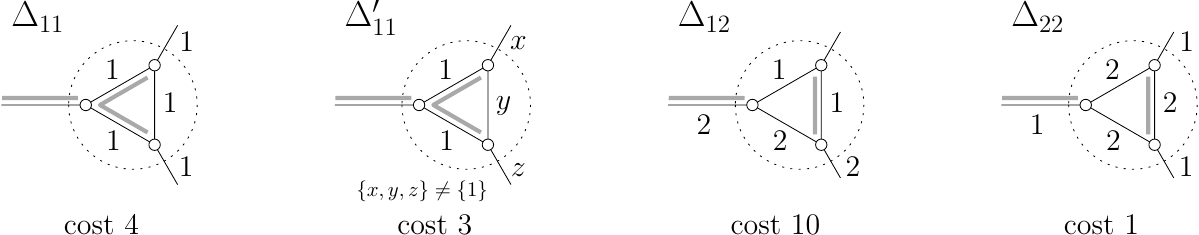}}
\end {myfigure}

So, for instance, the cost of $\Delta_{11}$ is $4$ because in moving from $G$ to $G'$ we lose a bonus of 4 for all 5 of the edges marked 1 for a total
of $-20$, we get a bonus of 0 or 4 in $G'$ for the newly formed ear since it has length 0 mod 3 (and might be equitable), and we get  $+16$ for the gain
of the ear which is removed. In endpoint pattern $\Delta_{11}'$, if either $x=y=z=2$ or the new ear obtained by deleting the two marked ears in the
inner triangle is inequitable, then the cost is $\le 1$; otherwise $y=1$ and $\{x,z\}=\{0,2\}$ and the total cost is 3. Note that the endpoint pattern
$\Delta_{12}$ cannot appear as the endpoint pattern of an ear with length 1 modulo 3 and similarly $\Delta_{22}$ cannot appear on an ear with length 2
modulo 3.  Next we will use this framework to establish some forbidden configurations.

\subsection{Forbidden Configurations}

\begin{lemma}
\label{forbid12}
The following configurations are forbidden.

\begin{myfigure}[htp]
\centerline{\includegraphics[height=3.5cm]{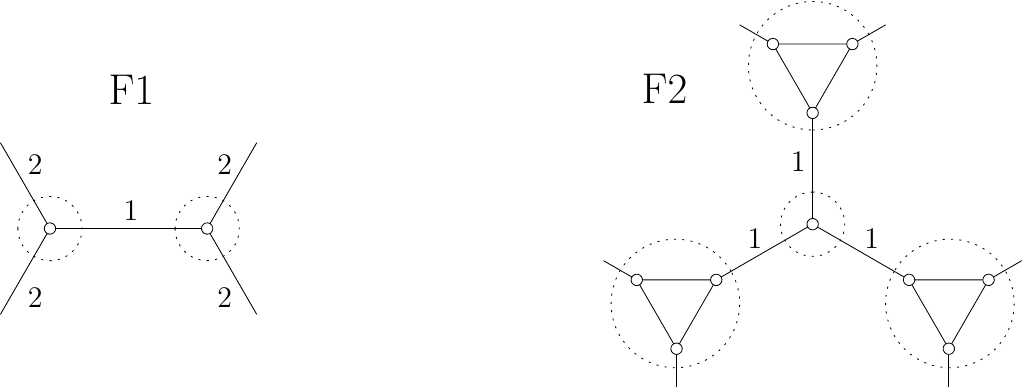}}
\end {myfigure}
\end{lemma}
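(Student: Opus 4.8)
\textbf{Proof proposal for Lemma~\ref{forbid12}.}

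The plan is to handle each forbidden configuration by the same reduction scheme that was used in the proofs of Lemma~\ref{nozero} and the endpoint-pattern discussion: identify one ear $P$ to remove, choose an ear labelling $\psi$ of $P$ with a large gain via Lemma~\ref{goodgain} (and Lemma~\ref{noequitable}), form the $\psi$-removal $G'$, then repair any cyclic $2$-edge-cuts of $G'$ caused by inner triangles at the ends of $P$ by removing one further ear from each offending triangle. The whole operation must be shown to be a reducible subgraph in the sense of the definition preceding Observation~\ref{noreducible}: the total gain of the chosen ear labellings must be at least $\bonus(G)-\bonus(G')$, i.e. the sum of the costs incurred at the endpoints (as bookkept by the endpoint patterns $Y_{ab}$ and $\Delta_{ab}$) plus the internal cost of merging ears must not exceed the gain $8i$ we extract from $P$ (where $|E(P)|\equiv i\pmod 3$, $i\in\{1,2\}$). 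Since these configurations are said to involve ``$12$'', I expect $P$ to be an ear of length $1$ or $2$ mod $3$ joining two triangles/triads whose endpoint patterns are among $Y_{12}$, $\Delta_{12}$, $\Delta_{11}$, $\Delta_{22}$ etc.; the figure dictates the exact patterns, and for each I would read off the cost from the preceding subsection and check the arithmetic $\text{(sum of endpoint costs)} \le 8i$.

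Concretely, for each configuration in the figure I would: (1) pick the ear $P$ whose removal is cheapest — typically one with length $2$ mod $3$ so that Lemma~\ref{goodgain} yields $\gain(\psi)\ge 16$, or if all relevant ears have length $1$ mod $3$, take the one whose endpoint patterns have total cost $\le 8$; (2) invoke Lemma~\ref{removingedge} (or Lemma~\ref{removingvertex} if three ears at a common vertex/triangle must go) to see that after the prescribed additional removals $G'$ is again a subdivision of a $3$-edge-connected graph, so the second bullet of the reducibility definition holds; (3) use Lemma~\ref{triangle_types} to constrain the lengths of the inner-triangle ears at the ends of $P$ (this is what forces the endpoint pattern to be, say, $\Delta_{11}$ with cost $4$ rather than something more expensive, since $\Delta_{12}$ cannot occur on an ear of length $1$ mod $3$ and $\Delta_{22}$ cannot occur on one of length $2$ mod $3$); (4) add up the costs and conclude $\sum_i \gain(\psi_i) \ge \bonus(G)-\bonus(G')$, so $H = \bigcup P_i$ is reducible, contradicting Observation~\ref{noreducible}.

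The main obstacle — and the only genuinely nontrivial point — is the ``$Y_{12}$'' and ``$\Delta_{11}'$'' type endpoints whose cost is \emph{variable}, being $3$ or $4$ (resp.\ $3$ or $\le 1$) according to whether a newly formed length-$0$-mod-$3$ ear is equitable. In the worst case the budget is tight, so I would need the second half of Lemma~\ref{triangle_types} (``if $P$ is an inner ear of length $2$ mod $3$, then for every ear labelling of gain $\ge 16$ both newly formed ears are equitable'') or a parallel fact for the merged outer ear to pin down which case occurs. Where the naive cost count fails by exactly the gap between the equitable and inequitable cases, the resolution should be that an inequitable merged ear actually raises $\bonus(G')$ and hence \emph{lowers} the required gain, while an equitable merged ear can be eliminated separately (it contributes a contractible piece or can be absorbed by choosing $\psi$ so that one of its edges is zero, using Lemma~\ref{noequitable} to forbid all three labellings hitting zero on distinct edges). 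So the endgame for each configuration is a short case split on equitability, each branch closing by either reducibility or a direct appeal to Lemma~\ref{noequitable}/Lemma~\ref{easycont}.
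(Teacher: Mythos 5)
Your overall scheme is indeed the paper's: exhibit a partial ear decomposition of the pictured subgraph, bound its gain below via Lemmas~\ref{goodgain} and~\ref{noequitable}, bound $\bonus(G)-\bonus(G')$ above via endpoint-pattern costs, and conclude reducibility, contradicting Observation~\ref{noreducible}, with Lemma~\ref{removingvertex} supplying the connectivity when three ears at a common vertex are removed. However, since you could not see the figure, your proposal never instantiates the argument, and your guesses about the configurations are off in ways that affect the bookkeeping. The two configurations are: (F1) an ear $P$ of length 1 mod 3 each of whose ends carries endpoint pattern $Y_{22}$ (its two companion ears there have length 2 mod 3), handled by the one-ear decomposition $P$ with gain $8$ against $\bonus(P)=4$ plus two costs of $2$; and (F2) a triad $v$ whose three ears $P_1,P_2,P_3$ all have length 1 mod 3 and all end in inner (111) triangles, handled by the decomposition $P_2\cup P_3,\,P_1$ with gain $24$ against $\bonus(P_1\cup P_2\cup P_3)=12$ plus three $\Delta_{11}$ costs of at most $4$ each. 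In particular no variable-cost endpoint ($Y_{12}$ or $\Delta_{11}'$) occurs, so the ``main obstacle'' your last paragraph is devoted to simply does not arise in this lemma; note also that in F2 removing a single ear would not suffice ($4+4+5>8$, the $Y_{11}$ merge at $v$ being too expensive), so the stronger decomposition removing all three ears at $v$ is genuinely needed -- your template allows for this but gives no criterion for when it is required.

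Two concrete gaps remain. First, your fallback for an equitable newly formed ear -- ``absorbed by choosing $\psi$ so that one of its edges is zero, using Lemma~\ref{noequitable}'' -- is not sound: Lemma~\ref{noequitable} constrains ears of $G$ itself, not ears created in a removal graph $G'$, and the actual tool for forcing a merged ear to be inequitable is Lemma~\ref{clever_choice}, which is proved only after Lemma~\ref{forbid12} (and, fortunately, is not needed for it). Second, you omit the one caveat the paper explicitly addresses: the endpoint-cost computations at the different ends must be independent, i.e.\ no ear of $G^{\bullet}$ may join two of the ends or inner triangles involved, since otherwise the bonus of a merged ear is counted twice. For F2 this is excluded by Observation~\ref{bullet3ec}; without some such argument your step of ``adding up the costs'' is not justified.
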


\begin{proof}
First consider the configuration on the left, and let $P$ be the ear marked as having length 1 mod 3 in this figure and assume its ends are $u,v$.  The partial ear decomposition given by $P$ has gain 8, and the bonus of $P$ is 4.  Since the endpoint patterns associated with removing $P$ at $u$ and $v$ are both of cost 2, this gives a reducible subgraph.  Although it is obvious in this case, let us note that this endpoint cost calculation would not be valid if in the graph obtained by removing $P$, both $u$ and $v$ were contained in a single ear.  In general we will need to be careful to ensure that these endpoint cost calculations are independent.

Next consider the configuration on the right, let $P_1, P_2, P_3$ be the ears marked as having length 1 mod 3 in this figure, and let $v$ be the common
  endpoint of $P_1, P_2, P_3$.  Now $P_2 \cup P_3, P_1$ is a partial ear decomposition with gain 24 and the bonus of $P_1 \cup P_2 \cup P_3$ is 12.
  Since the endpoint costs associated with removing each $P_i$ (at the endpoint other than $v$) are all at most 4, this gives a reducible configuration.
  Again, there is a potential danger here that there is an outer ear with an end in two of the inner triangles from this configuration (as this would
  invalidate our cost calculation), but this would result in a 3-edge-cut of $G^{\bullet}$ of a type that contradicts Observation~\ref{bullet3ec}, so it
  does not happen.
  Also note that the removal of~$P_1 \cup P_2 \cup P_3$ is possible according to Lemma~\ref{removingvertex}.
\end{proof}

In preparation for our next forbidden configurations, we will prove a couple of lemmas which will help us to control ears of length 1 mod 3 and allow us to arrange for the cost of endpoint pattern $Y_{12}$ to be 3 (instead of 7) in certain cases.

\begin{lemma}\label{le:length1}
If $P$ is a usable ear of $G^{\bullet}$ with length 1 mod 3, then $|E(P)| = 1$.
\end{lemma}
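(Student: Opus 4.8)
The plan is to argue by contradiction: suppose $P$ is a usable ear of $G^\bullet$ with length $1$ mod $3$ and $|E(P)| > 1$, hence $|E(P)| = 3k+1$ for some $k \ge 1$. The idea is that such a long ear is ``wasteful'' in the sense that we can extract more gain from it than its bonus of $4$, so that removing $P$ (together with the usual fix-up at its two ends via the endpoint patterns) yields a reducible subgraph, contradicting Observation~\ref{noreducible}. Concretely, by Lemma~\ref{goodgain} an inequitable ear of length $3k+1$ has an ear labelling $\psi$ with $\gain(\psi) \ge 8$; but here, since the ear is long, I would hope to do better than the generic bound.

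First I would try to show that a usable ear $P$ with $|E(P)| = 3k+1$ and $k \ge 1$ is necessarily inequitable with a large surplus of nonzero edges --- or, failing a direct improvement of Lemma~\ref{goodgain}, I would instead exploit the \emph{structure} of the length-$1$-mod-$3$ ears forbidden by Lemma~\ref{forbid12}. The two configurations forbidden there say precisely that an ear of length $1$ mod $3$ cannot have both endpoints ``cheap'' to fix (cost $\le 2$ on each end, or cost $\le 4$ at three ears meeting a vertex). So the real content I expect to need is: a \emph{long} ear of length $1$ mod $3$ (one that is subdivided, $|E(P)| \ge 4$) can be split into two shorter ears by contracting an interior edge, and one of the resulting pieces will have a length mod $3$ that lets us squeeze out extra gain. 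More precisely, if $|E(P)| = 3k+1$ with $k \ge 1$, then $P$ contains an interior vertex, and I would look at contracting an appropriate interior edge $e$ of $P$ to reduce $P$ to a shorter ear $P'$ of the same length mod $3$ in the contracted weighted graph $G_1$; by minimality of the counterexample there is a good $\phi$ for $G_1$, and we extend back over $e$ --- but this only works cleanly if contracting $e$ preserves that we are a subdivision of a $3$-edge-connected graph, which it does since $e$ is an interior edge of an ear, so $G_1$ is just a shorter subdivision of the same $3$-edge-connected graph.

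Wait --- contracting an interior edge of an ear simply un-subdivides, so $G_1$ is still a subdivision of the same $3$-edge-connected graph, and $\bonus(G_1)$ differs from $\bonus(G)$ only if the length of $P$ drops below the relevant threshold; since $3k+1 \ge 4$, contracting one edge leaves length $3k \ge 3$, which is still $1 \bmod 3$ only if... no, $3k \equiv 0$. So the parity changes. The cleaner move is to contract \emph{three} consecutive interior edges at once (possible since $|E(P)| = 3k+1 \ge 4$ has at least $3$ interior edges when $k \ge 1$): this replaces $P$ by an ear $P'$ of length $3(k-1)+1$, still $1 \bmod 3$, still inequitable by Lemma~\ref{noequitable} logic, and $\bonus(P') = 4 = \bonus(P)$, so $\bonus(G_1) = \bonus(G)$. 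By minimality we get $\phi$ on $G_1$ with $\gain(\phi) \ge \bonus(G_1) = \bonus(G)$, and we extend back over the three contracted edges using the fact (Observation~\ref{putback}-style, or the argument in Lemma~\ref{noequitable}) that any two zero-sum functions agreeing on all but one vertex agree everywhere, choosing the three new values to respect $\mu$. The only thing to check is that this extension does not lose too much gain: each extra edge contributes $24$ if nonzero and $0$ if zero to $\gain$, while it costs $16$ in the $-16|E|$ term, so we need the three reinstated edges to contain at most... hmm, actually we need the combined object to still have $\gain \ge \bonus(G)$, and we have slack because we want to reinstate exactly the structure of an equitable-free ear.

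The main obstacle, and where I would spend the most care, is exactly this last accounting: when we step back from $G_1$ to $G$ by reinstating three edges of $P$, we are free to choose the ear labelling on $P$ among the three options $\psi_1, \psi_2, \psi_3$, and by Lemma~\ref{goodgain} (using that $P$ is inequitable, which follows from $|E(P)| \equiv 1$) the best choice has $\gain \ge 8 = 8 \cdot 1 \ge \bonus(P) = 4$; but the subtlety is that the $\phi$ on $G_1$ already ``used up'' the gain $\ge \bonus(G_1)$ and we need the \emph{total} to still be $\ge \bonus(G)$. Since $\bonus(G) = \bonus(G_1)$ and replacing $P'$ by $P$ only adds edges, as long as the chosen extension has nonnegative marginal gain we are fine --- and the marginal gain of replacing a length-$3(k-1)+1$ ear labelling by a length-$3k+1$ one, both chosen optimally, is $\gain$(new) $- \gain$(old) $\ge 8 - 8 = 0$ only if both are exactly $8$; if the old one exceeds $8$ we might be in trouble. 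So the honest argument should not contract-and-reextend but rather directly delete $P$ as a reducible subgraph: take the partial ear decomposition consisting of just $P$, with $\gain(\psi) \ge 8$, and show that the endpoint patterns at both ends of $P$ have total cost $\le 8 - \bonus(P) + (\text{drop in bonus elsewhere})$; the forbidden configurations of Lemma~\ref{forbid12} are exactly what rule out the bad endpoint-pattern combinations, and Lemma~\ref{removingedge} / Lemma~\ref{removingvertex} handle the connectivity. I would structure the final proof as: assume $|E(P)| \ge 4$; if $|E(P)| \equiv 1 \bmod 3$ with $|E(P)| \ge 4$, use the extra length to get $\gain(\psi) \ge 8$ with room to spare, combine with the cost bounds on the two endpoint patterns (which are controlled because the worst cases were forbidden in Lemma~\ref{forbid12} and Lemma~\ref{triangle_types}), and conclude $P$ is reducible --- a contradiction --- so $|E(P)| = 1$.
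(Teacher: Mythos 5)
There is a genuine gap, and it sits exactly where you flagged your own uncertainty. Your final plan is: take the partial ear decomposition consisting of just $P$, use $\gain(\psi)\ge 8$ from Lemma~\ref{goodgain}, and argue that the endpoint-pattern costs at the two ends of $P$ are controlled by the forbidden configurations of Lemma~\ref{forbid12}. That accounting does not close. The bonus of $P$ is $4$, and each end of a length-$1$-mod-$3$ ear can carry an endpoint pattern as expensive as $Y_{12}$ with cost $7$ (the pattern $\Delta_{12}$ is excluded for such an ear, but $Y_{12}$ is not), so the total drop in bonus can be as large as $4+7+7=18$, far exceeding $8$. Lemma~\ref{forbid12} only forbids two very specific configurations (a length-$1$ ear between two $\Delta_{11}$-type triangle ends, and a triad whose three length-$1$ ears all end in such triangles); it says nothing about, say, an ear whose both ends are triads of pattern $Y_{12}$, which is precisely the kind of situation the later lemmas (e.g.\ the proof of Lemma~\ref{le:subcubic}) have to fight using Lemma~\ref{clever_choice}. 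So ``gain $\ge 8$ plus forbidden configurations'' is not enough, and your contraction-of-three-edges detour, which you rightly abandoned (the reinstated values are forced by the boundary conditions, and the marginal-gain bookkeeping fails), does not supply the missing strength either.

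The missing idea is that for the minimal counterexample the gain is not merely $8$ but proportional to $|E(P)|$: by Lemma~\ref{noequitable}, no ear of $G$ admits edges $e_1,e_2,e_3$ that are zero in the three respective labellings, so at least one of the three ear labellings of $P$ has support equal to all of $E(P)$. For that labelling $\gain(\psi)=24|E(P)|-16|E(P)|=8|E(P)|\ge 32$ once $|E(P)|\ge 4$. With $32$ in hand the deletion argument is trivial: $32$ dominates $\bonus(P)=4$ plus the worst possible endpoint costs ($\le 7$ at each end, since $\Delta_{12}$ cannot occur on a length-$1$-mod-$3$ ear), and the inner-ear case is even easier. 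This is exactly the paper's proof; your write-up gestures at ``a large surplus of nonzero edges'' early on but never identifies Lemma~\ref{noequitable} as the mechanism, and your final version drops back to the generic bound $\gain\ge 8$, where the argument genuinely fails.
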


\begin{proof}
  Suppose (for a contradiction) that $P$ has length 1 mod 3 and $|E(P)| \neq 1$.  By Lemma~\ref{noequitable} there is an ear labelling
  $\psi$ of $P$ with $\supp(\phi) = E(P)$.  So $\gain(\psi) = 24 | \supp(\phi)| - 16 |E(P)| = 8 |E(P)| \ge 32$.  If $P$ is an inner ear then it is a
  reducible subgraph.  Otherwise, $P$ is an outer ear with bonus 4, and  the endpoint patterns at the ends of $P$ each have cost at most 7, so $P$ may
  be extended to a reducible subgraph.
\end{proof}

\begin{lemma}\label{le:Y12}
\label{clever_choice}
Let $H$ be a weighted 2-edge-connected graph, let $v \in V(H)$ have $\deg(v) = 3$ and assume that $P, Q, Q'$ are distinct ears of $H$ with endpoint $v$.
If $|E(P)| = 1$ and $Q$ is not equitable, then there exists an ear labelling $\psi$ of $P$ with gain 8 so that in the $\psi$-removal of $P$, the ear
$Q \cup Q'$ is not equitable.
\end{lemma}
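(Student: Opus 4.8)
The plan is to analyze how the three ear labellings of $P$ interact with the equitability of the merged ear $Q\cup Q'$. Since $|E(P)|=1$, write $P$ as a single edge $e$ with ends $v$ and some vertex $v'$. By the Observation on ear labellings, the three ear labellings $\psi_0,\psi_1,\psi_2$ of $P$ simply assign $e$ the three distinct values $0,1,2$ of $\mathbb{Z}_3$, and (with the forward orientation) we may take $\psi_{i+1}=\psi_i+1$. Each such choice is an ear labelling of gain $8$ when the assigned value is nonzero, since $\gain(\psi)=24|\supp(\psi)|-16|E(P)| = 24-16 = 8$ in that case; so two of the three choices already have the required gain $8$, and we just need to show that at least one of those two also leaves $Q\cup Q'$ inequitable.

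First I would describe the $\psi_i$-removal concretely. Removing $e$ merges $Q$ and $Q'$ into a single ear $R$ of $H'=(H\setminus e)^{\times}$, and the new weight at $v$ becomes $\mu_H(v)-\partial\psi_i(v)$, which changes by a constant amount as $i$ varies (namely by $\pm 1$ per unit change of $\psi_i(e)$, depending on the orientation of $e$ at $v$). The key point is that the three ear labellings of the merged ear $R$ in the three graphs $H'_0,H'_1,H'_2$ are governed by the weight at $v$: as established in the discussion preceding the Observation, once a value is chosen on the first edge of $R$, all remaining values are forced by the weights at interior vertices, and $v$ is now an interior vertex of $R$. Concretely, the set of ear labellings of $R$ in $H'_i$, viewed as functions on $E(Q)\cup E(Q')$, shifts cyclically in a controlled way as $i$ changes; I would make this precise by fixing one orientation and tracking the single degree of freedom.

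The heart of the argument is then a counting/pigeonhole step. Consider the nine ear labellings obtained by first choosing $\psi_i$ ($i\in\mathbb{Z}_3$) and then choosing an ear labelling of $R$ in $H'_i$. These nine functions are exactly the nine ear labellings of the ``combined'' object, and in fact they correspond bijectively to the ear labellings of the cycle-or-path formed by $P\cup Q\cup Q'$ with the original weights — equivalently, by the Observation applied in $H$, to choices of values on two independent ears, giving $3\times 3 = 9$ functions. The statement ``$R$ is inequitable in $H'_i$'' for a given $i$ means the three restrictions to $E(R)$ do not have all three of $|\cdot^{-1}(0)|,|\cdot^{-1}(1)|,|\cdot^{-1}(2)|$ equal. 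I would argue by contradiction: if for both indices $i$ with $\psi_i(e)\neq 0$ the merged ear $R$ were equitable, then combined with the hypothesis that $Q$ itself is inequitable one derives a contradiction about the distribution of zero-values across $Q$, $Q'$, and the constraint linking the labellings of $Q$ and $Q'$ through the fixed weight at their far endpoints. The cleanest route is to note that equitability of $R$ in $H'_i$ forces a very rigid relationship between the zero-sets of the $Q$-part and $Q'$-part labellings, and this rigidity cannot simultaneously hold for two different weight values at $v$ while $Q$ is inequitable — because inequitability of $Q$ means its own three labellings already distribute zeros unevenly among $E(Q)$, and the two values of $i$ would force incompatible patching behaviour on $E(Q')$.

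The main obstacle I expect is the bookkeeping in that final pigeonhole step: one has to carefully set up orientations so that ``$\psi_{i+1}=\psi_i+1$ on $e$'' translates into an explicit cyclic shift of the family of $R$-labellings, and then verify that ``$R$ equitable'' is a condition that can hold for at most one of the three $i$'s (or at least, not for two of the two ``good'' ones) given that $Q$ is inequitable. A useful reformulation to reduce the casework: the nine labellings of $P\cup Q\cup Q'$ partition as $3$ labellings on $E(P)$ times the product structure, and ``$R$ equitable in $H'_i$'' asks that the $3$ labellings with $\psi(e)$ fixed equal to value $i$ have balanced zero-counts on $E(Q)\cup E(Q')$; since the total zero-count over all nine labellings on each edge is exactly $3$ (each edge is zero in exactly three of the nine), a double-counting argument over the three possible $i$-classes, combined with the inequitability of $Q$ (which says the three $i$-classes are already unbalanced when restricted to $E(Q)$), pins down that equitability of $R$ can fail to occur for at most one value of $i$, leaving a good value among the two nonzero ones. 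I would present the proof via this double-counting skeleton and fill in the orientation conventions as needed.
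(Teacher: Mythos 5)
Your proposal follows essentially the same route as the paper: identify the three labellings of the single-edge ear $P$ with the three values on that edge, observe that the ear labellings of the merged ear $Q\cup Q'$ in the $\psi_i$-removal pair the labellings of $Q$ with those of $Q'$ via a shift depending on $i$, and conclude that equitability of the merged ear for two distinct values of $i$ would force all zero-counts of the $Q$-labellings to coincide, contradicting the inequitability of $Q$ --- which is exactly the paper's ``straightforward case-analysis.'' One caveat: your phrase that equitability of $R$ ``can fail to occur for at most one value of $i$'' should read ``can occur for at most one value of $i$'' (your rigidity argument shows precisely this), and the per-edge count of three zeros among the nine combined labellings does not by itself distinguish the $i$-classes; the subtraction/rigidity step is the part that actually carries the proof.
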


\begin{proof}
Let $\psi_1, \psi_2, \psi_3$ be the ear labellings of $P$, and suppose that $\psi_3$ is the zero function (so $\psi_1, \psi_2$ have  support $E(P)$).
Let the ear labellings of $Q$ ($Q'$) be denoted $\sigma_i$ ($\sigma_i'$) for $1 \le i \le 3$.  When we perform a $\psi_i$-removal of $P$, the resulting
weighted graph will have $Q \cup Q'$ as an ear, and each ear labelling of this ear will have the form $\sigma_j \cup \sigma_k'$ for some $1 \le j,k \le 3$.
It follows from basic principles that (working with our indices modulo 3) we may assume that $\sigma_j \cup \sigma'_{j+i}$ is an ear labelling of
$Q \cup Q'$ in the $\psi_i$-removal of $P$.

If $|E(Q \cup Q')|$ is not divisible by~3, then $Q \cup Q'$ is inequitable. Thus we may assume that
either $|E(Q)|$ is $1 \bmod 3$ and $|E(Q')|$ is $2 \bmod 3$
or both $|E(Q)|$, $|E(Q')|$ are divisible by~3.
  By a straightforward case-analysis (and using the fact that $Q$ is inequitable),
it follows that either the $\psi_1$- or $\psi_2$-removal of $P$ will have $Q \cup Q'$ as an inequitable ear, thus proving the result.
\end{proof}

\begin{lemma}
\label{forbiddentriads}
The following configurations are forbidden in $G^{\bullet}$

\begin{myfigure}[htp]
\centerline{\includegraphics[width=6cm]{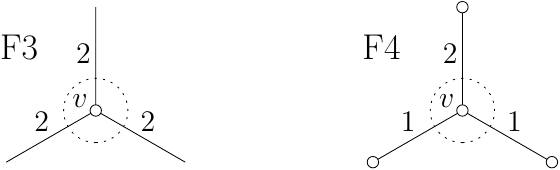}}
\end {myfigure}

\end{lemma}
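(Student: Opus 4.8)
\textbf{Proof plan for Lemma~\ref{forbiddentriads}.}

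The plan is to treat these configurations exactly as in the proof of Lemma~\ref{forbid12}, namely by exhibiting in each case a reducible subgraph whose accounting works out. Recall that the figure \texttt{forbidden34.pdf} depicts configurations involving triads and inner triangles joined by short ears; the key numerical facts available are that each ear of length $1$ mod $3$ is a single edge (Lemma~\ref{le:length1}), that each inner triangle has one of the three types of Lemma~\ref{triangle_types}, and that the endpoint patterns $Y_{ab}$ and $\Delta_{ab}$ carry the costs tabulated in the endpoint-pattern figures. First I would fix, in each configuration, a usable ear $P$ (or a short partial ear decomposition) on which to base the reduction: this $P$ will have length $1$ mod $3$, hence $|E(P)|=1$ and $\gain(\psi)\ge 8$ for a suitable ear labelling $\psi$, and $\bonus(P)=4$. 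Removing $P$ (and, where an end of $P$ lies in an inner triangle, one further ear of that triangle, as permitted by Lemma~\ref{removingedge}) produces a graph $G'$ that is still a subdivision of a $3$-edge-connected graph.

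The crux is then a bookkeeping inequality: I must check that the total gain harvested, $\sum\gain(\psi_i)$, is at least $\bonus(G)-\bonus(G')$, i.e.\ at least $\bonus(P)$ (plus the bonuses of any triangle ears removed) adjusted by the endpoint costs at the two ends of $P$. For the triad endpoint the relevant cost is $Y_{11}$, $Y_{12}$, or $Y_{22}$; for the inner-triangle endpoint it is $\Delta_{11}$, $\Delta_{11}'$, $\Delta_{12}$, or $\Delta_{22}$, according to the triangle's type. In the delicate case where an endpoint pattern is $Y_{12}$ or $\Delta_{11}'$ — whose nominal cost ($7$, resp.\ $3$) is too large — I would invoke Lemma~\ref{le:Y12}: since $|E(P)|=1$ and the neighbouring ear $Q$ of the triad (resp.\ the relevant triangle ear) is inequitable by Lemma~\ref{noequitable}, there is an ear labelling of $P$ with gain $8$ for which the merged ear $Q\cup Q'$ in $G'$ is again inequitable, so its bonus does not drop to $0$ and the effective cost of that endpoint pattern is only $3$ (resp.\ $1$). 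With this improvement the inequality $8\ge 4 + (\text{cost}_u) + (\text{cost}_v)$, or its three-ear analogue $24 \ge 12 + \sum_i(\text{cost at far end of }P_i)$, holds in every case, exhibiting a reducible subgraph and contradicting Observation~\ref{noreducible}.

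Two side conditions must be verified to make these endpoint-cost calculations legitimate, just as in Lemma~\ref{forbid12}. First, the two ends of $P$ must remain in distinct ears after removal — otherwise the costs at the two ends are not independent; here this follows because the configuration's vertices of degree $\ge 3$ and its inner triangles are forced apart by Observation~\ref{bullet3ec} (an outer ear meeting two of the depicted triangles, or joining the triad to a triangle in the wrong way, would create a $3$-edge-cut of $G^\bullet$ of a forbidden type). Second, when $P$ has an end in an inner triangle $T$, removing one ear of $T$ in $G'$ to repair the cyclic $2$-edge-cut is sanctioned by Lemma~\ref{removingedge}(2) (and, for the three-ear variant, Lemma~\ref{removingvertex}), and I must confirm no new uncontrollable $2$-edge-cut is introduced at the other end — again guaranteed since the two ends lie in different parts of $G^\bullet$.

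The main obstacle I anticipate is purely combinatorial: each configuration in the figure splits into subcases according to which of the three triangle types from Lemma~\ref{triangle_types} occurs at each triangle endpoint, and within the $Y_{12}$/$\Delta_{11}'$ subcases one must carefully track whether the merged ear is equitable. The argument is not deep, but it requires patiently enumerating these subcases and checking the one-line inequality in each; the use of Lemma~\ref{le:Y12} is exactly what keeps every case from being lost, so the heart of the matter is recognising in which subcases that lemma is needed and confirming its hypothesis (inequitability of the neighbouring ear) is met there.
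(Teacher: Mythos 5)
Your overall strategy (exhibit a reducible subgraph, do endpoint-cost bookkeeping, and call on Lemma~\ref{le:length1} and Lemma~\ref{le:Y12} in the delicate cases) is the paper's method, but the concrete plan does not fit the two configurations actually depicted, and the budget inequalities you propose would fail. The first configuration is a triad all three of whose incident ears have length $2 \pmod 3$, so there is no residue-$1$ ear to anchor on at all; the paper instead removes one residue-$2$ ear $P$ (gain $16$) and wins because the endpoint pattern at the triad is $Y_{22}$ (cost $2$), giving $16 \ge 3 + 2 + 10$. Your inequality $8 \ge 4 + \mathrm{cost}_u + \mathrm{cost}_v$ for a single residue-$1$ edge cannot hold here or in the second configuration (at a $Y_{12}$ end the cost is already at least $3$, at the other end at least $2$), and your ``three-ear analogue'' $24 \ge 12 + \sum_i \mathrm{cost}_i$ is the budget of Lemma~\ref{forbid12}, not of these configurations.

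The second configuration is a triad whose incident ears have residues $2,1,1$; removing all three yields a decomposition of gain $32$ against a bonus of $11$, and the reduction fails only when the far-end costs sum to at least $22$, which forces the patterns $\Delta_{12}$, $Y_{12}$, $Y_{11}$. This forced subcase is where your plan has a genuine gap. First, Lemma~\ref{le:Y12} cannot be applied inside that three-ear decomposition: after the residue-$2$ ear is removed, the two residue-$1$ ears are removed as a single merged ear of length $2 \pmod 3$, so the hypothesis $|E(P)|=1$ of Lemma~\ref{le:Y12} is unavailable for the ear carrying the $Y_{12}$ end. Second, the dominant cost $10$ comes from $\Delta_{12}$, which Lemma~\ref{le:Y12} does not reduce at all (nor does it govern $\Delta_{11}'$, as you suggest). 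The paper escapes by changing the decomposition in this subcase: it removes only the single edge $uv$ (gain $8$, with Lemma~\ref{le:Y12} making the merged ear at the $Y_{12}$ end $u$ inequitable) together with the residue-$2$ ear of the inner triangle sitting at the $\Delta_{12}$ end (gain $16$), and exploits that the ear of the resulting graph through $v$ then has length $1 \pmod 3$, so the bonus drop is exactly $24$ and matches the gain. Without this switch of decomposition, your claim that the bookkeeping inequality ``holds in every case'' is unsubstantiated, and in the forced subcase it is false as stated.
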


\begin{proof} Suppose (for a contradiction) that we have the configuration on the left in the statement of the lemma, and choose an ear $P$ which has $v$ as
  an end, but does not have the special vertex~$w$ as an end.  It now follows from a straightforward calculation that $P$ may be extended to a reducible
  subgraph.  Namely, the gain of the partial ear decomposition $P$ is 16, the bonus of $P$ is 3, and the endpoint costs at its ends will be 2 at $v$ and
  at most 10 at the other end. (We may need to delete another ear as suggested by the endpoint costs -- that is why we may need to extend~$P$
  to a reducible subgraph.)

\begin{myfigure}[htp]
\centerline{\includegraphics[width=8cm]{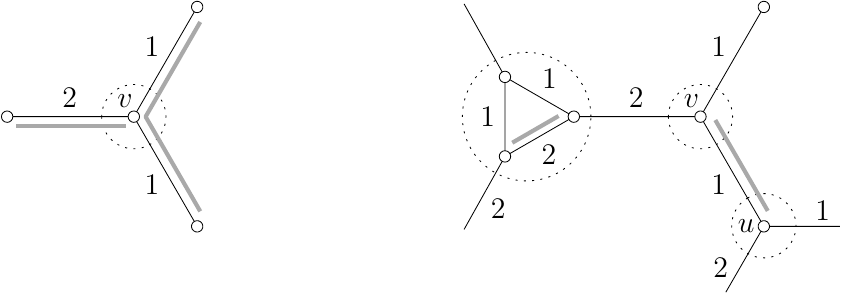}}
\end {myfigure}

Next suppose (for a contradiction) that we have the  configuration on the right in the statement of the lemma, and consider the partial ear decomposition
  with gain 32 indicated on the left in the above figure.  We may assume that this does not extend to a reducible subgraph, and this implies that the sum
  of the endpoint costs of these ears (at the ends other than $v$) must be at least 22.  Again, these cost calculations are independent as otherwise we would have a violation to Observation~\ref{bullet3ec} (or to the assumption that $v$ is not in an inner triangle).  The only way this is possible is for one of these endpoint patterns to be $\Delta_{12}$ and for another to be $Y_{12}$.  So, we may now assume that we have the configuration shown on the right in the above figure.  We have indicated a partial ear decomposition for this case with gain 24.  Let this partial ear decomposition be $P_1, P_2$ where $P_1$ is the ear with ends $u,v$.  By Lemma~\ref{le:length1}, $|E(P_1)|=1$. Now by applying Lemma~\ref{le:Y12}, we may choose an ear labelling $\psi_1$ of $P_1$ which has gain 8 (as required) and has the additional property that in the $\psi_1$-removal graph $G'$, the ear which contains the vertex $u$ will not be equitable.  Next we choose an ear labelling $\psi_2 : E(P_2) \rightarrow \mathbb{Z}_3$ with gain $\ge 16$ and let $G''$ be the $\psi_2$-removal from $G'$.  The ear of $G''$ containing $v$ has length 1 mod 3, and thus $\bonus(G'') = \bonus(G) - 32 + 8$ giving us a reducible subgraph.
\end{proof}

\begin{lemma}\label{le:subcubic}
$G^{\bullet}$ is subcubic (i.e., it has maximum degree 3).
\end{lemma}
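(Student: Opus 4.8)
The plan is to suppose for contradiction that $G^{\bullet}$ has a vertex $v$ of degree at least $4$, and to exhibit either a contractible or a reducible subgraph, contradicting the lemmas established above. Since $G$ is a subdivision of a $3$-edge-connected graph, and $G^{\bullet}$ is obtained by identifying a set to a single vertex $w$, the only vertex of $G^{\bullet}$ that could have very large degree is $w$; every other vertex has degree equal to its degree in $G$, so $v \ne w$ and $v$ has at least four distinct ears of $G^{\bullet}$ incident with it. First I would note that $v$ is not $w$ and is not contained in an inner triangle (an inner triangle contains only degree-$3$ vertices), so $v$ is a triad with degree $\ge 4$.

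Next I would reduce to the case that $v$ has exactly one neighbour through an unusable ear (i.e. at most one of the ears at $v$ runs to $w$); if there were two unusable ears at $v$, two of the three ears in the cut $\delta(X)$ would meet at $v$, which gives a $3$-edge-cut of $G^{\bullet}$ of a forbidden type (contradicting Observation~\ref{bullet3ec}, since $v \notin X$ and $v$ is not in a single cycle). So among the $\ge 4$ ears at $v$, at least three are usable. Now I would choose a usable ear $P$ at $v$ of a convenient length modulo $3$. Since $G$ has no equitable ear (Lemma~\ref{noequitable}) and no usable ear of length $0$ mod $3$ (Lemma~\ref{nozero}), every usable ear at $v$ has length $1$ or $2$ mod $3$, hence bonus $3$ or $4$, and by Lemma~\ref{goodgain} has an ear labelling with gain $\ge 8$ (length $1$) or $\ge 16$ (length $2$). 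The key point is that removing a usable ear $P$ at a degree-$\ge 4$ vertex $v$ does \emph{not} merge any ears at $v$ (the degree drops but stays $\ge 3$), so there is no endpoint cost at $v$ at all: by Lemma~\ref{removingedge} the only connectivity damage is at the \emph{other} end of $P$, governed by one of the endpoint patterns $Y_{**}$ or $\Delta_{**}$, each of bounded cost (at most $10$ for the worst $\Delta_{22}$, and always finite).

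With this, a direct accounting finishes it: take $P$ to be, if possible, a usable ear at $v$ of length $2$ mod $3$, so that the partial ear decomposition consisting of $P$ alone has gain $\ge 16$ and $\bonus(P) \le 4$ while the single endpoint pattern at the far end costs at most $\le 10$ — and in fact for length $2$ mod $3$ the relevant patterns ($Y_{0*}, Y_{22}, \Delta_{22}$ being excluded for length $2$ by the remarks after the $\Delta$-patterns, leaving $Y_{11}, Y_{12}, \Delta_{11}, \Delta_{11}', \Delta_{12}$) have cost small enough that $8\cdot 2 = 16 \ge \bonus(P) + (\text{endpoint cost})$; so $P$ (possibly extended by the extra inner-triangle ear forced by $\Delta$) is reducible. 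If instead every usable ear at $v$ has length $1$ mod $3$, then by Lemma~\ref{le:length1} each such ear is a single edge, and since $v$ has at least three usable ears all of length $1$ mod $3$, I would instead use a partial ear decomposition of three of these edges (as in Lemma~\ref{removingvertex}, case~1, since they are all adjacent to $v$), which has gain $24$ and $\bonus = 12$, leaving a budget of $12$ for the three far-end endpoint costs; invoking Lemma~\ref{le:Y12} exactly as in the proof of Lemma~\ref{forbiddentriads} to defeat the bad $Y_{12}$ (turning its cost from $7$ down to $3$) and using $\Delta_{12}$-handling likewise, the three endpoint costs sum to at most $12$, producing a reducible subgraph and hence a contradiction.

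The main obstacle I expect is the endpoint-cost bookkeeping for the length-$1$-mod-$3$ case: one must make sure the three far ends of the chosen single-edge ears are genuinely distinct ears (so the costs are independent, i.e. no single outer ear has ends at two of them, which would again violate Observation~\ref{bullet3ec}), and one must argue that at most a controlled number of the far endpoints are of the expensive $\Delta_{12}$ or $Y_{12}$ type — otherwise the crude bound $3\cdot 7 = 21 > 12$ fails and one genuinely needs the clever relabelling of Lemma~\ref{le:Y12} applied simultaneously at several ends. Verifying that these applications do not interfere with one another (they act on disjoint single-edge ears $P_i$, so they should not) is the delicate part; everything else is the same routine "gain $\ge$ bonus $+$ endpoint costs $\Rightarrow$ reducible, contradiction" pattern used repeatedly above.
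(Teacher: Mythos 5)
Your length-$2$-mod-$3$ subcase is essentially the paper's argument (remove one such usable ear; no cost at the degree-$\ge 4$ end, far-end cost bounded, gain $16 \ge \bonus + \text{cost}$), but your treatment of the remaining case --- all usable ears at the high-degree vertex $v$ having length $1$ mod $3$ --- has a genuine gap, in fact two. First, deleting three single-edge ears incident with $v$ is not licensed: if $\deg(v)=4$ this leaves $v$ with degree $1$, so $(G-E(H))^{\times}$ has a bridge and is not a subdivision of a $3$-edge-connected graph, and Lemma~\ref{removingvertex} does not cover this situation (it is designed for removing \emph{all} ears at a degree-$3$ vertex or at an inner triangle). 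Second, even granting connectivity, your budget does not close: the decomposition has gain $24$ and the removed ears have bonus $12$, leaving $12$ for \emph{three} far-end costs, but each far end can be of type $Y_{11}$ at cost $5$ (Lemma~\ref{clever_choice} only helps against $Y_{12}$), so e.g.\ three $Y_{11}$ ends cost $15>12$; your assertion that the three costs sum to at most $12$ is unsupported, and this is not a bookkeeping detail that more care fixes --- the decomposition itself is too weak.

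The paper resolves this case differently: after first eliminating parallel usable ears, it picks a single ear $P$ from the degree-$\ge 4$ vertex to a far end $v$ that is neither $w$ nor a neighbour of $w$ (this choice matters, because the ears later removed at $v$ must be usable and the forbidden-configuration arguments must transfer to $G$). If $v$ has degree $\ge 4$ or lies in an inner triangle or has pattern $Y_{*0}$ or $Y_{22}$, the single-ear removal suffices; the pattern $Y_{12}$ at $v$ is impossible by Lemma~\ref{forbiddentriads}; and in the remaining $Y_{11}$ case the paper removes $P\cup Q'$ together with $Q$ (gain $16+8=24$ against bonus $12$), so that only \emph{two} far-end costs arise, with Lemma~\ref{clever_choice} breaking the one bad case of two $Y_{12}$ ends. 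That restructuring --- one far end analysed at a time, and merging $P$ with $Q'$ to convert the $Y_{11}$ situation into a length-$2$ ear --- is the missing idea. (Separately, your claim that at most one unusable ear meets $v$ ``by Observation~\ref{bullet3ec}'' is not justified as stated: two ears between $v$ and $w$ do not obviously produce a $3$-edge-cut $\delta(Z)$ with $w\notin Z$ violating that observation; the paper avoids needing this by its choice of $P$.)
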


\begin{proof}
First let us suppose (for a contradiction) that there is a pair of usable ears $P,P' \in E(G)$ with the same ends, say $u,v$. Neither $u$ nor $v$ has degree 3 by Lemma~\ref{easydelconseq}. However, now $P$ is a reducible subgraph since this partial ear decomposition has gain at least 8, and the bonus of $P$ is at most 4.

So, we may now assume that no such ears exist.  Next we suppose (again for a contradiction) that there is a vertex $u$ with degree $\ge 4$ and note that $u \neq w$.  Since there are exactly 3 ears with $w$ as an end and at least 4 with $u$ as an end, there must be an ear $P$ with one end $u$ and another end $v$ which is not $w$ and not a neighbour of $w$.

If $v$ has degree at least 4, then $P$ is a reducible subgraph since the partial ear decomposition $P$ has gain at least 8, and the bonus of the resulting graph differs from the original only by the bonus of $P$, which is at most 4.  So we may assume that $v$ has degree 3.  If $P$ has length 2 mod 3, then again $P$ is a reducible configuration since it has gain 16 and the endpoint cost of $P$ at $v$ is at most 10.  So, we may assume that $P$ has length 1 mod 3.

If $v$ is in an inner triangle, then the endpoint cost of $P$ at $v$ is at most 4, and again $P$ extends to a removable subgraph.  So, we may assume that $v$ is a $Y_{ab}$ configuration.  The endpoint pattern of $P$ at $v$ cannot be $Y_{12}$ by Lemma~\ref{forbiddentriads},
and if it is either $Y_{*0}$ or $Y_{22}$, then the associated cost is at most 4, and again we find that $P$ is a reducible subgraph.  The only remaining case is when this endpoint pattern is $Y_{11}$ as shown in the figure below.

\begin{myfigure}[htp]
\centerline{\includegraphics[width=4cm]{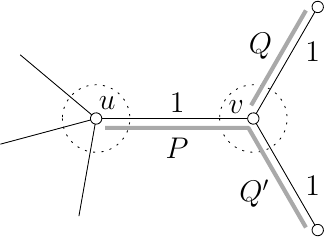}}
\end {myfigure}

The partial ear decomposition $P \cup Q', Q$ as shown has gain 24, and $\bonus(P \cup Q \cup Q') = 12$ so $P \cup Q \cup Q'$ will extend to a reducible
  configuration if the endpoint patterns of $Q$ and $Q'$ (at the vertices other than $v$) have cost at most 12.  It follows that we have a reducible subgraph unless the
  endpoint patterns of $Q$ and $Q'$ at the vertex other than $v$ are both $Y_{12}$.  However, in this case Lemma~\ref{clever_choice} permits us to obtain an endpoint cost
  of 3 when removing the ear~$Q$ (i.e., we may choose an ear labelling $\psi$ of $Q$ with gain 8 so that in the $\psi$-removal graph $G'$ the ear containing an end of $Q$ other than $v$ is not equitable).  Using this gives us a removable subgraph, thus contradicting the assumption that $G^{\bullet}$ has a vertex of degree $>3$.
\end{proof}

\section{Taming Triangles}

In this section we will prove some lemmas which will help to tame the possible behaviour of the inner triangles in  $G^{\bullet}$.

\begin{lemma}
\label{outer2force}
Let $P$ be a usable outer ear with length 2 modulo 3.  Then either both ends of $P$ are triads with pattern $Y_{12}$ or $P$ has one end which is a
triad, and the other has pattern $\Delta_{12}$.
\begin{figure}[htp]
\centerline{\includegraphics[width=11cm]{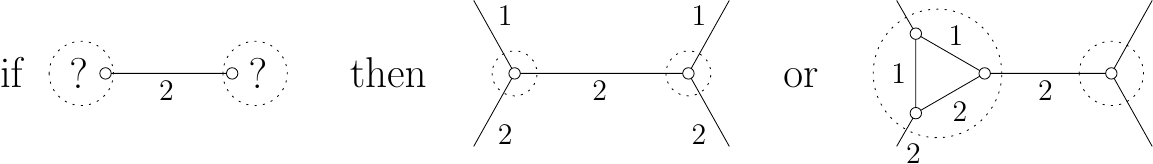}}
\end {figure}
Furthermore, if $P$ is a usable outer ear with an endpoint $u$ of pattern $Y_{12}$, then every ear labelling $\psi$ of $P$ with $\gain(\psi) \ge 16$ has the property that the ear of the $\psi$-removal containing $u$ is equitable.

\end{lemma}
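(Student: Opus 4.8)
The plan is the reducibility strategy used throughout the paper: I assume $P$ is a usable outer ear with $|E(P)| \equiv 2 \pmod 3$ and show that, unless its two endpoint patterns are as claimed, $P$ extends to a reducible subgraph, contradicting Observation~\ref{noreducible}. Since $|E(P)| \equiv 2 \pmod 3$ we have $\bonus(P) = 3$, and the one-ear partial ear decomposition $\{P\}$ has gain $16$, so by Lemmas~\ref{goodgain} and~\ref{noequitable} there is an ear labelling $\psi$ of $P$ with $\gain(\psi) \ge 16$. Because $G^{\bullet}$ is subcubic (Lemma~\ref{le:subcubic}) and $P$ is usable (so neither end is $w$), both ends $u,v$ of $P$ have degree exactly $3$; hence each end carries one of the patterns $Y_{0*}, Y_{11}, Y_{12}, Y_{22}$ (if it is a triad) or $\Delta_{11}, \Delta_{11}', \Delta_{12}, \Delta_{22}$ (if it lies in an inner triangle), and $\Delta_{22}$ is impossible since $|E(P)| \equiv 2 \pmod 3$.

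Next I would remove $P$ together with the extra ear removals forced by any $\Delta$-pattern end; these are legitimate by Lemmas~\ref{removingedge} and~\ref{removingvertex}, and the two endpoint operations are independent, for otherwise either $u$ and $v$ would lie on a common ear of $(G - E(P))^{\times}$ (impossible by the argument opening the proof of Lemma~\ref{le:subcubic}), or two inner triangles or the $w$-structures at the two ends would overlap, contradicting Observation~\ref{bullet3ec} or the cyclic $3$-edge-connectivity of $G^{\bullet}$. The bookkeeping is then exactly as in earlier proofs: $P$ extends to a reducible subgraph whenever $\mathrm{cost}(u) + \mathrm{cost}(v) \le \gain(\psi) - \bonus(P) = 13$.

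The heart of the argument is then a case analysis over the endpoint patterns, using the costs recorded in the two figures that introduce them. The relevant facts are that every pattern has cost at most $10$; that the only patterns whose cost can exceed $5$ are $Y_{0*}$, $Y_{12}$ (cost $7$, but only $3$ when the newly formed length-$0$-mod-$3$ ear is inequitable) and $\Delta_{12}$; and that by Lemmas~\ref{le:length1} and~\ref{clever_choice} --- via a short case check of the kind in the proof of Lemma~\ref{clever_choice}, using that at a $Y_{12}$ end the two merging ears have lengths $1$ and $2$ mod $3$ --- the labelling $\psi$ of $P$ may be chosen so that the merged ear at a $Y_{12}$ end is inequitable, and hence of cost only $3$, unless the opposite end is itself $Y_{12}$ or $\Delta_{12}$. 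Feeding these in, together with Lemmas~\ref{forbid12} and~\ref{forbiddentriads} to discard the low-cost combinations that the raw count leaves at equality and with the length restrictions on inner-triangle ears from Lemma~\ref{triangle_types}, one finds that every pair of endpoint patterns other than the two displayed in the figure forces $\mathrm{cost}(u) + \mathrm{cost}(v) \le 13$; this gives the first assertion.

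The ``Furthermore'' follows from the same inequality, independently of the first part. Suppose $u$ has pattern $Y_{12}$ and some ear labelling $\psi$ with $\gain(\psi) \ge 16$ makes the ear of the $\psi$-removal containing $u$ inequitable. Then for that $\psi$ the two merging ears at $u$ have bonuses $4$ and $3$ while the merged ear has bonus $4$, so $\mathrm{cost}(u) = 3$; since the pattern at $v$ has cost at most $10$ no matter what it is, $\mathrm{cost}(u) + \mathrm{cost}(v) \le 13$ and $P$ extends to a reducible subgraph, a contradiction. I expect the main obstacle to be the accounting itself --- checking in every sub-case that the two endpoint operations are genuinely independent, and pinning down precisely when a $Y_{12}$ end can have its cost pushed from $7$ to $3$, since that is exactly what narrows the surviving configurations to the two asserted ones.
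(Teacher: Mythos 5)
Your framework is the paper's: gain $16$ for a labelling of $P$ versus bonus $3$, so $P$ extends to a reducible subgraph whenever the two endpoint costs sum to at most $13$, and your proof of the ``Furthermore'' clause (an inequitable merged ear at a $Y_{12}$ end would give cost $3$ there and at most $10$ at the other end, hence a reducible subgraph) is exactly the paper's intended argument. But the first assertion has a genuine gap. The raw cost bookkeeping does not eliminate the pairs in which a $\Delta_{12}$ end is opposite another inner-triangle end: for $(\Delta_{12},\Delta_{12})$ and $(\Delta_{12},\Delta_{11})$ the cost bounds ($10+10$ and $10+4$) do not come under the threshold $13$, and neither pair is among the allowed outcomes. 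None of the lemmas you cite disposes of them: Lemmas~\ref{forbid12} and~\ref{forbiddentriads} concern triads and ears of length $1$ mod $3$, and Lemma~\ref{triangle_types} only restricts the outer ear attached at a corner of a length-$2$ inner ear, so it does not exclude a $\Delta_{11}$ attachment of a length-$2$ outer ear. The paper's proof is devoted almost entirely to these two configurations: it introduces two further forbidden configurations (F5, both ends of $P$ of pattern $\Delta_{12}$; F6, one end $\Delta_{12}$ and the other in an inner triangle) and kills them with dedicated partial ear decompositions --- for F5 a gain-$32$ decomposition together with the fact, via Lemma~\ref{triangle_types}, that a certain merged ear keeps nonzero length mod $3$; for F6 a gain-$24$ decomposition that also removes an ear emanating from the triangles, using connectivity to choose an end distinct from $w$ and Lemma~\ref{clever_choice} if that end has pattern $Y_{12}$. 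Your proposal contains no substitute for these arguments, and this is where the real work of the lemma lies.

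A second problem is your claimed tool that the labelling $\psi$ of $P$ ``may be chosen so that the merged ear at a $Y_{12}$ end is inequitable.'' Lemma~\ref{clever_choice} requires $|E(P)|=1$, whereas here $|E(P)|\equiv 2 \pmod 3$, so $P$ may admit only a single ear labelling of gain $\ge 16$ and there need be no freedom to exploit; indeed the ``Furthermore'' clause you prove says that in the minimal counterexample \emph{every} such labelling makes that merged ear equitable, which contradicts the claimed tool. Fortunately the tool is unnecessary in the cases where you invoke it (a $Y_{12}$ end of cost $7$ opposite any end of cost at most $5$ is already below the threshold, and a triad end opposite $\Delta_{12}$ is a permitted outcome), but it cannot help with the two missing configurations above, so the first assertion remains unproved as written.
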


\begin{proof}
The proof of this lemma calls on two more forbidden configurations as shown in the figure below.

\begin{myfigure}[htp]
\centerline{\includegraphics[width=8cm]{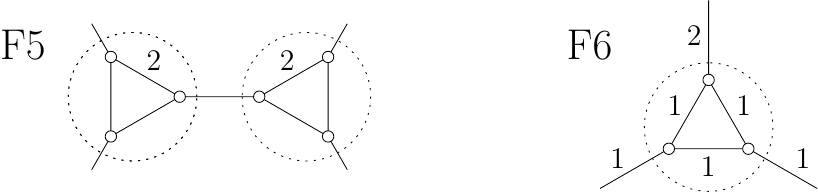}}
\end {myfigure}

To prove that F5 is forbidden, consider the partial ear decomposition indicated on the left in the figure below.  This decomposition has gain 32 and the
  ear of the resulting weighted graph containing $Q$ will have nonzero length mod~3 (Lemma~\ref{triangle_types}).
  It follows that this yields a reducible subgraph.

\begin{myfigure}[htp]
\centerline{\includegraphics[width=9cm]{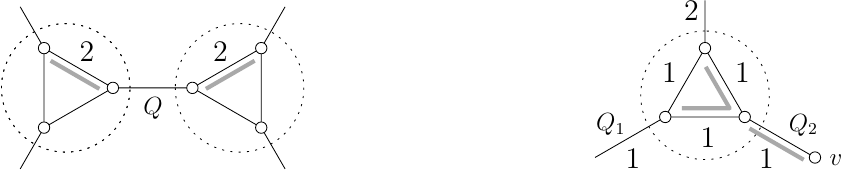}}
\end{myfigure}

To prove that F6 is forbidden, consider the ears $Q_1$ and $Q_2$ as shown on the right in the above figure.  By our connectivity, it is impossible for $Q_1$ and $Q_2$ to both have $w$ as an endpoint, so we have assumed (without loss) that $Q_2$ has an endpoint $v \neq w$.  Consider the partial ear decomposition indicated in this figure.  This decomposition has gain 24 and we can see that there is an ear with length 1 mod 3 formed by removing these ears.  It follows that we have a reducible subgraph unless the endpoint cost associated with $Q_2$ at $v$ is greater than 5.  The only possibility here would be for this endpoint pattern to be of type $Y_{12}$, but in this case Lemma~\ref{clever_choice} still guarantees us a removable subgraph.  It follows that this configuration is forbidden, as claimed.

Equipped with these forbidden configurations, the proof of the lemma is straightforward.  A simple check of costs of endpoint patterns reveals that either we have the first outcome or $P$ has at least one endpoint pattern of type $\Delta_{12}$.   Since $P$ cannot have two endpoint patterns of type $\Delta_{12}$  by F5, we may assume it has exactly one.  If the other end of $P$ is a triad, then we have nothing left to prove.  Otherwise it is contained in an inner triangle and the associated endpoint pattern must have cost at least 4.  However, this gives us the forbidden configuration F6.  The additional claim is straightforward.
\end{proof}

\begin{lemma}
\label{222-op-triangle}
The following configuration is forbidden.

\begin{myfigure}[htp]
\centerline{\includegraphics[width=4cm]{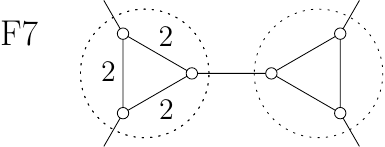}}
\end {myfigure}
\end{lemma}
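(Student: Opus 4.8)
The plan is to argue by contradiction: suppose the configuration appears in $G^{\bullet}$, write $T$ for the displayed inner triangle with degree-$3$ vertices $v_1,v_2,v_3$ and sides $f_1=v_1v_2$, $f_2=v_2v_3$, $f_3=v_3v_1$ (each an ear of $G^{\bullet}$ by Lemma~\ref{easydelconseq}), and let $P_1,P_2,P_3$ be the three outer ears leaving $T$, with $P_i$ incident to $v_i$ and $|E(P_i)|\equiv 2\pmod 3$ as marked. The first thing I would establish is that $P_1,P_2,P_3$ are all \emph{usable}, i.e.\ none of them ends at $w$: an outer ear of $T$ running to $w$, together with the triangle cut $\delta(V(T))$ of size $3$, would exhibit a $3$-edge-cut of $G^{\bullet}$ of exactly the kind excluded by Observation~\ref{bullet3ec}. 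This is the sort of routine bookkeeping the paper performs silently, and I expect it to go through directly from the drawn configuration.

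Granting usability, the rest is immediate. Since $v_i$ lies on the inner triangle $T$, it is not a triad, so the first alternative of Lemma~\ref{outer2force} (both ends of $P_i$ triads) is impossible; hence the endpoint pattern of the usable outer $2\bmod 3$ ear $P_i$ at $v_i$ must be $\Delta_{12}$. By the meaning of the subscripts, $\Delta_{12}$ at $v_i$ says that the two sides of $T$ meeting at $v_i$ have lengths $1$ and $2$ modulo $3$ in some order. Writing $a_j\in\{1,2\}$ for the residue of $|E(f_j)|$ modulo $3$ (nonzero by Lemma~\ref{triangle_types}), this gives $\{a_1,a_3\}=\{a_1,a_2\}=\{a_2,a_3\}=\{1,2\}$. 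But the first two equalities force $a_2=a_3$ (each of $a_2,a_3$ is the unique residue in $\{1,2\}$ distinct from $a_1$), contradicting $\{a_2,a_3\}=\{1,2\}$. This contradiction shows the configuration cannot occur.

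The only real obstacle I anticipate is the preliminary step, namely verifying from the picture that all three outer ears are usable so that Lemma~\ref{outer2force} is applicable to each; the residue computation itself is trivial. If the configuration as drawn does not by itself guarantee usability, the fallback is a reducibility argument in the style of the previous section: delete $E(T)\cup E(P_1)\cup E(P_2)\cup E(P_3)$ using the second variant of Lemma~\ref{removingvertex} to preserve the connectivity, bound the bonus lost at the far ends of the $P_i$ by their $Y_{12}$ or $\Delta_{12}$ endpoint costs, and invoke the additional claim of Lemma~\ref{outer2force} (that a labelling with $\gain\ge 16$ of a usable outer $2\bmod 3$ ear leaves the ear through its $Y_{12}$ end equitable) to keep the accounting tight enough that the resulting partial ear decomposition has gain at least the total bonus removed, contradicting Observation~\ref{noreducible}. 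I expect the short structural argument to suffice, with the reducibility computation needed only to mop up any degenerate placement of $w$.
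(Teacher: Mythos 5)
There is a genuine gap: you have proved the wrong statement, because the configuration in the figure is not the one you describe. The forbidden configuration of Lemma~\ref{222-op-triangle} consists of a type~$222$ inner triangle (the ``2''~marks are on the three \emph{sides} of the triangle) joined by an outer ear $R$ to a \emph{second} inner triangle; this is exactly how the lemma is invoked later (in Lemma~\ref{le:no222} it yields that no neighbour of a $222$-triangle vertex of $G^{\Delta}$ corresponds to an inner triangle, and similarly in Lemma~\ref{notk4}), and the paper's proof begins by naming ``the ear between the two inner triangles.'' Your argument instead forbids a triangle whose three \emph{outgoing} ears have residue~$2$; that residue bookkeeping via Lemma~\ref{outer2force} (at a triangle vertex the pattern would have to be $\Delta_{12}$, forcing incompatible side residues) only shows that the usable outgoing ears of a $222$ triangle have residue~$1$ --- a fact the paper essentially records elsewhere (cf.\ Lemma~\ref{even2}) --- and it says nothing about whether the far end of such a residue-$1$ ear may lie in another inner triangle, which is the actual content of the lemma. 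With $R$ of residue~$1$ no parity/residue contradiction is available: the paper has to run the cost machinery, showing the endpoint pattern of $R$ at the second triangle must be $\Delta_{11}$, then building a gain-$40$ partial ear decomposition and, in the $Y_{12}$ case, invoking Lemma~\ref{clever_choice} to produce a reducible subgraph contradicting Observation~\ref{noreducible}. Your ``fallback'' reducibility sketch is likewise aimed at deleting the triangle together with three residue-$2$ ears, so it does not address the real configuration either.

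A secondary problem, independent of the misidentification: your usability step is unjustified. The unusable ears are by definition the three ears of $G^{\bullet}$ incident with $w$, and an inner triangle can perfectly well have an outgoing ear ending at $w$; Observation~\ref{bullet3ec} concerns $3$-edge-cuts $\delta(Z)$ with $w\notin Z$ and is not violated by such an ear, so it cannot be used to rule this out. (In the paper's proof this is handled differently: among the relevant outer ears at most one can end at $w$, and the argument is carried out at an end $v\neq w$.)
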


\begin{proof} Suppose (for a contradiction) that this configuration is present in $G^{\bullet}$ and let $R$ denote the ear between the two inner triangles in this configuration.  Note that $R$ must have length 1 mod 3 by Lemma~\ref{triangle_types}.  The endpoint costs associated with the two ends of $R$ must be greater than 4 as otherwise $R$ may be extended to a reducible subgraph.  It follows from that  $R$ has to have a $\Delta_{11}$ endpoint pattern,  so we have the configuration in the figure below.  Here it is impossible for both $P$ and $P'$ to have $w$ as an endpoint, so we have assumed (without loss) that $P$ has an end $v \neq w$.

\begin{myfigure}[htp]
\centerline{\includegraphics[width=4cm]{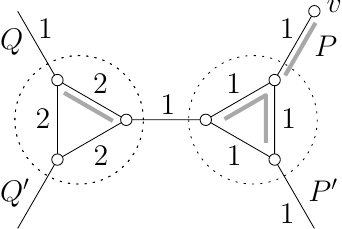}}
\end {myfigure}

It is not possible (by connectivity) for both of the ears $Q$ and $Q'$ to have $v$ as an endpoint.   Similarly, it is not possible for there to be an
  inner triangle $C$ which contains $v$ and an endpoint of both $Q$ and $Q'$.  Accordingly, we may assume (without loss) that $Q$ does not have $v$ as
  an end and there is no inner triangle containing $v$ and an end of $Q$.  Now consider the partial ear decomposition indicated in the figure, and note
  that it has gain 40.  If the cost of the endpoint pattern of $P$ at $v$ is at most 6, a straightforward calculation shows that this
  can be extended to a reducible
  subgraph (here we use the fact that after removing these ears, the ear containing $P'$ will have length 2 mod 3).  Otherwise, this endpoint pattern
  must be of type $Y_{12}$, but in this case we can use Lemma~\ref{clever_choice} to provide a reducible subgraph, a contradiction.
\end{proof}

\subsection[The Graph $G^{\Delta}$]{The Graph $\mathbf{G^{\Delta}}$}

We let $G^{\Delta}$ denote the (unweighted) graph obtained from $G^{\bullet}$ by identifying each inner triangle to a (distinct) vertex and then suppressing all degree 2 vertices.  
So every edge $e$ of $G^{\Delta}$ \emph{corresponds} to an outer ear $P$ of $G^{\bullet}$ and we say that $e$ has \emph{residue} 0, 1, or 2 if this is the length of $P$ mod 3.  
If $v$ is a vertex of $G^{\Delta}$ which was formed by identifying an inner triangle $T$, then we say that $v$ \emph{corresponds} to $T$, and more generally, 
if $H$ is a subgraph of $G^{\Delta}$, then we say that $H$ \emph{corresponds} to the subgraph of $G^{\bullet}$ consisting of the outer ears of $G^{\bullet}$ 
corresponding to the edges of $H$ together with all inner triangles of $G^{\bullet}$ corresponding to vertices of $H$.  
Working with $G^{\Delta}$ will prove to be convenient as evidenced by the next couple of lemmas.  
Recall that a graph is \emph{cyclically $k$-edge-connected} if every edge cut separating cycles has size at least $k$ and 
note that cyclically $4$-edge-connected \emph{cubic} graph must be 3-edge-connected. 

\begin{lemma}
\label{cyc4ec}
$G^{\Delta}$ is cubic and cyclically 4-edge-connected.
\end{lemma}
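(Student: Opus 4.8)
The plan is to establish the two claims separately, leaning on the structural lemmas already proved about $G^{\bullet}$.

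First I would verify that $G^{\Delta}$ is cubic. By Lemma~\ref{le:subcubic}, $G^{\bullet}$ has maximum degree $3$; after suppressing degree-$2$ vertices, every remaining vertex of $G^{\bullet}$ that survives into $G^{\Delta}$ has degree exactly $3$ (a vertex of degree $\le 1$ cannot exist in a graph with a full ear decomposition, so all branch vertices have degree exactly $3$). Each inner triangle $T$ is, by Lemma~\ref{triangle_types}, a union of three distinct ears meeting three vertices of degree $3$; contracting $T$ to a single vertex leaves that vertex incident to exactly the three outer ears attached to $T$, hence degree $3$ in $G^{\Delta}$. The special vertex $w$ has degree $3$ in $G^{\bullet}$ by construction (it is incident with the three unusable ears $P_1,P_2,P_3$), so it too has degree $3$ in $G^{\Delta}$. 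Therefore every vertex of $G^{\Delta}$ has degree $3$.

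Next I would show that $G^{\Delta}$ is $3$-edge-connected and that every cycle-separating edge cut has size $\ge 4$. Since $G$ is a subdivision of a $3$-edge-connected graph and $G^{\bullet}$ is obtained by identifying a set of vertices to $w$, the graph $G^{\bullet}$ is $2$-edge-connected and cyclically $3$-edge-connected. Because $G^{\Delta}$ is obtained from $G^{\bullet}$ by contracting connected pieces (each inner triangle) and suppressing degree-$2$ vertices, $G^{\Delta}$ remains $2$-edge-connected. For the cyclic $4$-edge-connectivity, suppose for contradiction that $G^{\Delta}$ has an edge cut $\delta(Z)$ of size $\le 3$ separating cycles. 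A cut of size $\le 2$ separating cycles would pull back to a cut of the same size in $G^{\bullet}$ separating cycles (since contracting a triangle or suppressing a degree-$2$ vertex does not create small cycle-separating cuts that weren't reflected in $G^{\bullet}$), contradicting that $G^{\bullet}$ is cyclically $3$-edge-connected; so the cut has size exactly $3$. A $3$-edge-cut of $G^{\Delta}$ separating two cycles pulls back to a $3$-edge-cut $\delta(Z')$ of $G^{\bullet}$, where $Z'$ is the union of the inner triangles and outer ears corresponding to $Z$, and both $Z'$ and its complement induce subgraphs containing cycles. By Observation~\ref{bullet3ec}, any $3$-edge-cut of $G^{\bullet}$ with $w$ on one side must either have $X$ entirely on the $w$-free side or have that side induce at most one cycle; combined with Lemma~\ref{easydelconseq} (no cycle is an ear, no two-ear cycle with a degree-$3$ vertex) and the classification of inner triangles, the only way a $3$-edge-cut of $G^{\bullet}$ can separate two cycles is if it isolates an inner triangle — but that cut becomes a single vertex (degree $3$) in $G^{\Delta}$, not a cycle-separating cut. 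This contradiction shows no such cut exists, so $G^{\Delta}$ is cyclically $4$-edge-connected.

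The main obstacle I anticipate is the careful bookkeeping in the pull-back argument: one must check that a small cycle-separating edge cut of $G^{\Delta}$ really does correspond to a small cycle-separating edge cut of $G^{\bullet}$ of the same (or smaller) size, and rule out the degenerate possibility that one "side" of the cut in $G^{\Delta}$ corresponds in $G^{\bullet}$ to a region whose only cycles are inner triangles that got contracted away. Handling this requires invoking the vertex-disjointness of distinct inner triangles (noted after Lemma~\ref{triangle_types}), the fact that each inner triangle has exactly three outer ears attached, and Observation~\ref{bullet3ec} to control $3$-cuts containing $w$ on a side. Once those structural facts are marshalled, the contradiction is immediate.
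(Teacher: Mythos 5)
Your proposal is correct and follows essentially the same route as the paper: cubicity comes from Lemma~\ref{le:subcubic} (plus the triangle contractions and the three ears at $w$), and cyclic 4-edge-connectivity is obtained by pulling a cycle-separating cut of size at most three back to $G^{\bullet}$, invoking Observation~\ref{bullet3ec} to force the $w$-free side to contain at most one cycle, and observing that the surviving cycle would then be an inner triangle of $G^{\bullet}$, which was already contracted in forming $G^{\Delta}$. The bookkeeping you flag (that the offending cut corresponds to a cycle $C$ with $d(V(C))=3$ avoiding $w$) is exactly the step the paper's short proof also relies on, so no new idea is missing.
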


\begin{proof}
Note that by the connectivity of $G^{\bullet}$ the graph $G^{\Delta}$ must be 3-edge-connected.
Moreover, $G^\Delta$ is cubic, since $G^\bullet$ is subcubic (Lemma~\ref{le:subcubic}).
Suppose for a contradiction that $G^{\Delta}$ has an edge-cut $\delta(Z)$ of size at most three which separates cycles, and assume (without loss) that $w \not\in Z$.  If $Z$ induces a graph with at least two cycles in $G^{\Delta}$, then
we have a contradiction to the choice of cut used in constructing $G^{\bullet}$ (Observation~\ref{bullet3ec}). 
Otherwise, $Z$ contains a unique cycle $C$. If some vertex in $Z$ corresponds to an inner triangle in $G^{\bullet}$, then again we have a contradiction to the choice of cut in constructing $G^{\bullet}$.  Otherwise the subgraph of $G^{\Delta}$ induced by $Z$ will correspond to a cycle $C$ in $G^{\bullet}$ satisfying $d(C) = 3$, but then $C$ would be an inner triangle of $G^{\bullet}$ and this yields a contradiction.
\end{proof}

\begin{corollary}
\label{cyc4eccor} Let $W\subseteq V(G^{\Delta})$. 
\begin{itemize}
\item If $|d(W)| = 3$, then $W$ or $\overline{W}=V(G)^{\Delta}\setminus W$ consists of a single vertex.
\item If $|d(W)| = 4$ and $W$ does not contain a cycle, then  $W$~induces a single edge.
\end{itemize}
\end{corollary}

\begin{proof} Suppose $|d(W)|\in\{3,4\}$. If $|d(W)|=3$ then the edge cut doesn't separate cycles since $G^{\Delta}$ is  cyclically 4-edge-connected; suppose this is also the case when $|d(W)|=4$. Since $G$ is 3-edge-connected this means $W$ or $\overline{W}$ induces a tree in $G^{\Delta}$. Since $G^{\Delta}$ is cubic, and trees with at least two vertices have at least two leaves, there are only two possibilities: $|d(W)|=3$ and this tree is a single vertex, or  $|d(W)|=4$, and this tree is a single edge.
\end{proof}

\begin{lemma}
\label{notk4}
$G^{\Delta}$ has at least six vertices and girth $\ge 4$.
\end{lemma}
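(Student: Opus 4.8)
The plan is to leverage the forbidden configurations already established, together with the cyclic 4-edge-connectivity of $G^{\Delta}$ from Lemma~\ref{cyc4ec}, to rule out the two smallest cubic cyclically-4-edge-connected graphs: $K_4$ (the unique cubic graph on $4$ vertices) and $K_{3,3}$ together with the $3$-cube-like graphs on $5$ vertices. More precisely, the cubic cyclically-$4$-edge-connected graphs on fewer than six vertices are exactly $K_4$ (on $4$ vertices) and $K_{3,3}$ (on $6$ vertices is already allowed), so the ``at least six vertices'' claim amounts to showing $G^{\Delta}\ne K_4$; and ``girth $\ge 4$'' means $G^{\Delta}$ has no triangle (note a cubic graph with a triangle on more than $4$ vertices either has a $3$-edge-cut separating cycles, or the triangle is ``close'' to another one). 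So I would split the proof into two parts.

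First I would handle $G^{\Delta} \ne K_4$. If $G^{\Delta} = K_4$, then $G^{\bullet}$ consists of at most four inner triangles and/or triads joined in the pattern of $K_4$ by six outer ears, with one of the four ``vertices'' being the special vertex $w$. By Lemma~\ref{nozero} no usable ear has length $0$ mod $3$, so every outer ear has residue $1$ or $2$. By Lemma~\ref{triangle_types}, Lemma~\ref{forbiddentriads} (which forbids certain triad configurations), and Lemma~\ref{forbid12}, the local structure around each vertex of $G^{\Delta}$ is severely constrained — in particular a triad cannot be incident with two residue-$1$ ears and a residue-$1$ ear (configuration on the right of Lemma~\ref{forbid12}), and by Lemma~\ref{222-op-triangle} two inner triangles joined by an ear cannot both have the ``all residue $2$'' type. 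One then does a finite case analysis over the assignment of residues to the six ears and types (triad, or one of the three triangle types from Lemma~\ref{triangle_types}) to the three non-$w$ vertices: in each surviving case I would exhibit a reducible subgraph directly, using Lemma~\ref{removingvertex} to delete the three ears at a well-chosen non-$w$ vertex (plus ears from any inner triangles that become $2$-edge-cuts), bounding the total cost by the endpoint-pattern costs tabulated in the figures and comparing against $\bonus(G)$, which for a $K_4$-subdivision is at most $24$ plus triangle contributions. This contradicts Observation~\ref{noreducible}, so $G^{\Delta}$ has at least six vertices.

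Second I would show $G^{\Delta}$ has girth $\ge 4$, i.e., no triangle. Suppose $G^{\Delta}$ contains a triangle $T^{\Delta}$ on vertices $a,b,c$ with edges $e_{ab},e_{bc},e_{ca}$. Since $G^{\Delta}$ is cubic and cyclically $4$-edge-connected with at least six vertices, each of $a,b,c$ has exactly one further edge leaving $T^{\Delta}$, and these three ``legs'' form a $3$-edge-cut; cyclic $4$-edge-connectivity forces the three legs to go to three \emph{distinct} vertices outside $T^{\Delta}$. This triangle in $G^{\Delta}$ corresponds in $G^{\bullet}$ to a cycle $C$ formed by the three outer ears corresponding to $e_{ab},e_{bc},e_{ca}$ together with arcs of the (at most three) inner triangles sitting at $a,b,c$. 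Because each of $e_{ab},e_{bc},e_{ca}$ has residue in $\{1,2\}$ and, by Lemma~\ref{triangle_types}, each inner triangle contributes ears of controlled residues, the cycle $C$ is the union of at most six ears of $G^{\bullet}$; moreover $w$ lies on at most one of $a,b,c$, so all ears in $C$ except possibly a bounded few are usable. If $|E(C)| \not\equiv 0 \pmod 3$ and $C$ is a union of few enough ears we contradict Lemma~\ref{easycont} directly; otherwise I would argue as in Lemma~\ref{222-op-triangle} and Lemma~\ref{outer2force}, removing the three outer ears $e_{ab},e_{bc},e_{ca}$ via Lemma~\ref{removingedge} (fixing up to three newly created inner-triangle $2$-edge-cuts via the $\Delta$-patterns), controlling the total cost by the endpoint-pattern tables and by Lemma~\ref{clever_choice} wherever a $Y_{12}$ pattern would otherwise be too expensive, to extend to a reducible subgraph and contradict Observation~\ref{noreducible}.

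The main obstacle, as usual in this argument, will be the bookkeeping of bonuses and endpoint costs: one must verify in each configuration that the cost calculations at the various endpoints are \emph{independent} — i.e., that removing the chosen ears does not accidentally merge two of the relevant endpoint regions into a single ear, which would invalidate summing the individual costs. This is exactly where Observation~\ref{bullet3ec} (ruling out bad $3$-edge-cuts in $G^{\bullet}$) and the vertex-disjointness of distinct inner triangles get used repeatedly, and where the cyclic $4$-edge-connectivity of $G^{\Delta}$ from Lemma~\ref{cyc4ec} is essential to guarantee that the three legs of a would-be triangle are genuinely spread out. I expect the residue/type case analysis for $G^{\Delta} = K_4$ to be the longest part but entirely routine once the forbidden configurations F1--F7 and Lemmas~\ref{forbid12}--\ref{222-op-triangle} are invoked.
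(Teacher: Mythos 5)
Your overall strategy (use Lemma~\ref{cyc4ec} to reduce everything to ruling out a tiny graph, then kill that case by exhibiting reducible/contractible subgraphs) is the paper's strategy, but the execution you describe has concrete flaws. First, the central tool you name for the $K_4$ case does not apply: if $G^{\Delta}\cong K_4$ then $w$ is one of the four vertices, so \emph{every} non-$w$ vertex is incident with an unusable ear (the one joining it to $w$), and Lemma~\ref{removingvertex} -- like the whole deletion machinery -- is only available for usable ears; deleting ears at $w$ is precisely what the $G^{\bullet}$ construction forbids, since we have no control over 3-edge-cuts through them. The paper instead disposes of the $K_4$ configurations almost entirely by \emph{contraction}: after using Lemma~\ref{222-op-triangle} and Lemma~\ref{outer2force} to pin down the possible residue/triangle patterns, it exhibits a contractible subgraph (built from the usable ears and inner triangles, avoiding $w$'s ears) in eight of the nine resulting configurations and a reducible one in the ninth, and even then one configuration needs a non-routine observation (that a certain cycle has the same nonzero length mod~3 as the ear $P$ it contains). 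Calling this step ``entirely routine'' and routing it through endpoint costs at a non-$w$ vertex glosses over exactly the obstruction that forces the paper's different mechanism. Two further accounting errors compound this: Lemma~\ref{nozero} only constrains \emph{usable} ears, so the three ears at $w$ may well have residue $0$ (your ``every outer ear has residue $1$ or $2$'' is false for them), and ``$\bonus(G)\le 24$ plus triangle contributions'' misreads the situation -- $G$ is not a subdivision of $K_4$ here, so all comparisons must be local (gain of the removed/contracted ears versus the drop in bonus), not against $\bonus(G)$. You also never exclude $|V(G^{\Delta})|=2$ (the three-parallel-edge multigraph is vacuously cyclically 4-edge-connected in this multigraph setting); the paper rules it out directly from the definition of $G^{\bullet}$, since $X$ was chosen to induce at least two cycles.

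Second, your treatment of girth is both incomplete and unnecessary. Once $|V(G^{\Delta})|\ge 6$, any triangle $T$ in the cubic graph $G^{\Delta}$ gives $d(V(T))=3$, and the other side, having at least three vertices and only three edges leaving it, induces at least as many edges as vertices and hence contains a cycle; so $\delta(V(T))$ is a cyclic 3-edge-cut, contradicting Lemma~\ref{cyc4ec}. This is the entire content of the girth claim (and is why the paper simply says it suffices to prove $|V(G^{\Delta})|\ge 6$). Your proposed alternative -- lifting the triangle to a cycle of up to nine ears in $G^{\bullet}$ and attempting ear removals -- is not carried out (no cost computations, no handling of the unusable ears on that cycle, Lemma~\ref{easycont} applies only to cycles made of at most four ears), so as written it would not constitute a proof even of the part it addresses.
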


\begin{proof} By the previous lemma, it suffices to show that $|V(G^{\Delta})| \ge 6$.
The graph~$G^{\Delta}$ cannot have two vertices by the definition of $G^{\bullet}$ (which must have at least two cycles not containing $w$).  So, if
  the lemma fails, then by the previous lemma we must have $G^{\Delta} \cong K_4$.  In this case at least one vertex of $G^{\Delta}$  must
  correspond to an inner triangle, as otherwise we would have a contradiction to the definition of $G^{\bullet}$.  So, $G^{\bullet}$ must contain either
  1, 2, or 3 inner triangles.  If it has at least two, then Lemma~\ref{222-op-triangle} implies that there is no triangle of type 222.  By repeatedly
  applying Lemma~\ref{outer2force} we may conclude that $G^{\bullet}$ appears as one of the graphs in the following figure.

\begin{myfigure}[htp]
\centerline{\includegraphics[width=13cm]{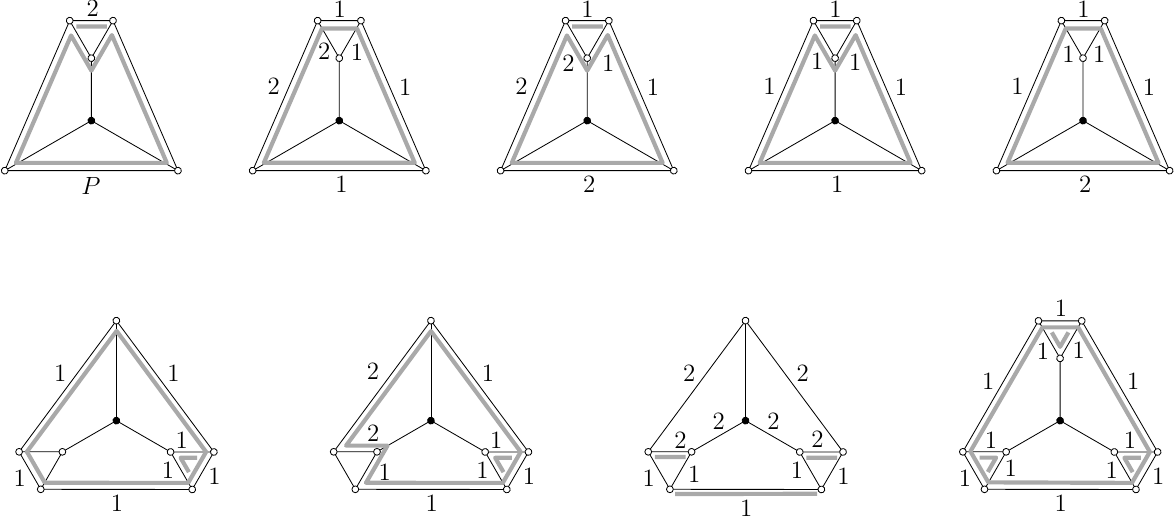}}
\end {myfigure}

This figure also indicates either a contractible or reducible subgraph of $G^{\bullet}$ in each case.
In the penultimate configuration in our list we indicated a reducible subgraph of gain 40, while the ears involved have total bonus of 38.
In each of the other eight configurations  we have indicated a contractible subgraph~$H$ and an ear decomposition of $H$ which has gain at least as large as the total bonus on the ears of $H$.  The only tricky case here is the first configuration.  In this case it follows from our triangle types that the indicated gray cycle containing the ear $P$ will have the same length modulo 3 as $P$.  In particular, this is nonzero, so the indicated subgraph $H$ has an ear decomposition of gain at least 24.
\end{proof}

\begin{lemma}
\label{even2}
If $v \in V(G^{\Delta})$ is not adjacent to $w$, then there are an even number of edges incident with $v$ which have residue 2.
\end{lemma}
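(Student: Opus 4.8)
The plan is to work in $G^\bullet$ and argue locally at the inner triangle $T$ corresponding to $v$ (if $v$ is a genuine triad, the statement is about the three outer ears at a degree-$3$ vertex, and the same argument applies with $T$ a single vertex). Suppose for contradiction that an odd number (i.e.\ one or three) of the three ears of $G^\Delta$ at $v$ has residue $2$. I would split into the two cases according to the type of $T$ as classified by Lemma~\ref{triangle_types}: either $T$ is a triad (type with no inner-triangle ears, all three incident outer ears present), or $T$ is a genuine inner triangle whose two ``internal'' ears have residues $1,1$ or $1,2$ (the $0$ mod $3$ case being excluded by Lemma~\ref{nozero}, and the $2,2$ case constrained by Lemma~\ref{222-op-triangle} and Lemma~\ref{outer2force}).

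The core of the argument is a counting identity. Consider the cycle $C$ which is $T$ itself (when $T$ is an inner triangle) together with the observation that the three ears leaving $v$, say $R_1,R_2,R_3$ with residues $r_1,r_2,r_3$, interact with the internal structure of $T$. The key point: by Lemma~\ref{triangle_types} and Lemma~\ref{outer2force}, an outer ear of residue $2$ must attach at an endpoint of pattern $Y_{12}$ or $\Delta_{12}$, which forces the two ears it merges with (inside $T$, or at the opposite triad) to have residues $1$ and $2$. So each residue-$2$ ear at $v$ ``consumes'' exactly one residue-$2$ internal slot of $T$ and one residue-$1$ internal slot. Since $T$ (as an inner triangle) is itself a cycle of length $\equiv 0 \pmod 3$ (Lemma~\ref{easycont}), the sum of the three internal-ear residues of $T$ is $0 \pmod 3$, so the number of residue-$2$ internal ears equals the number of residue-$1$ internal ears mod something tractable — and more importantly, the number of residue-$2$ ears at $v$ is bounded by the number of residue-$2$ internal ears of $T$, which by the type list is at most one, and is exactly one only in type $\{1,2\}$. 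In that case exactly one of $R_1,R_2,R_3$ can have residue $2$ — giving an odd count, which is precisely the bad case we must rule out. So the real content is: that single residue-$2$ ear at $v$, together with the residue-$2$ internal ear of $T$, yields a configuration forbidden by F5/F6 (Lemma~\ref{outer2force}) or by Lemma~\ref{222-op-triangle}, or produces a cycle through $v$ of nonzero length mod $3$ made of few ears that is contractible or reducible via Lemma~\ref{easycont}/Observation~\ref{noreducible}. I would make this precise by, in each surviving subcase, exhibiting the explicit short ear decomposition (partial or full) whose gain, computed via Lemma~\ref{goodgain} and the endpoint-cost bookkeeping of the previous section, meets or exceeds the bonus drop, invoking Lemma~\ref{removingedge} or Lemma~\ref{removingvertex} to certify that the connectivity is maintained.

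I expect the main obstacle to be the case bookkeeping around the $\Delta_{12}$ endpoint pattern and the interaction between two nearby inner triangles: ensuring that when we remove an ear whose endpoint at $v$ forces removal of an internal ear of $T$, the resulting merged ears at distinct ends are genuinely independent (no outer ear has an end in two relevant inner triangles), so that the endpoint-cost estimates add up as claimed. This is exactly the kind of subtlety flagged after Lemma~\ref{forbid12}; here it is handled, as before, by observing that a violating configuration would create a $3$-edge-cut of $G^\bullet$ contradicting Observation~\ref{bullet3ec}. Once independence is secured, the parity conclusion follows because every residue-$2$ ear at $v$ is paired with a residue-$2$ internal ear of $T$ in a way that is incompatible with the triangle types of Lemma~\ref{triangle_types} unless their total number is even — and for a triad $v$ (no internal ears at all) the three residues must sum to $0 \pmod 3$ since together with a suitable companion ear they close a short contractible cycle, forcing the number of residue-$2$ ears among the three to be $0$ or $2$, hence even.
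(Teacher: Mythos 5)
There is a genuine gap, in fact two. For the triad case your argument rests on the claim that the three residues at a triad must sum to $0 \pmod 3$ ``since together with a suitable companion ear they close a short contractible cycle.'' No such conservation law holds: the three ears leaving a triad do not form a cycle, and indeed the pattern $1,2,2$ (which the lemma must allow) has residue sum $\equiv 2 \pmod 3$. Even if the sum were $0 \pmod 3$, with residues confined to $\{1,2\}$ by Lemma~\ref{nozero} this would force $0$ or $3$ ears of residue $2$, not $0$ or $2$, so your parity conclusion does not follow from your premise. What is actually needed here is Lemma~\ref{forbiddentriads}, which forbids precisely the triads whose three incident ears have residue patterns $1,1,2$ and $2,2,2$; together with Lemma~\ref{nozero} this leaves only $1,1,1$ and $1,2,2$, i.e.\ an even count. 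You never invoke Lemma~\ref{forbiddentriads}.

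For the inner-triangle case, your key counting step --- that each residue-$2$ outer ear ``consumes'' a residue-$2$ internal ear, so the number of residue-$2$ outer ears is at most the number of residue-$2$ internal ears (hence at most one) --- is false: a single internal residue-$2$ ear has two endpoints, each of pattern $\Delta_{12}$, and the outer ears at \emph{both} of these endpoints must have residue $2$. This is exactly the ``furthermore'' clause of Lemma~\ref{triangle_types}: removing the internal residue-$2$ ear merges each internal residue-$1$ ear with the adjacent outer ear into an equitable (hence length $0$ mod $3$) ear, forcing those two outer ears to have residue $2$. So if any outer ear at the triangle has residue $2$, Lemma~\ref{outer2force} gives a $\Delta_{12}$ endpoint (so the triangle is of type $112$), and then there are \emph{exactly two} residue-$2$ outer ears, the third being excluded by Lemma~\ref{outer2force} again (its endpoint pattern would be $\Delta_{11}$). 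Your plan instead tries to rule out the ``exactly one'' case by exhibiting new reducible or contractible configurations, which you do not carry out, and which aims at the wrong target: at this stage of the argument a type-$112$ triangle with two attached residue-$2$ ears is a live configuration (see Lemma~\ref{tame112} later), so no reduction can eliminate it; the evenness comes from structure already established, not from a fresh gain/bonus computation. (A smaller slip: Lemma~\ref{easycont} does not give that an inner triangle has length $0$ mod $3$; it only excludes $2$ mod $3$ for cycles of three or four ears, and a $112$ triangle has length $\equiv 1 \pmod 3$.)
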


\begin{proof}
If the vertex $v$ is a triad in $G^{\bullet}$, then this follows from Lemmas~\ref{nozero} and~\ref{forbiddentriads}.  Otherwise, $v$ corresponds to a
  inner triangle $C$ in $G^{\bullet}$.  The ears of $G^{\bullet}$ associated with the edges $\delta_{G^{\Delta}}(v)$ are all usable and outer, so
  Lemma~\ref{outer2force} implies that if one of these ears has length 2 mod 3, then $C$ is a $112$ triangle and there are exactly two such ears.
  (If there are three, we use Lemma~\ref{outer2force} again to get a contradiction.)
\end{proof}

\subsection{Type 222 and 112 Triangles}

Our next goal will be to prove that  $G^{\bullet}$ does not have a triangle of type 222 and we will prove a lemma showing that it has at most one 112 triangle, and giving some further structure if one exists.  We proceed next with a couple more forbidden configurations.

\begin{lemma}
\label{2near112}
The following configurations are forbidden.

\begin{myfigure}[htp]
\centerline{\includegraphics[width=8.5cm]{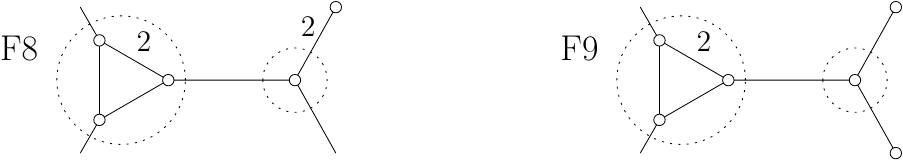}}
\end {myfigure}
\end{lemma}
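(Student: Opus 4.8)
The plan is to follow the same reducible-subgraph strategy used throughout this section (cf.\ Lemmas~\ref{forbid12}, \ref{forbiddentriads}, and~\ref{222-op-triangle}), identifying in each of the two configurations a partial ear decomposition whose gain exceeds the total bonus lost, after correctly accounting for the endpoint patterns at the ``free'' ends of the removed ears. For each configuration, the first step is to read off the lengths mod~3 that are forced on the unlabelled ears: the ears of an inner triangle must have nonzero length mod~3 and satisfy the $112$ (or $222$) constraint of Lemma~\ref{triangle_types}, and by Lemma~\ref{nozero} no usable ear has length $0$ mod~3, so every usable outer ear has residue $1$ or~$2$; by Lemma~\ref{le:length1} an ear of residue~1 is a single edge. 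Then I would use Lemma~\ref{even2} to pin down the residues of the remaining edges at each triad or inner-triangle vertex touched by the configuration.

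Next I would choose the partial ear decomposition. In the spirit of the earlier proofs, I expect to take the ear(s) of residue~$2$ inside the depicted inner triangle (gain $8$ each, and by Lemma~\ref{triangle_types} the removal leaves the complementary ear of that triangle equitable, so we recover a usable ear of known residue), possibly together with an adjacent outer ear, so that the resulting graph still has a long ear of nonzero residue to absorb any slack. The gain of such a decomposition is $8$ or $16$ per ear; against this I would tally: the bonuses of the removed ears themselves, plus the endpoint costs ($Y_{0*}, Y_{11}, Y_{12}, Y_{22}$ or $\Delta_{11}, \Delta_{11}', \Delta_{12}, \Delta_{22}$) incurred at each free end, using the cost table and the rule that an end of degree $\ge 4$ costs nothing (here everything is subcubic by Lemma~\ref{le:subcubic}, so that case does not arise). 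Whenever a free end would otherwise force the expensive $Y_{12}$ cost of~$7$, I would invoke Lemma~\ref{clever_choice} to pick an ear labelling of an incident single-edge ear that keeps the newly merged ear inequitable, cutting that cost to~$3$. Connectivity after deletion is guaranteed by Lemmas~\ref{removingedge} and~\ref{removingvertex} (possibly deleting one more ear from each inner triangle that becomes a $2$-edge-cut, absorbed into the $\Delta$-costs), and the independence of the endpoint-cost calculations follows, as before, from Observation~\ref{bullet3ec}, which rules out an outer ear joining two of the inner triangles in the configuration.

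The main obstacle I anticipate is the bookkeeping in the borderline cases, exactly as flagged in the statements of the $\Delta_{11}'$ and $Y_{12}$ patterns: when the ``cost'' of an endpoint pattern is only an \emph{upper} bound that is not quite beaten by the available gain, one must split into subcases according to whether a newly formed ear (of length $0$ mod~$3$) is equitable, and if it is, exploit the fact that $G$ has no equitable ear (Lemma~\ref{noequitable}) or use the finer count in Lemma~\ref{clever_choice}/Lemma~\ref{outer2force}. For one of the two configurations I expect that the ``crude'' decomposition falls short by a small margin, and that one must then pass to a second figure (as in Lemmas~\ref{222-op-triangle} and~\ref{forbiddentriads}), using the forced residues around the problematic free end to replace a $Y_{12}$ of cost~$7$ by $\Delta_{12}$ of cost~$3$, or to find an even longer ear decomposition whose gain finally dominates. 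The write-up would therefore consist of two parallel case analyses, one per depicted configuration, each ending with ``this is a reducible subgraph, contradicting Observation~\ref{noreducible}.''
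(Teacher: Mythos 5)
Your general strategy---force residues, delete a small partial ear decomposition, and compare its gain against the lost bonus plus endpoint costs, patching an expensive $Y_{12}$ end via Lemma~\ref{clever_choice} and restoring connectivity via Lemmas~\ref{removingedge} and~\ref{removingvertex}---is exactly the method the paper uses for this lemma. But as written there is a genuine gap: you never actually close the accounting for either configuration, and that accounting \emph{is} the proof. You explicitly defer the decisive step (``for one of the two configurations I expect that the crude decomposition falls short by a small margin\dots''), so no specific ear decomposition is exhibited and no inequality of the form gain $\ge$ bonus drop plus endpoint cost is verified. There is also a numerical slip: an ear of length $2$ mod~$3$ has an ear labelling of gain at least $16$ by Lemma~\ref{goodgain}, not $8$ as you first state, and getting such constants right is the whole game here.

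Concretely, what is missing is the following. For the left configuration the paper removes the two disjoint residue-$2$ ears inside the two inner triangles (gain $16+16=32$), and the step that makes the numbers work is the ``furthermore'' clause of Lemma~\ref{triangle_types}: after these removals both merged ears are equitable, so the ear containing the connecting ear $R$ retains exactly the bonus $R$ had, capping the total bonus drop at $22$, against gain $32$ minus a single endpoint cost of at most $10$. You do cite the equitability fact, but only to ``recover a usable ear of known residue,'' which is not its role (the merged ear has length $0$ mod~$3$; the point is that merging with an equitable ear does not change the bonus of the ear containing $R$). For the right configuration, the paper first uses the \emph{left} configuration (already forbidden) together with Lemma~\ref{forbiddentriads} to force all three ears at the pictured triad to have residue $1$, then Lemma~\ref{triangle_types} pins down the remaining lengths, producing a decomposition of gain $40$ against total pictured bonus $29$; the only way this fails to be reducible is an endpoint pattern $Y_{12}$ at $u$ or $v$, which Lemma~\ref{clever_choice} reduces to cost $3$, yielding the contradiction with Observation~\ref{noreducible}. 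None of these forced residues, specific decompositions, or totals appears in your write-up, so while your plan points in the right direction, it does not yet prove the lemma.
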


\begin{proof}
The figure below indicates the reductions we will use.

\begin{myfigure}[htp]
\centerline{\includegraphics[width=8.5cm]{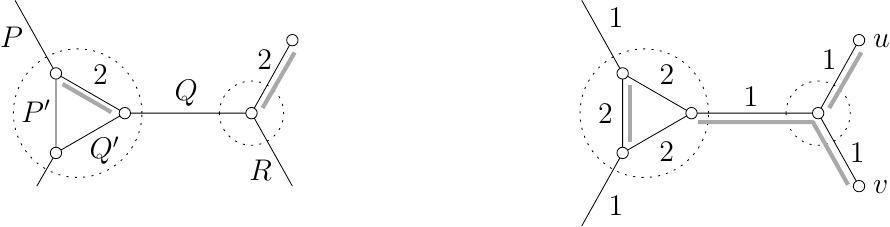}}
\end {myfigure}

For the configuration on the left, we have indicated a partial ear decomposition consisting of two disjoint ears, each of which has an ear labelling with gain 16 (for a total gain of 32).  However, it follows from Lemma~\ref{triangle_types} that in the weighted graph obtained by removing these ears (to achieve gain $\ge 32$), both $P \cup P'$ and $Q \cup Q'$ will be equitable, and thus the bonus of the ear containing $R$ in this new weighted graph is the same as the bonus of $R$ in the original.  Therefore, the drop in bonuses on the pictured ears minus the bonuses on the (pictured) newly created ears is at most 22.  Since no endpoint pattern has a cost greater than 10, this gives us a reducible configuration, as desired.

For the configuration on the right in the statement of the lemma, we may assume by the above argument and Lemma~\ref{forbiddentriads} that all three ears incident with the pictured triad have length 1 mod 3.  It then follows from our triangle types that this configuration must have the lengths as indicated in the above figure.  Here we have indicated a partial ear decomposition with gain 40.  The total bonus of the pictured ears is 29, so we will have a reducible configuration unless the sum of the costs of the two endpoint patterns at $u$ and $v$ is at least~12.  The only way this is possible is for at least one of $u,v$ to have endpoint pattern $Y_{12}$, so we may assume this happens at $u$.  However, in this case we may apply Lemma~\ref{clever_choice} to arrange for the cost of the endpoint pattern at $u$ to be 3, thus giving us a reducible configuration.
\end{proof}

\begin{lemma}\label{le:no222}
There is no inner triangle of type $222$.
\end{lemma}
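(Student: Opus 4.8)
\textbf{Proof plan for Lemma~\ref{le:no222}.}
The plan is to assume for a contradiction that $G^{\bullet}$ contains an inner triangle $T$ of type $222$, i.e.\ an inner triangle all three of whose incident outer ears $P_1,P_2,P_3$ have length $2$ modulo $3$. The first step is to understand what happens at the far ends of these three ears. By Lemma~\ref{outer2force}, each $P_i$ (being a usable outer ear of length $2$ mod $3$) either has both ends triads of pattern $Y_{12}$, or one end a triad and the other of pattern $\Delta_{12}$. But the end at $T$ is \emph{not} a triad, so for each $i$ the far end of $P_i$ must be a triad with pattern $Y_{12}$ (the $\Delta_{12}$ alternative would force the $T$-end to also be a $\Delta_{12}$, impossible since there the triangle is of type $222$, not $112$). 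Moreover, Lemma~\ref{222-op-triangle} forbids $T$ from being adjacent (via an outer ear) to another inner triangle, which is consistent with the three far ends being triads; and Lemma~\ref{2near112} controls the local picture around those triads.

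The second step is the main reduction. I would take the partial ear decomposition obtained from $T$ together with (a labelling of) the three ears $P_1,P_2,P_3$: removing the two ``free'' ears of $T$ and $P_1,P_2,P_3$. By Lemma~\ref{removingvertex}(2) this is a legitimate deletion, since the three $P_i$ are all adjacent to the same inner triangle $T$ and the only $2$-edge-cuts created are cured by deleting one ear from each inner triangle sitting at a far end of some $P_i$ --- but those far ends are triads, so no further deletions are needed. Using Lemma~\ref{goodgain} (and Lemma~\ref{noequitable}) I can choose an ear labelling of each $P_i$ with gain $\ge 16$ and, via the ``Furthermore'' clause of Lemma~\ref{outer2force}, arrange that the newly formed ear at each $Y_{12}$ far-endpoint is equitable. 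The bookkeeping: the partial ear decomposition (the two ears of $T$, which have residues forced by Lemma~\ref{triangle_types} for a $222$ triangle, plus $P_1,P_2,P_3$) has gain $16\cdot 3$ from the $P_i$'s plus whatever the two $T$-ears contribute; the three $Y_{12}$ endpoint patterns each cost at most $7$, but because the merged ear is equitable (length $0$ mod $3$, equitable) the $Y_{12}$ cost drops to $3$ each, and the bonus lost on $T$'s ears is accounted for by the $T$-ear gains. Summing, the total gain comfortably exceeds the total bonus drop, so we obtain a reducible subgraph, contradicting Observation~\ref{noreducible}.

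The part that will require the most care is the exact arithmetic and the independence of the three endpoint-cost calculations. One must check that the three triads at the far ends of $P_1,P_2,P_3$ are genuinely distinct and that no single outer ear has ends in two of the relevant local neighbourhoods --- otherwise the endpoint costs are not additive and the reduction is invalid. This independence follows from Observation~\ref{bullet3ec}: any such coincidence would produce a $3$-edge-cut of $G^{\bullet}$ of a type that either puts $X$ on the wrong side or bounds a region with too few cycles, contradicting the minimal choice of $X$ used to build $G^{\bullet}$. The other delicate point is pinning down the residues of the two ``free'' ears of a type $222$ triangle via Lemma~\ref{triangle_types}, and confirming that with those residues the full ear decomposition one writes down really has gain at least $16$ on the triangle part (or that the triangle part can simply be contracted). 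Once these are nailed down, the contradiction is immediate.
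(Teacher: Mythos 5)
There is a genuine gap, and it starts with the statement itself: you have misread what ``type $222$'' means. The type of an inner triangle refers to the lengths modulo $3$ of the three ears that \emph{comprise} the triangle (see Lemma~\ref{triangle_types} and the definition of the patterns $\Delta_{ab}$, whose subscripts are the lengths of the two triangle ears at the vertex $v$), not to the three outer ears attached to it. So a $222$ triangle is one whose own three ears have residue $2$; its attached outer ears are forced to have residue $1$ (they cannot be $0$ by Lemma~\ref{nozero}, and an attached outer ear of residue $2$ would force the triangle to be of type $112$ by Lemma~\ref{outer2force}, exactly as used in the proof of Lemma~\ref{even2}). Your configuration --- a triangle with three incident outer ears $P_1,P_2,P_3$ of residue $2$ --- is a different (and in fact already impossible) object: by your own first step, each such $P_i$ would need its triangle end to be a triad or a $\Delta_{12}$ endpoint, which it is not, so the setup is internally inconsistent rather than a reduction of the lemma. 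Consequently the entire deletion argument via Lemma~\ref{removingvertex} never engages the actual statement. A second, independent error is the $Y_{12}$ bookkeeping: the ``Furthermore'' clause of Lemma~\ref{outer2force} guarantees the merged ear at a $Y_{12}$ endpoint is \emph{equitable}, and an equitable merged ear has bonus $0$, which makes the $Y_{12}$ cost the \emph{large} value ($7$), not $3$; you have the variable cost backwards, so even the arithmetic sketch would not close.

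For the true statement no purely local reduction around the triangle suffices, which is why the paper argues globally: if $x\in V(G^{\Delta})$ corresponds to a $222$ triangle, then Lemma~\ref{222-op-triangle} forbids neighbours of $x$ corresponding to inner triangles, Lemma~\ref{2near112} forces every neighbour of $x$ to be equal or adjacent to $w$, and girth $\ge 4$ (Lemma~\ref{notk4}) rules out equality; then $d(\{w,x\}\cup N(x))=3$ together with cyclic $4$-edge-connectivity (Lemma~\ref{cyc4ec}) forces $G^{\Delta}\cong K_{3,3}$, all remaining edges at the sixth vertex have residue $1$, and the three resulting explicit configurations of $G^{\bullet}$ are eliminated one by one with contractible or reducible subgraphs (using Lemma~\ref{triangle_types} to track equitability of the ears created, and a final subcase where the ears incident with $w$ have length $0$ mod $3$). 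You would need to rebuild your argument along these lines; the local endpoint-cost reduction you propose does not reach this structure.
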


\begin{proof} We will argue first in the graph $G^{\Delta}$.  Let $x$ be a vertex of $G^{\Delta}$ corresponding to a $222$ triangle.  It follows from Lemma~\ref{222-op-triangle} that no neighbour of $x$ corresponds to an inner triangle.  It then follows from the previous lemma that every neighbour of $x$ must either be equal to $w$ or adjacent to $w$.  Since $G^{\Delta}$ has girth $\ge 4$ (by Lemma~\ref{notk4}) it must be that every neighbour of $x$ is adjacent to (but not equal to) $w$.   Now consider the subset of $V(G^{\Delta})$ consisting of $w$, $x$, and the neighbours of $x$.  There are just three edges with exactly one endpoint in this set.  So, by Lemma~\ref{cyc4ec} we find that there is just one vertex $z$ of $G^{\Delta}$ not in this set, and thus $G^{\Delta} \cong K_{3,3}$.  It follows from the previous lemma that all three edges of $G^{\Delta}$ incident with $z$ have residue 1.  Consequently the subgraph of $G^{\bullet}$ consisting of all of the usable ears must be one of the following.

\begin{myfigure}[htp]
\centerline{\includegraphics[height=3cm]{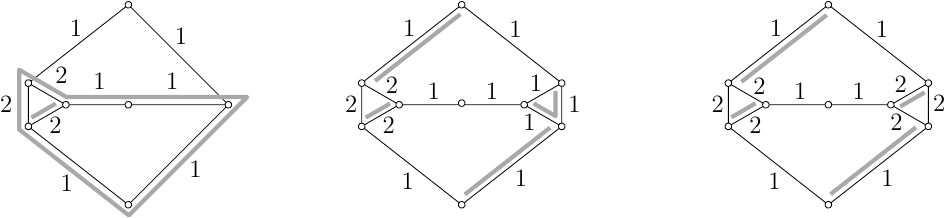}}
\end {myfigure}

In the leftmost graph we have indicated a contractible subgraph $H$.  To see this, note that the indicated ear decomposition of $H$ has gain 32, but the
  total bonus on the ears of $H$ is 25.  For the other two graphs we have indicated partial ear decompositions both of which have a gain of 48.  This
  gain is at least the sum of the bonuses on the ears which do not have $w$ as an endpoint (i.e., the ears which are fully pictured).
By Lemma~\ref{triangle_types}, all the ears created by deleting an inner ear of length 2 mod 3 are equitable. Thus, in the third configuration, if $P$
  is an ear of $G^{\bullet}$ which is incident with $w$, then either $P$ will still be an ear after the indicated reduction, or $P$ will merge with an
  equitable path to form a larger ear $P'$ with the same bonus as $P$. This shows that the third configuration is indeed reducible.
Now consider the middle configuration. Let $Q$~denote the marked ear incident with the type 111 triangle and $P$, $P'$ the two marked ears contained in
  the 111 triangle. Also, let $v$~be the topmost neighbour of $w$.
A similar reasoning as for the third configuration shows that the middle configuration is reducible, unless the ear between $v$ and $w$ in $G^\bullet$
  is inequitable of length 0 mod 3, and the ear created by deleting $P$ and $P'$ is inequitable (and these two ears form an equitable ear in the
  resulting graph).
Thus we may assume that this is the case and, by symmetry, all ears incident with $w$ have length 0 mod 3. In this case the subgraph given by $P, P'$
  and $Q$ is reducible.
\end{proof}

\begin{lemma}
\label{tame112}
There is at most one type 112 triangle in $G^{\bullet}$.  Furthermore, if it exists, then it appears in a configuration of the following form.

\begin{myfigure}[htp]
\centerline{\includegraphics[width=3cm]{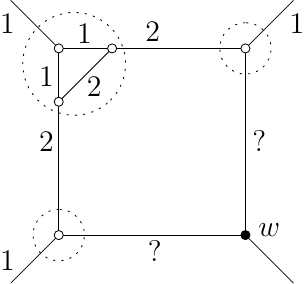}}
\end {myfigure}
\end{lemma}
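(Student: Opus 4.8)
The plan is to read off, from the endpoint‑pattern machinery, exactly which residues the three outer ears leaving a type 112 triangle must carry, then to use Lemma~\ref{2near112} to force such a triangle to sit next to $w$, and finally to finish by counting the three ears at $w$. So first I would fix a type 112 triangle $C$, with its residue‑$2$ side joining $a$ to $b$ and its two residue‑$1$ sides joining $c$ to $a$ and to $b$ (this side pattern is all that survives after Lemma~\ref{triangle_types} and Lemma~\ref{le:no222}). The outer ear leaving $C$ at $a$, and the one leaving at $b$, then each have endpoint pattern $\Delta_{12}$, while the one leaving at $c$ has pattern $\Delta_{11}$. Since pattern $\Delta_{12}$ never occurs on an ear of length $1$ mod $3$ and, by Lemma~\ref{nozero}, no usable ear has length $0$ mod $3$, the ears at $a$ and $b$ have length $2$ mod $3$ as soon as they are usable; and since by Lemma~\ref{outer2force} a usable outer ear of length $2$ mod $3$ has one triad end and one $\Delta_{12}$ end — in particular never a $\Delta_{11}$ end — the ear at $c$ has length $1$ mod $3$. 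Thus the three outer ears leaving $C$ have lengths $2,2,1$ mod $3$ (with the only possible exception that the outer ear joining $C$ to $w$, if there is one, might behave differently — a case handled separately but in the same spirit). Moreover, by Lemma~\ref{outer2force} again, each \emph{usable} residue‑$2$ ear leaving $C$ runs to a triad of $G^{\bullet}$, and by the girth bound for $G^{\Delta}$ (Lemma~\ref{notk4}) these far ends are distinct and not both equal to $w$.

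Next I would pin down where those far ends lie. Suppose a residue‑$2$ outer ear leaves $C$ and ends at a triad $t$ with $t\neq w$ and $t$ not adjacent to $w$ in $G^{\Delta}$. By Lemma~\ref{forbiddentriads} (together with Lemma~\ref{nozero}) the triad $t$ has an even, hence exactly two, incident ears of length $2$ mod $3$; the second one, analysed with Lemma~\ref{outer2force} and with the cyclic $2$-edge-cuts created on deletion repaired as usual through the inner triangles at the relevant ends, produces one of the configurations forbidden by Lemma~\ref{2near112}. This contradiction forces every such far end into $\{w\}$ together with the three neighbours of $w$ in $G^{\Delta}$. Combined with the girth bound, this means that the two residue‑$2$ ears leaving $C$, together with a possible unusable ear joining $C$ to $w$, consume two of the three ears incident with $w$.

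For uniqueness, a second type 112 triangle $C'$ would, by the same reasoning, reach $\{w\}\cup(\text{neighbours of }w)$ through its own two residue‑$2$ ears; since $w$ has only three ears and only three neighbours in $G^{\Delta}$, the triangles $C$ and $C'$ would be forced to share some neighbour $t$ of $w$, which would then carry two residue‑$2$ ears (one running towards each of $C$ and $C'$) together with an ear to $w$. One rules this out by inspecting the small subgraph of $G^{\Delta}$ spanned by $C$, $C'$, $t$ and $w$ and applying Observation~\ref{bullet3ec} to the complementary $3$-edge-cut, using that $G^{\Delta}$ is $3$-edge-connected and cyclically $4$-edge-connected (Lemma~\ref{cyc4ec}). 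Hence $G^{\bullet}$ has at most one type 112 triangle, and the local structure assembled in the first two paragraphs — a triangle with side lengths $1,2,2$ mod $3$, its two residue‑$2$ outer ears reaching two distinct vertices of the set $\{w\}\cup(\text{neighbours of }w)$, and its residue‑$1$ outer ear — is precisely the displayed configuration.

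The main obstacle is the bookkeeping in the second and third steps. One must check that the local pictures obtained around the far ends of the residue‑$2$ ears genuinely coincide with (or extend to) the forbidden configurations of Lemma~\ref{2near112}, including the recurring caveats that the endpoint‑cost estimates used there are mutually independent and that no auxiliary $3$-edge-cut contradicts Observation~\ref{bullet3ec}; and one must keep an exact tally of how the three ears at $w$ are spent — handling, in particular, the variant in which one outer ear of $C$ is itself an (unusable) ear to $w$ — since that tally is what pins the count at \emph{one} rather than two or three.
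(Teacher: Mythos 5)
Your structural half runs parallel to the paper: you derive the residues $2,2,1$ of the three outer ears at a type 112 triangle, note that the two residue-$2$ ears end at triads by Lemma~\ref{outer2force}, and use Lemma~\ref{2near112} to push those triads into $\{w\}\cup N(w)$. Two points, however. First, you never actually eliminate the case that one of these residue-$2$ ears ends at $w$ itself; the paper disposes of it in one line, since together with the other far end lying in $\{w\}\cup N(w)$ it would create a cycle of length at most $3$ in $G^{\Delta}$, contradicting Lemma~\ref{notk4}. Without this you neither get the displayed configuration nor a clean tally of the ears at $w$, and you explicitly leave this "variant" unresolved.

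Second, and more seriously, your uniqueness step has a genuine gap. Having forced a second 112 triangle to share a triad $t\in N(w)$ that would carry residue-$2$ ears to both triangles plus an ear to $w$, you propose to derive a contradiction from Observation~\ref{bullet3ec} applied to ``the complementary $3$-edge-cut'' of the set spanned by $C$, $C'$, $t$, $w$. But no such small cut exists: in the generic situation where the two triangles' other residue-$2$ ears go to the two remaining neighbours of $w$, the set $\{x,x',t,w\}$ (with $x,x'$ the triangle-vertices of $G^{\Delta}$) has six edges leaving it, and enlarging it by those two neighbours still leaves four; $4$-edge-cuts are perfectly compatible with Lemma~\ref{cyc4ec}, and $K_{3,3}$ has not been excluded at this stage (Lemma~\ref{notk33} comes later and itself uses the present lemma), so connectivity alone cannot kill the shared-triad scenario. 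The paper's route is different and is what makes uniqueness ``immediate'': after placing both triads adjacent to $w$, it shows by further applications of Lemma~\ref{outer2force} and Lemma~\ref{2near112} that the remaining (third) ear at each of these triads must have residue $1$ --- this residue statement is part of the displayed configuration --- and a second 112 triangle would need exactly such a third ear to have residue $2$, a contradiction. Your proof needs that residue-$1$ step (or an equivalent reducibility argument), not a cut-counting argument.
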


\begin{proof}
Let $C$ be a 112 triangle in $G$ and let $Q$ be the unique ear of~$G$ with length 2 modulo 3 which is contained in $C$.
Let $P_1,P_2$ be the two ears which are not contained in $C$ but have an endpoint in common with $Q$ (so $P_1$ and $P_2$ both have length 2 modulo 3).  For $i=1,2$ let $v_i$ be the end of $P_i$ which is not contained in $C$.  It follows from Lemma~\ref{outer2force} that $v_i$ is not contained in an inner triangle for $i=1,2$.  It then follows from Lemma~\ref{2near112} that $v_i$ must either be equal to $w$ or joined to $w$ by a single ear for $i=1,2$.  If either $v_i$ is equal to $w$, then this results in a cycle of length at most three in $G^{\Delta}$ which is a contradiction.  Therefore, both $v_1$ and $v_2$ are joined to $w$ by a single ear.  Now, the outward pointing ears in the configuration from the statement which are marked 1 must indeed have length 1 mod 3 by applications of Lemma~\ref{outer2force} and Lemma~\ref{2near112}.  It follows immediately from this that it is impossible for $G^{\bullet}$ to contain another triangle of type 112.
\end{proof}

\section{Cycles and Edge-Cuts of Size Four}

In this section we will be considering 4-edge-cuts in $G^{\Delta}$ and in particular 4-cycles in $G^{\Delta}$ in order to find more reductions and complete the proof.

\begin{lemma}
\label{all1insq}
If $C$ is a 4-cycle in $G^{\Delta}$ not containing $w$, and every edge of $C$ has residue 1, then the corresponding subgraph of $G^{\bullet}$ is one of the following.

\begin{myfigure}[htp]
\centerline{\includegraphics[width=12cm]{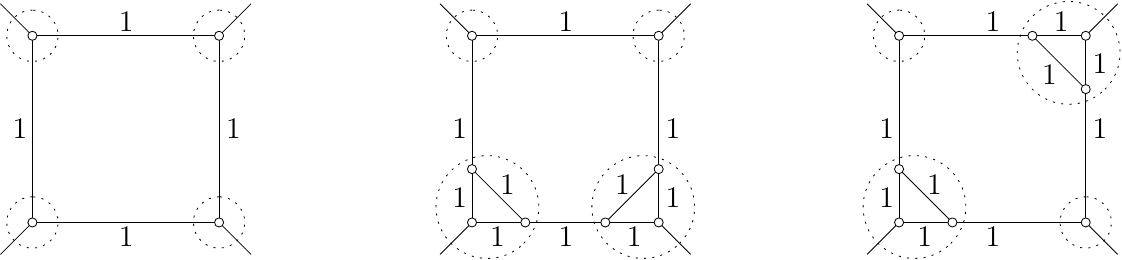}}
\end {myfigure}
\end{lemma}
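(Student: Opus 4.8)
The plan is to analyze what a $4$-cycle $C$ in $G^{\Delta}$ with all residue-$1$ edges corresponds to in $G^{\bullet}$, using the accumulated structural information. First I would identify which vertices of $C$ are triads and which are inner triangles. Since $C$ lies in $G^{\Delta}$, each vertex is either a triad of $G^{\bullet}$ or an inner triangle; by Lemma~\ref{le:no222} no inner triangle is of type $222$, and a $112$ triangle has length-$2$ ears, so any inner triangle on $C$ must have type $111$. At a vertex of $C$ of either kind, exactly one edge of $G^{\Delta}$ leaves $C$; call these four external edges $f_1,\dots,f_4$. The key combinatorial input is Lemma~\ref{even2}: at any vertex of $G^{\Delta}$ not adjacent to $w$, the number of incident residue-$2$ edges is even; combined with the two residue-$1$ edges of $C$ at that vertex, the external edge $f_i$ must have residue $1$ or the vertex is adjacent to (or equal to) $w$. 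Also, by Lemma~\ref{nozero} and Lemma~\ref{forbiddentriads} a triad has no residue-$0$ ear and is not the $Y_{12}$-type configuration appearing there. So I would first pin down the residues of the $f_i$ and the $w$-adjacencies, then translate back to $G^{\bullet}$, where each vertex of $C$ that is an inner triangle expands into three ears forming a triangle.

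The main work is then a case analysis producing exactly the listed configurations. I would organize it by the number of inner triangles among the four vertices of $C$ (which can be $0,1,2,3,$ or $4$ a priori), then within each case determine how many of the $C$-vertices are adjacent to $w$. For each surviving combination I would exhibit a contractible or reducible subgraph to rule it out, using the now-standard toolkit: Lemma~\ref{easycont} and Lemma~\ref{fulldecompose} for contractibility, Observation~\ref{noreducible} together with the endpoint-pattern cost bookkeeping of Section~5 for reducibility, and Lemma~\ref{removingedge}/Lemma~\ref{removingvertex} to guarantee that the deletions keep us inside the class of subdivisions of $3$-edge-connected graphs. Forbidden configurations already established (Lemma~\ref{forbid12}, Lemma~\ref{forbiddentriads}, Lemma~\ref{222-op-triangle}, Lemma~\ref{2near112}, Lemma~\ref{tame112}) should eliminate most branches quickly — for instance two adjacent inner triangles on $C$ joined by a residue-$1$ ear is exactly the kind of situation governed by Lemma~\ref{222-op-triangle} and Lemma~\ref{2near112}, and three residue-$1$ ears at a common triad is Lemma~\ref{forbiddentriads}. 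What remains after applying these is the short list in the figure, and for each I would check that the displayed picture is internally consistent with the triangle-type constraints of Lemma~\ref{triangle_types} (so that, e.g., a length-$2$ inner ear removal leaves equitable ears as needed).

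The hard part, I expect, will be the bookkeeping at the boundary of $C$: ensuring that the endpoint-cost calculations for the external ears $f_i$ are \emph{independent}, i.e.\ that no single outer ear of $G^{\bullet}$ has both of its ends among the inner triangles/triads of $C$ (which would invalidate adding up costs), and that no $3$-edge-cut of $G^{\bullet}$ violating Observation~\ref{bullet3ec} is hiding in the configuration. This is the same subtlety flagged in the proofs of Lemma~\ref{forbid12} and Lemma~\ref{forbiddentriads}, and here it is sharper because a $4$-cycle gives more room for such coincidences; I would handle it by invoking the cyclic $4$-edge-connectivity of $G^{\Delta}$ (Lemma~\ref{cyc4ec}) and girth $\ge 4$ with $|V(G^{\Delta})|\ge 6$ (Lemma~\ref{notk4}) to rule out the short-cut edges and small global structures, reducing to the genuinely local picture around $C$. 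Once independence is secured, each remaining branch is a routine gain-versus-bonus computation of exactly the type carried out repeatedly above, and the configurations that survive all these computations are precisely the ones in the figure.
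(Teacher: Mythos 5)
Your opening classification is the same as the paper's: since the two $C$-edges at each vertex have residue~1, no vertex of $C$ can correspond to a type~112 triangle (by Lemma~\ref{outer2force}/Lemma~\ref{tame112} such a vertex has two incident residue-2 edges in $G^{\Delta}$), and by Lemma~\ref{le:no222} there are no 222 triangles, so each vertex of $C$ is a triad or a 111 triangle. But after this step the actual content of the lemma is missing from your proposal. Up to symmetry there are six triad/triangle patterns around the $4$-cycle, and the proof consists of showing that exactly three of them are contractible -- i.e.\ the subgraph of $G^{\bullet}$ corresponding to $C$ admits a full ear decomposition with gain at least its bonus -- while the other three are the pictured configurations. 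You never determine which patterns are excluded, and the ``routine gain-versus-bonus computation'' you defer is precisely the proof: for one triangle and three triads the corresponding subgraph has bonus $28$ and an ear decomposition (big cycle through one side of the triangle, then the remaining two sides) of gain $32$; for three triangles the comparison is $56$ versus $52$; for four triangles $80$ versus $64$; whereas for zero or two triangles no decomposition reaches the bonus, which is exactly why those patterns survive. Moreover, the prior lemmas you invoke to ``eliminate most branches quickly'' do not apply here: Lemma~\ref{222-op-triangle} and Lemma~\ref{2near112} concern type 222/112 triangles and residue-2 ears, none of which can occur on $C$, and Lemma~\ref{forbiddentriads} does not forbid a triad with three residue-1 ears (such triads occur in the permitted configurations).

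A second problem is the machinery you plan to lean on. You propose reducible subgraphs with endpoint-pattern costs, an analysis of the residues of the external edges $f_i$, and of $w$-adjacencies, and you correctly flag that independence of the cost bookkeeping is then an issue -- but the hypotheses of the lemma give you nothing to resolve it with: the far ends of the external ears are completely unconstrained (they may be adjacent to $w$, so the ears may be unusable; they may carry residue~2; they may coincide). The paper's proof sidesteps all of this: the three bad patterns are killed purely by contractible subgraphs lying entirely inside the subgraph corresponding to $C$ (all of whose ears are usable since $w \notin V(C)$), so no deletion, no connectivity repair via Lemma~\ref{removingedge} or Lemma~\ref{removingvertex}, and no endpoint-cost independence argument is needed. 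So while your skeleton (classify vertices, case analysis, contract or reduce) is in the paper's spirit, the decisive computations are absent and the deletion-based route you sketch in their place would not go through from the stated hypotheses.
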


\begin{proof} This proof will call upon the following configurations.

\begin{myfigure}[htp]
\centerline{\includegraphics[width=12cm]{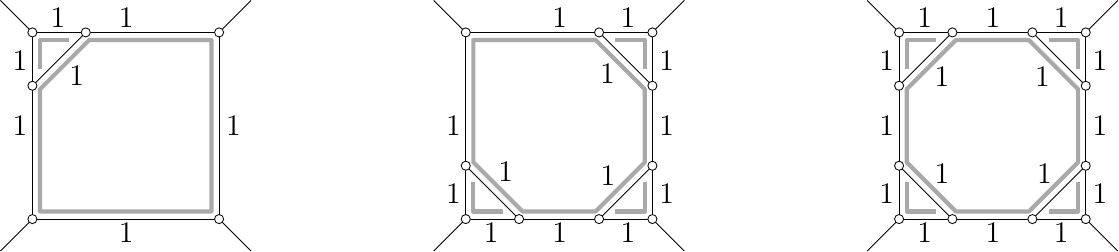}}
\end {myfigure}

Each vertex of $C$ corresponds to either a triad or an inner triangle of type 111 in $G^{\bullet}$. By Lemma~\ref{le:no222}, all these triangles are of type 111. Up to symmetry, this gives six possibilities; three of them are excluded by the contractible subgraphs indicated in the above figure, while the other three are those from the statement of the lemma.
\end{proof}

\begin{lemma}
Let $C \subset G^{\Delta}$ be a 4-cycle and assume $C$~does not contain the vertex~$w$. 
Then the corresponding subgraph of $G^{\bullet}$ cannot be as follows:

\begin{myfigure}[htp]
\centerline{\includegraphics[height=3cm]{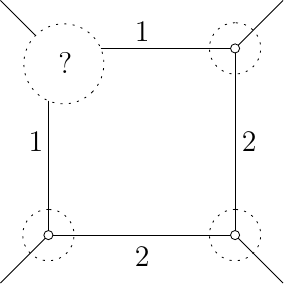}}
\end {myfigure}

Here, the question mark in the top right corner denotes either a triad or an inner triangle.
\end{lemma}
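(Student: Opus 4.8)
The plan is to suppose for a contradiction that the displayed subgraph occurs in $G^{\bullet}$, first pin down its structure completely, and then exhibit a reducible subgraph, contradicting Observation~\ref{noreducible}. Because the four edges of $C$ carry residues $1,1,2,2$ and $C$ avoids $w$, every vertex of $C$ is subject to Lemma~\ref{even2}; together with Lemma~\ref{nozero} this forces each residue-$2$ edge of $C$ either to meet, at a triad end, a second residue-$2$ edge leaving $C$, or to meet, at an inner-triangle end, a triangle of type $112$. By Lemma~\ref{tame112} there is at most one such triangle and it already sits in a prescribed local configuration; by Lemma~\ref{le:no222} every other inner triangle on $C$ has type $111$; and Lemma~\ref{2near112} together with Lemma~\ref{forbiddentriads} restricts the triads adjacent to the residue-$2$ edges. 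Running through the two possibilities for the top-right corner (triad or inner triangle) and the residues thereby forced on the edges leaving $C$, these constraints are exactly enough to leave the configuration drawn. Observation~\ref{bullet3ec} and the girth bound of Lemma~\ref{notk4} are used along the way to rule out coincidences --- two of the inner triangles being equal or adjacent, or an outer ear having both ends in two distinct inner triangles of the configuration.

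Next I would mark a partial ear decomposition (drawn in grey in a companion figure), choosing it so that the gain of the labelling built from it via Lemma~\ref{fulldecompose} and Lemma~\ref{goodgain} --- namely $8$ times the sum of the residues of its ears --- is as large as possible; the residue-$2$ ears of $C$ are the valuable contributors, each worth $16$ against a bonus of only $3$. Removing these ears is legitimate by Lemma~\ref{removingedge} or Lemma~\ref{removingvertex}: the only connectivity defects left are $2$-edge-cuts around inner triangles, repaired by deleting one more ear from each such triangle. The local bookkeeping --- the drop in bonus at the ends of the removed ears, minus the gain of the extra ears removed there --- is carried by the endpoint-pattern costs ($Y_{11}$, $Y_{12}$, $Y_{22}$, $\Delta_{11}$, $\Delta_{12}$, and so on) introduced earlier, while Lemma~\ref{triangle_types} ensures that inside a $111$ triangle the pair of inner ears surviving a deletion merges into an equitable ear and hence costs nothing. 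Adding the gain against the total bonus loss over the configuration should give a labelling whose gain is at least $\bonus(G)-\bonus(G')$, where $G'$ is the graph left after all the removals, making the marked subgraph reducible; in one or two subcases it may instead be cleaner to close things up into a contractible subgraph of gain at least its total bonus.

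The delicate point --- and I expect this to be the main obstacle --- is that a removed ear may have an endpoint realising the pattern $Y_{12}$, whose cost is $7$ rather than $3$, so that a naive count falls a few units short in the tight subcases. There I would invoke Lemma~\ref{clever_choice}: by Lemma~\ref{le:length1} the offending residue-$1$ ear is a single edge and the ear it merges with is inequitable, so its labelling can be chosen to keep the merged ear inequitable, which drops the cost back to $3$ and closes the gap. One must also check, each time an endpoint cost is charged, that the two ends of the removed ear do not lie in a common ear of the reduced graph, so that the two charges are independent; this is again guaranteed by Observation~\ref{bullet3ec} and Lemma~\ref{notk4}. The structural reductions and the connectivity verifications are routine given the earlier lemmas; it is the coordinated accounting --- choosing the decomposition, matching every endpoint to the correct pattern, and applying Lemma~\ref{clever_choice} precisely where the margin is thin --- that carries the weight of the argument.
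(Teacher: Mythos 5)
Your general strategy (pin down the configuration, remove the residue-$2$ ears, and balance gain against bonus with endpoint-pattern costs and Lemma~\ref{clever_choice}) is in the right spirit, but it has a decisive gap: you never confront the possibility $G^{\Delta}\cong K_{3,3}$. At this point of the argument $K_{3,3}$ has not been excluded --- Lemma~\ref{notk33} comes later, and its proof uses the present lemma through Lemma~\ref{tame2insq} --- and a vertex of $G^{\Delta}$ outside $C$ adjacent to two \emph{opposite} vertices of $C$ creates no triangle and no conflict with Observation~\ref{bullet3ec}; in a cubic, cyclically 4-edge-connected graph it simply forces $G^{\Delta}\cong K_{3,3}$. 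So the girth bound of Lemma~\ref{notk4} and Observation~\ref{bullet3ec} do not deliver the ``coincidence-free'' extension of the configuration, nor the independence of your endpoint charges. The paper splits on exactly this: if $G^{\Delta}\cong K_{3,3}$ it gives two tailored reductions (partial ear decompositions of gain $24$ and $40$, whose legitimacy uses the $K_{3,3}$ structure itself), and only when $G^{\Delta}\not\cong K_{3,3}$ (and, being cyclically 4-edge-connected, not the Prism) can it conclude that every vertex outside $C$ has at most one neighbour on $C$ and that the configuration extends in the clean way your accounting needs.

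A second, related gap is the connectivity of your main removal. The reduction that actually finishes the job deletes two ears simultaneously (the paper's $R,R'$), and neither Lemma~\ref{removingedge} (a single usable ear) nor Lemma~\ref{removingvertex} (three ears at a common vertex or inner triangle) covers this; the paper instead verifies directly that $G^{\Delta}-\{e,e'\}$ is a subdivision of a 3-edge-connected graph, and that verification is precisely where the assumption $G^{\Delta}\not\cong K_{3,3}$ is used. Finally, a smaller slip: Lemma~\ref{even2} applies only to vertices of $G^{\Delta}$ \emph{not adjacent} to $w$, whereas the hypothesis here only forbids $w\in V(C)$, so vertices of $C$ may be adjacent to $w$; the structural pinning should instead go through Lemma~\ref{outer2force} (each pictured triad is incident with two residue-$2$ ears and one residue-$1$ ear) together with Lemma~\ref{tame112}, which is how the paper sets up the two possible extensions before the case split.
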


\begin{proof} Note that by Lemma~\ref{outer2force}, all three vertices in the above figure must be incident with two ears of length 2 mod 3 and one ear of length 1 mod 3. By Lemma~\ref{tame112} the top left vertex of $C$ does not correspond to a type 112 triangle. By our assumptions, the configuration in the statement of the lemma must extend to that appearing on the left or the middle in the figure below.

\begin{myfigure}[htp]
\centerline{\includegraphics[height=2.75cm]{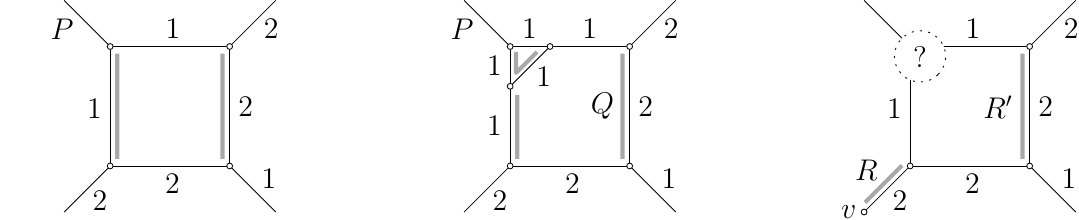}}
\end{myfigure}

Assume for starters that $G^{\Delta} \cong K_{3,3}$.  In this case we claim that the configurations on the left and middle above indicate reducible subgraphs (here the required connectivity follows from the assumption $G^{\Delta} \cong K_{3,3}$).
In the leftmost figure, the indicated partial ear decomposition has gain 24 and by Lemma~\ref{outer2force}, the weighted graph obtained by removing these ears (to achieve a gain of 24) will have an ear containing $P$ which has the same bonus as $P$ has in the original graph.  It then follows from a straightforward cost calculation that this gives a reducible subgraph.
Next consider the figure from the centre and the indicated partial ear decomposition.
In this case the gain of the decomposition is 40, and the total bonus of the ears involved is at most 40.

So, we may now assume that $G^{\Delta} \not\cong K_{3,3}$.  Since $G^\Delta$ is cyclically 4-edge-connected, $G^\Delta$ is not isomorphic to Prism. It follows that every vertex in $V(G^{\Delta}) \setminus V(C)$ has at most one neighbour in $V(C)$.  So, we may assume (without loss) that the configuration in the lemma extends to that appearing on the right in the above figure.  Let $e,e'$ be the edges of $G^{\Delta}$ associated with $R, R'$ and observe that $G^{\Delta} - \{e,e'\}$ is a subdivision of a 3-edge-connected graph.  It follows from this that the partial ear decomposition $R,R'$ extends to a reducible subgraph (the cost of the endpoint pattern at $v$ will be at most 10, and there is one newly formed ear with length 1 mod 3).  This completes the proof.
\end{proof}

\begin{lemma}
\label{tame2insq}
Let $C$ be a 4-cycle in $G^{\Delta}$ containing neither $w$ nor a vertex corresponding to a 112 triangle.  If the corresponding subgraph of $G^{\bullet}$
  contains an outer ear of length 2 mod 3, then this subgraph appears as follows:

\begin{myfigure}[h]
\centerline{\includegraphics[width=2.75cm]{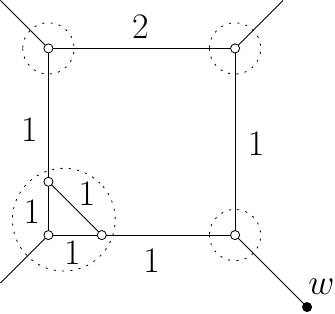}}
\end {myfigure}
\end{lemma}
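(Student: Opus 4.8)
The plan is to follow the same strategy used in the previous lemmas in this section: start from the hypothesis that the 4-cycle $C$ in $G^{\Delta}$ carries an outer ear of length $2$ mod $3$, and use the structural results of Section~``Taming Triangles'' to pin down the residues of all four edges of $C$ and the types of the four vertices, and then rule out every configuration except the one pictured. The key inputs are Lemma~\ref{outer2force} (an outer ear of length $2$ mod $3$ forces both endpoints to be triads of pattern $Y_{12}$, or one triad and one $\Delta_{12}$), Lemma~\ref{even2} (a vertex not adjacent to $w$ is incident with an even number of residue-$2$ edges), Lemma~\ref{le:no222} (no $222$ triangle), and Lemma~\ref{2near112}/Lemma~\ref{tame112} (the strong restrictions near residue-$2$ ears). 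Since $C$ avoids $w$ and avoids $112$ triangles, each vertex of $C$ corresponds either to a triad or to a $111$ triangle.

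First I would argue that no vertex of $C$ can be incident, \emph{within $C$}, with exactly one residue-$2$ edge: by Lemma~\ref{even2} it would need a third residue-$2$ edge leaving $C$, and then Lemma~\ref{outer2force} applied to one of these ears forces its far endpoint to be a triad or a $\Delta_{12}$ pattern, which (together with Lemma~\ref{2near112} and the absence of $112$ triangles) leads to a short cycle in $G^{\Delta}$ through $w$, contradicting Lemma~\ref{notk4}. Hence the residue-$2$ edges of $C$ come in pairs sharing a vertex, so along $C$ the residue-$2$ edges form either two adjacent edges at one vertex, or all four edges, or two opposite pairs at two antipodal vertices. The all-$2$ case is killed directly by Lemma~\ref{le:no222} together with Lemma~\ref{outer2force} (a $222$ triangle cannot arise, and two residue-$2$ ears meeting at a triad force a $Y_{12}$ pattern, which cascades around $C$ to a forbidden configuration as in Lemma~\ref{2near112}). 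This should leave exactly the case of two residue-$2$ edges meeting at a single vertex $v$ of $C$, with the other two edges of $C$ having residue $1$, and I would then check that $v$ must be a triad with pattern $Y_{12}$ and that the two neighbours of $v$ on $C$ have the types and incident-ear residues shown in the figure — this is exactly the pictured subgraph.

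Finally, to exclude the remaining stray possibilities (for instance a $111$ triangle appearing at a wrong spot, or a residue-$2$ ear attached to an inner triangle on $C$), I would exhibit, configuration by configuration, either a contractible subgraph via Lemma~\ref{easycont}/the contractible machinery, or a reducible subgraph obtained by removing a short partial ear decomposition whose gain exceeds the drop in bonus — the drop being bounded using the endpoint-pattern cost table and, where a $Y_{12}$ pattern would otherwise cost $7$, using Lemma~\ref{clever_choice} to bring that cost down to $3$. The main obstacle I expect is the bookkeeping in this last step: one must make sure the endpoint-cost calculations at the two ends of each removed ear are genuinely independent (no single outer ear has ends in two of the inner triangles of the configuration, and no end lies on $w$), which as elsewhere in the paper follows from Observation~\ref{bullet3ec} and Lemma~\ref{cyc4ec}; and one must correctly account for the fact that removing a length-$2$-mod-$3$ inner ear creates equitable ears (Lemma~\ref{triangle_types}), so those merges contribute no usable bonus. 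Once these accounting points are handled, each unwanted configuration yields a reducible or contractible subgraph, contradicting the minimality of $G$, and only the claimed configuration survives.
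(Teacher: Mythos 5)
Your opening step is where the argument breaks. You claim that no vertex of $C$ can be incident, within $C$, with exactly one residue-2 edge, and you deduce that the residue-2 edges of $C$ must form either all four edges or a pair sharing a vertex. But this claim excludes precisely the configuration the lemma permits: in the surviving configuration there is exactly one residue-2 edge $P_1$ of $C$, and by Lemma~\ref{outer2force} (together with the absence of 112 triangles on $C$) both ends of $P_1$ are triads with endpoint pattern $Y_{12}$, so at each such end the second $C$-edge has residue 1 and the ear leaving $C$ has residue 2. Hence a vertex of $C$ with exactly one residue-2 edge inside $C$ and one residue-2 edge outside $C$ is not contradictory---it is forced---and the chain you propose (Lemma~\ref{even2}, then Lemma~\ref{outer2force}, then Lemma~\ref{2near112} producing a short cycle through $w$ contradicting Lemma~\ref{notk4}) does not give a contradiction: the far end of the outgoing residue-2 ear can simply be another triad of pattern $Y_{12}$, or the unique 112 triangle sitting elsewhere, neither of which forces a cycle of length at most $3$ in $G^{\Delta}$. (Note also that Lemma~\ref{even2} requires the vertex not to be adjacent to $w$, which is not guaranteed for vertices of $C$ here; the paper gets the residue information at the ends of $P_1$ from Lemma~\ref{outer2force} instead.) Your own conclusion is even internally inconsistent with this step: in the ``two adjacent residue-2 edges'' case, the two outer vertices of that path each carry exactly one residue-2 edge of $C$.

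As a result your final classification comes out wrong: you end with two residue-2 edges of $C$ meeting at a single triad, which is exactly the configuration forbidden by the lemma immediately preceding Lemma~\ref{tame2insq} (the 4-cycle with two adjacent residue-2 ears), whereas the configuration the statement actually permits---exactly one residue-2 edge, both of its ends triads, and exactly one 111 triangle among the other two vertices---would be eliminated by your first step. The paper's proof instead cases on how many of the four outer ears of $C$ have residue 2: all four gives a cycle of length 2 mod 3, hence a contractible subgraph; three or two give contradictions with Lemma~\ref{forbid12} or with the preceding lemma; so exactly one remains, and then one cases on the number of inner triangles among the vertices of $C$ (zero or two: contractible; one: either the pictured configuration or a configuration made reducible using Lemma~\ref{clever_choice} and the newly created residue-1 ear). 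Your third paragraph gestures at this kind of bookkeeping, but it cannot rescue the proof because the case analysis it would be applied to is already incorrect.
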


\begin{proof} Let $H$ be the subgraph of $G^{\bullet}$ corresponding to $C$ and let $P_1, \ldots, P_4$ be the outer ears of $G^{\bullet}$ contained in $H$.  Note that by Lemma~\ref{outer2force} and our assumptions if $P_i$ has length 2 mod 3, then both ends of $P_i$ must be triads.  If all of $P_1, \ldots, P_4$ have length 2 mod 3, then $H$ is a cycle with length 2 mod 3 which is contractible (in particular it contradicts Lemma~\ref{easycont}).  For the other cases we will call upon the following figure.

\begin{myfigure}[htp]
\centerline{\includegraphics[height=2.75cm]{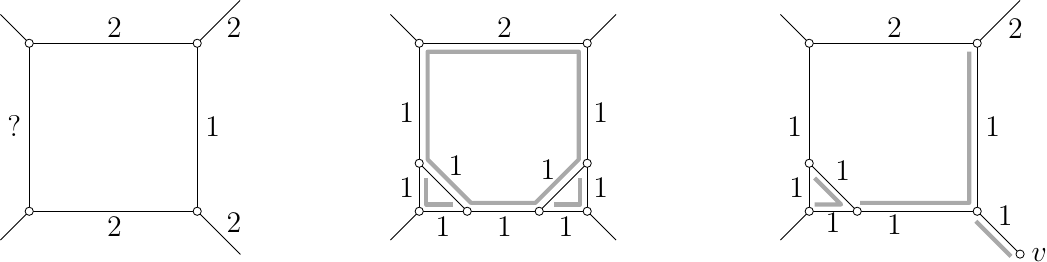}}
\end{myfigure}

If exactly three of $P_1, \ldots, P_4$ have length 2 mod 3 then we have the pattern on the left in the figure, and this gives a contradiction to Lemma~\ref{forbid12}.  If exactly two of $P_1, \ldots, P_4$ have length 2 mod 3  then either we have another contradiction to Lemma~\ref{forbid12} or a contradiction to the previous lemma.  So, we may now assume (without loss) that $P_1$ has length 2 mod 3 and $P_2, \ldots, P_4$ have length 1 mod 3.

If $H$ has no inner triangle, then $H$ is a cycle with length 2 mod 3 which is contractible.  If $H$ has two inner triangles, then $H$ is a contractible subgraph as shown in the middle.  Otherwise $H$ has exactly one inner triangle, and if we are not in the case
  indicated in the statement of the lemma, we must have the configuration on the right in the above figure (using Lemma~\ref{outer2force} and~\ref{even2}).
  We claim that the partial ear decomposition indicated by this figure extends to a reducible subgraph.  This follows from Lemma~\ref{clever_choice} (which allows us to have endpoint cost at $v$ at most 5), the fact that an ear of length 1 modulo 3 is created by the reduction  and a straightforward calculation.
\end{proof}

We are now ready to prove that $G^{\Delta}$ has at least 8 vertices.
Note that the structure of the proof of Lemma~\ref{notk33} is similar to
  the proof of Lemma~\ref{le:no222}, as $G^\Delta \cong K_{3,3}$ was the difficult case there;
  the details of the arguments are different, though.

\begin{lemma}
\label{notk33}
$G^{\Delta}$ is not isomorphic to $K_{3,3}$.
\end{lemma}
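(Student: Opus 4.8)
The plan is to assume, for contradiction, that $G^{\Delta}\cong K_{3,3}$ and to derive enough structural constraints on the residues of its edges and on the types of its vertices (triads versus inner triangles) to exhibit, in every surviving case, either a contractible or a reducible subgraph of $G^{\bullet}$, contradicting the minimality of the counterexample. The starting point is that $K_{3,3}$ is cubic and cyclically $4$-edge-connected (consistent with Lemma~\ref{cyc4ec}), has girth $4$, and has $6$ vertices; exactly the vertices adjacent to $w$ get no restriction from the earlier lemmas, while all other vertices obey Lemmas~\ref{nozero}, \ref{forbiddentriads}, \ref{even2}, \ref{le:no222}, \ref{tame112}. In particular, no vertex is a $222$ triangle, at most one vertex is a $112$ triangle, every non-neighbour of $w$ meets an even number of residue-$2$ edges (so $0$ or $2$ of them), and no usable ear has residue $0$.

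First I would fix $w$ as one vertex of $K_{3,3}$; its three neighbours are the only vertices that can be incident with residue-$2$ ears ``freely'', and the three non-neighbours of $w$ (one of which may be a $112$ triangle, the rest being triads or $111$ triangles) each see $0$ or $2$ residue-$2$ edges among their three incident edges. Since every $4$-cycle of $K_{3,3}$ not through $w$ falls under Lemmas~\ref{all1insq}, \ref{tame2insq} and the intermediate $1122$-lemma, I would run through the $4$-cycles of $K_{3,3}$ avoiding $w$: the all-residue-$1$ case forces the corresponding subgraph of $G^{\bullet}$ into one of the three permitted shapes of Lemma~\ref{all1insq}, and the presence of any residue-$2$ ear forces the single shape of Lemma~\ref{tame2insq} (after ruling out the $112$ vertex there). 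Combining these local pictures around the bipartition classes pins down $G^{\bullet}$ to a short list of global configurations — essentially the analogue of the $k33$-$222$ and $k4$-all figures used in Lemmas~\ref{le:no222} and~\ref{notk4} — and for each I would display a partial or full ear decomposition whose gain meets or beats the total bonus, invoking Lemma~\ref{triangle_types} to certify that the ears newly created by removing inner ears of residue $2$ are equitable (hence contribute no extra bonus), Lemma~\ref{removingvertex} for removals at a common vertex/triangle, and Lemma~\ref{clever_choice} to cut a $Y_{12}$ endpoint cost from $7$ down to $3$ wherever the bookkeeping is tight.

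The main obstacle, as in Lemma~\ref{le:no222}, will be the handful of boundary cases where the naive gain exactly equals the total bonus, so that a crude count does not close the argument; there one must track whether specific ears incident with $w$ remain ears after the reduction or merge with an \emph{equitable} segment (keeping their bonus), and whether the ear formed inside a $111$ triangle by deleting two of its ears is equitable or not. As in the earlier proofs, a symmetry reduction (all ears at $w$ have the same residue, forced to be $0$ in the worst sub-case, which then contradicts Lemma~\ref{nozero} for the usable ones and is handled directly for the three ears at $w$) should dispose of these, leaving a genuinely reducible or contractible subgraph in every branch. Assembling the case list cleanly — so that every residue assignment on $E(K_{3,3})$ satisfying parity (Lemma~\ref{even2}), no-residue-$0$ (Lemma~\ref{nozero}), no $222$ (Lemma~\ref{le:no222}), and at most one $112$ (Lemma~\ref{tame112}) is covered by one of the displayed figures — is the bulk of the work; once the list is complete, each individual case is a short gain-versus-bonus computation of the kind already rehearsed repeatedly above. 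This contradiction establishes that $G^{\Delta}\not\cong K_{3,3}$.
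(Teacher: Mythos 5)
Your overall strategy coincides with the paper's: assume $G^{\Delta}\cong K_{3,3}$, split on whether $G^{\bullet}$ contains a 112 triangle, use Lemmas~\ref{tame112}, \ref{even2}, \ref{tame2insq} and \ref{all1insq} to force every usable ear to residue 1 (or to pin down the two residue-2 ears in the 112 case), enumerate the resulting configurations, and kill each one with a contractible or reducible subgraph, using Lemma~\ref{triangle_types} and Lemma~\ref{clever_choice} when the gain-versus-bonus count is tight. However, as written the proposal has a genuine gap: the entire substance of the paper's proof is exactly the finite enumeration and the tight-case computations that you defer (''assembling the case list \dots is the bulk of the work''), and in the two hardest sub-cases your sketch does not contain the idea that actually closes them. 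In the paper, when all ears incident with $w$ have residue $0$ (your ''worst sub-case''), there is no contradiction with Lemma~\ref{nozero} -- those three ears are precisely the \emph{unusable} ears, to which Lemma~\ref{nozero} does not apply -- so appealing to that lemma, even partially, does not work. What the paper does instead is a two-tier argument: it first tries the large partial ear decomposition (gain $48$), whose cost count succeeds only if the ear at $w$ merges with an \emph{equitable} segment and so keeps its bonus; in the bad equitability pattern it falls back to a strictly smaller decomposition (the ears $P,P',Q$ in the 112 case, or $P',P$ in the all-111 case), which is then reducible precisely because the bad pattern guarantees that an inequitable ear of length $0$ mod $3$ is created. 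You name the relevant dichotomy (''whether the ear formed inside a 111 triangle by deleting two of its ears is equitable or not'') but do not supply the fallback reduction that exploits the inequitable branch, and ''a symmetry reduction should dispose of these'' is not an argument.

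A second, smaller point: you should verify, not just assert, that the local pictures from Lemmas~\ref{all1insq} and \ref{tame2insq} applied to the three $4$-cycles of $K_{3,3}$ avoiding $w$ really combine into a short global list. In the paper this step is concrete: in the no-112 case the three applications of Lemma~\ref{all1insq} leave exactly three global configurations (one contractible, one contradicting Lemma~\ref{forbid12}, one requiring the delicate residue analysis at the edges $wu_1$, $wu_3$), and in the 112 case Lemma~\ref{tame112} plus Lemma~\ref{even2} leave exactly three configurations as well. Without exhibiting these lists and checking the gain/bonus arithmetic in each (including where Lemma~\ref{clever_choice} is needed to lower a $Y_{12}$ endpoint cost), the proof is a plan rather than a proof.
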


\begin{proof}
Suppose (for a contradiction) that $G^{\Delta} \cong K_{3,3}$ and
let $( \{v_1, v_2, w \}, \{u_1, u_2, u_3\} )$ be a bipartition of $G^{\Delta}$.
  First we consider the possibility that $G^{\bullet}$ contains a triangle of type 112.  In this case Lemma~\ref{tame112} and Lemma~\ref{even2} imply that there are exactly two usable outer ears with length 2 mod 3.  All of these possibilities are handled by the partial ear decompositions appearing in the following figure depicting $G^{\bullet}$ after removing the unusable ears.

\begin{myfigure}[htp]
\centerline{\includegraphics[height=3cm]{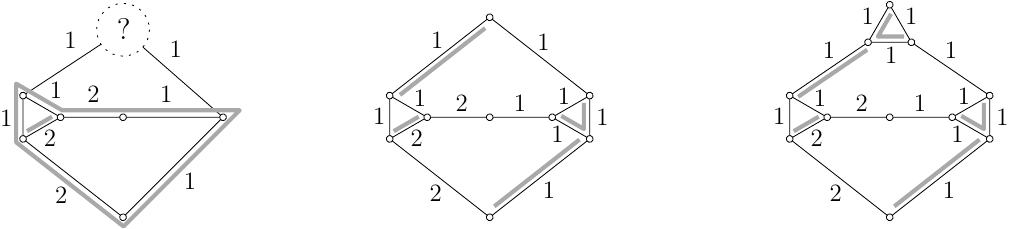}}
\end {myfigure}

In the first graph in the above figure, the indicated configuration is contractible. An easy calculation shows that the partial ear decomposition given
  in the third graph is reducible. Now consider the middle graph in the figure above. From top to bottom, let $v_1, v_2$ and $v_3$ denote the three
  neighbours of $w$, and let $P_i$ be the ear with ends $w$ and $v_i$. Finally, let $Q$~denote the marked ear that is incident with the type 111
  triangle and let $P$, $P'$~be the two marked ears contained in the 111 triangle.
The total gain of the given reduction is 48, while the bonus of the ears other than $P_1, P_2$ and $P_3$ is 45. By Lemma~\ref{triangle_types}, all the
  ears created by deleting an inner ear of length 2 mod 3 are equitable. Thus the reduction creates a new ear $P_3'$ containing $P_3$ and with the same
  bonus as $P_3$. Similarly, the bonus of the new ear containing $P_1$ is the same as the bonus of $P_1$, unless the ear created by removing $P$ and $P'$
  from $G^\bullet$ is inequitable. However, in this case $P, P', Q$ is a reducible configuration
  (in some variants we use Lemma~\ref{clever_choice} for removing~$Q$).

So we may assume that every inner triangle of $G^{\bullet}$ has type 111.  If there were an edge of $G^{\Delta}$ of residue 2 incident with $v_i$ for $i=1,2$, then Lemma~\ref{even2} would imply the existence of two such edges.  However, then $G^{\Delta}$ would have a 4-cycle containing these two edges which contradicts Lemma~\ref{tame2insq}.  It follows that every usable ear of $G^{\bullet}$ has length 1 mod 3.  Now by applying Lemma~\ref{all1insq} to the three 4-cycles of $G^{\Delta}$ not containing $w$ implies that we have one of the cases in the following figure.  Here we have depicted the graph obtained from $G^{\bullet}$ by removing the unusable ears, and all pictured ears have length 1 mod 3.

\begin{myfigure}[htp]
\centerline{\includegraphics[width=12cm]{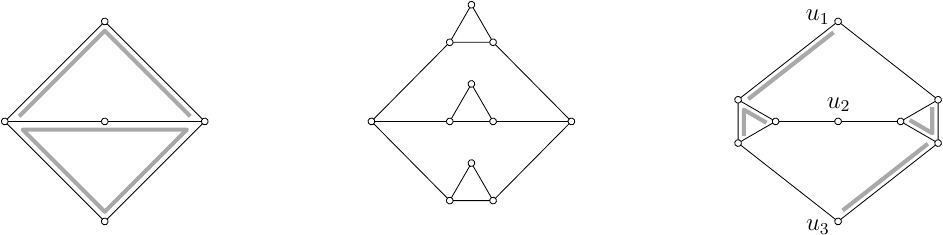}}
\end {myfigure}

In the first case we have a contractible subgraph as indicated by in the figure.  In the second one we have a contradiction to Lemma~\ref{forbid12}.  In the last case $u_1,u_2,u_3$ correspond to triads in $G^{\bullet}$ while $v_1,v_2$ correspond to triangles in~$G^{\bullet}$ (of type 111).  By the symmetry of $u_1,u_2,u_3$ we may assume (without loss) that either both edges $w u_1$ and $w u_3$ of $G^{\Delta}$ have nonzero residue, or both have zero residue.  In the former case, the indicated partial ear decomposition gives a reducible subgraph (in the resulting graph, the ear containing $u_1$ ($u_3$) will have the same (nonzero) length mod 3 as the ear of $G^{\bullet}$ with ends $w$ and $u_1$ ($u_3$)).   In the latter case, let the indicated partial ear decomposition be given by $Q',Q,P',P$ where $P$ ($Q$) is the ear with end $u_1$ ($u_3$) and $P$ and $P'$ intersect.  Choose an ear labelling of these ears with gain $\ge 48$ and consider the weighted graph $G'$ obtained from $G$ by performing the associated removals.
If, for both $i=1$ and~$i=3$, either the ear of $G'$ containing $u_i$ is inequitable or the ear of $G^\bullet$ between $w$ and $u_i$ is equitable, then we have a reducible configuration.
Otherwise, we may assume that the ear of $G'$ containing $u_3$ is equitable and the ear of $G^\bullet$ between $w$ and $u_3$ is inequitable.  However, in this case the partial ear decomposition $P', P$ is a reducible subgraph (removing these two ears as before will yield a weighted graph with an inequitable ear of length 0 mod 3 with ends $u_2$ and $u_3$), which is a contradiction.
\end{proof}

\subsection[Removing Adjacent Vertices of $G^{\Delta}$]{Removing Adjacent Vertices of $\mathbf{G^{\Delta}}$}

\begin{lemma}
\label{2nonequitable}
Assume that $v_1, \ldots, v_4$ is a cyclic ordering of a 4-cycle $C$ of $G^{\bullet}$ and assume that every vertex adjacent to one of $v_1, \ldots, v_4$ has degree 3.  Then there exist ear labellings $\psi : \{ v_1 v_2 \} \rightarrow \mathbb{Z}_3$ and $\psi' : \{ v_3 v_4 \} \rightarrow \mathbb{Z}_3$ each with gain $\geq$ 8, so that in the weighted graph obtained from $G^{\bullet}$ by the $\psi$-removal of the ear $v_1 v_2$ and then the $\psi'$-removal of the ear $v_3 v_4$, the ears given by the vertex sequences $u_1, v_1, v_4, u_4$ and $u_2, v_2, v_3, u_3$ are both inequitable (where $u_i$ is the neighbour of $v_i$ outside $C$).
\end{lemma}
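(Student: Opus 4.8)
\emph{The plan.} The two edges we are asked to relabel and delete, $v_1v_2$ and $v_3v_4$, each join two vertices of degree $3$ (the $v_i$ lie on the $4$-cycle $C$ and, by hypothesis, have a neighbour $u_i$ off $C$), so each is a single edge, hence an ear of length $1$. Thus an ear labelling $\psi$ of $v_1v_2$ has $\gain(\psi)=24|\supp(\psi)|-16\ge 8$ exactly when $\psi(v_1v_2)\ne 0$, and likewise for $\psi'$ on $v_3v_4$; write $\phi_1=\psi(v_1v_2)$ and $\phi_2=\psi'(v_3v_4)$, and plan to choose $\phi_1,\phi_2\in\{1,2\}=\mathbb{Z}_3\setminus\{0\}$. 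Performing the $\psi$-removal of $v_1v_2$ and then the $\psi'$-removal of $v_3v_4$ drops $v_1,v_2,v_3,v_4$ to degree $2$, so the two new subgraphs $E_1$ (vertex sequence $u_1,v_1,v_4,u_4$, edges $u_1v_1,\,v_1v_4,\,v_4u_4$) and $E_2$ (vertex sequence $u_2,v_2,v_3,u_3$, edges $u_2v_2,\,v_2v_3,\,v_3u_3$) are ears with exactly three edges each. (The one degenerate possibility, $u_1=u_4$, makes $E_1$ a three-edge cycle-ear, handled in the same way; coincidences such as $u_1=u_2$ are likewise harmless.) We want $\phi_1,\phi_2$ making both $E_1$ and $E_2$ inequitable.

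\emph{The computation.} Reorienting edges (and negating the corresponding values) changes neither supports nor which ears are equitable, so I orient $G^{\bullet}$ so that $E_1$ runs $u_1\to v_1\to v_4\to u_4$, that $E_2$ runs $u_2\to v_2\to v_3\to u_3$, that $v_1v_2$ runs $v_1\to v_2$ and that $v_3v_4$ runs $v_3\to v_4$. Writing $\mu$ for $\mu_{G^{\bullet}}$ and $\mu'$ for the weight function after both removals, the $\psi$-removal changes $\mu$ only at $v_1,v_2$ and the $\psi'$-removal only at $v_3,v_4$; explicitly $\mu'(v_1)=\mu(v_1)-\phi_1$, $\mu'(v_2)=\mu(v_2)+\phi_1$, $\mu'(v_3)=\mu(v_3)-\phi_2$, $\mu'(v_4)=\mu(v_4)+\phi_2$. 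In a three-edge ear with interior vertices $a,b$ taken in order, an ear labelling gives its three edges the values $x,\ x+\mu'(a),\ x+\mu'(a)+\mu'(b)$, so the ear is inequitable unless $\mu'(a),\mu'(b),\mu'(a)+\mu'(b)$ are all nonzero, and over $\mathbb{Z}_3$ that forces $\mu'(a)=\mu'(b)$. Hence if $E_1$ is equitable then $\mu'(v_1)=\mu'(v_4)$, i.e.\ $\phi_1+\phi_2=\mu(v_1)-\mu(v_4)=:p$, and if $E_2$ is equitable then $\mu'(v_2)=\mu'(v_3)$, i.e.\ $\phi_1+\phi_2=\mu(v_3)-\mu(v_2)=:q$.

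\emph{The finish, and the obstacle.} As $(\phi_1,\phi_2)$ ranges over $\{1,2\}^2$ the sum $\phi_1+\phi_2$ attains every value of $\mathbb{Z}_3$ (from $(1,1),(1,2),(2,2)$ one gets $2,0,1$), whereas $\{p,q\}$ is a proper subset of $\mathbb{Z}_3$; so I may choose $\phi_1,\phi_2\in\{1,2\}$ with $\phi_1+\phi_2\notin\{p,q\}$, and then neither $E_1$ nor $E_2$ is equitable, while $\psi\colon v_1v_2\mapsto\phi_1$ and $\psi'\colon v_3v_4\mapsto\phi_2$ each have gain $8$, as required. I expect the only delicate part to be the bookkeeping in the first paragraph — checking that $E_1$ and $E_2$ genuinely are three-edge ears of the doubly-reduced graph and dealing with the possible coincidences among the $u_i$; once the equitability criterion for three-edge ears is in hand, the whole argument turns on the single observation that equitability of either $E_i$ is governed by the value $\phi_1+\phi_2$, so steering that value away from its at most two forbidden settings finishes the proof.
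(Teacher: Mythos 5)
Your proposal is correct: the two removed edges are indeed single-edge ears (every $v_i$ is itself adjacent to another $v_j$, hence has degree 3 by the hypothesis, and the $u_i$ have degree 3, so $E_1$ and $E_2$ are genuine three-edge ears of the doubly-reduced graph), your sign bookkeeping for $\mu'$ matches the definition of $\psi$-removal, and your criterion that a three-edge ear with interior weights $a,b$ is equitable exactly when $a=b\neq 0$ is right; steering $\phi_1+\phi_2$ away from the at most two forbidden values $p,q$ therefore finishes the argument, and the degenerate case $u_1=u_4$ (a cycle-ear) really is handled by the same computation. The paper proves the lemma by a different organization of essentially the same elementary facts: if some $\mu_G(v_i)\neq 0$ it chooses $\psi$ so that the new weight at $v_1$ becomes $0$, which forces the ear $u_1,v_1,v_4,u_4$ to be inequitable (two consecutive edges always equal), and then invokes Lemma~\ref{le:Y12} at $v_3$ to make the other ear inequitable; if all $\mu_G(v_i)=0$ it exhibits the explicit choice $\psi=\psi'=1$ in a suitable orientation. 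What your route buys is a single unified computation with no case split and no appeal to Lemma~\ref{le:Y12}, making explicit that both equitability conditions depend only on $\phi_1+\phi_2$; what the paper's route buys is brevity by reusing a lemma already proved for other purposes. Either argument is acceptable.
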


\begin{proof}
First suppose that the function $\mu_G$ satisfies $\mu_G(v_1) \neq 0$.  In this case we may choose $\psi : \{ v_1 v_2 \} \rightarrow \mathbb{Z}_3 \setminus \{0\}$ so that $\partial \psi (v_1) = \mu_G(v_1)$.  Let $G'$ be the weighted graph obtained from $G^{\bullet}$ by the $\psi$-removal of the path given by $v_1, v_2$.  Note that $u_1, v_1, v_4$ is the vertex sequence of an ear in $G'$ and every ear labelling of this ear has support of size 0 or 2.  Now we may apply Lemma~\ref{clever_choice} to $v_3$ to choose an ear labelling $\psi' : \{ v_3 v_4 \} \rightarrow \mathbb{Z}_3 \setminus \{0\}$ so that in the weighted graph obtained from $G'$ by the $\psi'$-removal of the path given by $v_3, v_4$, the ear given by $u_2, v_2, v_3, u_3$ is inequitable.  The ear given by $u_1, v_1, v_4, u_4$ will also be inequitable, thus finishing our proof in this case.

By the above argument we may now assume that $\mu_G(v_i) = 0$ for $1 \le i \le 4$.  Assume (without loss) that the edges of $G^{\bullet}$ are oriented so that $v_1 v_2$ ($v_3 v_4$)  is directed from $v_1$ to $v_2$ (from $v_3$ to $v_4$) and so that $u_1, v_1, v_4, u_4$ and $u_2, v_2, v_3, u_3$ are vertex sequences of directed paths.  Now define $\psi : \{ v_1 v_2 \} = 1$ and $\psi' : \{ v_3 v_4 \} = 1$ and consider the weighted graph obtained from the $\psi$-removal of the ear $v_1 v_2$ and then the $\psi'$-removal of $v_3 v_4$.  In this weighted graph  the paths $u_1, v_1, v_4, u_4$ and $u_2, v_2, v_3, u_3$ form ears with the property that every ear-labelling assigns the first and last edge the same value.  It follows that these ears are inequitable, as desired.
\end{proof}

A 3-dimensional cube and Wagner's graph will play a special role in our paper. The former one will be denoted by $\mbox{Cube}$, the latter one by
  $\mbox{$V_8$}$. These graphs will be dealt with in the next section. Now we will find several reducible configurations
  that involve removing two adjacent vertices in certain (typical) cases. The remaining cases will be dealt with at the end of the proof,
  after we get to understand structure of 4-edge-cuts in~$G^{\Delta}$.

\begin{lemma}
\label{deleteadj}
Let $u,v$ be adjacent vertices in $G^{\Delta}$ and assume the following:
\begin{itemize}
\item both of $u,v$ correspond to triangles, or both correspond to triads in $G^{\bullet}$,
\item neither~$u$ nor~$v$ is adjacent to~$w$ in~$G^{\Delta}$,
\item all edges of $G^{\Delta}$ incident with $u,v$ have residue 1,
\item $G^{\Delta} - \{u,v\}$ is cyclically 3-edge-connected.
\end{itemize}
Then $G^{\Delta}$ is not isomorphic to Cube or $V_8$.

Furthermore, if no 4-cycle of $G^{\Delta}$ contains $w$ and either $u$ or $v$, then there is a unique 4-cycle containing the edge $uv$ in $G^{\Delta}$ and $u,v$ appear in a configuration of $G^{\bullet}$ as follows:

\begin{myfigure}[htp]
\centerline{\includegraphics[width=5cm]{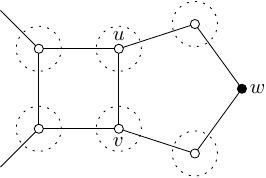}}
\end{myfigure}

\end{lemma}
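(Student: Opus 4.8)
The statement is a structural dichotomy: under the five hypotheses, either $G^{\Delta}$ is one of two exceptional graphs (Cube or $V_8$)—which is ruled out—or the local picture around the edge $uv$ is forced into the displayed configuration. My plan is to exploit hypothesis (5) so that the partial ear decomposition consisting of the two ears $e_{uv}$-incident ears (more precisely, the outer ears of $G^{\bullet}$ corresponding to the edges of $G^{\Delta}$ at $u$ and $v$) becomes a candidate reducible subgraph, and then show that the only way it \emph{fails} to be reducible is when all the surrounding endpoint patterns are of a very restrictive type. The key computational input is Lemma~\ref{2nonequitable}: since by hypothesis (1) both $u,v$ correspond to triads (or both to triangles), removing the two ``parallel'' ears on a $4$-cycle through $uv$ and invoking Lemma~\ref{2nonequitable} lets me create \emph{two} inequitable ears simultaneously, which is exactly the kind of bonus-conserving move that makes the accounting work out.

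**Key steps, in order.** First I would use hypothesis (5) together with Lemma~\ref{removingvertex} (and the discussion of $G^{\bullet}$ versus $G^{\Delta}$ connectivity) to justify that deleting the three usable ears incident to $v$, plus one ear from each inner triangle that becomes a $2$-edge-cut, returns us to a subdivision of a $3$-edge-connected graph; hypotheses (2) and (3) guarantee that none of these ears is unusable and that the endpoint-cost calculations at the far ends are independent (no outer ear with both ends in the relevant inner triangles, and no short cycle through $w$). Second, using hypothesis (4) every ear in sight has length $1$ mod $3$, so by Lemma~\ref{le:length1} each such ear is a single edge, which pins down all residues and lets me quote the endpoint-pattern costs exactly (the relevant ones are $Y_{11}$, $Y_{12}$, $Y_{0*}$, $\Delta_{11}$, $\Delta_{11}'$ from the endpoint-pattern tables). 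Third, I would run the bonus count: the partial ear decomposition has a known gain (a multiple of $8$ for each residue-$1$ ear removed), the bonus of the removed ears is $4$ each (or $12$ for a triple), and reducibility follows unless the surrounding endpoint costs sum to a large value—which forces every neighbourhood pattern to be $Y_{11}$ or $Y_{12}$ with a newly-created equitable $0$-mod-$3$ ear. Fourth, where the naive count is just short, I would upgrade it using Lemma~\ref{clever_choice} (to drop a $Y_{12}$ cost from $7$ to $3$) and Lemma~\ref{2nonequitable} (to force two simultaneously inequitable ears across a $4$-cycle through $uv$), pushing the inequality over the line except in the exact configuration displayed. Finally, the forced structure is a $Y_{11}$-type ``collar'' on both sides of $uv$, i.e.\ $u$ and $v$ each sit in a $4$-cycle whose other two ears are residue-$1$; tracing the $4$-cycles and invoking cyclic $4$-edge-connectivity of $G^{\Delta}$ (Lemma~\ref{cyc4ec}) together with hypothesis (3) shows the $4$-cycle through $uv$ is unique and that $G^{\Delta}$ being Cube or $V_8$ would reinstate a reducible subgraph (because in those two graphs the collar closes up into a contractible or reducible subgraph by a direct ear-decomposition count), hence that case is excluded.

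**Main obstacle.** The hard part will be the case analysis for the ``just barely not reducible'' configurations: keeping the endpoint-cost bookkeeping honest when several inner triangles and triads abut the removed ears, ensuring throughout that the cost contributions at distinct far endpoints are genuinely independent (this is where hypotheses (2), (3) and Observation~\ref{bullet3ec} must be invoked carefully), and correctly interleaving the ``clever choice'' of ear labellings (Lemma~\ref{clever_choice}, Lemma~\ref{2nonequitable}) so that the equitable-vs-inequitable status of each newly formed ear comes out in our favour. The Cube and $V_8$ exceptions are the price we pay for hypothesis (3) being conditional, and the cleanest way to dispatch them is to exhibit, in each of those two specific graphs, an explicit contractible or reducible subgraph using an ear decomposition of gain at least the total bonus—this is a finite check once the residues are all $1$.
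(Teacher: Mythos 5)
Your overall strategy is the paper's: take $H$ to be the union of the ears of $G^{\bullet}$ corresponding to the five edges of $G^{\Delta}$ incident with $u$ or $v$ (hypothesis (5) is what licenses deleting both vertices at once, rather than Lemma~\ref{removingvertex}, which only treats one vertex or one triangle), run the endpoint-cost accounting, and repair deficits with Lemmas~\ref{clever_choice} and~\ref{2nonequitable}. The first genuine gap is in what you predict survives the count. The budget here is large enough that any endpoint of cost at most $5$ is affordable (this is exactly the paper's threshold), so an all-$Y_{11}$ ``collar'' is reducible, not exceptional; the patterns that resist the count are $Y_{12}$ endpoints of cost $7$. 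The actual exceptional configuration is: a unique $4$-cycle through $uv$ whose far side is a type $111$ ``side pattern'', together with $Y_{12}$ patterns at the two remaining neighbours $u''$, $v''$ of $u$ and $v$; Lemma~\ref{forbiddentriads} then forces $u''$ and $v''$ to be adjacent to $w$, and that adjacency is precisely what the displayed configuration records and what every later application of this lemma uses. Your plan never derives this picture, and it also omits the step that kills three or more $Y_{12}$ endpoints in the case where $uv$ lies on no $4$-cycle: there Lemma~\ref{forbiddentriads} makes both neighbours $u'$, $u''$ of $u$ adjacent to $w$, hypothesis (3) then forces $G^{\Delta} \cong$ Cube or $V_8$, and in those graphs $uv$ would lie on a $4$-cycle, contradicting the case assumption.

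The second gap is the mechanism for the announced ``main obstacle''. When $uv$ lies on a $4$-cycle the endpoint costs at distinct far ends are genuinely not independent, and acknowledging this is not the same as handling it. The paper handles it by classifying the ears of $G^{\Delta} - \{u,v\}$ (they have length at most $3$, using Lemma~\ref{notk33} and cyclic $4$-edge-connectivity), attaching ``side pattern'' costs to the length-$3$ ears, and then splitting on whether $uv$ lies in zero, one, or two $4$-cycles; Lemma~\ref{2nonequitable} (when $u,v$ are triads) and Lemma~\ref{clever_choice} (when they are triangles) are deployed specifically to beat the type $111$ side pattern in the two-square case, and again at $u''$, $v''$ in the one-square case. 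You name the right tools but give no bookkeeping framework in which to apply them. Finally, your disposal of Cube and $V_8$ as ``a direct finite check exhibiting a contractible or reducible subgraph'' is not what the paper does and is not obviously available: the paper eliminates $V_8$ by re-running the two-square case of the same argument on the adjacent edge $uu'$, which in $V_8$ lies on two $4$-cycles and is incident only with residue-$1$ edges (Cube cannot carry the forced configuration at all, since that configuration contains an odd cycle through $w$), so without carrying out some such argument your exceptional-case claim remains an unproved assertion.
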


\begin{proof}
Let $H$ be the subgraph of $G^{\bullet}$ corresponding to the subgraph in $G^{\Delta}$ induced by all edges adjacent to $u$ or to $v$.
By Lemmas~\ref{triangle_types} and~\ref{le:no222}, if $u$ and~$v$ correspond to triangles in $G^\bullet$, then they are of type 111. As a first attempt toward finding a reducible subgraph, consider the partial ear decomposition $P_1, \ldots, P_k$ of $G^{\bullet}$ with $\cup_{i=1}^k P_i = H$ indicated by the following figure.

\begin{myfigure}[h]
\centerline{\includegraphics[width=10cm]{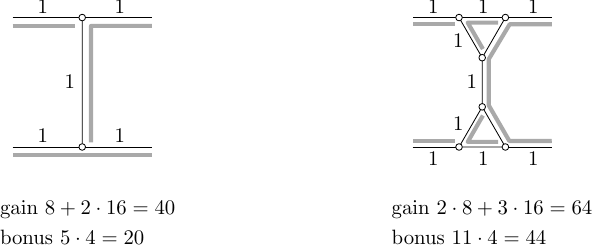}}
\end{myfigure}

First suppose that the edge $uv \in E( G^{\Delta} )$ is not contained in a 4-cycle of~$G^{\Delta}$. In particular, this means that $G^{\Delta}$ is not isomorphic to Cube or $V_8$, so we may additionally assume that no 4-cycle of $G^{\Delta}$ contains $w$ and either $u$ or $v$. In this case we will show that $H$ may  be extended to a reducible subgraph using the endpoint patterns and their associated costs.  This is immediate if every endpoint pattern has cost at most 5.  In the other case there must be one or more endpoint patterns of type $Y_{12}$.  If there are at most two such endpoint patterns, then by Lemma~\ref{clever_choice} we may arrange for a cost of 3 at one of them, and again we are done.  Otherwise, we may assume (without loss) that in the graph $G^{\Delta}$ the vertex $u$ has neighbours $\{v,u',u''\}$ where both $u'$ and $u''$ are incident with two edges of residue 1 and one of residue 2.  However
in this case Lemma~\ref{forbiddentriads} implies that both $u'$ and~$u''$ will be adjacent to $w$ in $G^{\Delta}$, contradicting that no 4-cycle of $G^{\Delta}$ contains $w$ and either $u$ or $v$.

We may now assume that $uv$ is contained in at least one 4-cycle of $G^{\Delta}$. We cannot compute our endpoint costs independently as the newly formed graph will have ears which ``utilize'' more
  than one of these endpoints.   To get a handle on these possibilities we will consider the possible ears of the graph $G^{\Delta} - \{u,v\}$.  It
  follows from Lemma~\ref{notk33} and cyclic 4-edge-connectivity that  $G^{\Delta} - \{u,v\}$ is not a cycle.  If $G^{\Delta} - \{u,v\}$ has an ear of length 4 whose interior
  vertices are $\{x_1, x_2, x_3\}$, then $d( \{u,v, x_1, x_2, x_3 \} ) = 3$, but then a combination of the cyclic 4-edge-connectivity of $G^{\Delta}$
  and Lemma~\ref{notk33} give us a contradiction.  So, every ear of $G^{\Delta} - \{u,v\}$ has length at most $3$.  Next suppose that $P$ is an ear of
  $G^{\Delta}- \{u,v\}$ with length 3 and note that $P$ must have one interior vertex adjacent to $u$ in $G^{\Delta}$ and the other adjacent to $v$
  in $G^{\Delta}$ (thus giving us a 4-cycle using the edge $uv$).  The following figure assigns costs to the subgraph of $G^{\bullet}$ associated
  with~$P$; we will call these costs \emph{side pattern costs}.  (In the figure, the ears entering each configuration on the right are part of $H$ and are therefore getting removed.)

\begin{myfigure}[htp]
\centerline{\includegraphics[width=13cm]{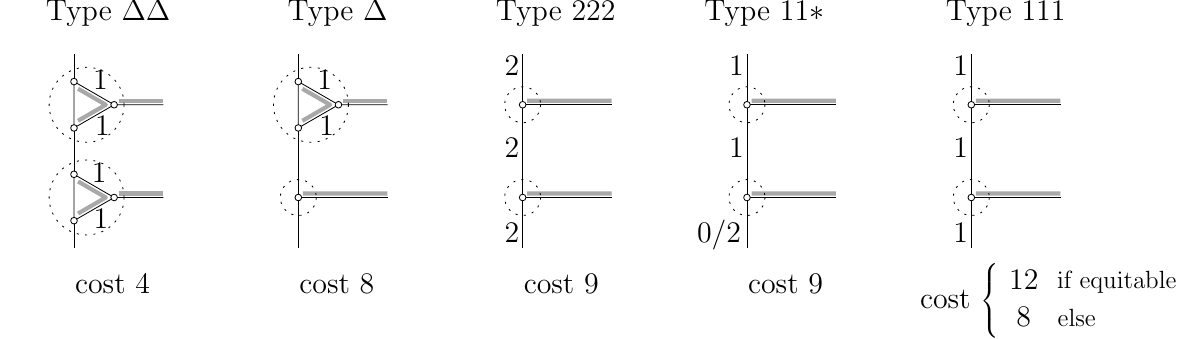}}
\end{myfigure}

For the three rightmost configurations of the figure, the cost is an upper bound on the drop in bonus of the ears associated with $P$ in $G^{\bullet}$ to the bonus of the single ear $P$ in the resulting graph.  In the two leftmost configurations, we need to extend our partial ear decomposition by removing some additional ear(s) from the subgraph of $G^{\bullet}$ associated with $P$ in order to maintain our connectivity.  Here the cost indicates an upper bound on the drop in bonus minus the gain of the indicated partial ear decomposition. 

First suppose that $uv$ is contained in two squares of $G^{\Delta}$.  If neither associated side pattern is type 111, then each has cost at most 10 giving us a
  reducible configuration.  So, we may assume (without loss) that at least one side pattern~$P$ has type 111.  If $u,v$ both correspond to inner triangles in
  $G^{\bullet}$, then by applying Lemma~\ref{clever_choice} (and the ear decomposition from the first figure in the proof) we may choose a function $\psi : E(H)
  \rightarrow \mathbb{Z}_3$ with gain at least 64 so that the ear of the resulting graph corresponding to $P$ is inequitable (i.e., we can arrange to have the side pattern associated with $P$ cost 8 instead of 12).  It then follows that we have found a removable subgraph.  Next suppose that $u,v$ correspond to triads in $G^{\bullet}$ where $u$ has neighbour set $\{v,u',u''\}$ and $v$ has neighbour set $\{u, v', v''\}$ and assume further that $u', v'$ are the interior vertices of $P$.
By Lemma~\ref{le:length1}, all the neighbours of $u,v,u',v'$ have degree 3.
It now follows from Lemma~\ref{2nonequitable} applied to the 4-cycle $u,v,u',v'$ that we may choose ear labellings $\psi_1: \{ uu' \} \rightarrow \mathbb{Z}_3$ and $\psi_2 : \{ vv' \} \rightarrow \mathbb{Z}_3$ of gain 8 so that in the weighted graph $G'$ obtained by the $\psi_1$- and $\psi_2$-removal both the ear containing $u'$ and $v'$ and the ear containing $u''$ and $v''$ are inequitable (and length 0 mod 3).  Let $\psi_3 $ be an ear labelling of the ear of $G'$ containing $u''$ and $v''$ with gain $\ge 24$.  Now $\psi_1 \cup \psi_2 \cup \psi_3$ is a function on $H$ with gain 40 and we have arranged that the cost of the side pattern $P$ is at most 8, thus giving us a reducible configuration.

So, we may assume that $uv$ is contained in exactly one square of $G^{\Delta}$ (in particular, $G^{\Delta}$ is not isomorphic to Cube) with vertices cyclically ordered as $u, u', v', v$, and we let the neighbour sets of $u$ and $v$ be $\{v, u', u'' \}$ and $\{u, v', v''\}$.  If the costs associated with the endpoint patterns at $u''$ and $v''$ are at most 5, then we will again have a reducible subgraph (by using the above procedure we may arrange that the cost of the side pattern will be at most 10).  So, we may assume without loss that $u''$ has endpoint pattern of type $Y_{12}$.  However, in this case we may apply Lemma~\ref{clever_choice} to arrange for a cost of 3 at this endpoint.  It follows that we will have a reducible configuration unless both $u''$ and $v''$ have endpoint pattern of type $Y_{12}$ and we have a side pattern of cost 12.  In this case Lemma~\ref{forbiddentriads} implies that both $u''$ and $v''$ must be adjacent to $w$ which gives us the following configuration in $G^{\Delta}$.

\begin{myfigure}[htp]
\centerline{\includegraphics[width=5cm]{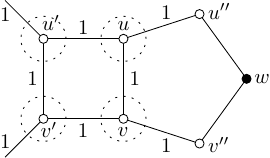}}
\end{myfigure}

We may now assume $G^{\Delta} \cong V_8$ as otherwise we have nothing left to prove.
Up to symmetry, there are only two ways how the configuration in the picture above
can be included in~$V_8$: $w$ is adjacent either to~$u'$ or to~$v'$. We assume the latter.
However, then $u, u'$ are adjacent vertices of $G^{\Delta}$ incident only with edges of residue 1 so that $uu'$ is contained in two 4-cycles.  It now follows from
  applying the above argument to $u,u'$ instead of $u,v$ that we have a removable subgraph (the connectivity condition follows from the assumption
  $G^{\Delta} \cong V_8$).  This final contradiction completes the proof.
\end{proof}

\subsection[Cube and $V_8$]{Cube and $\mathbf{V_8}$}

Next we take care of a couple more particular small graphs.

\begin{lemma}
\label{notv8cube}
$G^{\Delta}$ is not isomorphic to Cube or $V_8$.
\end{lemma}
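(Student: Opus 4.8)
The plan is to assume, for contradiction, that $G^{\Delta}$ is isomorphic to Cube or $V_8$. Both graphs are cubic, vertex-transitive, of girth~$4$, and cyclically $4$-edge-connected, so we may fix the location of the special vertex~$w$ and argue up to the remaining symmetry. Recall the constraints already in hand: no usable ear has length $0 \bmod 3$ (Lemma~\ref{nozero}), so every edge of $G^{\Delta}$ not incident with~$w$ has residue~$1$ or~$2$; every vertex not adjacent to~$w$ is incident with an even number of residue-$2$ edges (Lemma~\ref{even2}); there is no type~$222$ triangle (Lemma~\ref{le:no222}); there is at most one type~$112$ triangle, and if present it sits in the rigid configuration of Lemma~\ref{tame112}; and every usable outer ear of length $2\bmod 3$ has a triad end of pattern $Y_{12}$ (Lemma~\ref{outer2force}). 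Together these pin the possible residue/triangle-type decorations of the fixed $8$-vertex graph down to a short list.

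\emph{First case: a type $112$ triangle is present.} By Lemma~\ref{tame112} its two ``escaping'' residue-$2$ ears run to two distinct neighbours of~$w$ and its third outer ear has residue~$1$; in Cube or~$V_8$ there are only a handful of ways to realize this (two neighbours of~$w$ have exactly two common neighbours in each graph), and Lemmas~\ref{outer2force} and~\ref{even2} then force the residues of all remaining edges. For each of the finitely many resulting drawings of $G^{\bullet}$ one exhibits a contractible or reducible subgraph by a short ear-decomposition/endpoint-cost computation of the kind used repeatedly above (a partial ear decomposition whose gain meets the total bonus of the ears involved, calling on Lemma~\ref{clever_choice} whenever a $Y_{12}$ pattern would otherwise be too expensive), contradicting Observation~\ref{noreducible} or the absence of contractible subgraphs.

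\emph{Second case: every inner triangle, if any, has type $111$.} By Lemma~\ref{outer2force}, every residue-$2$ edge of $G^{\Delta}$ then joins two triads. If some usable edge has residue~$2$, it is an outer ear of length $2 \bmod 3$ lying on a $4$-cycle $C$ of $G^{\Delta}$ avoiding~$w$ (a direct check in each of Cube and $V_8$, using Lemma~\ref{all1insq} on the residue-$1$ four-cycles and a short extra reduction where $V_8$ has no convenient $4$-cycle); applying Lemma~\ref{tame2insq} together with the lemma preceding it to $C$ forces a configuration that is forbidden or contains a contractible subgraph, a contradiction. Hence every usable edge of $G^{\Delta}$ has residue~$1$. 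Now I would produce adjacent vertices $u,v$ of $G^{\Delta}$, neither adjacent to~$w$, that both correspond to triads or both to (type~$111$) triangles; such a pair exists because otherwise the non-neighbours of~$w$ would be decorated so that a non-neighbour of~$w$ is a triad incident with three residue-$1$ ears each entering an inner triangle, which is impossible by Lemma~\ref{forbid12} (the symmetric pattern, a $111$ triangle with three residue-$1$ ears to triads, is killed by the same reduction). For this pair, conditions $1$, $2$, $4$ of Lemma~\ref{deleteadj} hold, condition~$3$ holds vacuously (its ``unless'' clause is satisfied by our assumption), and condition~$5$ holds because deleting two adjacent vertices from Cube or from~$V_8$ yields a subdivision of Theta, which is cyclically $3$-edge-connected. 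Lemma~\ref{deleteadj} then asserts that $G^{\Delta}$ is not isomorphic to Cube or~$V_8$, the desired contradiction.

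The main obstacle is the bookkeeping in the type-$112$ case: one must enumerate the few embeddings of the rigid Lemma~\ref{tame112} configuration into the two fixed graphs, propagate the residue constraints via Lemmas~\ref{outer2force} and~\ref{even2}, and find a valid reduction in each instance — the calculations are routine individually, but there are several sub-cases and the independence of the endpoint costs must be rechecked each time. A secondary subtlety is guaranteeing, in the all-residue-$1$ case, that a matching-type pair $u,v$ of non-neighbours of~$w$ always exists; the residual decorations where it does not must be excluded by hand using the earlier forbidden-configuration lemmas, and it is worth double-checking that the symmetric ``$111$-triangle with three pendant residue-$1$ ears to triads'' pattern is genuinely reducible.
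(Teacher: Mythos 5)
Your overall skeleton (split on whether a type~112 triangle is present, eliminate residue~2 away from $w$, then use Lemma~\ref{deleteadj} in contrapositive form) follows the paper, but the endgame of your all-residue-1 case has a genuine gap. Your dichotomy ``either some adjacent pair of non-neighbours of $w$ are both triads/both triangles, or some non-neighbour of $w$ is a triad whose three neighbours are 111 triangles'' is false as stated. In $V_8$ the four non-neighbours of $w$ induce a path, not a star; if their types alternate properly, the forbid12 pattern is not forced, because for each candidate triad one or two of its neighbours are neighbours of $w$, whose types your argument does not constrain (one can write down type assignments in $V_8$ with no like pair among non-neighbours of $w$, no triad surrounded by three triangles, and no triangle surrounded by three triads). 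In the Cube, the branch ``antipode is a 111 triangle, the three distance-2 vertices are triads'' rests on your asserted ``symmetric pattern'' being reducible, which is not a lemma of the paper and does not follow from the obvious count: removing the three pendant ears and the triangle costs $\bonus(G)-\bonus(G') = 24 + 3\cdot 5 = 39$, while the decomposition yields $8+16+24$ only if the final merged ear (length $0 \bmod 3$) can be made inequitable; Lemma~\ref{clever_choice} does not apply since the previously removed ear has length greater than~1, so this needs a new argument (and the paper is careful never to need this configuration, always arranging the forbid12 orientation instead). The paper closes exactly this hole differently: it applies Lemma~\ref{deleteadj} to the two edges of each of the two 4-cycles $C_1,C_2$ avoiding $w$ that miss the unique $w$-neighbour on that cycle, and then invokes Lemma~\ref{all1insq}, whose short list of permitted decorations forces each $C_i$ to alternate triad/111-triangle; this pins down the types of the $w$-neighbours lying on $C_1\cup C_2$ as well, so that an end of the common edge $e$ is a triad with three 111-triangle neighbours, contradicting Lemma~\ref{forbid12}.

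Two further points. First, your residue-elimination step is slightly off: Lemma~\ref{tame2insq} does not forbid a single residue-2 edge on a 4-cycle avoiding $w$ (it permits exactly that configuration); you must first use Lemma~\ref{even2} to pair residue-2 edges at a common vertex and then find a 4-cycle avoiding $w$ through the pair, which in $V_8$ requires the paper's stepwise argument starting from the chord common to the two 4-cycles avoiding $w$ (rim edges of $V_8$ lie on only one 4-cycle, so your ``short extra reduction'' is exactly where care is needed). Second, the 112 case is deferred wholesale to ``a short ear-decomposition/endpoint-cost computation''; the paper's treatment there is a specific argument (all residues on the 4-cycle $C'$ opposite to $w$ are 1, Lemma~\ref{deleteadj} forces alternation on $C'$, Lemma~\ref{all1insq} locates a triad $x\notin N(w)$ with both $C'$-neighbours 111 triangles, and then the three ears at $x$ are removed with gain 24 against two $\Delta_{11}$ costs plus one more of cost at most 4), and without exhibiting such a reduction that case is not done.
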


\begin{proof}
  Assume the contrary.
  First suppose that $G^{\bullet}$ contains a triangle of type~112.  In this case Lemma~\ref{tame112} implies that there is a 4-cycle $C$ in $G^{\Delta}$ which contains the vertex $w$ and also contains a vertex corresponding to the unique type 112 triangle.  Let $e_0, \ldots, e_3$ be the edges of $G^{\Delta}$ containing exactly one vertex in $V(C)$ and assume that $e_0$ is incident with $w$.  It now follows from Lemma~\ref{tame112} that $e_1, e_2, e_3$ all have residue 1.  Let $C'$ be the 4-cycle given by $G^{\Delta} - V(C)$ and assume (without loss) that $C'$ has edges $f_0, \ldots, f_3$ where $f_i$ is adjacent to the edges $e_i$ and $e_{i+1}$ treating the indices mod 4.
  (Note that we do not assume that the order of $e_0$, \dots, $e_3$ corresponds to their order along~$C$.)
  It now follows from Lemma~\ref{tame2insq} and Lemma~\ref{outer2force} that $f_i$ has residue 1 for $0 \le i \le 3$.  By applying Lemma~\ref{deleteadj} to the ends of $f_1$ and $f_2$ we deduce that each of these edges has one end corresponding to a triad in $G^{\bullet}$ and the other corresponding to a 111 triangle in $G^{\bullet}$.  It now follows from Lemma~\ref{all1insq} applied to $C'$ that we may choose a vertex $x \in V(C')$ so that the following holds:
\begin{itemize}
\item $x$ is not adjacent to $w$,
\item $x$ corresponds to a triad in $G^{\bullet}$,
\item Both neighbours of $x$ in $V(C')$ correspond to triangles of type 111.
\end{itemize}

Let $H$ be the subgraph of $G^{\bullet}$ consisting of the three ears with endpoint $x$ and note that $H$ has an ear decomposition of gain 24.  The bonus of $H$ is 12, and the three endpoint patterns we get by removing it will have two of type $\Delta_{11}$ and one of type $\Delta_{11}'$, $Y_{0*}$, $Y_{12}$, or $Y_{22}$. (The endpoint pattern $Y_{11}$ does not appear, because we are assuming that $C$ contains a type 112 triangle, and so has the structure given by Lemma~\ref{tame112}.)  In any of these cases (possibly by employing Lemma~\ref{clever_choice}) we may arrange that the total of these endpoint costs is at most 12, thus giving us a removable subgraph.

So we may now assume that $G^{\bullet}$ has no triangle of type 112.  We claim that every edge of $G^{\Delta}$ not incident with $w$ has residue 1.
  To see this, first suppose $G^{\Delta}$ is isomorphic to Cube.  If $\overline{w}$ is the vertex of $G^{\Delta}$ which is the antipode of $w$, then
  all three edges of $G^{\Delta}$ incident with $\overline{w}$ must have residue 1 (otherwise by Lemma~\ref{even2} there would be two of residue 2 and
  this would violate Lemma~\ref{tame2insq} applied to the 4-cycle containing those two edges).  Now if there is an edge of $G^{\Delta}$ not incident
  with either $w$ or $\overline{w}$ of residue 2, this would give us a violation of Lemma~\ref{tame2insq}.  Next suppose that $G^{\Delta} \cong V_8$.
  Let $C_1, C_2$ be the two 4-cycles of $G^{\Delta}$ which do not contain $w$, and let $\{ e \} = E( C_1 ) \cap E(C_2)$.  If $e$ has residue 2, then
  Lemma~\ref{even2} implies that this must also be true for another edge adjacent to $e$, which gives us a contradiction with Lemma~\ref{tame2insq}.
  So, $e$ must have residue 1.  If $e$ has an endpoint $v$ that is incident with another edge $f$ of residue 2, then $v$ must also be incident with
  another edge $f' \neq f$ of residue 2 (by Lemma~\ref{even2}).  However, now applying Lemma~\ref{tame2insq} to one of $C_1$ or $C_2$ gives us a contradiction.  So, every edge of $G^{\Delta}$ adjacent to $e$ must also have residue 1.  Another application of Lemma~\ref{tame2insq} implies that all edges in $C_1 \cup C_2$ must have residue 1.  Now Lemma~\ref{even2} implies that all edges of $G^{\Delta}$ not incident with $w$ have residue 1, as desired.

Now we will return to considering the cases of Cube and $V_8$ simultaneously.  As we did above, choose 4-cycles $C_1, C_2$ of $G^{\Delta}$ which do not contain $w$.  In either case there is a unique edge $e$ which is contained in both $E(C_1)$ and $E(C_2)$.  For $i=1,2$ let $v_i$ be the unique vertex in $C_i$ for which there exists an edge with ends $w$ and $v_i$.  It now follows from Lemma~\ref{deleteadj} that for $i=1,2$  both edges of $C_i$ not containing $v_i$ have the property that one end corresponds to a triad in $G^{\bullet}$ while the other end corresponds to a triangle.  Thus, by Lemma~\ref{all1insq}, each of $C_1$ and $C_2$ has two nonadjacent vertices corresponding to triads and two nonadjacent vertices corresponding to 111 triangles.  It now follows that one end of $e$ is a triad which is adjacent (in $G^{\Delta}$) to three vertices each of which corresponds to a 111 triangle.  However this contradicts Lemma~\ref{forbid12},  thus completing the proof.
\end{proof}

\subsection{Pushing a 4-Edge-Cut}

For the purposes of maintaining connectivity, earlier in our argument we pushed a 3-edge-cut in the original graph $G$ (i.e., we chose a 3-edge-cut $\delta(X)$ with $X$ minimal subject to some conditions on $X$).  This gave us the graph $G^{\bullet}$ which we have investigated at length. However, in order to complete the proof, we will need some slightly stronger connectivity properties.  For this purpose we next consider 4-edge-cuts in the graph $G^{\Delta}$ and look to find an extreme one with some useful properties.



We say that a 4-cycle $C \subseteq G^{\Delta}$ has property $\alpha$ or $\beta$ if $C$ does not contain any vertex which is in a 4-cycle with $w$ (so in particular, $w \not\in V(C)$ and $C$ does not contain any vertex corresponding to a type 112 triangle), and the edges of~$C$ may be put in cyclic order $e_1, e_2, e_3, e_4$ so that the corresponding property indicated in the following table is satisfied:

\begin{tabular}{|c|p{4.25in}|}
\hline
Property	&	Required connectivity\\
\hline
$\alpha$	&	$N(w) \cap V(C) = \emptyset$ and $G^\Delta - \{ e_1, e_3 \}$ is cyclically 3-edge-connected.	\\
\hline
$\beta$	&	$|N(w) \cap V(C)| = 1$ and both $G^\Delta - \{ e_1, e_3 \}$ and $G^\Delta - \{ e_2, e_4 \}$ are cyclically 3-edge-connected. \\
\hline
\end{tabular}

\begin{lemma}
\label{alphabetagood}
Either $G^{\Delta}$ contains a 4-cycle of type $\alpha$ or $\beta$, or there exists a set $Z \subseteq V(G^{\Delta}) \setminus \{w\}$ satisfying all of the following:
\begin{enumerate}
\item if $C$ is a 4-cycle containing $w$, then $V(C) \cap Z = \emptyset$,
\item $Z$ induces a graph containing a cycle,
\item $d(Z) = 4$,
\item $Z$ induces a subgraph of minimum degree $\ge 2$ and girth $\ge 5$ in which no two degree 2 vertices are adjacent, and
\item no edge with both ends in $Z$ is in a cyclic 4-edge-cut of $G^{\Delta}$.
\end{enumerate}
\end{lemma}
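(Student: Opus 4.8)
The plan is to \emph{push a $4$-edge-cut}, in direct analogy with the construction of $G^{\bullet}$ from a pushed $3$-edge-cut in Lemma~\ref{3edgecut}. So I would assume throughout that $G^{\Delta}$ has no $4$-cycle of type $\alpha$ or $\beta$ and work to produce the set $Z$. First I would record the ambient structure: by Lemma~\ref{cyc4ec}, $G^{\Delta}$ is cubic and cyclically $4$-edge-connected, hence simple with girth $\ge 4$ (Lemma~\ref{notk4}), and by Lemmas~\ref{notk33} and~\ref{notv8cube} it has at least $10$ vertices. Let $B$ be the union of the vertex sets of all $4$-cycles of $G^{\Delta}$ through $w$; then $B$ is connected and $w \in B$. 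A short forest-counting argument (a forest on $k$ cubic vertices with $d$ edges leaving has $k-c = (3k-d)/2$, which is very restrictive when $d$ is small) combined with cyclic $4$-edge-connectivity bounds $|B|$ and shows that $G^{\Delta}[V(G^{\Delta})\setminus B]$ contains a cycle and $d(B)\le 6$.

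Next I would consider the family $\mathcal{Z}$ of all $Z \subseteq V(G^{\Delta})\setminus\{w\}$ with $Z\cap B=\emptyset$, with $G^{\Delta}[Z]$ containing a cycle, and with $d(Z)=4$. I would argue $\mathcal{Z}\ne\emptyset$: when there is a $4$-cycle through $w$ one takes $Z=V(G^{\Delta})\setminus B$ directly (if $d(B)=4$) or peels off degree-$\le 1$ vertices from a slightly larger cut; when there is no $4$-cycle through $w$ one instead verifies that $Z=V(G^{\Delta})\setminus\{w,x\}$ lies in $\mathcal{Z}$ for a neighbour $x$ of $w$ (this is where $|V(G^{\Delta})|\ge 10$ is essential, since the naive cut $\delta(V(G^{\Delta})\setminus\{w\})$ has size only $3$). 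Then I would choose $Z\in\mathcal{Z}$ with $|Z|$ minimum. Conditions (1)--(3) hold by construction. For condition (4): if some $v\in Z$ had degree $\le 1$ in $G^{\Delta}[Z]$, then $Z\setminus\{v\}$ would still be cyclic with $d(Z\setminus\{v\})\le 3$, contradicting $3$-edge-connectivity or minimality; if $Z$ had two adjacent degree-$2$ vertices $u,v$, the identity relating $d(Z)$, $d(\{u,v\})$, and the number of edges between $\{u,v\}$ and $Z\setminus\{u,v\}$ forces $d(Z\setminus\{u,v\})=4$, and a forest count then forces $Z$ itself to be a chordless $4$-cycle.

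To finish condition (4) I would show $Z$ has no chordless $4$-cycle $C$. Such a $C$ is disjoint from $B$ (it lies in $Z$), and since every vertex corresponding to a type-$112$ triangle lies in a $4$-cycle with $w$ (Lemma~\ref{tame112}), $C$ avoids those too; moreover $|N(w)\cap V(C)|\le 1$, because if two (necessarily opposite) vertices of $C$ lay in $N(w)$ then the third vertex of the resulting $4$-cycle through $w$ would be in $B\cap Z=\emptyset$. So $C$ is a candidate for type $\alpha$ (if $|N(w)\cap V(C)|=0$) or type $\beta$ (if it is $1$), and the only way it can fail to have that type is that $G^{\Delta}$ minus some diagonal pair of edges of $C$ is not cyclically $3$-edge-connected; uncrossing the resulting small cyclic cut against $\delta(V(C))$ (and against $\delta(Z)$) via inequalities~(\ref{submod}) and~(\ref{other}), and using that cyclic edge-cuts of $G^{\Delta}$ have size $\ge 4$, produces either a strictly smaller member of $\mathcal{Z}$ or one of the excluded graphs $K_{3,3}$, Cube, $V_8$. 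This simultaneously kills the two-adjacent-degree-$2$ configuration and gives girth $\ge 5$. Condition (5) is handled the same way: if an edge with both ends in $Z$ lay in a cyclic $4$-edge-cut $\delta(W)$, uncrossing $W$ with $Z$ via~(\ref{submod}) and~(\ref{other}) yields a quadrant set that is cyclic, avoids $w$ and $B$, and has boundary at most $4$ --- contradicting minimality of $|Z|$ --- while the degenerate splittings are ruled out by the parity argument used in the proof of Lemma~\ref{3edgecut} or again force a small excluded graph.

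I expect the main obstacle to be exactly the girth-$\ge 5$ half of condition (4): converting ``$G^{\Delta}$ has no type-$\alpha$ or type-$\beta$ $4$-cycle'' into a contradiction when a $4$-cycle sits inside $Z$ requires a careful case split on $|N(w)\cap V(C)|$ and on which of the two diagonal-pair deletions is cyclically $3$-edge-connected, together with enough bookkeeping to keep the several cuts independent and to reduce the leftover cases to $K_{3,3}$, Cube, or $V_8$. Everything else is routine submodular uncrossing of the kind already used for $G^{\bullet}$.
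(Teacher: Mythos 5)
Your overall strategy is the same as the paper's: form the family of candidate sets (cycle inside, boundary $4$, disjoint from every $4$-cycle through $w$, not containing $w$), take a minimal member $Z$, funnel every failure of conditions (4) and (5) into ``$Z$ contains, and hence is, a $4$-cycle'', and resolve that case by uncrossing, with the small graphs $K_{3,3}$, Cube, $V_8$ already excluded. Your reductions of minimum degree, adjacent degree-$2$ vertices, and property (5) to the $4$-cycle case are sound, and your treatment of the would-be type-$\alpha$ cycle (no neighbour of $w$ on $C$, so \emph{both} diagonal pairs lie in cyclic $4$-edge-cuts, which you uncross against each other to force $|V(G^{\Delta})|=8$) is exactly the paper's argument.

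The gap is in the would-be type-$\beta$ case, and it comes from your choice criterion. The paper does not minimize $|Z|$ alone: it first minimizes the number of neighbours of $w$ contained in $Z$, and only then the size, and this is load-bearing precisely where your sketch is vague. Suppose the minimal $Z$ is a $4$-cycle $C=v_1v_2v_3v_4$ with $v_1$ adjacent to $w$, and type $\beta$ fails because (say) the single diagonal pair $\{v_1v_2,v_3v_4\}$ lies in a cyclic $4$-edge-cut $\delta(Y)$ with $v_2,v_3\in Y$, $v_1,v_4\notin Y$, $w \notin Y$. Uncrossing $\delta(Y)$ with $\delta(Z)$ produces nothing: the two quadrants inside $Z$ are the two-vertex sets $\{v_2,v_3\}$ and $\{v_1,v_4\}$, and the two quadrants outside $Z$ are not smaller than $Z$; since every member of your family $\mathcal{Z}$ has at least four vertices, a ``strictly smaller member of $\mathcal{Z}$'' cannot exist, and no excluded graph is forced either (there is only one cut to uncross, unlike the $\alpha$ case). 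The paper's contradiction here splits on whether $Y$ meets $N(w)$: if $Y$ contains no neighbour of $w$, then $Y$ is a legitimate candidate that beats $Z$ under the neighbour-count-first criterion \emph{even though it may be much larger} --- this branch is simply unavailable to a size-only minimization; if $Y$ does contain a neighbour of $w$, then $d(Y\cup\{w,v_1,v_4\})\le 3$ forces $V(G^{\Delta})\setminus Y$ to induce a $4$-cycle through $w$ containing $v_1\in Z$, contradicting $Z\cap B=\emptyset$ (this branch you could recover, but it is a direct computation, not an uncrossing against $\delta(Z)$). So as written your plan fails exactly at this case; adopting the paper's lexicographic choice (fewest neighbours of $w$ first, then fewest vertices) repairs it. Two minor points: when $B\neq\emptyset$ one in fact has $d(B)\le 4$ (three or more $4$-cycles through $w$ are impossible once $|V(G^{\Delta})|\ge 10$), so the ``peeling'' step is unnecessary; and connectivity of $B$ plays no role.
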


\begin{proof}
  The graph $G^{\Delta}$ is a cyclically 4-edge-connected cubic graph (see Lemma~\ref{cyc4ec} and Corollary~\ref{cyc4eccor}),
  and by Lemmas~\ref{notk4}, \ref{notk33} and~\ref{notv8cube} it follows that $|V(G^{\Delta})| \ge 10$.  We claim that there exists $Z_0 \subseteq V(G^{\Delta}) \setminus \{w\}$ satisfying the following:
\begin{itemize}
\item if $C$ is a 4-cycle containing $w$, then $V(C) \cap Z_0 = \emptyset$,
\item $Z_0$ induces a graph containing a cycle,
and
\item $d(Z_0) \le 4$.
\end{itemize}
If there is no 4-cycle containing $w$, then the complement of $w$ is a set with the requisite properties.  If there is a unique 4-cycle $C$ containing $w$, then the set $Z_0 = V(G^{\Delta}) \setminus V(C)$ satisfies these conditions.  If there are exactly two 4-cycles $C_1, C_2$ containing $w$, then $C_1$ and $C_2$ must share exactly  one edge (by Corollary~\ref{cyc4eccor} and Lemma~\ref{notk33}) and the set $Z_0 = V(G^{\Delta}) \setminus (V(C_1) \cup V(C_2))$ has the required properties.  Finally, if there are three distinct 4-cycles $C_1, C_2, C_3$ containing $w$, then each pair of these must share at least one edge, so $d( V(C_1) \cup V(C_2) \cup V(C_3) ) \le 3$.  In this case our connectivity assumptions imply that there is at most one vertex not in $V(C_1) \cup V(C_2) \cup V(C_3)$ and this violates $|V(G^{\Delta})| \ge 10$.  Therefore, there exists a set $Z_0$ satisfying the desired properties.

Now we choose a set $Z$ satisfying the above properties so that $Z$ contains a minimum number of neighbours of $w$, and subject to this, $Z$ has minimum size.  If $d(Z)  < 4$, then our connectivity implies that $V(G^{\Delta}) \setminus Z = \{w\}$, but then we may decrease $Z$ by removing a neighbour of $w$ to get a set which contradicts our choice.  It follows that we must have $d(Z) = 4$.  So, our set $Z$ satisfies the first three properties in the statement of the lemma.

\paragraph{Suppose that $Z$ does not induce a 4-cycle.}
In this case the minimality of $Z$ immediately implies that the fourth property holds.  Next suppose (for a contradiction) that there is an edge $yz$
  with $y,z \in Z$ so that $yz$ lies in a cyclic 4-edge-cut $\delta(Y)$ of $G^{\Delta}$. Note that since $Y$ separates cycles, $|Y|>2$ and $|V(G^{\Delta})\setminus Y|> 2$.   We may assume (by possibly replacing $Y$ with its
  complement) that $w \not\in Y$.  If $Y \setminus Z$ is empty, then $Y$ contradicts the choice of $Z$.  It follows from $d(Y) = 4 = d(Z)$ and basic
  considerations that all of $d(Z \cap Y)$, $d(Z \setminus Y)$, $d(Y \setminus Z)$, and $d(Z \cup Y)$ have the same parity. Note that
  all of the sets $Y \cap Z$, $Y \setminus Z$, $Z \setminus Y$, $Y \cup Z$ are non-empty.
  By uncrossing we also have
\[
  \max \bigl( d(Z \cap Y) + d(Z \cup Y) \, , \, d(Z \setminus Y) + d(Y \setminus Z)\bigr) \le d(Z) + d(Y) = 8
\]
If the parity of our four parameters is odd, then one of $d(Z \cap Y)$ and $d(Z \cup Y)$ is equal to 3 and one of $d(Z \setminus Y)$ and $d(Y \setminus
  Z)$ is equal to 3.  Since the only 3-edge-cuts of $G^{\Delta}$ separate one vertex from the rest, the only way for this to happen is if
  $V(G^{\Delta}) \setminus Z$ has size two.  In this case there is no 4-cycle containing $w$, and the set $Y$ contradicts the choice of $Z$ (since it
  has at most as many neighbours of $w$ as $Z$ and fewer vertices than $Z$).   So, we may assume that the parity of our four edge-cut sizes is even.
  Now our uncrossing equations imply that $d(Z \cap Y) = d(Z \setminus Y) = 4$.  If one of these sets induces a graph with a cycle, then it contradicts
  the choice of $Z$.  Otherwise, both have size 2, but then $Z$~must induce a 4-cycle, thus contradicting our assumption.
  It follows that no edge $yz$ with $y,z\in Z$ is contained in a cyclic 4-edge-cut, thus verifying the final property.

\paragraph{Now let us assume that $Z$ induces a 4-cycle $C$.} Let edges of~$C$ be, in cyclic order, $e_1, e_2, e_3, e_4$.  As a first case, we shall assume that $Z$
  does not contain a neighbour of $w$.  Suppose (for a contradiction) that there exists a cyclic 4-edge-cut $\delta(Y)$ containing $\{e_1, e_3\}$ and
  another cyclic 4-edge-cut $\delta(Y')$ containing $\{e_2,e_4\}$. If one of the sets
  $Y \cap Y'$, $Y \setminus Y'$, $Y' \setminus Y$, $Y \cup Y'$ is empty, then since $(e_1, e_2, e_3, e_4)$ is a cycle and $d(Y)=d(Y')=4$, then in fact two of these sets are empty and $Y'=Y$ or $Y' = V(G^{\Delta}) \setminus Y$, which leads to a contradiction. So, we may assume that all four of these sets are nonempty. By basic considerations, all of $d(Y \cap Y')$, $d(Y \setminus Y')$, $d(Y'
  \setminus Y)$, and $d(Y \cup Y')$ have the same parity.  If they are all odd, then one of $d(Y \cap Y')$ and $d(Y \cup Y')$ must equal three and one of
  $d(Y \setminus Y')$ and $d(Y' \setminus Y)$ must equal three.  Since these size three edge-cuts must separate one vertex from the rest, this gives a
  contradiction to the assumption that both $\delta(Y)$ and $\delta(Y')$ are cyclic edge-cuts. It now follows from the uncrossing inequalities that
  $d(Y \cap Y') = d(Y \setminus Y') = d(Y' \setminus Y) = d(Y \cup Y') = 4$.

  By the existence of our 4-cycle $C$, each of the sets $Y \cap Y'$, $Y \setminus Y'$, $Y' \setminus Y$, $V(G^{\Delta}) \setminus (Y \cup Y')$ must induce a graph with a vertex of degree $\le 1$.  Now removing this vertex from the
  corresponding set decreases the size of the associated edge-cut to 3.  It follows from this that $|Y \cap Y'| = |Y \setminus Y'| = |Y' \setminus Y| =
  |V(G^{\Delta}) \setminus (Y \cup Y')| = 2$, giving us a contradiction (since $|V(G^\Delta)|\geq10$).  So, we may assume, without loss of generality,
  that there is no 4-edge-cut containing $\{e_1, e_3\}$ and thus $C$ has property $\alpha$.

The remaining case is when $Z$ induces a 4-cycle $C$ and $C$ contains a vertex which is a neighbour of $w$.  Assume that the vertices of $C$ have cyclic
  order $v_1, \ldots, v_4$ and that $v_1$ is a neighbour of $w$.  Suppose (for a contradiction) that there is a cyclic 4-edge-cut $\delta(Y)$ with
  $v_2,v_3 \in Y$ and $v_1,v_4 \not\in Y$.  Note that in this case $w \not\in Y$, otherwise $d(Y\cup \{v_1\})=3$, which implies that
  $|V(G^\Delta)\setminus Y|=2$, a contradiction.  If $Y$ does not contain a neighbour of $w$, then $Y$ contradicts our choice of $Z$.
  (Note that if some 4-cycle containing~$w$ intersects~$Y$, then we get a contradiction with cyclic 4-edge-connectivity.)
  So, we may assume that $Y$ contains at least one neighbour of $w$.  However, in this case $d(Y \cup \{ w, v_1, v_4 \}) \le 3$ so the graph induced by
  $V(G^{\Delta}) \setminus Y$ is a 4-cycle containing $w$ and $v_1 \in Z$.  This gives us a contradiction to the choice of $Z$.  It follows that there is no
  4-edge-cut separating cycles containing $v_1 v_2$ and $v_3 v_4$.
  A symmetric argument shows that there is no cyclic 4-edge-cut $\delta(Y')$ with
  $v_3,v_4 \in Y'$ and $v_1,v_2 \not\in Y'$, and thus $C$ has property $\beta$.
\end{proof}

\subsection{Completing the Proof}

We will now complete the proof of our workhorse lemma.  The only additional thing we require is the following simple observation concerning cyclic 4-edge-cuts.  Here the first property is a direct consequence of the assumption, and the other two properties follow from the first.

\begin{observation}
  \label{obs4cuts}
Let $H$ be a cyclically 4-edge-connected cubic graph, let $v_1 v_2 \in E(H)$ and assume that $H - \{ v_1, v_2 \}$ is not cyclically 3-edge-connected.
Then the following is true: 
\begin{itemize}
\item There is a partition of $V(H)$ into $\{ Y_1, \{v_1,v_2\}, Y_2 \}$ so that $\delta(Y_i)$ is a cyclic 4-edge cut for $i=1,2$ and so that
there is exactly one edge between $v_i$ and $Y_j$ for every $1 \le i, j \le 2$.
\item Every edge adjacent to the edge $v_1 v_2$ is contained in a cyclic 4-edge-cut.
\item If there is a 4-cycle with vertex sequence $v_1, v_2, v_3, v_4$,
	then both $v_3$ and~$v_4$ are in~$Y_1$ or both are in~$Y_2$.
	Consequently, $H$ has a cyclic 4-edge cut containing the edges $v_1 v_4$ and $v_2 v_3$.
\end{itemize}
\end{observation}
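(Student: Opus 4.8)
The plan is to prove the first bullet by an uncrossing/counting argument on edge-cuts at $\{v_1,v_2\}$, and then to read the other two bullets off from it.

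\emph{The partition, and the second bullet.} Put $H' = H - \{v_1, v_2\}$; since $H$ is cubic, the four edges of $\delta_H(\{v_1,v_2\})$ other than $v_1v_2$ (two at $v_1$, two at $v_2$) are exactly those deleted in passing from $H$ to $H'$. Since $H'$ is not cyclically $3$-edge-connected, I would fix a partition $V(H') = A \sqcup B$ into nonempty parts with $|\delta_{H'}(A)| \le 2$ for which each of $H[A], H[B]$ contains a cycle; the degenerate possibilities — $H'$ disconnected (impossible, as it would give $H$ two cuts of size $\le 3$ whose sizes add to more than $|\delta_H(\{v_1,v_2\})|=4$) or $H'$ having a bridge (which, via the $3$- and cyclic $4$-edge-connectivity of $H$, forces $H \cong K_4$) — are handled by a short separate argument and do not occur for $H = G^{\Delta}$. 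Let $k$ of the four edges above have an end in $A$. Then $|\delta_H(A)| = |\delta_{H'}(A)| + k \le 2 + k$ and $|\delta_H(B)| = |\delta_{H'}(A)| + (4-k) \le 6-k$; but $A$ and $V(H)\setminus A \supseteq B$ each contain a cycle, so $\delta_H(A)$ separates cycles and hence $|\delta_H(A)| \ge 4$, and similarly $|\delta_H(B)| \ge 4$. This forces $k = 2$ and $|\delta_H(A)| = |\delta_H(B)| = 4$. Setting $Y_1 = A$, $Y_2 = B$ gives the required partition with $\delta(Y_i)$ a cyclic $4$-edge-cut; and if both non-$v_2$ edges at $v_1$ entered $Y_1$, then (as $k=2$) neither edge at $v_2$ would, and $\delta_H(Y_1 \cup \{v_1\})$ would be a cycle-separating $3$-edge-cut (the two $v_1$–$Y_1$ edges leave and only $v_1v_2$ enters), contradicting cyclic $4$-edge-connectivity — so each $v_i$ has exactly one neighbour in each $Y_j$. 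The second bullet is then immediate: the four edges adjacent to $v_1v_2$ are the edges crossing from $\{v_1,v_2\}$, and one at $v_1$ and one at $v_2$ lie in $\delta_H(Y_1)$ while the remaining two lie in $\delta_H(Y_2)$, both cyclic $4$-edge-cuts.

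\emph{The third bullet.} Suppose a $4$-cycle $v_1, v_2, v_3, v_4$ exists but, contrary to the claim, $v_3 \in Y_1$ and $v_4 \in Y_2$ (the other mixed case is symmetric). By the first bullet, $v_2v_3$ is the edge from $v_2$ into $Y_1$ and $v_1v_4$ the edge from $v_1$ into $Y_2$, and $v_3v_4$ is one of the two $Y_1$–$Y_2$ edges of $\delta_{H'}(Y_1)$. Using $3$-edge-connectivity of $H$ (to exclude $v_1v_3\in E(H)$, a repeated edge at $v_3$, and $v_3$ having two neighbours outside $Y_1\cup\{v_2\}$), $v_3$ has exactly one neighbour $v_3'$ inside $Y_1$, so $|Y_1| \ge 2$; symmetrically $v_4$ has a neighbour $v_4'$ inside $Y_2$. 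Then $\delta_H(Y_1\setminus\{v_3\}) = \bigl(\delta_H(Y_1)\setminus\{v_2v_3,v_3v_4\}\bigr)\cup\{v_3v_3'\}$ is a $3$-edge-cut whose other side contains $Y_2$, hence a cycle, so $H[Y_1\setminus\{v_3\}]$ is a forest. But a subgraph of the cubic graph $H$ on $m$ vertices with a $3$-edge boundary has $(3m-3)/2$ internal edges, whereas a forest on $m$ vertices has at most $m-1$; so $m\le 1$ and $|Y_1|=2$. Symmetrically $|Y_2|=2$, so $|V(H)|=6$; a cyclically $4$-edge-connected cubic graph on six vertices is $K_{3,3}$ or the prism, and the prism has a cycle-separating $3$-edge-cut (the matching between its two triangles) while $K_4$ fails the first bullet, so $H \cong K_{3,3}$. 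But $K_{3,3}$ minus the two ends of an edge is a $4$-cycle, which is cyclically $3$-edge-connected, contradicting the hypothesis on $H$. Hence $v_3$ and $v_4$ lie on the same side; if both are in $Y_1$ then $v_1v_4, v_2v_3 \in \delta_H(Y_1)$ (and symmetrically for $Y_2$), which is the claimed cyclic $4$-edge-cut containing $v_1v_4$ and $v_2v_3$.

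\emph{Main obstacle.} The conceptual content is slight; the work is entirely in the careful bookkeeping of the sizes of the cuts $\delta_H(A)$, $\delta_H(Y_1\cup\{v_1\})$, $\delta_H(Y_1\setminus\{v_3\})$, making the parity/connectivity contradictions come out cleanly, and in disposing of the handful of small graphs ($K_4$, the prism, $K_{3,3}$) that would otherwise be genuine counterexamples. These never arise for $H=G^{\Delta}$ since $|V(G^{\Delta})| \ge 10$, but they must be acknowledged for the statement as written.
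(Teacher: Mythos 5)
Your proof is correct, and it follows exactly the route the paper itself only gestures at in a single sentence: a cut-counting/uncrossing argument deriving the partition $\{Y_1,\{v_1,v_2\},Y_2\}$ of the first bullet from the failure of cyclic 3-edge-connectivity in $H-\{v_1,v_2\}$, with the second and third bullets then read off from it. Your caveat is also accurate — $K_4$ does satisfy the hypotheses while violating the first bullet, so the statement as literally written needs the small-graph proviso you give, which is harmless in context since $G^{\Delta}$ has at least $10$ vertices (and, as a minor aside, your $K_{3,3}$ endgame in the third bullet could be shortened: $|Y_1|=2$ already contradicts the fact that $Y_1$ contains a cycle).
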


\begin{proof}[Proof of Lemma \ref{workhorse}]
We will split the proof into cases based on the possible outputs of Lemma~\ref{alphabetagood}.  In each of these cases we will reach a contradiction, and this will complete the proof of our theorem.


\paragraph{Case 1: $G^{\Delta}$ contains a 4-cycle $C$ of type $\alpha$.}


\begin{figure}[htp]
\centerline{\includegraphics[height=3cm]{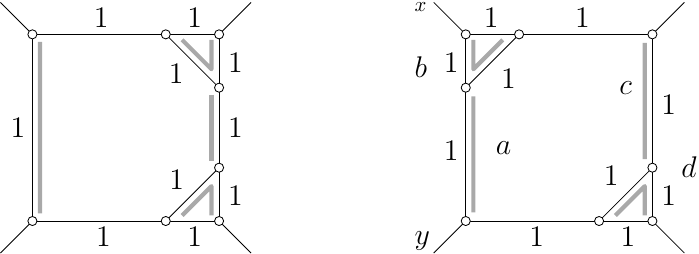}}
\caption{Reducing Squares}
\label{reducesqares}
\end {figure}

By assumption, $C$ cannot contain a vertex corresponding to a 112 triangle.  If there is an edge of $C$ of residue 2, then we get an immediate
  contradiction to Lemma~\ref{tame2insq}.  So, every edge of $C$ has residue 1 and by Lemma~\ref{even2} all edges incident with the vertices of $C$ have
  residue 1. Assume that the vertices of $C$ are cyclically ordered $v_1, \ldots, v_4$ so that $G^{\Delta} - \{v_1 v_2, v_3 v_4 \}$ is cyclically
  3-edge-connected.  It now follows from the previous observation that both $G^{\Delta} - \{ v_1, v_4 \}$ and $G^{\Delta} - \{ v_2, v_3 \}$ are
  cyclically 3-edge-connected.
  If $v_1$ and~$v_4$ both correspond to either triads or triangles in $G^{\Delta}$ and also $v_2$ and~$v_3$ both correspond
  to either triads or triangles in $G^{\Delta}$, then we can apply Lemma~\ref{deleteadj} to $v_1, v_4$ and also to $v_2, v_3$.
  This means, that the exceptional configuration in the statement of Lemma~\ref{deleteadj} appears in both cases.
  Consequently, all of $v_1,\ldots,v_4$ are in distance at most~2 from $w$, thus two of them are adjacent to the same neighbour
  of~$w$. Hence, there is either a triangle in~$G^{\Delta}$ (contradiction with the cyclic edge-connectivity), or
  a 4-cycle through $v_1 v_4$, or through $v_2 v_3$, different from~$C$ (contradiction with Lemma~\ref{deleteadj}).

  It follows that we may assume (without loss) that $v_1$ corresponds to a 111 triangle and $v_4$ corresponds to a triad.  It now follows from
  Lemma~\ref{all1insq}
  that the subgraph of $G^{\bullet}$ corresponding to $C$ is given by one of the configurations Figure~\ref{reducesqares} above.
  Here all of the ears in the figure will have length 1 mod 3 by our assumptions on $w$.  In both cases we have indicated a partial ear decomposition
  (on the right, we may remove them in order $a$, $b$, $c$, $d$). 
  These decompositions yield a reducible subgraph (since two of the new ears will have length 1 mod~3), thus completing the proof of this case.


\paragraph{Case 2: $G^{\Delta}$ contains a 4-cycle of type $\beta$.}


Let $C$ be a 4-cycle of type $\beta$ with vertices $v_1, \ldots, v_4$ as in the definition of this type, where the neighbour of $w$ in $C$ is $v_1$.  If
  $C$ has an edge of residue 2, then the subgraph corresponding to~$C$ must be as given by Lemma~\ref{tame2insq}.  However, in this case we have a reducible subgraph indicated by the following figure (in the weighted graph obtained by removing these ears to achieve the appropriate bonus, the ear containing $Q$ will have the same bonus as $Q$ had in $G^{\bullet}$ by Lemma~\ref{outer2force}).

\begin{myfigure}[htp]
\centerline{\includegraphics[height=3cm]{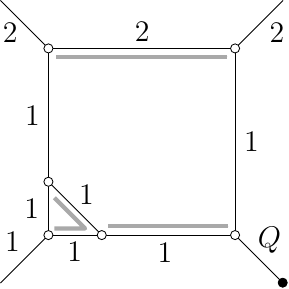}}
\end {myfigure}

So, we may assume all edges of $C$ have residue 1. It follows that all the edges incident with $v_2,v_3,v_4$ have residue 1 (Lemma~\ref{forbiddentriads}). 
If both of $v_2,v_3$ correspond to triads or both correspond to 111 triangles, then we apply Lemma~\ref{deleteadj}. 
Note that the assumptions of the lemma are satisfied: neither $w v_2$ nor $w v_3$ is an edge since $N(w) \cap V(C) = \{v_1\}$ and for the connectivity 
assumption we use Observation~\ref{obs4cuts} together with the definition of type~$\beta$. 
Thus we have the exceptional configuration in Lemma~\ref{deleteadj} so $w$ is connected by an edge to~$v_1$ and by a two-edge path 
to each of $v_2$, $v_3$. Consequently, the graph has a 3-edge-cut, or is isomorphic to~$V_8$, a contradiction. 
Similarly, we cannot have that $v_3$ and $v_4$ both correspond to 111 triangle or both correspond to triads (so the vertices $v_2,v_3,v_4$ of $V(G^{\Delta})$ alternate triangle, triad, triangle, or triad,
triangle, triad). It follows by Lemma~\ref{all1insq} that the subgraph of $G^{\bullet}$ corresponding to $C$ and its incident edges must be as on the right in Figure~\ref{reducesqares}. By the previous arguments,
there exists at most one ear on the periphery with length not congruent to 1 mod 3, namely $wv_1$. 
Now $v_1$ can correspond to a triad or to a triangle, so in Figure~\ref{reducesqares} in the right, we may assume 
that either $w=x$ or $w=y$. 
If $w=x$, we remove the ears in the order $a$, $b$, $c$, $d$. When removing ear labeled~$c$, we use Lemma~\ref{clever_choice} not to lose the 
bonus of the ear incident with~$w$ (this might possibly happen if it was an inequitable ear of lenght 0 mod 3). This way we lose bonus~48 (twelve ears of length 1 mod 3) and 
gain $8(1+2+1+2) = 48$. 
If $w=y$, the analysis is the same, only we need to remove ears in the order $c$, $d$, $a$, $b$, 
using Lemma~\ref{clever_choice} for removing~$a$. 
Altogether, the indicated partial ear decomposition gives a reducible subgraph.

\paragraph{Case 3: There exists $Z \subseteq V(G^{\Delta})$ satisfying properties 1--5 in the statement of Lemma~\ref{alphabetagood}.}


By Lemma~\ref{tame112}, no vertex of $Z$ corresponds to a 112 triangle.
First suppose that there exists an edge $yz$ of residue 2 with both ends in $Z$.  Note that by Lemma~\ref{outer2force}, both $y$ and $z$ must
  correspond to triads in $G^{\bullet}$ and each of $y,z$ will have one incident edge of residue 1 and two of residue~2.  By the fourth property in
  Lemma~\ref{alphabetagood}, we may assume (without loss) that all neighbours of~$y$ are in~$Z$.  In particular, this implies that there is a 2-edge
  path with vertex sequence $x,y,z$ with all vertices in $Z$ and both edges of residue~2.  Furthermore, $x$ will also be incident in $G^{\Delta}$ with
  exactly one edge of residue~1 and two of residue~2.  It follows from the first property in Lemma~\ref{alphabetagood} that at most one of $x,z$ is
  adjacent to $w$ in $G^{\Delta}$.  So, we may assume (without loss) that $x,w$ are not adjacent in $G^{\Delta}$.  Choose an edge $ux \in
  E(G^{\Delta})$ with $u \neq y$ so that $ux$ has residue~2. Since $x$ is not adjacent to $w$,  Lemma~\ref{tame112} implies that the vertex $u$ is
  also not associated with a 112 triangle in $G^{\bullet}$ so it must also be associated with a triad. 
  Note that by the fifth property in Lemma~\ref{alphabetagood}, $G^{\Delta} - \{yz\}$ is cyclically 4-edge-connected, thus $G^{\Delta} - \{yz, ux\}$ is cyclically 3-edge-connected. 
  Recall that each of $u$, $x$, $y$, and $z$ is incident with one edge of residue~1 and two edges of residue~2. 
  Next we consider the partial ear decomposition of $G^{\bullet}$ obtained by removing the ears associated with $yz$ and $ux$. 
  By this operation we loose five ears of length 2 mod 3 and four ears of length 1 mod 3. Total gain from this removal is at least $2 \cdot 8 \cdot 2$
  by Lemma~\ref{goodgain}. As $32 > 5\cdot 3 + 4\cdot 4$, we have indeed found a reducible subgraph. 
 
  So, we may now assume that every edge of $G^{\Delta}$ with both ends in $Z$ has residue~1.  Modify $Z$ to form the set $Z'$ by deleting from $Z$ any vertex which has $w$ as a neighbour and note that 
  $|Z \setminus Z'| \le 2$ (since $|d(Z)|=4$).  This implies that $d(Z') \le d(Z)+2 \le 6$. 
  If $Z'$ induces a subgraph with a vertex $x$ of degree at most $1$, then either $x$ has degree 2 in $Z$ and $x$ is adjacent to a vertex of $Z$ which is a  neighbour of $w$ (contradicting part 4 of Lemma~\ref{alphabetagood}), or $x$ is adjacent to two neighbours of $w$ (contradicting part 1 in Lemma~\ref{alphabetagood}).
  So $Z'$ still induces a subgraph of $G^{\Delta}$ with minimum degree 2, which means it induces a graph containing cycle.
   By Lemma~\ref{even2}, all edges incident with a vertex in $Z'$ have residue 1.
  If there are two adjacent vertices $v_1$ and $v_2$ in $Z'$ which are both associated with triangles or both associated with triads, then we have the configuration in
  Lemma~\ref{deleteadj} (we use Observation~\ref{obs4cuts} and Property~5 in Lemma~\ref{alphabetagood}). 
  Let $v_3$ and $v_4$ be the other two vertices in the 4-cycle $C$ containing $v_1$ and $v_2$ in this configuration. Now none 
  of the neighbours of $v_1$ and $v_2$ outside of $C$ is in $Z'$, so $v_3$ and $v_4$ must both be in $Z'$. It follows that $C$ is contained in $Z$, a
  contradiction to part 4 of Lemma~\ref{alphabetagood}.
  So, we may assume that the subgraph of~$G^{\Delta}$ induced by $Z'$ has a bipartition $(A,B)$ where the vertices in $A$ correspond to triangles, and
  the vertices in $B$ to triads.  
 
  If there is a vertex in $B$ which has all of its neighbours in $Z'$, then we have a contradiction to
  Lemma~\ref{forbid12}.
  Thus in the graph $G^{\Delta}[A \cup B]$, all vertices in~$B$ have degree~2, vertices in~$A$ have degree~$2$ or~$3$
  (and the total number of degree~2 vertices equals $d(Z')$). 
  Let $H$ be the graph obtained from~$G^{\Delta}[A \cup B]$ by suppressing all vertices in~$B$. (This is indeed a simple graph, as there are no 
  4-cycles in~$Z'$.) We have observed that $H$ is a simple graph with all degrees $2$ or~$3$. The number of edges of~$H$ plus the number of degree-2 vertices in~$H$
  is equal to $d(Z')$, which is at most $6$. A simple case analysis shows that there are only two such graphs: $H \cong K_4$ and $H \cong C_3$.

Therefore the only possibilities for $G^{\Delta}[Z']$ are a 6-cycle and $K_4$ with every edge subdivided exactly once. 
Both have the number of degree-2 vertices equal to~6, thus $d_{G^{\Delta}}(Z') = 6$ and so $|Z \setminus Z'| = 2$. 
It follows that $d(Z \cup \{w\}) = 3$ and $V(G^{\Delta}) \setminus Z$ consists of just two vertices.  
Therefore $V(G^{\Delta})$ may be expressed as the disjoint union of
$\{w\}$, three neighbours of $w$, and $Z'$.  To satisfy the assumption that $G^{\Delta}[Z]$ has girth $\ge 5$, there is 
a unique way how to add the vertices to each of the two possible variants of~$G^{\Delta}[Z']$. 
They lead to a graph that is isomorphic to either Petersen or Heawood graph.  Furthermore, every edge of $G^{\Delta}$ not incident with $w$ must have residue 1.
These last two graphs are resolved by the reducible subgraphs indicated by the partial ear decompositions in the figure below (the figure on the right
shows an embedding on the torus where opposite sides are identified as depicted). In both cases, the indicated partial ear decomposition has gain 72, there are
21 ears of $G$ with bonus 4 which are not ears of the resulting graph (for a loss of 84), but there are 3 newly formed ears with length 1 mod 3
which contribute 12.

\begin{figure}[htp]
\centerline{\includegraphics[width=13cm]{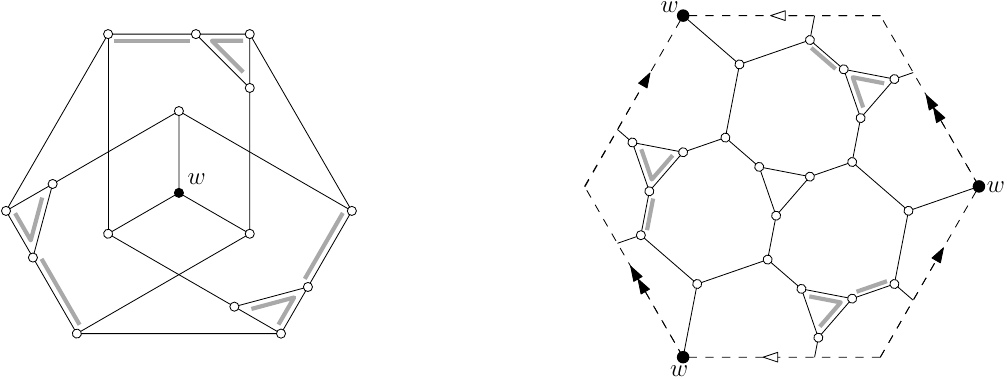}}
\end{figure}
\end{proof}

\section{Future Work}

While our methods require 3-edge-connected graphs, we believe the following conjecture is true.

\begin{conjecture}
Every 2-edge-connected graph $G$ has a 3-flow $\phi$ satisfying
\[ |\supp (\phi) | \ge \tfrac{5}{6} |E(G)|.\]
\end{conjecture}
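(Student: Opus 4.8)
The plan is to induct on $|E(G)|$, reducing a $2$-edge-connected graph along its $2$-edge-cuts. Three cases are immediate. If $G$ is $3$-edge-connected we are done by Theorem~\ref{main1}. If $G$ is a cycle, orienting it consistently and assigning $1$ to every edge is a nowhere-zero $2$-flow, hence a $3$-flow of full support. If $G$ has a cut vertex $z$, then $G$ is the vertex-sum at $z$ of two smaller $2$-edge-connected graphs $G_a, G_b$, and $3$-flows $\phi_a, \phi_b$ of $G_a, G_b$ glue (with no constraint at $z$) to a $3$-flow $\phi$ of $G$ with $|\supp(\phi)| = |\supp(\phi_a)| + |\supp(\phi_b)|$, so the inductive hypothesis finishes the case. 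Hence we may assume $G$ is $2$-connected and not $3$-edge-connected, and choose a $2$-edge-cut $\{e,f\}$ separating $V(G)$ into $V_1, V_2$.

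The two reductions I would use are: (i) if $G$ has a vertex $u$ of degree $2$ with incident edges $e,f$, suppress $u$ to a single edge $e'$, obtaining a smaller $2$-edge-connected graph $G^-$; (ii) otherwise, writing $e=u_1u_2$, $f=v_1v_2$ with $u_i,v_i\in V_i$ (and now $u_i\neq v_i$), form $H_i$ from $G[V_i]$ by adding an edge $g_i=u_iv_i$, so that $H_1, H_2$ are $2$-edge-connected with $|E(H_1)|+|E(H_2)|=|E(G)|$. In each case one reassembles a $3$-flow of $G$ from $3$-flows of the smaller graph(s) by copying values and setting the two cut edges equal to the value on the suppressed/virtual edge (with appropriate orientations). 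The bookkeeping is exact: in case (ii), if $\phi_1(g_1)$ and $\phi_2(g_2)$ are either both zero or both nonzero --- and then equal, after negating one flow --- the reassembled $\phi$ satisfies $|\supp(\phi)| = |\supp(\phi_1)| + |\supp(\phi_2)| \geq \tfrac56(|E(H_1)|+|E(H_2)|) = \tfrac56|E(G)|$; in case (i), if the value on $e'$ is nonzero one even gains a little slack. So the sole difficulty is forcing the reassembled flows not to conflict on the suppressed/virtual edge --- that is, \textbf{controlling the value of a large-support $3$-flow on a prescribed edge} of a $2$-edge-connected graph.

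This is where essentially all of the work lies, and where the subtlety flagged in the introduction becomes decisive. Deleting the offending edge, what one wants is an analog of Theorem~\ref{main2} valid for $2$-edge-connected graphs, at least when the boundary function $\mu$ is supported on the two ends of one edge. But Theorem~\ref{main2} \emph{fails} in that generality: subdividing every edge of a $3$-edge-connected graph twice and setting $\mu=1$ on each new degree-$2$ vertex forces support exactly $\tfrac23|E|$. The escape is that this counterexample uses a $\mu$ smeared over vertices in the interiors of subdivided edges, whereas the $\mu$ produced by the reductions above lives on just two vertices, each a branch vertex. I would therefore aim to prove a \emph{restricted} group-connectivity theorem --- for $2$-edge-connected graphs (equivalently, for subdivisions of minimum-degree-$3$ graphs, dropping the cyclic $3$-edge-connectivity required by Lemma~\ref{workhorse}) and for $\mu$ supported on a bounded number of vertices of degree $\geq 3$ --- carrying a bonus function in the spirit of Lemma~\ref{workhorse}, augmented to charge the short ``terminal'' ears created at the prescribed vertices. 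Establishing this seems to require re-running the reductive machinery and forbidden-configuration analysis of the preceding sections with the terminal vertices present throughout and checking that no surviving configuration violates the bound; I expect this to be the hard part.

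I would also expect the obvious alternative --- decomposing $G$ along the tree of its $2$-edge-cuts into $3$-edge-connected components and cycles, handling each with Theorem~\ref{main2}, and gluing --- to hit the same wall, with one extra complication: a cycle arising as a piece and carrying several virtual edges whose values are prescribed from elsewhere in the tree need only admit a flow of support $\tfrac23$ of its size, and a short computation shows this deficit is not always compensated by the cut edges the piece absorbs. So a rooted bonus framework of the kind above seems unavoidable, and pushing a choosability-type statement through $2$-edge-cuts despite the failure of the clean version is the crux of the conjecture.
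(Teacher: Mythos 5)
You are attempting to prove the paper's concluding \emph{conjecture}, not one of its theorems: the authors explicitly say their methods require 3-edge-connectivity and leave the 2-edge-connected case open, so there is no proof in the paper to measure against, and the only question is whether your argument closes the gap. It does not. Your induction reduces the conjecture to a rooted statement that you never establish: controlling the value of a large-support $\mathbb{Z}_3$-flow on one prescribed edge of a 2-edge-connected graph (equivalently, a large-support $\mathbb{Z}_3$-connectivity statement with boundary concentrated on two branch vertices, with a bonus system charging the terminal ears). You acknowledge this yourself ("I expect this to be the hard part"), but that hard part \emph{is} the content of the conjecture; what you have is a reduction sketch, not a proof. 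Moreover, the truth of the rooted statement is far from clear: the paper's own counterexample after Theorem~\ref{main2} shows that prescribed boundaries can force support exactly $\tfrac23|E|$, and while restricting the boundary to two vertices of degree $\ge 3$ dodges that particular example, nothing you write (nor the paper's machinery, which from Lemma~\ref{workhorse} onward relies on cyclic 3-edge-connectivity via Lemma~\ref{3edgecut} and the construction of $G^{\bullet}$) establishes it.

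Even the bookkeeping you call "exact" has a hole that feeds straight into the same missing lemma. In your case (i), if the inductively obtained flow on $G^-$ vanishes on the suppressed edge $e'$, you only get $|\supp(\phi)| \ge \tfrac56\bigl(|E(G)|-1\bigr)$, which falls short of $\tfrac56|E(G)|$; since the tripod examples show one cannot in general demand support strictly above $\tfrac56$, you must be able to force $e'$ into the support or extract compensating slack, which is again the unproved rooted control. In case (ii), the mismatched situation $\phi_1(g_1)=0\neq\phi_2(g_2)$ is precisely the case your gluing cannot handle, and the both-nonzero case only works if you stay with $\mathbb{Z}_3$-flows and invoke Tutte's modular-to-integer equivalence on the support at the very end (integer values $1$ and $2$ cannot be matched by negation). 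So the proposal correctly identifies the obstacle --- pushing a choosability-type statement through 2-edge-cuts --- but does not overcome it, and the conjecture remains open.
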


It would be interesting to find ``approximative versions'' of the 5-flow, 4-flow and 3-flow conjectures:

\begin{problem}
  Suppose~$G$ is a graph with $m$ edges.
\begin{enumerate}
  \item How large can $\supp{\phi}$ be for a 5-flow~$\phi$, if $G$ is 2-edge-connected? (or 3-edge-connected?)
  \item How large can $\supp{\phi}$ be for a 4-flow~$\phi$, if $G$ is 2-edge-connected and has no Petersen minor?
  \item How large can $\supp{\phi}$ be for a 3-flow~$\phi$, if $G$ is 4-edge-connected? (or 5-edge-connected?)
\end{enumerate}
\end{problem}

In each of the above questions, Tutte~\cite{Tutte54, Tutte66} conjectures that the support may be of size $m$ (all of the edges get nonzero value).
We suggest weaker results that may be more approachable: for an appropriate function~$f$, prove that the support of some flow is at least $f(m)$.

Another interesting development would be to look for a group-connectivity version: every edge~$e$ has its ``forbidden'' value~$h(e)$
and we are looking for a flow~$\phi$ such that for as many edges as possible we have $\phi(e) \ne h(e)$.

\section*{Acknowledgement}
The authors would like to thank Dan Kr\'al', Bojan Mohar and Michael Tarsi for helpful ideas. Additionally, careful reading by an anonymous referee improved the exposition of this paper.

Matt DeVos was supported in part by an NSERC Discovery Grant (Canada).
Jessica McDonald was supported in part by a grant from the National Science Foundation, NSF-DMS-1600551.
Edita Rollov\'a was partially supported by project GA14-19503S of the Czech Science Foundation and by project LO1506 of the Czech Ministry of Education, Youth and Sports.
Robert \v{S}\'amal was partially supported by grant GA \v{C}R 19-21082S and by European Union’s Horizon 2020 research and innovation programme under the Marie Skłodowska-Curie grant agreement No 823748. 

\bibliographystyle{rs-amsplain}
\bibliography{flows-biblio}

\providecommand{\bysame}{\leavevmode\hbox to3em{\hrulefill}\thinspace}
\providecommand{\MR}{\relax\ifhmode\unskip\space\fi MR }
\providecommand{\MRhref}[2]{%
  \href{http://www.ams.org/mathscinet-getitem?mr=#1}{#2}
}
\providecommand{\href}[2]{#2}
\begin{thebibliography}{10}

\bibitem{Jaeger79}
Fran{\c{c}}ois Jaeger, \emph{Flows and generalized coloring theorems in
  graphs}, J. Combin. Theory Ser. B \textbf{26} (1979), no.~2, 205--216.

\bibitem{JLPT1992}
Fran{\c{c}}ois Jaeger, Nathan Linial, Charles Payan, and Michael Tarsi,
  \emph{Group connectivity of graphs---a nonhomogeneous analogue of
  nowhere-zero flow properties}, J. Combin. Theory Ser. B \textbf{56} (1992),
  no.~2, 165--182.

\bibitem{KKN2006}
Tom{\'a}{\v{s}} Kaiser, Daniel Kr{\'a}l', and Serguei Norine, \emph{Unions of
  perfect matchings in cubic graphs}, Topics in discrete mathematics,
  Algorithms Combin., vol.~26, Springer, Berlin, 2006, pp.~225--230.

\bibitem{Kral}
Daniel Kr\'al', Personal Communication.

\bibitem{LTWZ2013}
L{\'a}szl{\'o}~Mikl{\'o}s Lov{\'a}sz, Carsten Thomassen, Yezhou Wu, and
  Cun-Quan Zhang, \emph{Nowhere-zero 3-flows and modulo {$k$}-orientations}, J.
  Combin. Theory Ser. B \textbf{103} (2013), no.~5, 587--598.

\bibitem{Mader78}
Wolfgang Mader, \emph{A reduction method for edge-connectivity in graphs}, Ann.
  Discrete Math. \textbf{3} (1978), 145--164, Advances in graph theory
  (Cambridge Combinatorial Conf., Trinity College, Cambridge, 1977).

\bibitem{Seymour81}
Paul~D. Seymour, \emph{Nowhere-zero 6-flows}, Journal of Combinatorial Theory,
  Series B \textbf{30} (1981), no.~2, 130--135.

\bibitem{tarsi}
Michael Tarsi, Personal Communication.

\bibitem{Tutte54}
William~T. Tutte, \emph{A contribution to the theory of chromatic polynomials},
  Canadian J. Math. \textbf{6} (1954), 80--91.

\bibitem{Tutte56}
William~T. Tutte, \emph{A class of {A}belian groups}, Canad. J. Math.
  \textbf{8} (1956), 13--28.

\bibitem{Tutte66}
William~T. Tutte, \emph{On the algebraic theory of graph colorings}, Journal of
  Combinatorial Theory \textbf{1} (1966), no.~1, 15--50.

\end{thebibliography}

\end{document}